\newcommand{\bburl}[1]{\textcolor{blue}{\url{#1}}}
\newcommand{\burl}[1]{\textcolor{blue}{\url{#1}}}
\numberwithin{equation}{section}
\newtheorem{thm}{Theorem}[section]
\newtheorem{cor}[thm]{Corollary}
\newtheorem{lem}[thm]{Lemma}
\newtheorem{prop}[thm]{Proposition}
\newtheorem{defi}[thm]{Definition}
\newtheorem{que}[thm]{Question}
\theoremstyle{plain}
\newtheorem{corollary}[thm]{Corollary}
\newtheorem{definition}[thm]{Definition}
\newtheorem{example}[thm]{Example}
\newtheorem{lemma}[thm]{Lemma}
\newtheorem{proposition}[thm]{Proposition}
\newtheorem{theorem}[thm]{Theorem}
\newtheorem{remark}[thm]{Remark}
\newcommand\be{\begin{equation}}
\newcommand\ee{\end{equation}}
\newcommand\bee{\begin{equation*}}
\newcommand\eee{\end{equation*}}
\newcommand\bea{\begin{eqnarray}}
\newcommand\eea{\end{eqnarray}}
\newcommand\beae{\begin{eqnarray*}}
\newcommand\eeae{\end{eqnarray*}}
\newcommand\bi{\begin{itemize}}
\newcommand\ei{\end{itemize}}
\newcommand\ben{\begin{enumerate}}
\newcommand\een{\end{enumerate}}
\newcommand\bc{\begin{center}}
\newcommand\ec{\end{center}}
\newcommand\ba{\begin{array}}
\newcommand\ea{\end{array}}
\newcommand{\R}{\ensuremath{\mathbb{R}}}
\newcommand{\Z}{\ensuremath{\mathbb{Z}}}
\newcommand{\N}{\mathbb{N}}
\newcommand\frakfamily{\usefont{U}{yfrak}{m}{n}}
\DeclareTextFontCommand{\textfrak}{\frakfamily}
\newcommand{\hr}[1]{\href{#1}{\url{#1}}}
\newcommand{\E}[1]{\mathbb{E}[#1]}
\title{Paper template}
\author{Glenn Bruda}
\email{\textcolor{blue}{\href{mailto:glenn.bruda@ufl.edu}{glenn.bruda@ufl.edu}}}
\address{Department of Mathematics, University of Florida, Gainesville, FL 32611}
\author{Bruce Fang}
\email{\textcolor{blue}{\href{mailto:bf8@williams.edu}{bf8@williams.edu}}}
\address{Department of Mathematics and Statistics, Williams College, Williamstown, MA 01267}
\author{Raul Marquez}
\email{\textcolor{blue}{\href{mailto:raul.marquez02@utrgv.edu}{raul.marquez02@utrgv.edu}}}
\address{School of Mathematical and Statistical Sciences, University of Texas Rio Grande Valley, Brownsville, TX 78520}
\author{Steven J. Miller}
\email{\textcolor{blue}{\href{mailto:sjm1@williams.edu}{sjm1@williams.edu}},  \textcolor{blue}{\href{Steven.Miller.MC.96@aya.yale.edu}{Steven.Miller.MC.96@aya.yale.edu}}}
\address{Department of Mathematics and Statistics, Williams College, Williamstown, MA 01267}
\author{Beni Prapashtica}
\email{\textcolor{blue}{\href{mailto:bp492@cam.ac.uk}{bp492@cam.ac.uk}}}
\address{Department of Pure Mathematics and Mathematical Statistics, University of Cambridge, Cambridge, UK}
\author{Vismay Sharan}
\email{\textcolor{blue}{\href{mailto:vismay.sharan@yale.edu}{vismay.sharan@yale.edu}}}
\address{Department of Mathematics, Yale University, New Haven, CT 06511}
\author{Daeyoung Son}
\email{\textcolor{blue}{\href{mailto:ds15@williams.edu}{ds15@williams.edu}}}
\address{Department of Mathematics and Statistics, Williams College, Williamstown, MA 01267}
\author{Saad Waheed}
\email{\textcolor{blue}{\href{mailto:sw21@williams.edu}{sw21@williams.edu}}}
\address{Department of Mathematics and Statistics, Williams College, Williamstown, MA 01267}
\author{Janine Wang}
\email{\textcolor{blue}{\href{mailto:jjw3@williams.edu}{jjw3@williams.edu}}}
\address{Department of Mathematics and Statistics, Williams College, Williamstown, MA 01267}
\thanks{This research was supported by the National Science Foundation grant DMS-2241623, Churchill College Cambridge, the Finnerty fund, and Williams College.}
\subjclass[2010]{}
\keywords{}
\date{\today}
\title{The Limiting Spectral Distribution of Various Matrix Ensembles Under the Anticommutator Operation}
\begin{document}

\begin{abstract}
Inspired by the quantization of classical quantities and Rankin Selberg convolution, we study the anticommutator operation $\{\cdot, \cdot\}$, where $\{A,B\} = AB + BA$, applied to real symmetric random matrix ensembles including Gaussian orthogonal ensemble (GOE), the palindromic Toeplitz ensemble (PTE), the $k$-checkerboard ensemble, and the block $k$-circulant ensemble ($k$-BCE). Using combinatorial and topological techniques related to non-crossing and free matching properties of GOE and PTE, we obtain closed-form formulae for the moments of the limiting spectral distributions of $\{\textup{GOE, GOE}\}$, $\{\textup{PTE, PTE}\}$, $\{\textup{GOE, PTE}\}$ and establish the corresponding limiting spectral distributions with generating functions and convolution. On the other hand, $\{\textup{GOE, $k$-checkerboard}\}$ and $\{\textup{$k$-checkerboard, $j$-checkerboard}\}$ exhibit entirely different spectral behavior than the other anticommutator ensembles: while the spectrum of $\{\textup{GOE, $k$-checkerboard}\}$ consists of 1 bulk regime of size $\Theta(N)$ and 1 blip regime of size $\Theta(N^{3/2})$, the spectrum of $\{\textup{$k$-checkerboard, $j$-checkerboard}\}$ consists of 1 bulk regime of size $\Theta(N)$, 2 intermediary blip regimes of size $\Theta(N^{3/2})$, and 1 largest blip regime of size $\Theta(N^2)$. In both cases, with the appropriate weight function, we are able to isolate the largest regime for other regime(s) and analyze its moments and convergence results via combinatorics. We end with numerical computation of lower even moments of $\{\textup{GOE, $k$-BCE}\}$ and $\{\textup{$k$-BCE, $k$-BCE}\}$ based on genus expansion and discussion on the challenge with analyzing the intermediary blip regimes of $\{\textup{$k$-checkerboard, $j$-checkerboard}\}$.
\end{abstract}

\maketitle

\tableofcontents

%%%%%%%%%%%%%%%%%%%%%%%%%%%%%%%%%%%%%%%%%%%%%%%%%%%%%%%%%%%%%%%%%%%%%%%%%%%%%%%%%%%%%%%%%%%%%%%%%%%%%%%%%%%%%%%%%%%%%%%%%%%%%%%%%%%%
%%%%%%%%%%%%%%%%%%%%%%%%%%%%%%%%%%%%%%%%%%%%%%%%%%%%%%%%%%%%%%%%%%%%%%%%%%%%%%%%%%%%%%%%%%%%%%%%%%%%%%%%%%%%%%%%%%%%%%%%%%%%%%%%%%%%
%%%%%%%%%%%%%%%%%%%%%%%%%%%%%%%%%%%%%%%%%%%%%%%%%%%%%%%%%%%%%%%%%%%%%%%%%%%%%%%%%%%%%%%%%%%%%%%%%%%%%%%%%%%%%%%%%%%%%%%%%%%%%%%%%%%%
\section{Introduction}

\subsection{Background}
Random matrices were introduced by Wishart \cite{Wishart} in the 1920s to estimate the sample covariance matrices of large samples. Since then, random matrices and statistics of their eigenvalues have been widely studied, with numerous important applications to number theory, quantum mechanics, condensed matter physics, wireless communications \cite{Mezzadri, Mehta, Forrester, Couillet}, etc. Discovered by Wigner while he was investigating nuclear resonance levels \cite{Wigner1,Wigner2}, Wigner's semi-circle law is a central result in random matrix theory and states that under certain conditions, the normalized spectral distribution of symmetric random matrices with i.i.d. entries converges weakly almost surely to a semi-circle as the size of the matrices tends to infinity. The limiting normalized spectral distribution of different types of random matrix ensembles has been extensively researched, see for example \cite{BasBo1,BasBo2,BLMST,BHS1,BHS2,FM,GKMN,Toeplitz,McK,Me}.

Even though the spectral distribution of matrices from various random matrix ensembles converges to the semicircle distribution, additional symmetries on the entries of a matrix ensemble often allow us to observe new behavior. Examples of such ensembles include Toeplitz \cite{Toeplitz}, palindromic Toeplitz \cite{palindromicToeplitz}, $k$-checkerboard \cite{split}, adjacency matrices of $d$-regular graphs \cite{GKMN}, and block circulant \cite{Block Circulant} ensembles. In general, it is rare to find a well-known, closed-form expression of the limiting spectral distribution of a random matrix ensemble.

The new construction in this paper was inspired by the notion of quantization in quantum mechanics. In contrast to classical mechanics where commutativity holds for the product of two classical variables $ab$ (i.e., $ab=ba$), in quantum mechanics the composition (or product) of two quantum operators $\hat{A}\hat{B}$ is generally not commutative (i.e., $\hat{A}\hat{B}\neq \hat{B}\hat{A}$). A natural way to resolve this issue is to quantize $\hat{A}\hat{B}$ by the anticommutator product $(\hat{A}\hat{B}+\hat{B}\hat{A})/2$. As $1/2$ is only a scaling factor, we drop it from our definition of anticommutatr product \cite{Quantum}.

Our study of anticommutator of random matrices was also motivated by the deep connections between random matrix theory and number theory, which originated from a chance meeting between Montgometry and Dyson. They observed that the pair correlation of the nontrivial zeros of the Riemann zeta function, which encodes information about the spacing between adjacent zeros, matches with that of the eigenvalues of random Hermitian matrices in the Gaussian Unitary Ensemble (GUE) \cite{BFMT}. This observation started a long and fruitful relationship between these two areas. Subsequent work by Rudnick and Sarnak \cite{RS} generalized this connection to zeros of automorphic $L$-functions, which are generalization of Riemann zeta function in number theory. In the theory of integral representation of $L$-functions, Rankin-Selberg convolution is an important method to obtain a new family of $L$-function from given $L$-function families. Considering fruitful connections between random matrix theory and number theory, it is a natural question to ask if there is an analogue of such convolution in the context of random matrix theory.

Previous research on analogues of Rankin-Selberg convolution has considered the Hadamard product, the Kronecker product, the ``swirl'' operation, and the ``disco'' operation. All these operations take as input different types of random matrices and combine them together to form new random matrices \cite{disco, BLMST, Swirl, Convolution}. In our paper, we consider the convolution realized through anticommutating of two random matrices of the same size, defined as
\begin{align*}
    \{A, B\} \ := \ AB+BA,
\end{align*}
where $A,B$ are real symmetric random matrices of the same dimension. We call $\{A, B\}$ the \textbf{anticommutator} of $A$ and $B$. We choose to sample $A$ and $B$ from real symmetric random matrix ensembles known either for lack of symmetry or for additional symmetrical structure, such as GOE (lack of additional symmetry), palindromic Toeplitz (additional symmetry), block circulant (additional symmetry), and $k$-checkerboard (additional symmetry), in hope to see how the additional symmetry affects the spectral distribution of the anticommutator.

The anticommutator of random matrices has previously been studied by Nica and Speicher in \cite{commutator} using the machinery of free probability. They provided a general analytic formula for the moment series of the spectral distribution of the anticommutator of random matrices in terms of expressions involving compositional inversion and $R$-transform. The analytic formula has successfully yielded the moment series, and thus the spectral distribution of the anticommutator of random matrices whose spectral distributions are semicircle, free Poisson, arcsine, or Bernoulli, etc. However, this method generally fails in the case of matrix ensembles with additional symmetries, due to the intractability of the $R$-transform and the compositional inversion computation. We demonstrate that we can study the anticommutator of these matrix ensembles using combinatorial, topological, and moment methods. We define these matrix ensembles and summarize the results in the next section.

\subsection{Preliminaries}

We study the anticommutators of the matrices from real symmetric matrix ensembles as defined in this section. We choose to form anticommutators from these ensembles because they all share nice combinatorial structures and closed form expressions for limiting expected moments or even limiting spectral distribution, which would increase our chance of being able to study their anticommutator through combinatorics and find closed form expressions for the limiting expected moments or the limiting spectral distributions the anticommutator. Moreover, we choose to include the $k$-checkerboard ensemble, whose spectrum typically consists of two regimes of different orders, because we are interested in what the splitting behavior of the spectrum would look like if we take the anticommutator of the $k$-checkerboard ensemble and another matrix ensemble.

\begin{definition} The $N\times N$ \textbf{Gaussian Orthogonal Ensemble (GOE}) is a random matrix ensemble whose matrices $A_N=(a_{ij})$ are given by 
\begin{align}
a_{ij} \ = \ a_{ji} \ \sim \ \begin{cases}
    b_{ij} &\textup{if $i\neq j$} \\
    c_{ii} &\textup{if $i=j$},
\end{cases}
\end{align}
where $b_{ij}\sim N(0,1)$ and $c_{ii}\in N(0,2)$ for all $1\leq i,j\leq N$.
\end{definition}

\begin{definition}
The $N\times N$ \textbf{real symmetric palindromic Toeplitz ensemble (PTE)} (where $N$ is assumed to be even for simplicity) is a random matrix ensemble whose matrices $M_N$ have entries paramatrized by $b_0, b_1,\dots,b_{N/2-1}$, where the $b_i$'s are i.i.d. random variables with mean 0, variance 1, and finite higher moments:
\begin{align}
a_{ij} \ = \ \begin{cases}
b_{|i-j|}& \text{if } 0\leq |i-j|\leq \frac{N}{2}-1\\
b_{N-1-|i-j|}& \text{if } \frac{N}{2}\leq |i-j|\leq N-1.
\end{cases}
\end{align}
These matrices are therefore of the form
\begin{align}
\begin{pmatrix}
b_0 & b_1 & b_2 & \cdots & b_2 & b_1 & b_0\\
b_1 & b_0 & b_1 & \cdots & b_3 & b_2 & b_1\\
b_2 & b_1 & b_0 & \cdots & b_4 & b_3 & b_2\\
\vdots & \vdots & \vdots & \ddots & \vdots & \vdots & \vdots\\
b_2 & b_3 & b_4 & \cdots & b_0 & b_1 & b_2\\
b_1 & b_2 & b_3 & \cdots & b_1 & b_0 & b_1\\
b_0 & b_1 & b_2 & \cdots & b_2 & b_1 & b_0
\end{pmatrix}.
\end{align}
\end{definition}
\begin{definition}
Let $k|N$. The $N\times N$ \textbf{real symmetric $k$-block circulant ensemble ($k$-BCE)} is a random matrix ensemble whose matrices are block matrices of the form
\begin{equation}
\left(\begin{array}{ccccc}
B_0 & B_1 & B_2 & \cdots & B_{\frac{N}{k}-1}\\
B_{-1} & B_0 & B_1 & \cdots & B_{\frac{N}{k}-2}\\
B_{-2} & B_{-1} & B_0 & \cdots & B_{\frac{N}{k}-3}\\
\vdots & \vdots & \vdots & \ddots & \vdots\\
B_{1-\frac{N}{k}} & B_{2-\frac{N}{k}} & B_{3-\frac{N}{k}} & \cdots & B_0
\end{array}\right),
\end{equation}
where each $B_i$ is an $k\times k$ real matrix, each $B_{-i} = B_i^T$, and $B_0$ is symmetric. Unless otherwise specified (i.e., the restriction that $B_{-i}=B_i^T$), the entries of these block matrices are i.i.d. with mean 0, variance 1, and finite higher moments.
\end{definition}

\begin{definition} For $k\in\mathbb{Z}_{>0}$ and $w\in\mathbb{R}$, the $N\times N$ \textbf{$(k,w)$-checkerboard ensemble} is a random matrix ensemble whose matrices $M_N=(m_{ij})$ are given by
\begin{align}
m_{ij} \ = \
\begin{cases}
a_{ij},& \text{if }i\not\equiv j\mod{k}\\
w,& \text{if }i\equiv j\mod{k},
\end{cases}
\end{align}
where $a_{ij}=a_{ji}$ and all of the distinct $a_{ij}$ terms are sampled from a distribution with mean $0$, variance $1$, and finite higher moments. For example, $N\times N$ $(2,w)$-checkerboard matrices are of the form
\begin{align*}
\begin{pmatrix}
w & a_{0\textup{ }1} & w & a_{0\textup{ }3} & w & \cdots & a_{0\textup{ }N-1} \\
a_{0\textup{ }1} & w & a_{1\textup{ }2} & w & a_{1\textup{ }4} & \cdots & w \\
w & a_{1\textup{ }} & w & a_{2\textup{ }3} & w & \cdots & a_{2\textup{ }N-1} \\
\vdots & \vdots & \vdots & \vdots & \vdots & \ddots & \vdots \\
a_{0\textup{ }N-1} & w & a_{2\textup{ }N-1} & w & a_{4\textup{ }N-1} & \cdots & w
\end{pmatrix}.
\end{align*}

We refer to the $(k, 1)$-checkerboard ensemble as the $k$-checkerboard ensemble. Unless specified otherwise, we always assume that the weight of the checkerboard be 1.
\end{definition}

\begin{definition}
The \textbf{mean matrix} $\overline{M}_N=(m_{ij})$ of the $N\times N$ $k$-checkerboard ensemble is given by
\begin{align}
\overline{m}_{ij} \ = \
\begin{cases}
0,& \text{if }i\not\equiv j\mod{k}\\
1,& \text{if }i\equiv j\mod{k},
\end{cases}
\end{align}
where we note that the rank of $\overline{M}_N$ is $k$.
\end{definition}

\begin{remark}
We say that $f(n)=O(g(n))$ or $f(n)\ll g(n)$ if there exist a positive real number $C$ and a real number $M$ such that $f(n)\leq Cg(n)$ for all $n\geq M$. If $f(n)=O(g(n))$ and $g(n)=O(f(n))$, then we say that $f(n)=\Theta(g(n))$. Moreover, we say that $f(n)=o(g(n))$ if $\lim_{n\rightarrow\infty}\frac{f(n)}{g(n)}=0$. On the other hand, we say that $f(n)=\omega(g(n))$ if $\lim_{n\rightarrow\infty}\frac{f(n)}{g(n)}=\infty$.
\end{remark}

\begin{definition}
As we see later, the eigenvalues of the anticommutator of two random matrices from the above ensembles (except for the $k$-checkerboard ensemble) are typically of $\Theta(N)$. Hence, we define the \textbf{empirical spectral measure} associated to such an $N\times N$ random matrix $C_N$ as
\begin{align}
\mu_{C_N, N}(x)dx \ = \ \frac{1}{N}\sum_{i=1}^N \delta\left(x-\frac{\lambda_i}{N}\right)dx,
\end{align}
where $\{\lambda_i\}_{i=1}^N$ are the eigenvalues of $C_N$ and $\delta$ is the Dira-delta functional. The \textbf{empirical spectral distribution} $F^{C_N/N}$ of $C_N/N$ is defined as
\begin{align}
F^{C_N/N}(x) \ := \ \int_{-\infty}^x\mu_{C_N, N}(y)dy \ = \ \frac{\#\{i\leq N:\lambda_{i}/N\leq x\}}{N}.
\end{align}
If $F^{C_N/N}$ is differentiable, then the \textbf{empirical spectral density} $f^{C_N/N}$ of $C_N/N$ is defined as the derivative of $F^{C_N/N}$. If as $N\rightarrow\infty$ we have $F^{C_N/N}$ converges in some sense (in probability or almost surely) to a distribution $F$, then we say that $F$ is the \textbf{limiting spectral distribution} of the matrix ensemble.
The \textbf{m\textup{\textsuperscript{th}} moment of the empirical spectral distribution $F^{C_N/N}$ of $C_N/N$}, denoted by $M_m(C_N)$, is naturally defined as
\begin{align}
M_m(C_N) \ := \ \int_{-\infty}^\infty x^mdF^{C_N/N}(x) \ = \ \int_{-\infty}^\infty x^m\mu_{C_N, N}(x)dx \ = \ \frac{\sum_{i=1}^N \lambda_i^m}{N^{k+1}}.
\end{align}
Moreover, we define the \textbf{expected m\textup{\textsuperscript{th}} moment of the empirical spectral distribution} of $C_N/N$, denoted by $M_m(N)$, as the average of $M_m(C_N)$ over all the $C_N$ in our chosen anticommutator matrix ensemble, i.e., $M_m(N)=\mathbb{E}\left[M_m(C_N)\right]$. Finally, we let $M_m$ to be the limit of $M_m(N)$ as $N\rightarrow\infty$.
\end{definition}

To establish the limiting spectral distribution of a random matrix ensemble, it is crucial to understand the limiting expected moments of the empirical spectral distribution. The formula below, known as the Eigenvalue Trace Formula, allows us to reduce the moment calculation into a combinatorial problem:

\begin{prop}
Let $A_N$ be an $N\times N$ real symmetric matrix, then
\begin{align}
M_m(A_N) \ = \ \frac{1}{N^{m+1}}\textup{Tr}(A^m_N) \ = \frac{1}{N^{m+1}}\sum_{1\leq i_1, \cdots, i_m\leq N} a_{i_1i_2}a_{i_2i_3}\cdots a_{i_mi_{1}},
\end{align}
where $\textup{Tr}(\cdot)$ denotes the trace of a matrix and $a_{ij}$'s are the entries of $A_N$ indexed by $ij$. Similarly, if $A_N$ is a random matrix drawn from an $N\times N$ real symmetric matrix ensemble, then
\begin{align}
M_m(N) \ = \ \frac{1}{N^{m+1}}\mathbb{E}\left[\textup{Tr}(A^m_N)\right] \ = \frac{1}{N^{m+1}}\sum_{1\leq i_1,\cdots, i_m\leq N}\mathbb{E}\left[a_{i_1i_2}a_{i_2i_3}\cdots a_{i_mi_1}\right],
\end{align}
where by $\mathbb{E}\left[\textup{Tr}(A^m_N)\right]$ we mean averaging over the $N\times N$ random matrix ensemble with each matrix $A_N$ weighted by its probability of occurring. We refer to each $a_{i_1i_2}a_{i_2i_3}\cdots a_{i_mi_1}$ as a \textbf{cyclic product} in the expected $m$\textup{\textsuperscript{th}} moment of the random matrix ensemble.
\end{prop}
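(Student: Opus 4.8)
The plan is to prove the deterministic identity first and then pass to the random case by linearity of expectation. Starting from the definition of the empirical spectral measure, whose atoms sit at the rescaled eigenvalues $\lambda_i/N$, the $m$th moment of $F^{A_N/N}$ is
\begin{align*}
M_m(A_N) \ = \ \int_{-\infty}^\infty x^m\, dF^{A_N/N}(x) \ = \ \frac{1}{N}\sum_{i=1}^N\left(\frac{\lambda_i}{N}\right)^m \ = \ \frac{1}{N^{m+1}}\sum_{i=1}^N \lambda_i^m,
\end{align*}
where $\lambda_1,\dots,\lambda_N$ are the eigenvalues of $A_N$. Thus the entire content of the proposition reduces to the two trace identities $\textup{Tr}(A_N^m)=\sum_{i=1}^N\lambda_i^m$ and $\textup{Tr}(A_N^m)=\sum_{1\leq i_1,\dots,i_m\leq N}a_{i_1i_2}a_{i_2i_3}\cdots a_{i_mi_1}$, after which dividing by $N^{m+1}$ finishes the deterministic case.

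For the first identity I would invoke the spectral theorem: because $A_N$ is real symmetric, there is an orthogonal matrix $O$ with $A_N=O\,D\,O^T$ and $D=\textup{diag}(\lambda_1,\dots,\lambda_N)$. Then $A_N^m=O\,D^m\,O^T$, and by the cyclic invariance of trace together with $O^TO=I$ we get $\textup{Tr}(A_N^m)=\textup{Tr}(O\,D^m\,O^T)=\textup{Tr}(D^m\,O^TO)=\textup{Tr}(D^m)=\sum_{i=1}^N\lambda_i^m$. For the second identity I would expand the matrix power entrywise: iterating the definition of matrix multiplication, the $(i_1,i_1)$ diagonal entry of $A_N^m$ equals $\sum_{1\leq i_2,\dots,i_m\leq N}a_{i_1i_2}a_{i_2i_3}\cdots a_{i_mi_1}$, and summing this over $i_1$ to form the trace produces exactly the full sum over all $m$-tuples $(i_1,\dots,i_m)$ of cyclic products.

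For the random matrix version I would simply apply the expectation operator to both sides of the deterministic formula $N^{m+1}M_m(A_N)=\textup{Tr}(A_N^m)=\sum_{1\leq i_1,\dots,i_m\leq N}a_{i_1i_2}\cdots a_{i_mi_1}$. Since the index sum is finite (it has only $N^m$ terms), linearity of expectation lets me interchange $\mathbb{E}$ with the summation, yielding $\mathbb{E}[\textup{Tr}(A_N^m)]=\sum_{1\leq i_1,\dots,i_m\leq N}\mathbb{E}[a_{i_1i_2}\cdots a_{i_mi_1}]$, and the scalar factor $1/N^{m+1}$ passes through the expectation. Recalling that $M_m(N)=\mathbb{E}[M_m(A_N)]$ by definition then gives the stated formula.

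This result is routine, so there is no genuine analytic obstacle; the only points demanding care are bookkeeping ones. The first is confirming that the normalization $N^{m+1}$ in the statement is the one induced by the $\lambda_i/N$ rescaling in the empirical spectral measure, so that the moment definition and the trace formula are consistent. The second is justifying the interchange of expectation and the index sum, which is immediate here precisely because the sum is finite; no dominated-convergence machinery is needed. Verifying these two points is the whole of the argument.
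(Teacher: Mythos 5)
Your proposal is correct, and it supplies exactly the standard argument: the paper states this proposition without proof (it is the classical Eigenvalue Trace Lemma, quoted as a known fact), so there is nothing in the paper to diverge from. Your three ingredients --- spectral theorem plus cyclic invariance of trace for $\textup{Tr}(A_N^m)=\sum_i\lambda_i^m$, entrywise expansion of the matrix power for the cyclic-product sum, and linearity of expectation over the finite index sum for the ensemble version --- are precisely the intended justification, and your two bookkeeping checks (the $N^{m+1}$ normalization matching the $\lambda_i/N$ rescaling, and the triviality of interchanging $\mathbb{E}$ with a finite sum) are the right ones.
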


\subsection{Results}
In this section, we summarize our findings about anticommutator ensembles of random matrices sampled from ensembles including GOE, the real symmetric PTE, the real symmetric $k$-BCE, and the $k$-checkerboard ensemble. We will study not only the \textbf{homogeneous} anticommutator ensembles $\{A_N,B_N\}$, where $A_N$ and $B_N$ are matrices from the same ensemble, but also look at inhomogeneous anticommutator ensembles $\{A_N,B_N\}$, where $A_N$ and $B_N$ are matrices from different ensembles. Using the method of genus expansion as we shall develop in Section \ref{sec: anticommutator Combinatorics}, the non-crossing property of the cyclic products of GOE, and the free matching property of the cyclic products of PTE, we are able to obtain closed-form formulae for the limiting expected moments of the empirical spectral distributions of $\{\textup{GOE, GOE}\}$, $\{\textup{PTE, PTE}\}$, $\{\textup{GOE, PTE}\}$. Specifically, the closed-form formulae for $\{\textup{GOE, GOE}\}$ and $\{\textup{PTE, PTE}\}$ are given by explicit, simple expressions, while that for $\{\textup{GOE, PTE}\}$ is given by a recurrence relation. We also provide genus expansion formulae for the limiting expected moments of the empirical spectral distributions of $\{\textup{GOE, $k$-BCE}\}$ and $\{\textup{$k$-BCE, $k$-BCE}\}$ based on their respective matching rules. We only concern ourselves with the limiting expected even moments, since as we shall see in Section \ref{sec: anticommutator Combinatorics}, the limiting expected odd moments vanish for all these ensembles. Despite the fact that it is generally rare to find a closed form expression for the limiting spectral density of any random matrix ensemble, we are able to obtain nice closed form expressions for the limiting spectral densities of $\{\textup{GOE, GOE}\}$, $\{\textup{PTE, PTE}\}$ using analytic methods including generating functions and convolution of distributions. Comparisons between the empirical spectral density and the theoretiacl spectral density for these ensembles are shown in Figure \ref{PTExPTE figure} and \ref{GOExGOE figure}. Note that from now on, unless otherwise specified, we refer to the limiting expected moments of the empirical spectral distribution of a matrix ensemble simply as the limiting expected moments of the matrix ensemble.

\begin{theorem}
The limiting expected $2m$\textup{\textsuperscript{th}} moment $M_{2m}$ of $\{\textup{GOE, GOE}\}$ is given by the explicit formula
\begin{align}
M_{2m} \ = \ \frac{1}{m}\sum_{k=1}^m2^k\binom{2m}{k-1}\binom{m}{k}.
\end{align}
\end{theorem}

\begin{cor}
The limiting spectral density of $\{\textup{GOE, GOE}\}$ is given by
\begin{align}
\mu(x) \ = \ -\frac{\sqrt{3}}{2\pi|x|}\left(\frac{3x^2+1}{9h(x)}-h(x)\right),\quad |x| \ \leq \ \sqrt{\frac{11+5\sqrt{5}}{2}},
\end{align}
where
\begin{align}
h(x) \ = \ \sqrt[3]{\frac{18x^2+1}{27}+\sqrt{\frac{x^2(1+11x^2-x^4)}{27}}}.
\end{align}
\end{cor}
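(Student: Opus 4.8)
The plan is to pass from the explicit moment formula of the preceding Theorem to the density via the Cauchy--Stieltjes transform, exploiting the principle that a moment sequence with an \emph{algebraic} generating function produces a density supported exactly on the locus where the governing algebraic equation develops a complex-conjugate pair of roots. First I would record that all odd moments vanish and package the even moments into
\begin{align}
G(z) \ = \ \sum_{m\geq 0}\frac{M_{2m}}{z^{2m+1}} \ = \ \frac1z\left(1 + \frac{2}{z^2} + \frac{10}{z^4} + \frac{66}{z^6} + \cdots\right),
\end{align}
convergent for $|z|$ large, so that $zG(z)$ is a power series in $z^{-2}$ whose coefficients are the $M_{2m}$. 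The goal is to show $G$ is algebraic of degree three and then invert.

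The heart of the argument is establishing the cubic. The concrete claim is that, under the explicit affine substitution $\xi = -zG(z) - \tfrac13$, the function $\xi$ satisfies the depressed cubic $\xi^3 + p\,\xi + q = 0$ with
\begin{align}
p \ = \ -\frac{3z^2+1}{3}, \qquad q \ = \ -\frac{2(18z^2+1)}{27}.
\end{align}
To prove this from the double binomial sum, I would apply creative telescoping (Zeilberger's algorithm) to $M_{2m}=\tfrac1m\sum_k 2^k\binom{2m}{k-1}\binom{m}{k}$ to extract a finite recurrence with polynomial-in-$m$ coefficients, then translate that recurrence into the above algebraic equation for the generating function; equivalently one guesses the cubic from the first several moments $M_2=2,\ M_4=10,\ M_6=66,\ M_8=498,\dots$ and verifies it against the recurrence. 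A useful internal consistency check is that expanding the branch of $\xi$ with $\xi\to-\tfrac43$ as $z\to\infty$ gives $\xi=-\tfrac43-2z^{-2}-10z^{-4}-\cdots$, whence $G = -(\xi+\tfrac13)/z = z^{-1}+2z^{-3}+10z^{-5}+\cdots$, recovering $M_2$ and $M_4$. That the minimal polynomial is a cubic (not higher degree) is exactly what the single cube root $h(x)$ in the target formula signals.

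I would then solve the cubic by Cardano. Setting $\Delta = q^2/4 + p^3/27$, a direct computation using $(18z^2+1)^2-(3z^2+1)^3 = 27z^2(1+11z^2-z^4)$ gives
\begin{align}
\Delta \ = \ \frac{z^2\bigl(1+11z^2-z^4\bigr)}{27}, \qquad -\frac{q}{2}+\sqrt{\Delta} \ = \ \frac{18z^2+1}{27}+\sqrt{\frac{z^2(1+11z^2-z^4)}{27}},
\end{align}
so the principal cube root is precisely $h$. The measure is supported where $G$ has nonzero imaginary boundary value, i.e.\ where $\Delta>0$ and the cubic has one real and two complex-conjugate roots; solving $1+11x^2-x^4=0$ yields the edge $x^2=\tfrac{11+5\sqrt5}{2}$, matching the stated support.

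Finally I would apply Stieltjes--Perron inversion, $\mu(x) = -\tfrac1\pi\lim_{\varepsilon\to0^+}\operatorname{Im}G(x+i\varepsilon)$. For a depressed cubic the complex-conjugate roots are $\omega h + \omega^2\tfrac{-p/3}{h}$ and its conjugate with $\omega=e^{2\pi i/3}$, so their imaginary part equals $\tfrac{\sqrt3}{2}\bigl(h - \tfrac{-p/3}{h}\bigr)$; inserting $-p/3=(3x^2+1)/9$ and undoing the affine substitution $G=-(\xi+\tfrac13)/x$ (which contributes the factor $1/|x|$ and the overall sign) reproduces
\begin{align}
\mu(x) \ = \ -\frac{\sqrt3}{2\pi|x|}\left(\frac{3x^2+1}{9h(x)}-h(x)\right).
\end{align}
I would close by verifying that $\mu$ is nonnegative on its support and integrates to $1$, and that its first nonzero moments return $M_2=2,\ M_4=10$. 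The hard part is the step above establishing the cubic rigorously from the explicit double binomial sum, together with the branch bookkeeping in the inversion: one must select the root of the cubic with $G(z)\sim 1/z$ at infinity and nonnegative imaginary part on the bulk, discarding the spurious real and sign-reversed roots that Cardano's formula also produces.
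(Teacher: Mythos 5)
Your proposal is correct, and I verified its pivotal algebra: with $u=zG(z)$ the moment series satisfies $u^3+u^2-z^2u+z^2=0$; your shift $\xi=-u-\tfrac13$ produces exactly $p=-(3z^2+1)/3$ and $q=-2(18z^2+1)/27$; the discriminant identity $(18z^2+1)^2-(3z^2+1)^3=27z^2(1+11z^2-z^4)$ holds; and $1+11x^2-x^4=0$ gives the stated edge $x^2=(11+5\sqrt5)/2$. However, your route is genuinely different from the paper's. The paper never performs Stieltjes--Perron inversion: it invokes Nica--Speicher \cite{commutator}, where the moment series $M_0(z)$ of the anticommutator of two free semicircular elements is given as a compositional inverse and satisfies $z^2M_0(z)^3+4z^2M_0(z)^2+(5z^2-1)M_0(z)+2z^2=0$; the paper then observes that its own series $M(z)=\tilde F(z^2)-1$, built from the functional equation $\tilde F(z)=1+z\left(\tilde F(z)^2+\tilde F(z)^3\right)$ already established while proving the explicit moment formula, satisfies the same cubic with null constant term, concludes $M=M_0$ by uniqueness of the solution, and simply quotes the closed-form density from \cite{commutator}. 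What each approach buys: the paper's matching argument is short, delegates the analytic inversion to the literature, and implicitly identifies the limit law of $\{\textup{GOE, GOE}\}$ with that of the free anticommutator; yours is self-contained, making the support computation, the Cardano branch bookkeeping, and the positivity and normalization checks explicit, so it stands without the free-probability reference. Note also that the step you flag as hardest --- extracting the cubic from the double binomial sum by creative telescoping --- can be bypassed entirely: the functional equation for $\tilde F$ is proved en route to the very formula you start from, and substituting $u=zG(z)=\tilde F(z^{-2})$ into it yields your cubic in one line. For full rigor you should additionally record that the moment problem is determinate (the $M_{2m}$ are the $3$-Schr\"oder numbers, growing like $\left(\tfrac{11+5\sqrt5}{2}\right)^m$ up to polynomial factors, so $M_{2m}^{1/(2m)}$ is bounded and the limiting measure is compactly supported and unique) and that convergence of the empirical spectral measures to this limit is supplied by Theorem \ref{section2Bulk}; with those two remarks your inversion argument constitutes a complete, independent proof of the corollary.
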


\begin{theorem}\label{PTEPTE moments intro}
The limiting expected $2m$\textup{\textsuperscript{th}} moment $M_{2m}$ of $\{\textup{PTE, PTE}\}$ is $4^m((2m-1)!!)^2$.
\end{theorem}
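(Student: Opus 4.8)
The plan is to compute the limiting expected $2m$\textsuperscript{th} moment via the Eigenvalue Trace Formula applied to the anticommutator $C_N = \{M_N, M'_N\} = M_N M'_N + M'_N M_N$, where $M_N, M'_N$ are independent PTE matrices. Expanding $\mathrm{Tr}(C_N^{2m})$ gives a sum over the $2^{2m}$ ways of choosing, at each of the $2m$ factors, one of the two orderings $M_N M'_N$ or $M'_N M_N$. Each such choice produces a cyclic product of length $4m$ in which the entries alternate in a prescribed way between the two matrices. Because $M_N$ and $M'_N$ are independent with mean-zero entries, the expectation factors as a product of an expectation over the $M_N$-entries and an expectation over the $M'_N$-entries, and by the PTE structure each surviving term requires the $b$-indices within each ensemble to be matched in pairs.

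First I would set up the index notation: writing $C_N = (c_{ij})$, each matrix element of $C_N$ is $c_{ij} = \sum_{\ell} (m_{i\ell} m'_{\ell j} + m'_{i\ell} m_{\ell j})$, so a single cyclic product $c_{i_1 i_2}\cdots c_{i_{2m} i_1}$ expands into a sum over intermediate indices $\ell_1,\dots,\ell_{2m}$ and over the binary choices, producing products of $4m$ entries, $2m$ drawn from $M_N$ and $2m$ drawn from $M'_N$. The key is then to invoke the \textbf{free matching property} of PTE cyclic products referenced in the introduction: in the $N\to\infty$ limit, the only configurations contributing to the normalized moment are those in which the $M_N$-entries are matched into $m$ pairs and likewise the $M'_N$-entries are matched into $m$ pairs, with each pair forced to carry the same $b$-index (so the expectation is $1$ per pair), and moreover these matchings must be the ``free'' (noncrossing-type) matchings that achieve the maximal power of $N$ after summing over the free indices. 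I would count, for PTE, that each admissible matching on $2m$ objects contributes and that the number of such matchings, together with the degrees of freedom in the indices, yields the claimed constant.

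The cleanest route to the factor $4^m((2m-1)!!)^2$ is to argue that the two ensembles decouple completely in the limit: the $M_N$-part and the $M'_M$-part each behave like an independent PTE moment computation, each contributing a factor $(2m-1)!!$ (the number of pairings of $2m$ objects, which is the PTE analogue of the Gaussian moment once the palindromic free-matching constraint is imposed), while the $2^{2m} = 4^m$ comes from the binary ordering choices at the $2m$ anticommutator factors, all of which survive in the limit because they do not affect which matchings are admissible. So I would show (i) each binary choice string contributes equally in the limit, giving the overall $4^m$; and (ii) within a fixed choice string the surviving index configurations are exactly a pairing of the $2m$ $M_N$-entries times an independent pairing of the $2m$ $M'_N$-entries, each pairing contributing $1$, giving $((2m-1)!!)^2$.

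The hard part will be step (ii): verifying that for PTE the relevant matchings really do contribute with weight exactly $1$ and that \emph{all} $(2m-1)!!$ pairings on each side survive (rather than only the noncrossing ones, as would happen for GOE), and simultaneously confirming that no cross-matchings between an $M_N$-entry and an $M'_N$-entry occur and that the number of free summation indices is exactly right so that each surviving configuration is $\Theta(N^{2m+1})$ after the $1/N^{2m+1}$ normalization. This requires carefully tracking how the palindromic Toeplitz relation $a_{ij} = b_{|i-j|}$ (with the wrap-around identification) forces constraints on the differences of consecutive indices, and showing that the palindromic symmetry removes the crossing restriction so that every pairing is admissible; I expect the bookkeeping of these index-difference constraints, and the proof that lower-order (crossing or cross-ensemble) terms are genuinely $o(N^{2m+1})$, to be the most delicate portion of the argument.
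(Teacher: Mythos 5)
Your proposal is correct and follows essentially the same route as the paper: expand $\mathrm{Tr}(\{M_N,M_N'\}^{2m})$ into $2^{2m}=4^m$ configurations of length-$4m$ cyclic products, then use the free matching property of PTE (each matrix's entries pair among themselves in all $(2m-1)!!$ ways, each contributing $1$ in the limit) to get $4^m((2m-1)!!)^2$. The index-difference bookkeeping you flag as the delicate step is exactly what the paper outsources to the free-matching result of \cite{palindromicToeplitz}, by viewing each cyclic product as one appearing in the $4m$\textsuperscript{th} moment of a single PTE.
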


\begin{cor}
The limiting spectral density of $\{\textup{PTE, PTE}\}$ is given by the convolution
\begin{align}
\mu(x) \ = \ f(x) * g(x),
\end{align}
where $f(x)$ is the probability density function of a chi-squared random variable with 1 degree of freedom and $g(x)$ is the probability density function of the negative of a chi-squared random variable with 1 degree of freedom.
\end{cor}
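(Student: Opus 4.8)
The plan is to recognize the claimed density as the law of a concrete random variable, compute its moments in closed form, match them against Theorem \ref{PTEPTE moments intro}, and then invoke a moment-determinacy argument to upgrade equality of moments to equality of distributions. Concretely, let $X$ and $W$ be independent chi-squared random variables, each with one degree of freedom, and set $S := X - W$. Since the density of $-W$ is exactly $g$, and the density of a sum of independent variables is the convolution of their densities, the density of $S$ is precisely $f * g$. It therefore suffices to show that the even moments of $S$ agree with $M_{2m} = 4^m((2m-1)!!)^2$, that its odd moments vanish, and that the distribution of $S$ is uniquely determined by its moments.

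First I would compute the moments of $S$ via moment generating functions. The moment generating function of a chi-squared variable with one degree of freedom is $(1-2t)^{-1/2}$ (valid for $|t|<1/2$), so that of $-W$ is $(1+2t)^{-1/2}$; by independence,
\begin{align}
M_S(t) \ = \ (1-2t)^{-1/2}(1+2t)^{-1/2} \ = \ (1-4t^2)^{-1/2}.
\end{align}
Expanding with the generalized binomial theorem gives
\begin{align}
(1-4t^2)^{-1/2} \ = \ \sum_{m=0}^\infty \binom{2m}{m} t^{2m},
\end{align}
and matching the coefficient of $t^{2m}$ with $\mathbb{E}[S^{2m}]/(2m)!$ yields $\mathbb{E}[S^{2m}] = \binom{2m}{m}(2m)! = ((2m)!)^2/(m!)^2$, while all odd moments vanish (as they must, since $S \eqd -S$ by symmetry of $X-W$), matching the vanishing of the limiting expected odd moments.

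It then remains to match this to the stated moment and to confirm determinacy. Using the identity $(2m-1)!! = (2m)!/(2^m m!)$, one checks directly that
\begin{align}
4^m\left((2m-1)!!\right)^2 \ = \ 4^m\,\frac{((2m)!)^2}{4^m (m!)^2} \ = \ \frac{((2m)!)^2}{(m!)^2} \ = \ \mathbb{E}\left[S^{2m}\right],
\end{align}
so the moments of $S$ coincide with the limiting expected moments from Theorem \ref{PTEPTE moments intro}. The one genuinely nontrivial point, which I expect to be the main obstacle, is that the target law is \emph{not} compactly supported, so equality of moments does not immediately force equality of distributions; I would resolve this by verifying Carleman's condition. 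Since Stirling's approximation gives $M_{2m}^{1/(2m)} \asymp m$, the series $\sum_m M_{2m}^{-1/(2m)}$ diverges, so the moment problem is determinate. Combining this with the convergence of the empirical moments to $M_{2m}$ (established in the proof of Theorem \ref{PTEPTE moments intro}) and the method of moments, the empirical spectral distribution of $\{\textup{PTE},\textup{PTE}\}$ converges to the unique distribution carrying these moments, namely the law of $S$ with density $f * g$, as claimed.
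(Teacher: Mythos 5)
Your proof is correct, and while it lands on the same key identity as the paper's second proof, it runs in the opposite direction and uses a different determinacy tool, so a comparison is worthwhile. The paper gives two arguments: the first factors $Y_1-Y_2$ (with $Y_1,Y_2$ i.i.d.\ chi-squared, one degree of freedom) as $(S_1\sqrt{Y_1}+S_2\sqrt{Y_2})(S_1\sqrt{Y_1}-S_2\sqrt{Y_2})$, a product of two independent $N(0,2)$ variables whose even moments $(2^m(2m-1)!!)^2$ match the limiting moments, and appeals to the almost-sure convergence theorem; the second starts from the moment series $M(z)=1+\sum_{m\geq 1}4^m((2m-1)!!)^2 z^{2m}/(2m)!$, resums it to $(1-4z^2)^{-1/2}=(1+2z)^{-1/2}(1-2z)^{-1/2}$ via the Wallis integral $\int_0^{\pi/2}\sin^{2m}(x)\,dx=\tfrac{\pi}{2}\tfrac{(2m-1)!!}{(2m)!!}$ and a geometric-series interchange, and then invokes uniqueness of distributions with matching moment generating functions near $0$. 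You instead compute the MGF of the candidate law $S=X-W$ directly, extract $\mathbb{E}[S^{2m}]=((2m)!)^2/(m!)^2$ by the generalized binomial theorem, and match to $4^m((2m-1)!!)^2$ by a double-factorial identity; this replaces the paper's integral manipulations with a two-line expansion, which is the computationally lighter route. For determinacy you verify Carleman's condition ($M_{2m}^{1/(2m)}\asymp m$, so $\sum_m M_{2m}^{-1/(2m)}$ diverges), where the paper cites MGF uniqueness; both are valid, and in fact finiteness of the MGF in a neighborhood of $0$ already implies determinacy, so your Carleman check is slightly redundant but fully self-contained. Notably, you flagged the non-compact support as the genuine obstacle and handled it explicitly, a point the paper's first proof arguably glosses over when it passes from moment agreement to distributional convergence. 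One small caveat: what the moment theorem establishes is convergence of the \emph{expected} empirical moments; upgrading this to convergence of the empirical spectral distribution itself rests on the concentration estimate the paper proves separately (its fourth-moment bound, Theorem 4.8/\textup{C}.7 in the appendix), which your closing sentence implicitly subsumes under ``the method of moments'' and should be cited there.
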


\begin{theorem}
The limiting expected $2m$\textup{\textsuperscript{th}} moment $M_{2m}$ of $\{\textup{GOE, PTE}\}$ is given by $\sigma_{m, 0}$, where $\sigma_{n, s}$ satisfies the initial conditions $\sigma_{0, s}=(2s-1)!!$ for $s\geq 1$ and $\sigma_{0,0}=1$, and the recurrence relation
\begin{align}
\sigma_{n, s} \ = \ \sum_{k=1}^n \sigma_{k-1,1}\cdot \sigma_{n-k,s}+\sigma_{k-1,0}\cdot \sigma_{n-k,s+1}.
\end{align}
\end{theorem}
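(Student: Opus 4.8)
The plan is to apply the Eigenvalue Trace Formula to $C_N = A_N B_N + B_N A_N$, where $A_N$ is drawn from GOE and $B_N$ from PTE independently, and then to run the genus expansion developed in Section~\ref{sec: anticommutator Combinatorics}. Expanding $\textup{Tr}(C_N^{2m}) = \textup{Tr}((A_NB_N+B_NA_N)^{2m})$ into $2^{2m}$ words, each word is a cyclic product of $2m$ copies of $A_N$ and $2m$ copies of $B_N$; summing over the $4m$ indices on the associated cycle and using $A_N \perp B_N$, the expectation factors as $\mathbb{E}[\prod a]\cdot\mathbb{E}[\prod b]$. Since all entries have mean zero and the two families are independent, a nonzero contribution forces the $2m$ GOE edges to pair among themselves and the $2m$ PTE edges to pair among themselves, and at leading order in $N$ only pairings (not higher multiplicities) survive.

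First I would record the two matching rules. For the GOE factor, $\mathbb{E}[a_{i_pi_{p+1}}a_{i_qi_{q+1}}]\neq 0$ forces $\{i_p,i_{p+1}\}=\{i_q,i_{q+1}\}$, and the non-crossing property established earlier shows that only non-crossing pairings of the $A$-edges contribute at the top order $N^{2m+1}$. For the PTE factor, the palindromic structure gives $\mathbb{E}[b_{|i_p-i_{p+1}|}b_{|i_q-i_{q+1}|}]=\delta$ on the consecutive differences, and the free matching property shows that every pairing of the $B$-edges contributes at leading order, each imposing a single linear constraint on those differences. Carefully counting degrees of freedom then shows that a configuration saturates the $N^{2m+1}$ normalization precisely when the $A$-pairing is non-crossing and the $B$-pairing is arbitrary, with the two families of constraints consistently orientable around the cycle.

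The heart of the argument is to convert this count into the stated recurrence. I would fix the edge in the first position and track its pairing partner, which (by non-crossing of the $A$-structure) splits the cycle into an inner arc and an outer arc whose pairings do not interact. Summing over the position $k$ of the partner produces a convolution, and I would introduce the auxiliary quantity $\sigma_{n,s}$ counting the surviving configurations on an arc carrying $n$ resolved $A$-pairs together with $s$ ``pending'' $B$-edges that must eventually be matched across the split. The two terms $\sigma_{k-1,1}\sigma_{n-k,s}$ and $\sigma_{k-1,0}\sigma_{n-k,s+1}$ should correspond to the two ways the split edge interfaces with the adjacent $B$-edges (whether it deposits one pending $B$-edge into the inner block or transfers it to the outer block), while the base case $\sigma_{0,s}=(2s-1)!!$ records that $2s$ pending $B$-edges on an arc with no remaining $A$-structure can be freely matched in $(2s-1)!!$ ways, with the empty convention $\sigma_{0,0}=1$. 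Evaluating the full cycle, which starts with no pending edges, then gives $M_{2m}=\sigma_{m,0}$.

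The main obstacle will be the bookkeeping in this last step: defining the state $s$ so that the interaction between the rigid non-crossing GOE pairings and the flexible PTE pairings is captured by exactly these two transitions, and checking that no other interface cases arise and that the degree-of-freedom count is exactly saturated in each branch (neither gaining a spurious factor of $N$ nor dropping below the leading term). In particular, I expect the delicate point to be showing that the shift of $s$ by one in the second term, rather than some alternative accounting of the pending $B$-edges, is forced by the orientation constraints on the palindromic differences at the split.
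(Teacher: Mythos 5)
Your overall route is the paper's: expand $\textup{Tr}(A_NB_N+B_NA_N)^{2m}$ into configurations, use independence and mean zero to reduce to pairings with $a$'s and $b$'s matched among themselves, characterize the leading-order matchings, and then split at the partner of the first $a$ to get exactly the two-term convolution for $\sigma_{n,s}$ with base case $\sigma_{0,s}=(2s-1)!!$. However, there is one genuine error in your characterization step: you assert that a configuration saturates the order $N^{2m+1}$ precisely when the $A$-pairing is non-crossing and the $B$-pairing is \emph{arbitrary}. That is false, and taken literally it proves the wrong theorem: with unrestricted $B$-pairings the count would be $4^m C_m (2m-1)!!$ (with $C_m$ the $m$\textsuperscript{th} Catalan number), which the paper obtains only as a strict upper bound on $M_{2m}$. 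The correct statement, which is the content of Lemma \ref{PT-GOE lemma for bs} (paired with the index count of Lemma \ref{PT-GOE lemma for as}), is that given a non-crossing matching of the $a$'s, the $b$'s contribute at leading order only when every \emph{layer} --- a maximal block of $b$'s not separated by the $a$-matching --- is matched within itself; a $b$-pairing that crosses the $a$-matching loses a degree of freedom, which the paper establishes by a rank argument on the system of linear equations in the palindromic differences. Your recursion silently assumes exactly this restriction at the moment you declare that the inner and outer arcs created by the split ``do not interact'': without the layer lemma, a pending $B$-edge could legitimately pair across the split, and the convolution structure $\sigma_{k-1,\cdot}\,\sigma_{n-k,\cdot}$ would be unjustified.

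Once that lemma is supplied, the rest of your plan matches the paper's proof of the recurrence. The partner of the first $a$ must sit in an even mixed-pair position $2k$ (an odd split strands an odd number of terms inside a layer, a lower-order event --- a parity point you should state explicitly, though your sum over $k$ embodies it), the split produces an inner product with $2(k-1)$ mixed pairs and an outer product with $2(n-k)$ mixed pairs, and the $ab$ versus $ba$ orientation of the first mixed pair decides whether the leftover $b$ lands inside (giving $\sigma_{k-1,1}\sigma_{n-k,s}$) or joins the outer block of pending $b$'s (giving $\sigma_{k-1,0}\sigma_{n-k,s+1}$). Note this dichotomy is purely combinatorial bookkeeping of where the adjacent $b$'s fall, not, as you suggest in your final paragraph, a consequence of orientation constraints on the palindromic differences; those orientation constraints enter only in the degree-of-freedom lemmas, not in the derivation of the recurrence itself. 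Your definition of $s$ as pending $bb$ pairs with the conventions $\sigma_{0,s}=(2s-1)!!$ and $\sigma_{0,0}=1$ is the paper's definition verbatim, and evaluating the full cycle with $s=0$ gives $M_{2m}=\sigma_{m,0}$ exactly as in the paper.
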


\begin{cor}
The generating function $F(z,w):=\sum_{n,s\geq0}\sigma_{n,s}z^n w^s/s!$ satisfies the partial differential equation
\begin{align}
F(z,w) \ = \ (1-2w)^{-1/2}+z\left(\frac{\partial F(z,0)}{\partial w}F(z,w)+F(z,0)\frac{\partial F(z,w)}{\partial w}\right).
\end{align}
\end{cor}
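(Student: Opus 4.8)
The plan is to convert the recurrence for $\sigma_{n,s}$ into the claimed partial differential equation by directly manipulating the mixed generating function $F(z,w)=\sum_{n,s\geq 0}\sigma_{n,s}z^n w^s/s!$, which is an ordinary generating function in $z$ and an exponential generating function in $w$. The choice of the exponential weight $w^s/s!$ in the second variable is what makes the index shift $s\mapsto s+1$ appearing in the recurrence correspond to the operator $\partial_w$, and this is the decisive structural observation I will exploit throughout.

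First I would pin down the $n=0$ part, namely $F(0,w)=\sum_{s\geq 0}\sigma_{0,s}w^s/s!$. Using the initial data $\sigma_{0,0}=1$ and $\sigma_{0,s}=(2s-1)!!$ (with the convention $(-1)!!=1$, so the formula is uniform in $s$), together with the classical identity
\begin{align}
(1-2w)^{-1/2} \ = \ \sum_{s\geq 0}\frac{(2s-1)!!}{s!}\,w^s,
\end{align}
I obtain $F(0,w)=(1-2w)^{-1/2}$, which is precisely the first term on the right-hand side of the assertion.

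Next I would multiply the recurrence by $z^n w^s/s!$ and sum over $n\geq 1$ and $s\geq 0$. The left-hand side becomes $F(z,w)-F(0,w)=F(z,w)-(1-2w)^{-1/2}$. For the two convolution sums I would substitute $j=k-1$ and $l=n-k$, turning each $\sum_{k=1}^n$ into a Cauchy product in $z$ of the form $z\sum_{j,l\geq 0}(\cdots)z^{j+l}$. The crucial point is that in each product exactly one factor carries the running index $s$ (and hence the full weight $w^s/s!$), while the other factor has a \emph{constant} second index. Recognizing the four resulting pieces,
\begin{align}
\sum_{l,s\geq 0}\sigma_{l,s}\frac{w^s}{s!}z^l = F(z,w),\quad \sum_{l,s\geq 0}\sigma_{l,s+1}\frac{w^s}{s!}z^l = \frac{\partial F(z,w)}{\partial w},\quad \sum_{j\geq 0}\sigma_{j,0}z^j = F(z,0),\quad \sum_{j\geq 0}\sigma_{j,1}z^j = \frac{\partial F(z,0)}{\partial w},
\end{align}
the first convolution (coming from $\sigma_{k-1,1}\sigma_{n-k,s}$) collapses to $z\,\partial_w F(z,0)\,F(z,w)$, and the second (from $\sigma_{k-1,0}\sigma_{n-k,s+1}$) collapses to $z\,F(z,0)\,\partial_w F(z,w)$. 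Assembling these and moving $F(0,w)$ to the right yields exactly the stated equation.

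The only genuine obstacle is bookkeeping: I must verify that the exponential weight $1/s!$ attaches entirely to the $s$-dependent factor of each convolution, since the constant-index factor carries no $w$ at all, so that no spurious binomial or factorial coefficient is generated when the two power series are multiplied. Once the reindexing $j=k-1$, $l=n-k$ is in place and the four identifications above are checked termwise, the derivation is a routine comparison of coefficients; the substantive content is entirely in matching the shift in the second index to the $w$-derivative and the frozen index to an evaluation at $w=0$.
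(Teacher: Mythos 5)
Your proposal is correct and follows essentially the same route as the paper: multiply the recurrence by $z^n w^s/s!$, sum over $n\geq 1$ and $s\geq 0$, use the generalized binomial identity $\sum_{s\geq 0}(2s-1)!!\,w^s/s!=(1-2w)^{-1/2}$ for the $n=0$ boundary term, and recognize the two Cauchy products as $z\,\frac{\partial F(z,0)}{\partial w}F(z,w)$ and $z\,F(z,0)\frac{\partial F(z,w)}{\partial w}$ via the correspondence between the shift $s\mapsto s+1$ and $\partial_w$. Your explicit termwise identification of the four series pieces is just a more detailed writing-out of the same argument.
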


\begin{theorem}
The limiting expected $2m$\textup{\textsuperscript{th}} moment $M_{2m}$ of $\{\textup{GOE, $k$-BCE}\}$ is given by the genus expansion formula
\begin{align}
M_{2m} \ = \ \sum_{C\in\mathcal{C}_{2,4m}}\sum_{\pi_C\in NCF_{2,C}(4m)}k^{\#(\gamma_{4m}\pi)-(2m+1)},
\end{align}
and the limiting expected $2m$\textup{\textsuperscript{th}} moment $M'_{2m}$ of $\{\textup{$k$-BCE, $k$-BCE}\}$ is given by the genus expansion formula
\begin{align}
M'_{2m} \ = \ \sum_{C\in\mathcal{C}_{2,4m}}\sum_{\pi_C\in \mathcal{P}_{2,C}(4m)}k^{\#(\gamma_{4m}\pi)-(2m+1)}.
\end{align}
\end{theorem}

\begin{figure}[ht]
\centering
\includegraphics[width=0.7\textwidth]{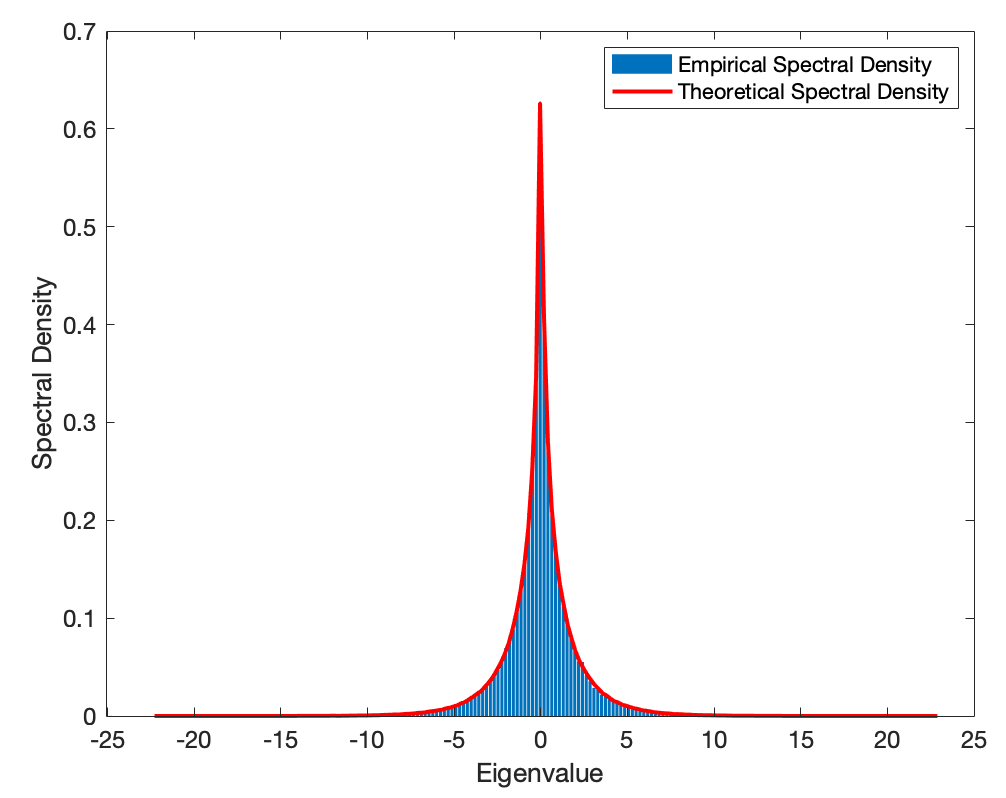}
\caption{Comparison between the normalized empirical spectral density for one hundred 1000$\times$1000 matrices from $\{\textup{PTE, PTE}\}$ and the theoretical spectral density.}
\label{PTExPTE figure}
\end{figure}

\begin{figure}[ht]
\centering
\includegraphics[width=0.7\textwidth]{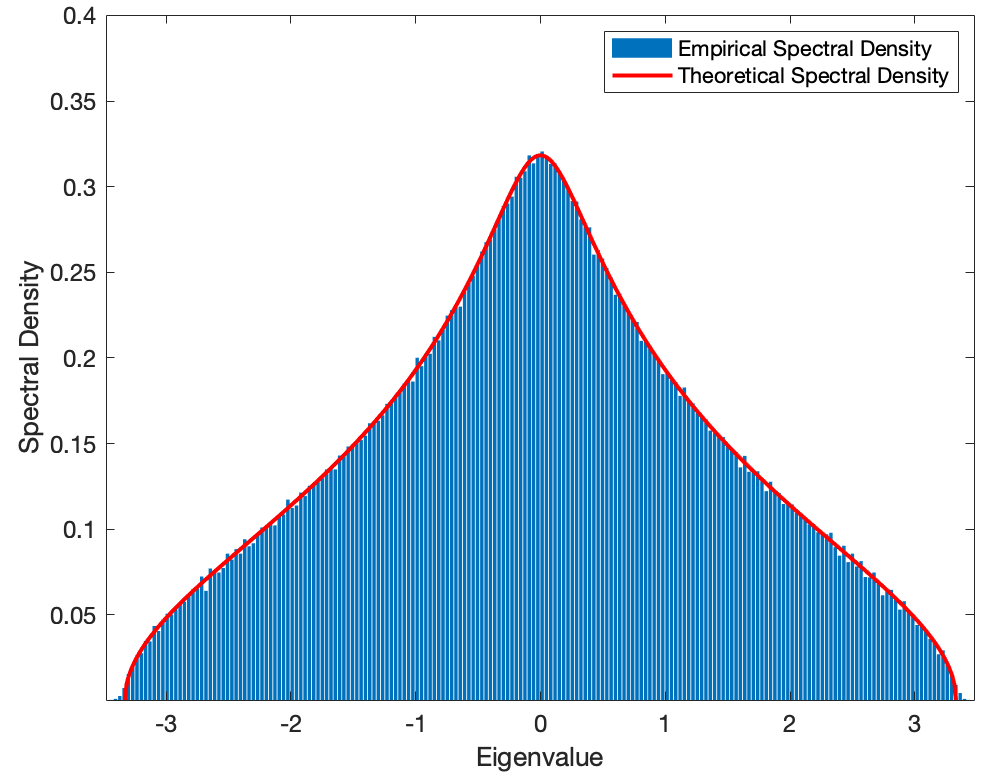}
\caption{Comparison between normalized empirical spectral density for one hundred 1000$\times$1000 matrices from $\{\textup{GOE, GOE}\}$ and the theoretical spectral density.}
\label{GOExGOE figure}
\end{figure}

\begin{figure}[ht]
\centering
\includegraphics[width=0.7\textwidth]{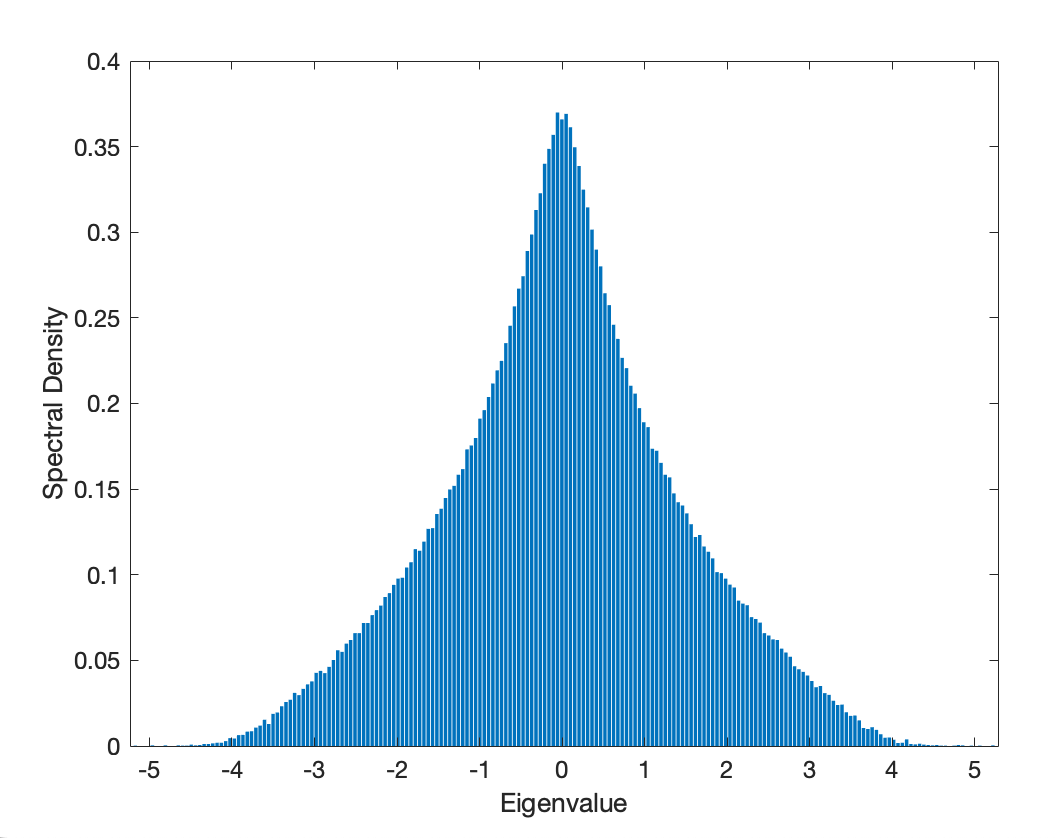}
\caption{Normalized empirical spectral density for one hundred 1000$\times$1000 matrices from $\{\textup{GOE, PTE}\}$.}
\label{GOExPTE figure}
\end{figure}

\begin{figure}[ht]
\centering
\begin{minipage}[b]{0.4\textwidth}
\includegraphics[width=\textwidth]{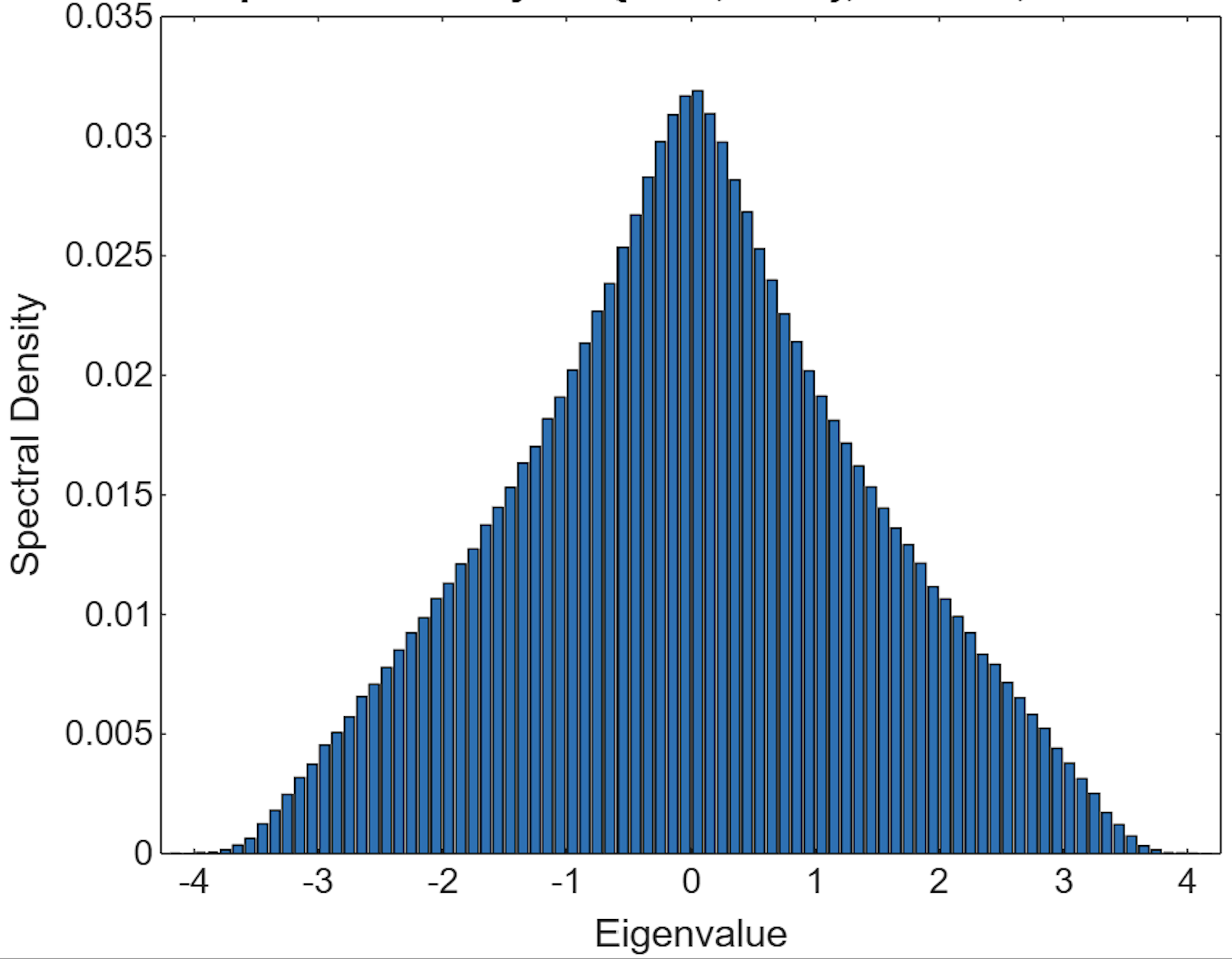}
\caption{Normalized empirical spectral density for one hundred 1500$\times$1500 matrices from $\{\textup{GOE, $2$-BCE}\}$.}
\label{GOExBCE figure}
\end{minipage}
\begin{minipage}[b]{0.4\textwidth}
\includegraphics[width=\textwidth]{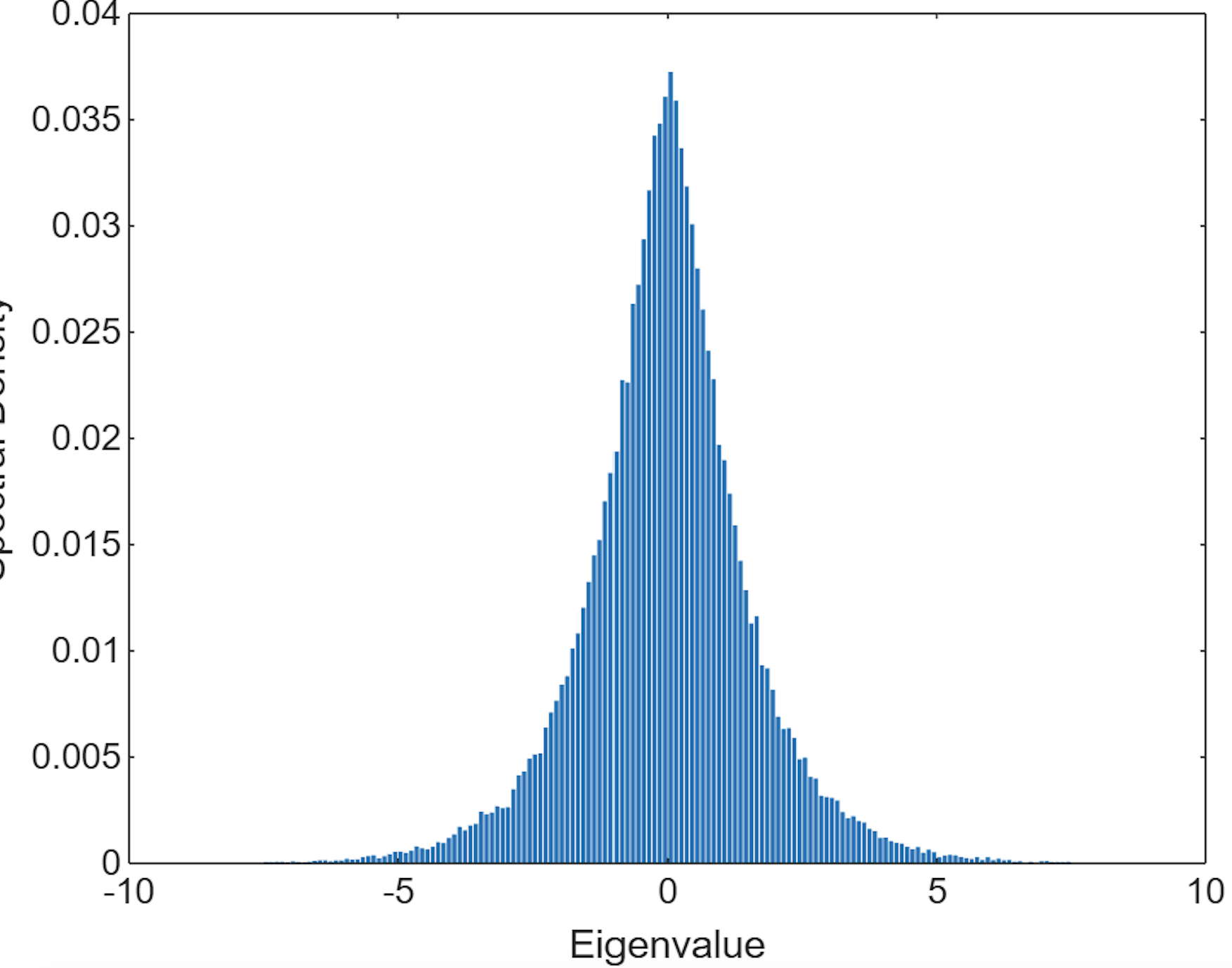}
\caption{Normalized empirical spectral density for one hundred 1500$\times$1500 matrices from $\{\textup{$2$-BCE, $2$-BCE}\}$.}
\label{BCExBCE figure}
\end{minipage}
\end{figure}

In contrast to anticommutator ensembles of random matrices sampled from GOE and the real symmetric PTE, anticommutator ensembles where one of the random matrix is sampled from the real symmetric $k$-BCE generally have more complex matrix structures and matching rules for the expected moments. Hence, for $\{\textup{GOE, $k$-BCE}\}$ and $\{\textup{$k$-BCE, $k$-BCE}\}$, even recurrence relations for the limiting expected moments are hard to come. Instead, we provide genus expansion formulae for these moments and numerical computation for lower even moments in Appendix \ref{lower even moments involving BCE}.

So far, all the random matrix ensembles we have considered have eigenvalues almost surely $\Theta(N)$. However, we begin to observe new behaviors as we consider anticommutator ensembles where at least one of the random matrices is sampled from the $k$-checkerboard. We restrict our attention to the following two ensembles: $\{\textup{GOE, $k$-checkerboard}\}$ and $\{\textup{$k$-checkerboard, $j$-checkerboard}\}$. If $k\mid N$, then the empirical spectral distribution of a matrix from an $N\times N$ $\{\textup{GOE, $k$-checkerboard}\}$ consists of one bulk regime of size $\Theta(N)$ and one blip regime of size $\Theta(N^{3/2})$. On the other hand, if we take $k,j>1$, $\textup{gcd}(k,j)=1$ and $k,j\mid N$, then the empirical spectral distribution of a matrix from an $N\times N$ $\{\textup{$k$-checkerboard, $j$-checkerboard}\}$ consists of one bulk regime of size $\Theta(N)$, one largest blip regime that contains the largest eigenvalue (which is positive) of size $\Theta(N^2)$, and two regimes whose magnitude is in between the bulk regime and the largest blip regime and the number of eigenvalues they contain depends on $k$ and $j$. We call these regimes \textbf{intermediary blip regimes} and their eigenvalues \textbf{intermediary blip eigenvalues}. 

Based on numerical simulation, for $\{\textup{GOE, $k$-checkerboard}\}$, the blip regime contains $2k$ eigenvalues positioned symmetrically around $0$ at $\pm N^{3/2}/k+\Theta(N)$, as illustrated in Figure \ref{GOE5Checkerboard}. On the other hand, for $\{\textup{$k$-checkerboard, $j$-checkerboard}\}$, the largest blip regime contains one eigenvalue at $2N^2/jk+\Theta(N)$; among the two intermediary blip regimes, one regime contains $2k-2$ eigenvalues and is positioned symmetrically around 0 at $\pm \frac{N^{3/2}}{k}\sqrt{1-\frac{1}{j}}+\Theta(N)$, and the other regime contains $2j-2$ eigenvalues and is positioned symmetrically around 0 at $\pm \frac{N^{3/2}}{j}\sqrt{1-\frac{1}{k}}+\Theta(N)$. An example of these blip regimes is illustrated in Figure \ref{Intermediary Blip figure} and \ref{Largest Blip figure}, where we keep the normalization factor for both figures to be $N$ to highlight the difference in the order of $N$ of these regimes. Furthermore, the existence of these blip regimes is established in Appendix \ref{multipleregimes} through Weyl's inequality.

\begin{figure}[ht]
\centering
\includegraphics[width=0.7\textwidth]{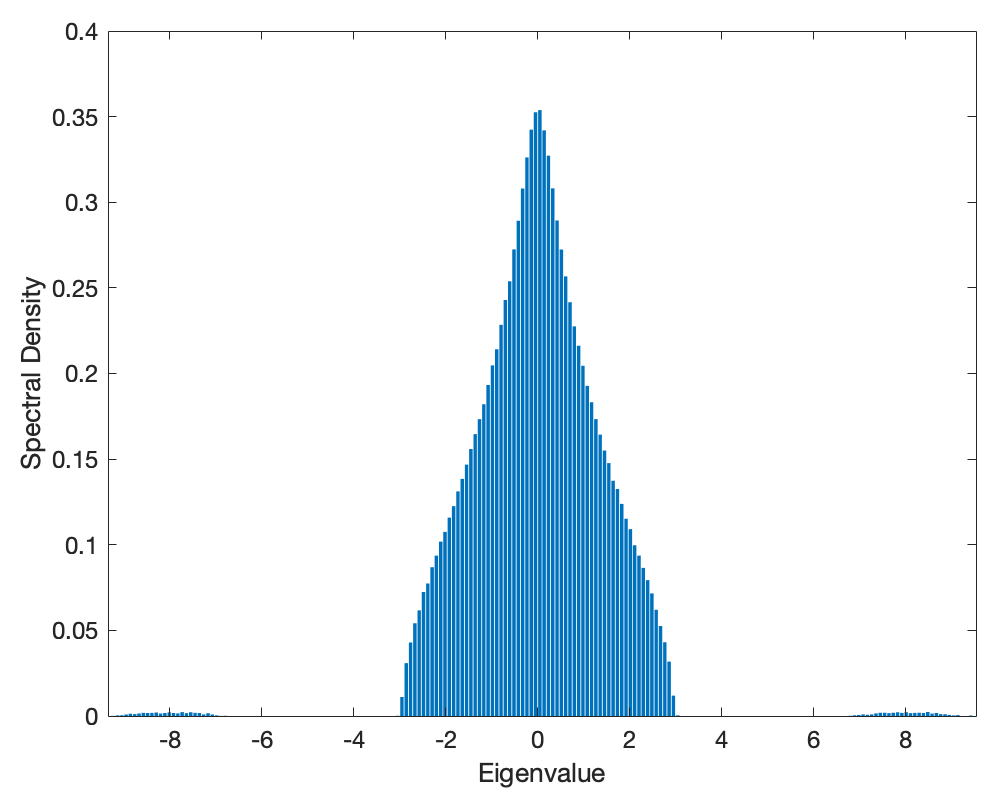}
\caption{Normalized empirical spectral density for one hundred 1500$\times$1500 matrices from $\{\textup{GOE, $5$-checkerboard}\}$. }
\label{GOE5Checkerboard}
\end{figure}

\begin{figure}[ht]
\centering
\includegraphics[width=0.7\textwidth]{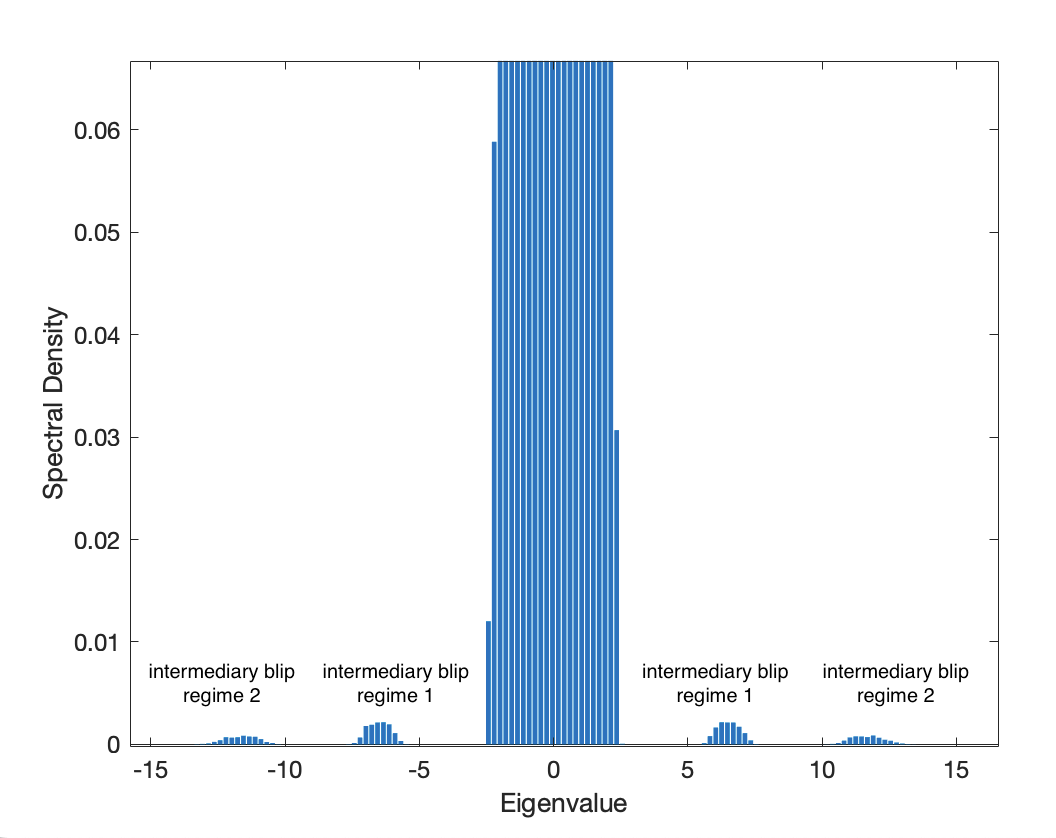}
\caption{Normalized intermediary blip regimes for five hundred 1500$\times$1500 matrices from $\{\textup{$3$-checkerboard, $5$-checkerboard}\}$.}
\label{Intermediary Blip figure}
\end{figure}
\begin{figure}
\includegraphics[width=0.7\textwidth]{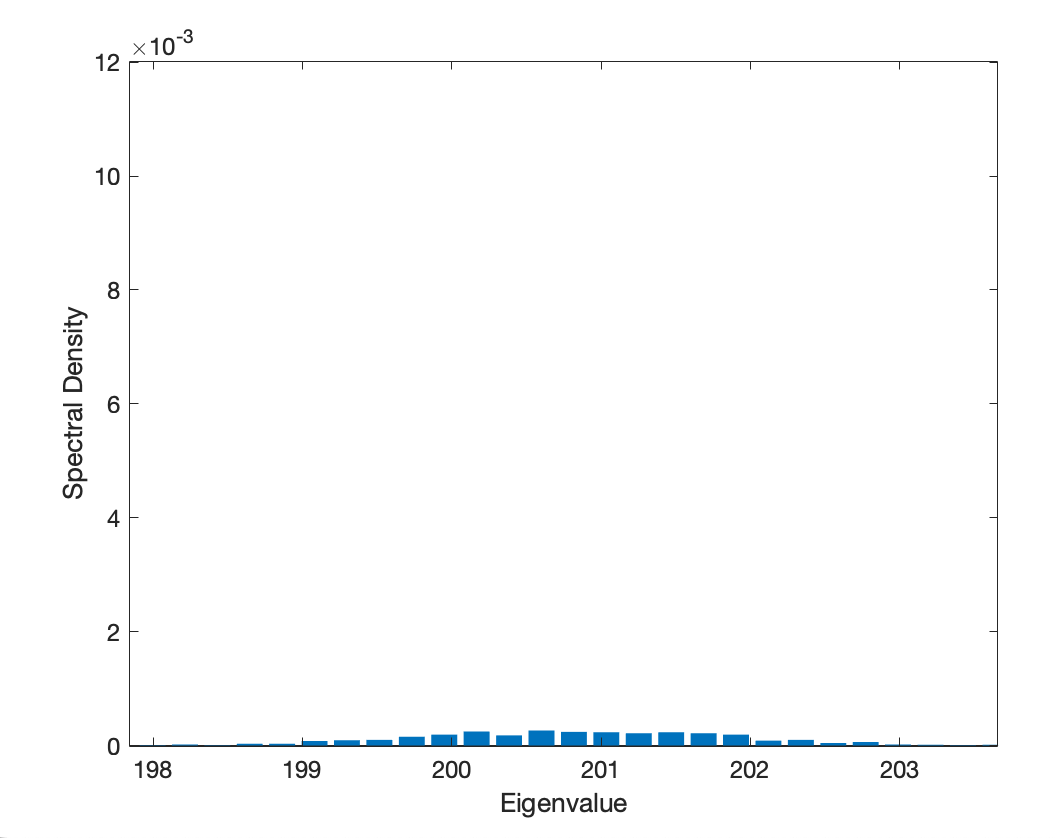}
\caption{Normalized largest blip regime for five hundred 1500$\times$1500 matrices from $\{\textup{$3$-checkerboard, $5$-checkerboard}\}$.}
\label{Largest Blip figure}
\end{figure}

The presence of multiple regimes poses a challenge to analyzing each blip regime. To this end, we introduce suitable weight functions and empirical blip spectral measures to localize at each blip. Then we exploit combinatorics and cancellation techniques to obtain the limiting expected moments of each empirical blip spectral distribution. This method enables us to find the limiting expected moments of the blip regime of $\{\textup{GOE, $k$-checkerboard}\}$ and of the largest blip regime of $\{\textup{$k$-checkerboard, $j$-checkerboard}\}$. However, it fails to work for the intermediary blip regimes as the cancellation techniques cannot differentiate between the contribution from different regimes. We shall discuss the challenge with finding the limiting expected moments of the intermediary blip regimes in more details in Section \ref{sec: anticommutator Combinatorics}. 

\begin{definition}
Let $n=\log\log(N)$ and choose the weight function to be $f^{(2n)}=x^{2n}(2-x)^{2n}$. Suppose that $A_N$ is an $N\times N$ matrix sampled from GOE and $B_N$ is an $N\times N$ matrix sampled from the $k$-checkerboard ensemble, where the samplings occur independently. Then the $f^{(2n)}$-\textbf{weighted empirical blip spectral measure} associated to $\{A_N,B_N\}$ around $\pm N^{3/2}/k$ is
\begin{align}
\mu_{\{A_N,B_N\},f^{(2n)}}(x)dx \ = \ \frac{1}{2k}\sum_{\lambda\textup{ eigenvalues}}f^{(2n)}\left(\frac{k^2\lambda^2}{N^{3}}\right)\delta\left(x-\frac{\lambda^2-N^{3}/k^2}{N^{5/2}}\right)dx.
\end{align}
\end{definition}
\begin{definition}
Let $n=\log\log(N)$ and choose the weight function to be $f^{(2n)}=x^{2n}(2-x)^{2n}$. Suppose that $A_N$ is an $N\times N$ matrix sampled from the $k$-checkerboard ensemble and $B_N$ is an $N\times N$ matrix sampled from the $j$-checkerboard ensemble, where the samplings occur independently. Then the $f^{(2n)}$-\textbf{weighted empirical blip spectral measure} associated to $\{A_N, B_N\}$ around $2N^2/kj$ is
\begin{align}
\mu_{\{A_N,B_N\},f^{(2n)}}(x)dx=\sum_{\lambda\textup{ eigenvalues}}f^{(2n)}\left(\frac{jk\lambda}{2N^2}\right)\delta\left(x-\left(\frac{\lambda-\frac{2}{jk}N^2}{N}\right)\right)dx.
\end{align}
\end{definition}
\begin{theorem} Let $A_N$ be an $N\times N$ matrix sampled from GOE and $B_N$ be an $N\times N$ matrix sampled from the $k$-checkerboard ensemble, where the samplings occur independently. Then the limiting expected $m$\textup{\textsuperscript{th}} moment associated to the empirical blip spectral measure of $\{A_N, B_N\}$ around $\pm N^{3/2}/k$ is
\begin{align}
\lim_{N\rightarrow\infty}\mathbb{E}\left[\mu_{\{A_N,B_N\}}^{(m)}\right] \ = \ \frac{1}{k}\left(\frac{2}{k^2}\right)^{m}\mathbb{E}_k[\textup{Tr }C_k^m],
\end{align}
where $C_k$ is a $k\times k$ GOE and $\mathbb{E}_k[\textup{Tr }C_k^m]$ is taken over all $k\times k$ hollow GOE.
\end{theorem}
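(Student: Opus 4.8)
The plan is to establish the identity by the moment (trace) method, reducing the $m$-th moment of the weighted blip measure to a finite combination of expected traces $\mathbb{E}[\textup{Tr}(M^{2p})]$ of $M := \{A_N,B_N\}$, applying the anticommutator genus expansion together with a localization argument in which the weight $f^{(2n)}$ isolates the blip, and finally matching the surviving contributions to the moments of a $k\times k$ hollow GOE. The first step is purely formal: since the blip measure is supported on the squared eigenvalues and $f^{(2n)}(x)=x^{2n}(2-x)^{2n}$ is a polynomial of degree $4n$, the $m$-th moment can be written as the single weighted expected trace
\begin{equation}
\mathbb{E}\left[\mu_{\{A_N,B_N\}}^{(m)}\right] \ = \ \frac{1}{2k}\,\mathbb{E}\left[\textup{Tr}\left(f^{(2n)}\!\left(\tfrac{k^2 M^2}{N^3}\right)\left(\tfrac{M^2-(N^3/k^2)I}{N^{5/2}}\right)^{m}\right)\right],
\end{equation}
whose integrand is a polynomial in $M^2$ of degree $4n+m$. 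Expanding it gives a finite sum $\sum_p c_{p,N}N^{-\alpha_p}\mathbb{E}[\textup{Tr}(M^{2p})]$ with $p\le 4n+m$, each term accessible through the Eigenvalue Trace Formula and the combinatorics developed earlier in the paper.

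The second step is localization. I would split $B_N=\overline{B}_N+\widetilde{B}_N$ into its rank-$k$ mean matrix $\overline{B}_N=\tfrac{N}{k}P$, where $P=EE^T$ is the orthogonal projection onto the $k$ residue-class directions $E=[e_0,\dots,e_{k-1}]$, and its hollow fluctuation $\widetilde{B}_N$, so that $M=\tfrac{N}{k}(A_NP+PA_N)+\{A_N,\widetilde{B}_N\}$. The first summand, of rank $2k$, carries the $\Theta(N^{3/2})$ blip eigenvalues, while the second is the $\Theta(N)$ bulk. The purpose of the weight is that on a bulk eigenvalue $\lambda=\Theta(N)$ one has $k^2\lambda^2/N^3=\Theta(1/N)$, so $f^{(2n)}(\Theta(1/N))=\Theta(N^{-2n})=\Theta((\log N)^{-2\log N})$ decays faster than any power of $N$ and annihilates the bulk, whereas on a blip eigenvalue $k^2\lambda^2/N^3\to1$ and $f^{(2n)}\to1$. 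I would make this precise at the level of the trace expansion, showing that every combinatorial configuration not built from the mean-matrix directions is suppressed super-polynomially in $N$ after weighting, \emph{uniformly} in the degree $4n+m$ as $n=\log\log N\to\infty$; this controls the error terms and leaves only the blip configurations.

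The third step is to pin down the prefactors by scaling bookkeeping, which is clean. By the symmetry $\{-A_N,B_N\}=-\{A_N,B_N\}$ together with the fact that $-A_N$ is again GOE, the expected spectrum is even, so the positive and negative blips contribute equally and the factor $2$ from the $\pm$ pairing cancels the $\tfrac{1}{2k}$ against the $k$ positive blip eigenvalues. Writing a positive blip eigenvalue as $\lambda=N^{3/2}/k+\mu$ linearizes the shift, $\tfrac{\lambda^2-N^3/k^2}{N^{5/2}}=\tfrac{2\mu}{kN}+O(N^{-1/2})$, and to leading order the fluctuations $\mu$ equal $\tfrac{N}{k}$ times the eigenvalues of an effective $k\times k$ matrix obtained by compressing the randomness of $A_N$ to the residue-class directions. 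The shift variable then becomes $\tfrac{2}{k^2}$ times an eigenvalue of that matrix, and raising to the $m$-th power and summing produces exactly $\tfrac1k(2/k^2)^m\,\mathbb{E}_k[\textup{Tr}\,C_k^m]$, provided the effective matrix is a $k\times k$ hollow GOE.

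The crux, and the step I expect to be the main obstacle, is proving that this effective matrix is a \emph{hollow} GOE rather than a full one. The naive compression $E^TA_NE$ is itself a genuine $k\times k$ GOE (off-diagonal variance $1$, diagonal variance $2$), and the second-order contribution entering through $E^TA_N^2E$ only inflates both the diagonal and the off-diagonal variances further, so that $\tfrac{N}{k}(A_NP+PA_N)$ alone would overshoot the target moments. It is precisely the contributions coming from the $\{A_N,\widetilde{B}_N\}$ part of the anticommutator, together with the subtraction of $N^3/k^2$ built into the blip measure, that must cancel the diagonal self-energy and the excess off-diagonal variance, reducing the effective matrix to exactly a hollow GOE. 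Verifying this delicate cancellation combinatorially — identifying the same-residue-class contributions as the ones removed by the subtraction while the genuinely off-diagonal couplings survive — and simultaneously controlling every lower-order error uniformly as the weight degree $4n+m$ grows, is the heart of the argument.
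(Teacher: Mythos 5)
Your opening reduction of the blip moment to a finite linear combination of weighted traces $\mathbb{E}[\textup{Tr}(\{A_N,B_N\}^{2(\alpha+i)})]$ is exactly the paper's starting point, and your scaling bookkeeping (the linearization $\lambda = N^{3/2}/k+\mu$ producing the factor $(2/k^2)^m$, the $\pm$ symmetry absorbing the $\tfrac{1}{2k}$) is consistent with the final answer. But the proposal has a genuine gap at precisely the point you flag: you never prove that the effective $k\times k$ matrix is a \emph{hollow} GOE, and you correctly observe that your decomposition $M=\tfrac{N}{k}(A_NP+PA_N)+\{A_N,\widetilde{B}_N\}$ makes this look like a delicate cancellation between a full-GOE compression $E^TA_NE$, second-order self-energy, and the $N^3/k^2$ subtraction. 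The paper never has to fight this battle, because it does not pass through an effective-matrix perturbation argument at all: in its combinatorial scheme (Lemmas \ref{blockslemma} and \ref{Sclasscontribution}), the surviving $S$-classes consist of exactly $m$ $1$-blocks of $a$ paired among themselves, separated by weight chains that propagate congruence classes mod $k$, and the count of congruence-class assignments and pairings is \emph{literally} $\sum_{1\leq i_1,\dots,i_m\leq k}\mathbb{E}[a_{i_1i_2}a_{i_2i_3}\cdots a_{i_mi_1}]$ with $a_{ii}=0$ — the hollow-GOE trace moment appears by inspection rather than by cancellation of overshooting variances. Without an analogous mechanism, your step three is an unproven assertion of the theorem's hardest content.

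There is also a second, more technical error in your localization step: you claim that, at the level of the trace expansion, every configuration not built from the blip directions is ``suppressed super-polynomially in $N$ after weighting.'' This is false. The $S$-classes with $m_1<m$ $1$-blocks (e.g., all-$2$-block configurations) contribute terms of order $N^{(m-m_1)/2}$, which \emph{diverge} as $N\to\infty$; they disappear not by decay but by exact cancellation through the alternating binomial identities $\sum_{i=0}^m(-1)^i\binom{m}{i}i^p=0$ for $p<m$ (Lemma \ref{inequalities}), which is only available because the arrangement count $p(\eta)=\tfrac{2\eta^{m_1}}{m_1!}+O(\eta^{m_1-1})$ is polynomial in $\eta=\alpha+i$ and the weight satisfies $f^{(2n)}(1)=\sum_\alpha c_\alpha=1$; even the $m_1>m$ terms are only polynomially small, controlled via $|c_\alpha|\ll C_0^{2n}$ and $n=\log\log N$. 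Your super-polynomial-decay heuristic is valid at the spectral level (bulk eigenvalues are crushed by $f^{(2n)}(\Theta(1/N))$), but converting that into a proof would first require rigorous localization of the blip eigenvalues, which the paper's Weyl-inequality argument (Appendix \ref{multipleregimes}) obtains only conditionally on an empirically assumed spectral structure of the mean-matrix anticommutator. So as written, the plan would stall both at the suppression claim and at the hollow-GOE identification; the paper's cancellation-based combinatorics is what resolves both simultaneously.
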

\begin{theorem} Let $A_N$ be an $N\times N$ matrix sampled from the $k$-checkerboard ensemble and $B_N$ be an $N\times N$ matrix sampled from the $j$-checkerboard ensemble, where the samplings occur independently. Then the limiting expected $m$\textup{\textsuperscript{th}} moment of the empirical largest blip spectral measure of $\{A_N, B_N\}$ around $2N^2/kj$ is
\begin{align}
\lim_{N\rightarrow\infty}\mathbb{E}\left[\mu_{\{A_N,B_N\}}^{(m)}\right] \ = \ \sum_{\substack{m_{1a}+m_{1b}+m_{2a}+m_{2b}=m; \\ m_{1a},m_{1b}\textup{ even}}}C(m, m_{1a}, m_{2a}, m_{1b}, m_{2b}),
\end{align}
where 
\begin{align}
&C(m, m_{1a}, m_{2a}, m_{1b}, m_{2b}) \ = \ \nonumber \\
&m!\left(\frac{2}{jk}\right)^m\frac{ 2^{\frac{m_{1a}+m_{1b}}{2}-2(m_{2a}+m_{2b})}m_{1a}!!m_{1b}!!}{m_{1a}!m_{1b}!m_{2a}!m_{2b}!}\left(k\sqrt{1-\frac{1}{k}}\right)^{m_{1a}+2m_{2a}}\left(j\sqrt{1-\frac{1}{j}}\right)^{m_{1b}+2m_{2b}}.
\end{align}
\end{theorem}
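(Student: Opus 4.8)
The plan is to reduce the moment computation to the fluctuation of the single largest eigenvalue. Write $A_N=\overline A_N+A'_N$ and $B_N=\overline B_N+B'_N$, where $\overline A_N,\overline B_N$ are the mean matrices and $A'_N,B'_N$ the centered (hollow) random parts, and expand
\[
\{A_N,B_N\}=\{\overline A_N,\overline B_N\}+\{\overline A_N,B'_N\}+\{A'_N,\overline B_N\}+\{A'_N,B'_N\}.
\]
Since $\gcd(k,j)=1$ and $k,j\mid N$ force $kj\mid N$, the Chinese Remainder Theorem gives $\overline A_N\overline B_N=\overline B_N\overline A_N=\frac{N}{kj}\mathbf 1\mathbf 1^{\top}$, so the leading term $\{\overline A_N,\overline B_N\}=\frac{2N}{kj}\mathbf 1\mathbf 1^{\top}$ is rank one with top eigenvalue $\frac{2N^{2}}{kj}$ and eigenvector $\mathbf 1/\sqrt N$, all other eigenvalues being $0$. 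First I would record $\overline A_N\mathbf 1=\frac Nk\mathbf 1$, $\overline B_N\mathbf 1=\frac Nj\mathbf 1$, and note the spectral gap is $\Theta(N^{2})$.

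Next I would show the weight $f^{(2n)}\!\big(\tfrac{jk\lambda}{2N^{2}}\big)$, with $n=\log\log N$, localizes the measure at the largest eigenvalue: bulk eigenvalues ($\Theta(N)$) and intermediary blip eigenvalues ($\Theta(N^{3/2})$) are mapped to arguments tending to $0$ and annihilated by the factor $x^{2n}$, whereas for $\lambda_{\max}=\frac{2N^{2}}{kj}+\Theta(N)$ the argument is $1+\Theta(1/N)$ and the weight tends to $1$. Hence $\mathbb{E}[\mu^{(m)}_{\{A_N,B_N\}}]\to\mathbb{E}[Y^{m}]$ with $Y:=\big(\lambda_{\max}-\tfrac{2N^{2}}{kj}\big)/N$; the slow growth of $n$ is exactly what makes suppression and normalization hold simultaneously, and I would make the eigenvalue locations quantitative via the Weyl's inequality argument of Appendix~\ref{multipleregimes}.

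The core is to determine the limiting law of $Y$. Because $\|M_1\|=\Theta(N^{3/2})$ while the gap is $\Theta(N^{2})$, the Rayleigh--Schr\"odinger series converges, and I would collect every contribution of order $N$ to $\lambda_{\max}-\frac{2N^{2}}{kj}$. These organize into four channels matching the summation indices: two \emph{linear} channels from the first-order shift, built from $\mathbf 1^{\top}A'_N\mathbf 1$ and $\mathbf 1^{\top}B'_N\mathbf 1$ (indices $m_{1a},m_{1b}$), and two \emph{quadratic} channels from the second-order term $\mu_0^{-1}\|Q M_1\mathbf 1/\sqrt N\|^{2}$, built from $\|A'_N\mathbf 1\|^{2}$ and $\|B'_N\mathbf 1\|^{2}$ (indices $m_{2a},m_{2b}$). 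I would prove the quadratic channels concentrate on deterministic constants carrying the factors $k\sqrt{1-1/k}$ and $j\sqrt{1-1/j}$ (whence the exponents $2m_{2a},2m_{2b}$), and that cross terms between the $A'$- and $B'$-channels and all perturbation orders beyond the second contribute $o(N)$ to $NY$. The linear channels, after scaling by the same factors, converge to centered variables whose $m_{1a}$-th moment is $m_{1a}!!$ times the appropriate variance power for even $m_{1a}$ (and vanishes for odd $m_{1a}$) -- a two-mode, chi-square-type law rather than a single Gaussian -- which both imposes the parity constraint and produces the double factorials. Expanding $Y^{m}$ by the multinomial theorem, using independence of the $A$- and $B$-channels and bookkeeping the powers of $2$ and of $\tfrac{2}{jk}$, then assembles $C(m,m_{1a},m_{2a},m_{1b},m_{2b})$.

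The main obstacle is precisely the identification of the linear-channel limit and its two-mode structure: a na\"ive single-mode (rank-one) heuristic predicts a Gaussian fluctuation with half the correct variance, so the genuine two-real-mode law must be extracted carefully. I expect this to mirror the $\{\mathrm{GOE},k\text{-checkerboard}\}$ blip, whose moments reduce to $\mathbb{E}_k[\mathrm{Tr}\,C_k^{m}]$ of a small hollow GOE: the largest-blip fluctuation should be governed by an effective low-dimensional random object coupling the $\mathbf 1$-direction to the residue-class structure, and computing its joint moments is the real work. The companion difficulty -- that the weight function cannot separate the intermediary blips, as the authors note -- is exactly what confines this clean analysis to the largest blip, and I would emphasize this limitation rather than attempt the intermediary regimes by the same method.
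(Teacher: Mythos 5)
Your route is genuinely different from the paper's. The paper never touches the top eigenvector: it expands the weighted moment \eqref{momentOfCNDN} via the eigenvalue trace lemma, evaluates the exact leading contribution of each $S$-class of cyclic products (Lemma \ref{blockslemma} and Proposition \ref{j,k-checkerboard trace}), and isolates the classes with $m_1+m_2=m$ through the alternating binomial cancellation of Lemma \ref{inequalities} --- no eigenvalue-location estimate enters the moment computation at all. Your perturbative bookkeeping does match the target in substantial part: the theorem's sum is the multinomial expansion of $\mathbb{E}[(c_a+c_b+X_a+X_b)^m]$ with deterministic shifts $c_a=(k-1)/2j$ and $c_b=(j-1)/2k$, and your second-order term $\|QM_1v_0\|^2/\mu_0$ reproduces these exactly (the dominant piece $\frac{k}{2jN^2}\|\tilde{A}_N\mathbf{1}\|^2$ concentrates at $\frac{k-1}{2j}$ with fluctuation $O(N^{-1/2})$), while $\|M_1\|=O(N^{3/2})$ against the spectral gap $\Theta(N^2)$ does kill the cross-channel terms and all perturbation orders beyond the second, which are $O(N^{1/2})$.

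The genuine gap is the step you yourself flag as the main obstacle, and it is not merely deferred --- within your framework it cannot be closed as described. After the second-order term concentrates, the only $\Theta(1)$ fluctuation left in $Y$ is the first-order statistic $\frac{2}{jN}\mathbf{1}^{\top}\tilde{A}_N\mathbf{1}+\frac{2}{kN}\mathbf{1}^{\top}\tilde{B}_N\mathbf{1}$, and the classical CLT (the entry sum is $2\sum a_{i\ell}$ over unordered pairs, so $\mathrm{Var}(\mathbf{1}^{\top}\tilde{A}_N\mathbf{1})=2N^2(1-1/k)(1+o(1))$) makes each channel Gaussian with even moments $(m_{1a}-1)!!\,\bigl(8(1-1/k)/j^2\bigr)^{m_{1a}/2}$. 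The theorem instead requires even moments $m_{1a}!!\,\bigl(8(1-1/k)/j^2\bigr)^{m_{1a}/2}$ --- second moment $16(1-1/k)/j^2$, exactly the symmetrized two-mode ($\pm c\sqrt{\chi_2^2}$-type) law you correctly decode, with twice the CLT variance. You rightly predict this discrepancy, but you supply no mechanism that could produce it, and none is visible in your setup: $\{\overline{A}_N,\overline{B}_N\}=\frac{2N}{kj}\mathbf{1}\mathbf{1}^{\top}$ has a \emph{simple} top eigenvalue, so there is no degenerate block from which a $\pm|z|$-law could emerge, and the analogy you invoke with the hollow-GOE object of Theorem \ref{GOE-checkerboard Moments} does not transfer: that structure arises from congruence-class chaining of the 1-blocks, whereas Proposition \ref{j,k-checkerboard trace} shows (via the Chinese-remainder reset inside mixed weight blocks) that in the $\{k,j\}$ setting the 1-block indices are unconstrained, which is precisely what produces the free-matching count $2^{m_1/2}m_{1a}!!\,m_{1b}!!$ in the paper's proof. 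As written, the natural execution of your plan terminates in the Gaussian count and conflicts with the stated formula; locating the additional $\Theta(N)$ fluctuation that your inventory of first- and second-order contributions excludes (or else reconciling the two counts) is the entire content of the proof, not a detail.

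A second, more routine gap: you need convergence of \emph{expected} moments, and localization with probability $1-o(1)$ does not give it. The weight $f^{(2n)}(x)=x^{2n}(2-x)^{2n}$ with $n=\log\log N$ grows like $x^{4n}=N^{\Theta(\log\log N)}$ once $x$ leaves $[0,2]$, so a polynomially rare event on which $\|\tilde{A}_N\|$ or $\|\tilde{B}_N\|$ is abnormally large could a priori dominate $\mathbb{E}[\mu^{(m)}]$; you must prove uniform integrability via high-moment tail bounds on the operator norms. Moreover the Weyl bounds of Appendix \ref{multipleregimes} locate $\lambda_{\max}$ only to $O(N^{3/2})$ --- too coarse for the scale-$N$ fluctuation --- and partly rest on empirically observed spectra of the mixed pieces, so they cannot be cited as-is; your expansion has to carry that weight itself. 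The paper's combinatorial route avoids both issues entirely, since $\mathbb{E}\,\textup{Tr}\{A_N,B_N\}^{2(\alpha+i)}$ is evaluated exactly term by term and the suppression of the other regimes is the algebraic cancellation of Lemma \ref{inequalities} rather than any spectral localization.
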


%%%%%%%%%%%%%%%%%%%%%%%%%%%%%%%%%%%%%%%%%%%%%%%%%%%%%%%%%%%%%%%%%%%%%%%%%%%%%%%%%%%%%%%%%%%%%%%%%%%%%%%%%%%%%%%%%%%%%%%%%%%%%%%%%%%
%%%%%%%%%%%%%%%%%%%%%%%%%%%%%%%%%%%%%%%%%%%%%%%%%%%%%%%%%%%%%%%%%%%%%%%%%%%%%%%%%%%%%%%%%%%%%%%%%%%%%%%%%%%%%%%%%%%%%%%%%%%%%%%%%%%%
%%%%%%%%%%%%%%%%%%%%%%%%%%%%%%%%%%%%%%%%%%%%%%%%%%%%%%%%%%%%%%%%%%%%%%%%%%%%%%%%%%%%%%%%%%%%%%%%%%%%%%%%%%%%%%%%%%%%%%%%%%%%%%%%%%%%
\section{Limiting Expected Moments of Some Anticommutator Ensembles}\label{sec: anticommutator Combinatorics}

In this section, we provide formulae for the limiting expected moments of the following anticommutator ensembles: $\{\textup{GOE, } \textup{GOE}\}$, $\{\textup{PTE, PTE}\}$, $\{\textup{GOE, PTE}\}$, $\{\textup{GOE}, k\textup{-BCE}\}$, and $\{k\textup{-BCE}, k\textup{-BCE}\}$. Along the way, we develop genus expansion formulae for these matrices based on the matching properties of their cyclic products. Since we are always anticommuting two matrices independently drawn from the aforementioned ensembles, every cyclic product in the expected moment expansion contains terms (or entries) from different matrices. To distinguish entries from different matrices, we denote entries from one matrix by $a$ and entries from the other matrix by $b$. We also denote entries from either matrices by $c$ when it is not necessary to specify them or when we have no information about them. We first start with some definitions and results to facilitate our discussion.

\begin{definition}
For a positive integer $n$, let $[2n]:=\{1, 2, \dots, 2n\}$ and $C=c_1c_2\cdots c_{2n}$ be a configuration satisfying $(c_{2s-1},c_{2s})\in\{(a_{2s-1},b_{2s}),(b_{2s-1},a_{2s})\}$ for all $1\leq s\leq n$. Let $\mathcal{C}_{2,2n}$ be the set of all such configurations. Then, a \textbf{partition of $[2n]$ with respect to $C$}, $\pi_C=(V_1,\dots, V_t)$, is a tuple of subsets of $[2n]$ such that the following holds: 
\begin{enumerate}
\item $V_i\neq\emptyset$ for all $1\leq i\leq t$,
\item $V_1\cup\cdots\cup V_t=[2n]$,
\item $V_i\cap V_j=\emptyset$ for $i\neq j$,
\item for all $1\leq i\leq t$ and $i_1,i_2\in V_i$, $\{c_{i_1},c_{i_2}\}\in\{\{a_{i_1},a_{i_2}\},\{b_{i_1},b_{i_2}\}\}$.
\end{enumerate}
Note that (4) ensures that terms from two matrices are matched within themselves (i.e., there is no matching where a term from one matrix is matched with a term from the other matrix). We call $V_1, V_2, \dots, V_t$ \textbf{blocks} of $\pi_C$. Let $\mathcal{P}_C(2n)$ denote the set of all partitions with respect to $C$
of $[2n]$. A partition is called a \textbf{pairing} (or matchings) if each block is of size 2. We denote all the pairings with respect to $C$ of $[2n]$ as $\mathcal{P}_{2,C}(2n)$.
\end{definition}

Since there is a bijective correspondence between a configuration $C=c_1c_2\cdots c_{2n}$ and the set $[2n]$, namely $c_i\leftrightarrow i$, then we can identify a partition of $[2n]$ with respect to $C$ as a partition of the configuration $C$ itself. Note that this definition can be easily extended to any arbitrary subset $S\subseteq [n]$ and configuration $C_S$ (indexed by $S$), where $S$ is not necessarily equal to $[k]$ for any $k\in \mathbb{N}$.

\begin{definition} A partition with respect to $C$, $\pi_C=(V_1,\dots, V_t)$, of $[2n]$ is \textbf{crossing} if there exists blocks $V$ and $W$ with $i, k\in V$ and $j, l\in W$ such that $i<j<k<l$. We denote the set of non-crossing partitions with respect to $C$ of $[2n]$ by $NC_C(2n)$ and the set of non-crossing pairings with respect to $C$ of $[2n]$ by $NC_{2,C}(2n)$.
\end{definition}

We can represent a pairing of the set $[2n]$ with respect to $C$ by drawing lines that connect pairs of numbers from $[2n]$. Then a non-crossing pairing matches $a$'s and $b$'s within themselves and no two lines cross in the diagram. For example, suppose that $C=a_1b_2a_3b_4b_5a_6b_7a_8$. Then $(\{1, 8\}, \{2, 7\}, \{3, 6\}, \{4, 5\})$ is a non-crossing pairing with respect to $C$ and $(\{1, 3\}, \{2, 4\}, \{5, 7\}, $\{6, 8\}$)$ is a crossing pairing with respect to $C$, as shown in Figure \ref{fig:non-crossing-pairing} and \ref{fig:crossing-pairing}.

\begin{figure}[ht]
\centering
\begin{tikzpicture}

% Nodes for numbers
\node (1) at (0,0) {$a_1$};
\node (2) at (1,0) {$b_2$};
\node (3) at (2,0) {$a_3$};
\node (4) at (3,0) {$b_4$};
\node (5) at (4,0) {$b_5$};
\node (6) at (5,0) {$a_6$};
\node (7) at (6,0) {$b_7$};
\node (8) at (7,0) {$a_8$};

% Draw non-crossing arcs
\draw (0,0.2) to[bend left=60] (7,0.2); % 1-8
\draw[red] (1,0.2) to[bend left=60] (6,0.2); % 2-7
\draw (2,0.2) to[bend left=60] (5,0.2); % 3-6
\draw[red] (3,0.2) to[bend left=60] (4,0.2); % 4-5

\end{tikzpicture}
\caption{A non-crossing pairing: $(\{1, 8\}, \{2, 7\}, \{3, 6\}, \{4, 5\})$.}
\label{fig:non-crossing-pairing}
\end{figure}
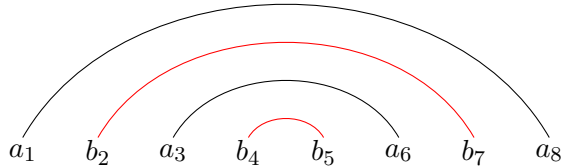

\begin{figure}[ht]
\centering
\begin{tikzpicture}

% Nodes for numbers
\node (1) at (0,0) {$a_1$};
\node (2) at (1,0) {$b_2$};
\node (3) at (2,0) {$a_3$};
\node (4) at (3,0) {$b_4$};
\node (5) at (4,0) {$b_5$};
\node (6) at (5,0) {$a_6$};
\node (7) at (6,0) {$b_7$};
\node (8) at (7,0) {$a_8$};

% Draw non-crossing arcs
\draw (0,0.2) to[bend left=50] (2,0.2); % 1-3
\draw[red] (1,0.2) to[bend left=50] (3,0.2); % 2-4 (red)
\draw (5,0.2) to[bend left=50] (7,0.2); % 6-8
\draw[red] (4,0.2) to[bend left=50] (6,0.2); % 5-7 (red)

\end{tikzpicture}
\caption{A crossing pairing: $(\{1, 3\}, \{2, 4\}, \{5, 7\}, $\{6, 8\}$)$.}
\label{fig:crossing-pairing}
\end{figure}
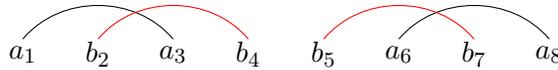

The following formula, known as Wick's formula, provides a connection between expected moments and non-crossing partitions useful to us in the next section:

\begin{proposition}[\cite{MS}] 
Denote by $\mathcal{P}_2(2m)$ the set of pairings of $[2m]$. For each $\pi\in\mathcal{P}_2(2m)$, let $\mathbb{E}_\pi[X_1,\dots, X_{2m}]=\prod_{(r,s)\in\pi}\mathbb{E}[X_rX_s]$. Suppose that $(X_1, \dots, X_n)$ be a real Gaussian random vector. Then
\begin{align}
\mathbb{E}[X_{i_1}, \dots, X_{i_{2m}}] \ = \ \sum_{\pi\in \mathcal{P}_2(2m)}\mathbb{E_\pi}[X_{i_1},\dots, X_{i_{2m}}],
\end{align}
for any $i_1,\dots, i_{2m}\in [n]$.
\end{proposition}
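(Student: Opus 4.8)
The plan is to prove this via Gaussian integration by parts together with induction on $m$, matching the resulting recursion to the recursive structure of pairings. Since the tuple $(X_{i_1},\dots,X_{i_{2m}})$ is itself a mean-zero jointly Gaussian vector (its coordinates are Gaussian linear combinations of the original ones), we may relabel and reduce to proving
\begin{align}
\mathbb{E}[X_1 X_2 \cdots X_{2m}] \ = \ \sum_{\pi\in\mathcal{P}_2(2m)}\ \prod_{(r,s)\in\pi}\sigma_{rs},
\end{align}
where $\sigma_{rs}:=\mathbb{E}[X_rX_s]$ are the entries of the covariance matrix $\Sigma$; any repetition of indices in the original statement is then just a special case of equal covariances.

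First I would establish the \textbf{Gaussian integration-by-parts identity} (Stein's identity): for a mean-zero jointly Gaussian vector and any polynomial $g$,
\begin{align}
\mathbb{E}[X_r\, g(X)] \ = \ \sum_{s=1}^n \sigma_{rs}\, \mathbb{E}\!\left[\frac{\partial g}{\partial x_s}(X)\right].
\end{align}
This follows by writing $X = \Sigma^{1/2} Z$ with $Z$ a standard Gaussian vector, applying the one-dimensional identity $\mathbb{E}[Z_k h(Z)] = \mathbb{E}[\partial h/\partial z_k]$ (itself a consequence of $\frac{d}{dz}e^{-z^2/2} = -z\,e^{-z^2/2}$ and ordinary integration by parts), and using the chain rule to convert the $z$-derivatives back into $x$-derivatives. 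The polynomial growth of $g$ against the Gaussian tail kills the boundary terms, so no convergence issues arise.

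Next I would apply this with $g(X) = X_2 X_3 \cdots X_{2m}$ and $r = 1$. Since $\partial x_k/\partial x_s = \delta_{s,k}$, the product rule yields
\begin{align}
\mathbb{E}[X_1 X_2 \cdots X_{2m}] \ = \ \sum_{k=2}^{2m} \mathbb{E}[X_1 X_k]\,\mathbb{E}\!\left[\prod_{\substack{2\leq j\leq 2m \\ j\neq k}} X_j\right].
\end{align}
This is precisely the recursion satisfied by the pairing sum: every $\pi \in \mathcal{P}_2(2m)$ is built by choosing the partner $k$ of the element $1$ (contributing the factor $\mathbb{E}[X_1 X_k]=\sigma_{1k}$) and then pairing the remaining $2m-2$ indices. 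By the inductive hypothesis the inner expectation equals $\sum \mathbb{E}_{\pi'}$ over all pairings $\pi'$ of those remaining indices, and summing over $k$ reassembles $\sum_{\pi \in \mathcal{P}_2(2m)} \mathbb{E}_\pi$. The base case $m=1$ is the tautology $\mathbb{E}[X_1 X_2] = \mathbb{E}[X_1 X_2]$, the unique pairing of $[2]$.

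I expect the main obstacle to be the clean verification of the integration-by-parts identity, since this is the only genuinely analytic input; once it is in hand, the combinatorial induction is automatic. An alternative that avoids the induction is to differentiate the moment generating function $\mathbb{E}[e^{\langle t, X\rangle}] = \exp(\tfrac12 t^\top \Sigma t)$: writing $\mathbb{E}[X_1\cdots X_{2m}] = \partial_{t_1}\cdots\partial_{t_{2m}} \exp(\tfrac12 t^\top \Sigma t)\big|_{t=0}$ and noting that a nonzero contribution at $t=0$ forces each of the $2m$ derivatives to be paired against the quadratic form produces the same sum over pairings directly, with each pair $(r,s)$ contributing one factor $\sigma_{rs}$.
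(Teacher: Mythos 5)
Your proof is correct, but there is nothing in the paper to compare it against: the paper states this result (Wick's formula) as a quoted proposition with the citation \cite{MS} and gives no proof of its own, using it purely as an imported tool for the genus-expansion computations. Both of your routes are standard and complete. The reduction at the start is handled properly: passing to the tuple $(X_{i_1},\dots,X_{i_{2m}})$, which is again jointly Gaussian, correctly disposes of repeated indices, a point that is easy to fumble. Your Stein-identity derivation via $X=\Sigma^{1/2}Z$ is valid even for degenerate $\Sigma$ (the symmetric square root always exists, and $\Sigma^{1/2}(\Sigma^{1/2})^{\mathsf T}=\Sigma$ is all you use), the boundary terms do vanish for polynomial $g$ against Gaussian decay, and the recursion
\begin{align}
\mathbb{E}[X_1\cdots X_{2m}] \ = \ \sum_{k=2}^{2m}\sigma_{1k}\,\mathbb{E}\Bigl[\prod_{\substack{2\leq j\leq 2m\\ j\neq k}}X_j\Bigr]
\end{align}
is exactly the recursion obtained by splitting each pairing according to the partner of the element $1$, so the induction closes. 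The alternative via differentiating $\exp(\tfrac12 t^{\mathsf T}\Sigma t)$ at $t=0$ is equally standard and shorter, at the cost of hiding the combinatorics inside the multivariate Fa\`a di Bruno bookkeeping; the integration-by-parts route isolates the sole analytic input in one lemma and makes the pairing structure transparent, which is arguably better aligned with how the paper then uses the formula. One small point worth making explicit: the proposition as printed says only ``real Gaussian random vector,'' but Wick's formula is false without centering (take $m=1$ with nonzero means), so your insertion of the mean-zero hypothesis is not cosmetic --- it silently repairs an omission in the statement, consistent with the convention for Gaussian families in \cite{MS}.
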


For the remainder of the section, we investigate limiting expected moments of $\{\textup{GOE,GOE}\}$, $\{\textup{PTE, PTE}\}$, $\{\textup{GOE, PTE}\}$, $\{\textup{GOE, $k$-BCE}\}$, $\{k\textup{-BCE},k\textup{-BCE}\}$ one by one. Note that in any expected moment calculation, it is essential to characterize the pairings that do contribute in the limit, as they have proven to considerably simplify the calculation. This requires us to extend the method of genus expansion originally used in the moment calculation of the GUE (see \cite{MS} section 1.7 for more details) to our ensembles.

\subsection{Limiting Expected Moments of the Anticommutator of GOE and GOE}\label{momentGOEGOE}
Let $A_N=(a_{ij})$ and $B_N=(b_{ij})$ be $N\times N$ matrices sampled independent from GOE. We consider the $m$\textsuperscript{th} moment of $\{A_N, B_N\}:=A_NB_N+B_NA_N$:
\begin{align}
M_m(N) \ = \ \frac{1}{N^{m+1}}\mathbb{E}[\textup{Tr}(A_NB_N+B_NA_N)^m] \ = \ \frac{1}{N^{m+1}}\sum_{C\in\mathcal{C}_{2,2m}}\sum_{1\leq i_1,\cdots, i_{2m}\leq N}\mathbb{E}[c_{i_1i_2}c_{i_2i_3}\dots c_{i_{2m}i_1}]\label{GOExGOE eigenvalue trace},
\end{align}
where for the second equality \eqref{GOExGOE eigenvalue trace}, we first expand $\textup{Tr}(A_NB_N+B_NA_N)^m$ using additivity of trace and identify each summand as a configuration in $\mathcal{C}_{2,2m}$ (where we identify $A_N$ as $a$ and $B_N$ as $b$). Then we apply eigenvalue trace lemma to each summand, which gives us the RHS. Now, we apply genus expansion to each $\mathbb{E}[c_{i_1i_2}c_{i_2i_3}\dots c_{i_{2m}i_1}]$. The argument that follows is essentially the same argument as the genus expansion of the $2m$\textsuperscript{th} moment of the GUE, since treating $a$'s and $b$'s both as $c$'s while ensuring that they are matched within themselves preserves the ``non-crossing'' property of pairings that contribute in the limit.

Now, as $N\rightarrow\infty$, we have $M_m(N)\rightarrow 0$ when $m$ is odd, since by standard argument the contribution from each type of configuration is $O(N^{m})$, but the number of types of configurations depends only on $m$. When $m$ is even, if a term in a cyclic product is unmatched, or more than two terms are matched together, then the contribution from such matchings is $O(N^m)$, which is negligible as $N\rightarrow\infty$. So the matchings that contribute in the limit come solely from the pairings of terms in the cyclic products, and we can safely ignore other matchings in the limiting expected moment calculation. By Wick's formula
\begin{align}
\mathbb{E}[c_{i_1i_2}c_{i_2i_3}\cdots c_{i_{2m}i_1}] \ = \ \sum_{
\pi_C\in\mathcal{P}_{2,C}(2m)}\mathbb{E}_{\pi_C}[c_{i_1i_2},c_{i_2i_3},\dots, c_{i_{2m}i_1}].
\end{align}
Since $\mathbb{E}[c_{i_ri_{r+1}}c_{i_si_{s+1}}]=1$ when $i_r=i_{s+1}$ and $i_{r+1}=i_{s}$ and is 0 otherwise given that $i_{r}\neq i_s$ (as we shall see, pairings with $i_r=i_s$ vanish in the limit because they are crossing pairings), then $\mathbb{E}[c_{i_1i_2}c_{i_2i_3}\cdots c_{i_{2m}i_1}]$ is the number of pairings $\pi_C$ with respect to $C$ of $[2m]$ such that $i_r=i_{s+1}$, $i_{r+1}=i_s$, and $a$'s and $b$'s are matched within themselves (i.e., an $a$ is not matched with a $b$). Now, we think of a tuple of indices $(i_1,\dots, i_{2m})$ as a function $i:[2m]\rightarrow [N]$ and write the pairing $\pi_C=\{(r_1,s_1), (r_2, s_2), \dots, (r_{k},s_{m})\}$, as the product of transpositions $(r_1,s_1)(r_2,s_2)\cdots(r_k,s_m)$. We also take $\gamma_{2m}$ to be the cycle $(1, 2, 3, \dots, 2m)$. If $\pi_C$ is a pairing of $[2m]$ and $(r,s)$ is a pair of $\pi_C$, then we express our conditions $i_r=i_{s+1}$ and $i_s=i_{r+1}$ as $i(r)=i(\gamma_{2m}(\pi_C(r)))$ and $i(s)=i(\gamma_{2k}(\pi_C(s)))$ respectively. Hence, $\mathbb{E}_{\pi_C}[c_{i_1i_2}c_{i_2i_3}\cdots c_{i_{2m}i_1}]=1$ if $i$ is constant on the orbits of $\gamma_{2m}\pi_C$ (e.g. $i(r)=i(s+1)$) and $0$ otherwise. Let $\#(\sigma)$ denote the number of cycles of a permutation $\sigma$, then
\begin{align}\label{genus expansion GOExGOE}
M_m(N) \ = \ \frac{1}{N^{m+1}}\mathbb{E}[\textup{Tr}(A_NB_N+B_NA_N)^m] &\ = \ \frac{1}{N^{m+1}}\sum_{C\in \mathcal{C}_{2,2m}}\sum_{\pi_C\in \mathcal{P}_{2,C}(2m)}N^{\#(\gamma_{2m}\pi_C)}.
\end{align}

The following proposition provides a powerful characterization of $\#(\gamma_{2m}\pi)$ based on whether $\pi$ is non-crossing or not. A proof of this proposition can be found in Section 1.8 of \cite{MS}.

\begin{proposition}\label{non-crossing}
If $\pi$ is a pairing of $[2m]$ then $\#(\gamma_{2m}\pi)\leq m-1$ unless $\pi$ is non-crossing, in which case $\#(\gamma_{2m}\pi)=m+1$.
\end{proposition}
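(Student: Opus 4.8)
The plan is to analyze $\gamma_{2m}\pi$ through the word-length metric on the symmetric group together with a parity constraint, and then pin down the extremal case combinatorially. For a permutation $\sigma$ of a set of size $n$, write $|\sigma| := n - \#(\sigma)$; this equals the minimal number of transpositions whose product is $\sigma$, and it obeys the triangle inequality $|\sigma\tau| \le |\sigma| + |\tau|$. I would use this to extract the universal upper bound first. Since $\gamma_{2m}$ is a single $2m$-cycle we have $|\gamma_{2m}| = 2m-1$, and since $\pi$ is a product of $m$ disjoint transpositions with $\pi = \pi^{-1}$ we have $|\pi| = |\pi^{-1}| = m$. Writing $\gamma_{2m} = (\gamma_{2m}\pi)\pi^{-1}$ and applying the triangle inequality gives $2m-1 \le |\gamma_{2m}\pi| + m = (2m - \#(\gamma_{2m}\pi)) + m$, which rearranges to $\#(\gamma_{2m}\pi) \le m+1$. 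This holds for every pairing, crossing or not.

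Next I would insert a parity jump so that the bound cannot be narrowly missed. Comparing signs, $\operatorname{sgn}(\gamma_{2m}) = (-1)^{2m-1} = -1$ and $\operatorname{sgn}(\pi) = (-1)^m$, so $\operatorname{sgn}(\gamma_{2m}\pi) = (-1)^{m+1}$; on the other hand $\operatorname{sgn}(\gamma_{2m}\pi) = (-1)^{2m - \#(\gamma_{2m}\pi)} = (-1)^{\#(\gamma_{2m}\pi)}$ because $2m$ is even. Hence $\#(\gamma_{2m}\pi) \equiv m+1 \pmod 2$, and combined with the bound this forces $\#(\gamma_{2m}\pi) \in \{m+1, m-1, m-3, \dots\}$. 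In particular the only alternative to the extreme value $m+1$ is $\#(\gamma_{2m}\pi) \le m-1$, which is exactly the dichotomy in the statement; what remains is to show the extreme value is attained precisely for non-crossing $\pi$.

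For the implication that a non-crossing $\pi$ attains $m+1$, I would induct on $m$. A non-crossing pairing always contains an adjacent pair $(i,i+1)$ (indices mod $2m$); deleting it and relabeling produces a non-crossing pairing $\pi'$ of a $2(m-1)$-element set, and one checks that this deletion merges orbits so that $\#(\gamma_{2m}\pi) = \#(\gamma_{2(m-1)}\pi') + 1$, so the base case $m=1$ (where $\gamma_2\pi = \mathrm{id}$ has $2$ cycles) propagates to $m+1$. For the converse I would use the topological reading of the data: glue the sides of a $2m$-gon, whose boundary edges are cyclically permuted by $\gamma_{2m}$, in the pairs dictated by $\pi$, obtaining a closed orientable surface of some genus $g$. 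The numbers of faces, edges, and vertices are $1$, $m$, and $\#(\gamma_{2m}\pi)$ respectively, so Euler's formula yields $\#(\gamma_{2m}\pi) = m + 1 - 2g$. Since planarity $g=0$ is equivalent to $\pi$ being non-crossing, this simultaneously reproves equality in the non-crossing case and shows that any crossing forces $g \ge 1$, hence $\#(\gamma_{2m}\pi) \le m-1$.

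The main obstacle is this last step: verifying that the vertex-orbits of the glued polygon are counted exactly by the cycles of $\gamma_{2m}\pi$, so that Euler's formula applies cleanly, and that the genus-zero condition matches the non-crossing condition. The length and parity arguments are essentially formal bookkeeping, but the equivalence between a planar gluing and a non-crossing pairing is where the genuine geometric content lies. If I wished to avoid the surface machinery, I would instead prove the converse directly by an uncrossing argument, locating pairs $(i,k)$ and $(j,l)$ with $i<j<k<l$ and showing that replacing them by the nested or parallel pairing increases $\#(\gamma_{2m}\pi)$ by exactly two, so that every crossing is detectably costly.
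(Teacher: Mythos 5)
Your main line of argument is sound, and it is worth noting that the paper itself does not prove this proposition at all: it cites Section 1.8 of the Mingo--Speicher reference \cite{MS}, whose treatment is precisely the genus-expansion circle of ideas you propose. Your first three steps are airtight: $|\gamma_{2m}|=2m-1$ and $|\pi|=m$ with the triangle inequality give $\#(\gamma_{2m}\pi)\leq m+1$; the sign computation $\operatorname{sgn}(\gamma_{2m}\pi)=(-1)^{m+1}=(-1)^{\#(\gamma_{2m}\pi)}$ forces $\#(\gamma_{2m}\pi)\equiv m+1 \pmod 2$, so the only alternative to $m+1$ is indeed $\leq m-1$; and your adjacent-pair deletion induction (with $i+1$ a fixed point of $\gamma_{2m}\pi$ when $\pi$ pairs $(i,i+1)$) correctly yields $\#(\gamma_{2m}\pi)=m+1$ for every non-crossing $\pi$. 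This reduces everything to the converse: $\#(\gamma_{2m}\pi)=m+1$ implies $\pi$ non-crossing.

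That converse is exactly where your proposal has a genuine gap, as you yourself flag: in the surface picture you assert rather than prove that $g=0$ is equivalent to non-crossing (your induction plus Euler's formula gives non-crossing $\Rightarrow g=0$, but the needed direction $g=0\Rightarrow$ non-crossing is the real content). Moreover, your proposed elementary fallback is false as stated: it is not true that uncrossing a crossing pair raises $\#(\gamma_{2m}\pi)$ by exactly two. Take $m=3$ and $\pi=(1\,4)(2\,6)(3\,5)$, for which $\gamma_6\pi=(1\,5\,4\,2)(3\,6)$ has $\#=2$. The pairs $(1,4)$ and $(2,6)$ cross, yet replacing them by the nested pairing $(1,6)(2,4)$ gives $\gamma_6\pi'=(2\,5\,4\,3\,6)(1)$ with $\#=2$, and by the parallel pairing $(1,2)(4,6)$ gives $\gamma_6\pi'=(1\,3\,6\,5\,4)(2)$ with $\#=2$; in both cases a crossing with $(3,5)$ survives and the count is unchanged. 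A double-transposition move changes the cycle count by $0$ or $\pm2$, and it is $+2$ only when the move actually lowers the genus, so an induction on single uncrossings does not go through. The clean elementary repair runs through fixed points instead: if $\#(\gamma_{2m}\pi)=m+1$, then letting $s$ be the number of singleton cycles, $2m\geq s+2(m+1-s)$ forces $s\geq2$; any fixed point $i$ of $\gamma_{2m}\pi$ satisfies $\pi(i)=i-1$ (cyclically), i.e., $\pi$ contains an adjacent pair, which you delete exactly as in your forward induction to get a pairing of $[2m-2]$ with $(m-1)+1$ cycles, and induct. This closes the argument with no surface topology, and together with your bound-plus-parity step gives a complete proof consistent with the source the paper cites.
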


Combined with \eqref{genus expansion GOExGOE}, Proposition \ref{non-crossing} intuitively says that for large $N$, $M_m(N)$ is the number of non-crossing pairings with respect to each configuration $C$ summed over all configurations in $\mathcal{C}_{2,2m}$. This leads us to the following lemma.

\begin{lemma}
As $N\rightarrow \infty$,
\begin{align}\label{genusGOEGOE}
M_m \ = \ \lim_{N\rightarrow\infty} M_m(N) \ = \ \sum_{C\in \mathcal{C}_{2,2m}}\sum_{\pi_C\in NC_{2,C}(2m)}1.
\end{align}
\end{lemma}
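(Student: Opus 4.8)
The plan is to pass to the limit directly in the genus expansion formula \eqref{genus expansion GOExGOE}, using Proposition \ref{non-crossing} to control the power of $N$ attached to each pairing. Concretely, I would begin from
\[
M_m(N) \ = \ \frac{1}{N^{m+1}}\sum_{C\in \mathcal{C}_{2,2m}}\sum_{\pi_C\in \mathcal{P}_{2,C}(2m)}N^{\#(\gamma_{2m}\pi_C)},
\]
and split each inner sum over $\mathcal{P}_{2,C}(2m)$ into its non-crossing part $NC_{2,C}(2m)$ and its crossing complement, treating the two contributions separately and then letting $N\to\infty$.

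For the non-crossing part, the key observation is that every $\pi_C\in NC_{2,C}(2m)$ is, after forgetting the $a/b$ labelling, a non-crossing pairing of $[2m]$; hence Proposition \ref{non-crossing} gives $\#(\gamma_{2m}\pi_C)=m+1$ exactly. Each such term therefore contributes $N^{m+1}/N^{m+1}=1$, independent of $N$, so for every $N$ the non-crossing part of the double sum equals $\sum_{C\in\mathcal{C}_{2,2m}}\sum_{\pi_C\in NC_{2,C}(2m)}1$, which is already the claimed right-hand side.

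For the crossing part, each $\pi_C$ viewed as a pairing of $[2m]$ is crossing, so Proposition \ref{non-crossing} yields $\#(\gamma_{2m}\pi_C)\leq m-1$, and the corresponding summand is $N^{\#(\gamma_{2m}\pi_C)-(m+1)}\leq N^{-2}$. Since $|\mathcal{C}_{2,2m}|=2^m$ and each $|\mathcal{P}_{2,C}(2m)|\leq(2m-1)!!$ are finite and independent of $N$, the total crossing contribution is $O(N^{-2})$ and vanishes in the limit. Taking $N\to\infty$ then kills the crossing terms and freezes the non-crossing terms at $1$ apiece, giving \eqref{genusGOEGOE}.

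The step I expect to require the most care is the justification that Proposition \ref{non-crossing}, stated for unrestricted pairings of $[2m]$, applies to pairings ``with respect to $C$''. The point to verify is simply that $\mathcal{P}_{2,C}(2m)\subseteq\mathcal{P}_2(2m)$ and that the $a/b$-matching constraint neither changes the permutation $\gamma_{2m}\pi_C$ nor its cycle count $\#(\gamma_{2m}\pi_C)$; thus the crossing/non-crossing dichotomy and the exact value $m+1$ transfer verbatim, the constraint only restricting which pairings survive in the sum, and $NC_{2,C}(2m)$ is precisely the set of those surviving non-crossing pairings. As a consistency check, when $m$ is odd the configuration carries an odd number of $a$-labels, so no pairing can match $a$'s within themselves, whence $NC_{2,C}(2m)=\emptyset$ and the formula correctly returns $M_m=0$, recovering the vanishing of odd moments asserted earlier.
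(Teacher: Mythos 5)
Your proposal is correct and matches the paper's own (largely implicit) argument: the paper likewise deduces the lemma directly from the genus expansion \eqref{genus expansion GOExGOE} together with Proposition \ref{non-crossing}, with non-crossing pairings contributing exactly $N^{m+1}/N^{m+1}=1$ and crossing pairings contributing $O(N^{-2})$ over a number of terms depending only on $m$. Your added verification that $\mathcal{P}_{2,C}(2m)\subseteq\mathcal{P}_2(2m)$ (so the dichotomy transfers) and the odd-moment consistency check are sound details the paper leaves unstated.
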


Given genus expansion formula \eqref{genusGOEGOE}, we are then able to obtain a recurrence relation for the even moment of $\{\textup{GOE, GOE}\}$.

\begin{theorem}\label{GOE-GOE moment recurrence}
The limiting expected $2m$\textup{\textsuperscript{th}} moment $M_{2m}$ of $\{\textup{GOE, GOE}\}$ is given by $M_{2m}=2f(m)$, where $f(0)=f(1)=1$, $g(1)=1$, and for $m\geq 2$,
\begin{align}
f(m) &\ = \ g(m)+2\sum_{j=1}^{m-1}g(j)f(m-j), \\
g(m) &\ = \ 2f(m-1) + \sum_{\substack{0\leq x_1,x_2\leq m-2\\ x_1+x_2\leq m-2}}(1+\mathbbm{1}_{x_1>0})(1+\mathbbm{1}_{x_2>0})f(x_1)f(x_2)g(m-1-x_1-x_2).
\end{align}
\end{theorem}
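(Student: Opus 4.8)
The plan is to evaluate the genus-expansion count $M_{2m}=\sum_{C\in\mathcal{C}_{2,4m}}|NC_{2,C}(4m)|$ from the preceding lemma by a first-passage decomposition of the non-crossing pairings. First I would repackage the data: the $4m$ positions come in $2m$ consecutive \emph{dominoes} $\{2s-1,2s\}$, a configuration $C$ assigns each domino a polarity (one endpoint an $a$, the other a $b$), and a pairing in $NC_{2,C}(4m)$ is a non-crossing perfect matching whose matched endpoints share a color (condition (4) of the definition). Counting over all $C$ is therefore the same as counting pairs (polarity assignment, color-respecting non-crossing matching). Since globally flipping every $a\leftrightarrow b$ is a fixed-point-free involution on these pairs, the total is even and equals $2f(m)$, where $f(m)$ counts the pairs with position $1$ colored $a$; this is the origin of the leading factor $2$.

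For the recurrence for $f$, I would condition on the matching partner $q$ of position $1$. Non-crossing forces the enclosed block $\{2,\dots,q-1\}$ to have even size, so $q=2j$ is even, and since position $1$ is an $a$ we need $c_{2j}=a$. The chord $(1,2j)$ splits the interval into an interior block on $\{2,\dots,2j-1\}$ and an exterior block on $\{2j+1,\dots,4m\}$, matched independently. The exterior is a clean run of $2m-j$ full dominoes and contributes the count $M_{2m-j}$; a color-parity count (a clean run needs equally many $a$'s and $b$'s) shows this vanishes unless $2m-j$ is even, forcing $j$ even. The interior consists of $j-2$ full dominoes flanked by two half-dominoes both forced to color $b$ (positions $2$ and $2j-1$); writing $g(i)$ for the count of such a doubly-$b$-flanked region with $2i-2$ interior dominoes, the interior contributes $g(j/2)$. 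Putting $j=2i$ and using $M_{2(m-i)}=2f(m-i)$ for $i<m$ together with $M_0=1$ yields exactly $f(m)=g(m)+2\sum_{j=1}^{m-1}g(j)f(m-j)$.

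For the recurrence for $g$, I would introduce two auxiliary region counts: $R(k)$ for a run of $k$ dominoes flanked by two same-colored half-dominoes (so $g(m)=R(2m-2)$), and $S(k)$ for a run flanked by two opposite-colored half-dominoes; a run with a single flanking half-domino has an odd number of positions and contributes nothing, which is what keeps the system finite. Conditioning on the match of the left flank, and again using that every enclosed block has even size, each match either joins the two flanks (possible only in the same-colored case, giving the clean contribution $M_k$) or lands on the near endpoint of some interior domino, cutting off a clean block $M_{s-1}$ on the left and leaving a flanked region on the right whose colors are determined by the polarity just fixed. This gives the coupled relations $R(k)=M_k+\sum_{s\ge1}M_{s-1}S(k-s)$ and $S(k)=\sum_{s\ge1}M_{s-1}R(k-s)$. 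Substituting the second into the first, specializing to $k=2m-2$, and using the parity fact that $M_{s-1}\ne0$ forces $s$ odd (write $s=2a+1$) together with the uniform identity $M_{2a}=(1+\mathbbm{1}_{a>0})f(a)$, the leading term becomes $M_{2m-2}=2f(m-1)$ while the double sum collapses onto $\sum_{x_1+x_2\le m-2}(1+\mathbbm{1}_{x_1>0})(1+\mathbbm{1}_{x_2>0})f(x_1)f(x_2)g(m-1-x_1-x_2)$, which is the claimed relation; the base cases $f(0)=f(1)=1$ and $g(1)=R(0)=1$ are immediate.

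The main obstacle I anticipate is the careful bookkeeping of the flanked-region types and the systematic use of parity. The crucial simplification is that non-crossing plus the domino coloring forces every enclosed block to have even size, which (i) eliminates all singly-flanked regions, (ii) pins down the color of each newly exposed half-domino, and (iii) restricts the summation indices to the correct parities so that the indicator factors $(1+\mathbbm{1}_{x_i>0})$ emerge naturally from the boundary value $M_0=1$ versus $M_{2a}=2f(a)$ for $a>0$. Confirming that $S$ is exactly the auxiliary needed to close the recursion, rather than spawning an ever-growing family of region types, is the delicate point, and I would sanity-check it against the small values $g(1)=1$, $g(2)=3$, $g(3)=17$ and $M_2=2$, $M_4=10$, $M_6=66$.
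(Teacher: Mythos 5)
Your proposal is correct and takes essentially the same route as the paper: both evaluate the genus-expansion count over non-crossing, color-respecting pairings with the identical $f$ and $g$, and both obtain the $f$-recurrence by conditioning on the partner of the first $a$ and using evenness of enclosed blocks. Your $g$-recurrence via the one-flank-at-a-time system $R(k)=M_k+\sum_{s\ge1}M_{s-1}S(k-s)$, $S(k)=\sum_{s\ge1}M_{s-1}R(k-s)$ is merely a refactoring of the paper's simultaneous conditioning on the two flanking $b$'s (one substitution collapses it to the paper's double sum), and the argument is sound, up to one misstated parenthetical: a clean run of $k$ dominoes always has equally many $a$'s and $b$'s ($k$ of each), and what actually kills odd $k$ is that each color class must have \emph{even} cardinality to be perfectly matched within itself --- the conclusion you use is nonetheless correct.
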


\begin{proof} Let $f(m)$ be the number of non-crossing pairings with respect to all configurations in $\mathcal{C}_{2,4m}$ starting with an $a$, and $g(m)$ be the number of non-crossing pairings with respect to all configurations in $\mathcal{C}_{2,4m}$ starting and ending with an $a$ such that these two $a$'s are matched together (i.e., a configuration $a_{i_1i_2}b_{i_2i_3}\cdots \\b_{i_{4m-1}i_{4m}}a_{i_{4m}i_1}$ with $i_{4m}=i_2$).

We first find the recurrence relation for $f(m)$. We know that $a_{i_1i_2}$ is matched with some $a_{i_{4j}i_{4j+1}}$ with $j\leq m$ (in the case when $j=m$, we identify $4m+1$ as $1$) since there should be an even number of both $a$ and $b$ terms between $a_{i_1i_2}$ and $a_{i_{4j}i_{4j+1}}$ to ensure non-crossing pairings. When $j=m$, the number of non-crossing pairings is just $g(m)$ by definition. When $j<m$, the number of non-crossing pairings within $a_{i_1,i_2}b_{i_2,i_3}\cdots b_{i_{4j-1}i_{4j}}a_{i_{4j}i_{4j+1}}$ is $g(j)$. We multiply this by the number of non-crossing pairings within the rest of the cyclic product which have no restrictions and is therefore simply $2f(m-j)$, with the 2 accounting for starting with either an $a$ or $b$. Thus, summing over all possible $j$'s, we have
\begin{align}
f(m) \ = \ g(m)+2\sum_{j=1}^{m-1}g(j)f(m-j).
\end{align}

Similarly, we know that either $b_{i_2 i_3}$ is matched with $b_{i_{4m-1}i_{4m}}$, or $b_{i_2 i_3}$ is matched with $b_{i_{4x_1+3}i_{4x_1+4}}$ and $b_{i_{4m-1}i_{4m}}$ is matched with $b_{i_{4m-4x_2-2}i_{4m-4x_2-1}}$, with $4x_1+4<4m-4x_2-2$, or $x_1+x_2\leq m-2$. In the first case, since there are no restrictions on the $4m-4$ terms between $b_{i_2 i_3}$ and $b_{i_{4m-1}i_{4m}}$, the number of non-crossing pairings is $2f(m-1)$. In the second case, the number of non-crossing pairings of terms between $b_{i_2 i_3}$ and $b_{i_{4x_1+3}i_{4x_1+4}}$ is $(1+\mathbbm{1}_{x_1>0})f(x_1)$, the number of non-crossing pairings of terms between $b_{i_{4m-4x_2-2}i_{4m-4x_2-1}}$ and $b_{i_{4m-1}i_{4m}}$ is $(1+\mathbbm{1}_{x_2>0})f(x_2)$, with the indicator functions accounting for the intermediary terms starting with either an $a$ or a $b$, and lastly the number of non-crossing pairings of terms between $b_{i_{4x_1+3}i_{4x_1+4}}$ and $b_{i_{4m-4x_2-2}i_{4m-4x_2-1}}$ is $g(m-1-x_1-x_2)$. Thus, 
\begin{align}
    g(m) \ = \ 2f(m-1) + \sum_{\substack{0\leq x_1,x_2\leq m-2\\ x_1+x_2\leq m-2}}(1+\mathbbm{1}_{x_1>0})(1+\mathbbm{1}_{x_2>0})f(x_1)f(x_2)g(m-1-x_1-x_2).
\end{align}

We have now defined our recurrence for $f(m)$, which counts the number of non-crossing pairings with respect to configurations in $\mathcal{C}_{2,4m}$ starting with an $a$. Since general configurations in $\mathcal{C}_{2,4m}$ can start with either an $a$ or a $b$, we multiply $f(m)$ by $2$ to get all possible non-crossing pairings of configurations in $\mathcal{C}_{2,4m}$, and we arrive at the even moments being $M_{2m}=2f(m)$.
\end{proof} 

A natural extension of the anticommutator $\{A_N,B_N\}$ is the \textbf{$\ell$-anticommutator} of $\ell$ matrix ensembles $A^{(1)}_N,A^{(2)}_{N},\dotsc, A^{(\ell)}_{N}$, defined as
\begin{align}
\{A^{(1)}_N, A^{(2)}_N, \dotsc, A^{(\ell)}_N\} \ := \ \sum_{\sigma\in S_\ell} A^{(\sigma(1))}_N A^{(\sigma(2))}_N \cdots A^{(\sigma(\ell))}_N,
\end{align}
where $S_\ell$ is the symmetric group of order $\ell$. An example of $3$-anticommutator and $4$-anticommutator is illustrated in Figure \ref{3Anticommutator} and \ref{4Anticommutator}.

\begin{figure}[ht]
\centering
\begin{minipage}[b]{0.4\textwidth}
\includegraphics[width=\textwidth]{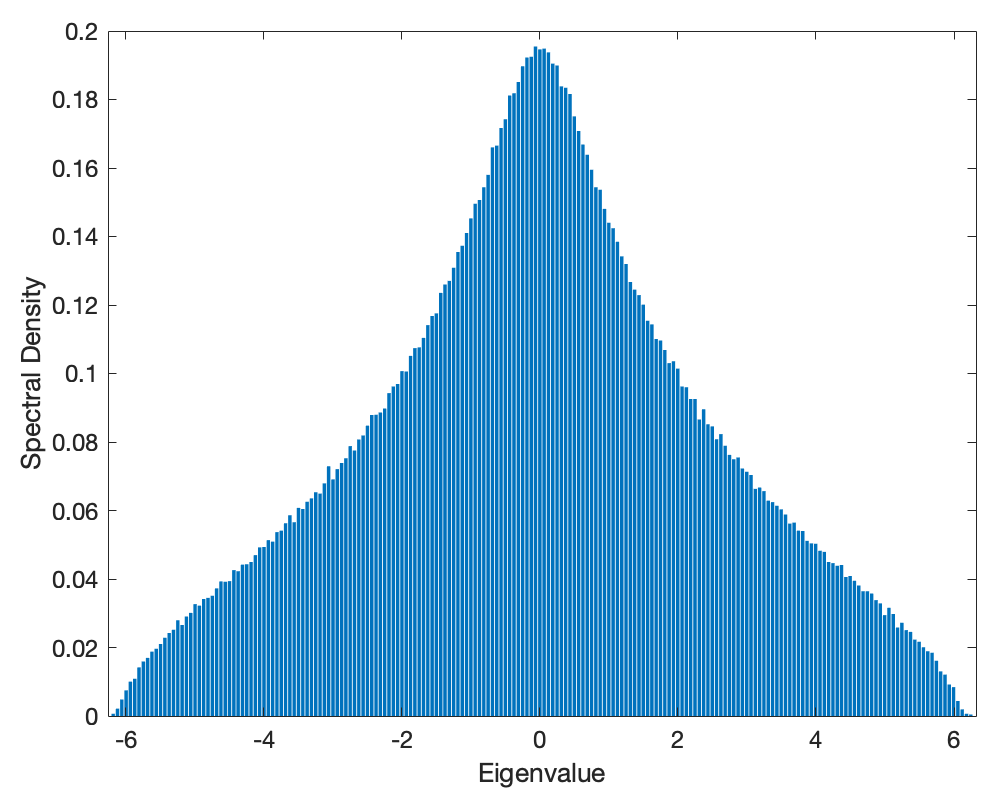}
\caption{Normalized empirical spectral density for one hundred 1500$\times$1500 matrices from $3$-anticommutator.}
\label{3Anticommutator}
\end{minipage}
\begin{minipage}[b]{0.4\textwidth}
\includegraphics[width=\textwidth]{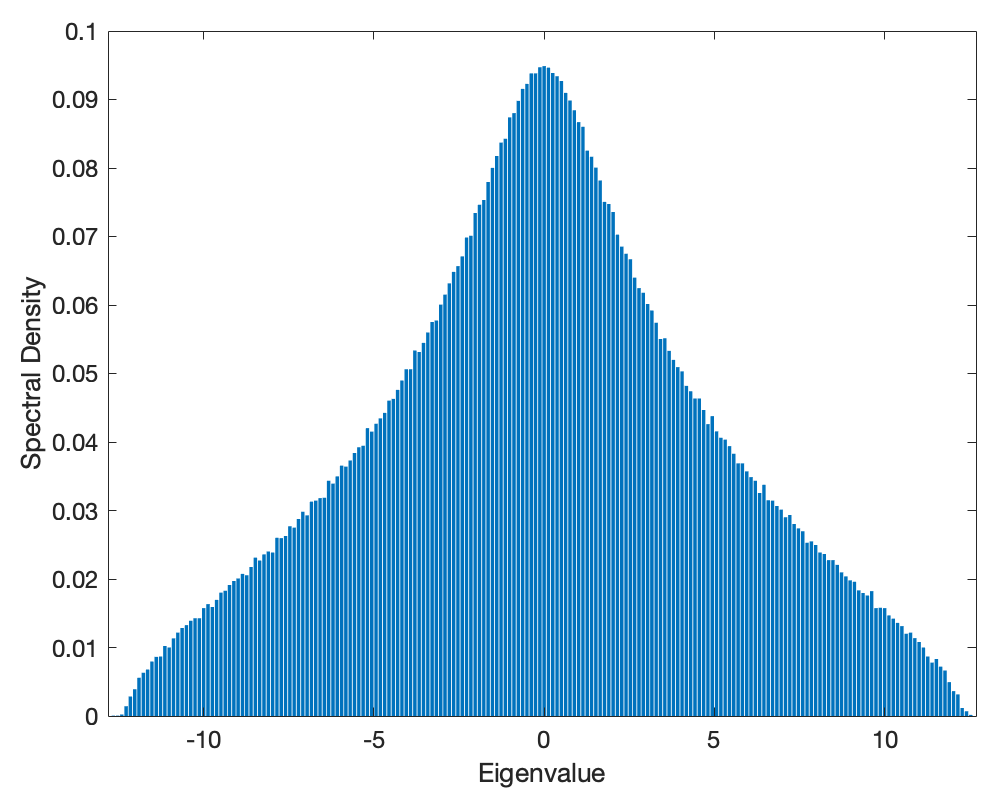}
\caption{Normalized empirical spectral density for one hundred 1500$\times$1500 matrices from $4$-anticommutator.}
\label{4Anticommutator}
\end{minipage}
\end{figure}
Based on numerical simulation, the spectrum of $\ell$-anticommutator is $\Theta(N^{\ell/2})$. Even after normalization, the support of the spectrum still increases as $\ell$ increases, and it seems that the support tends to infinity as $\ell$ tends to infinity. By employing the same method as in the proof of Lemma \ref{GOE-GOE moment recurrence}, we are able to obtain a recurrence relation for the limiting expected moments of the $\ell$-anticommutator of GOE. Now, however, instead of two interdependent recurrence relations, we have $\ell$ interdependent recurrence relations. We leave the result and proof in Appendix
\ref{AppendixMomentsAnti}.

Next, we use generating functions to find the limiting spectral distribution of $\{\textup{GOE, GOE}\}$ from the recurrence relations given in Lemma \ref{GOE-GOE moment recurrence}. We start by introducing some definitions and facts about generating functions, which is also useful for finding the spectral distribution of $\{\textup{GOE, PTE}\}$ later on.

\begin{definition}
For a sequence $\{a_n\}_{n=0}^\infty$ of real numbers, the \textbf{ordinary generating function} $A(z)$ of $\{a_n\}_{n=0}^\infty$ is defined as
\begin{align}
A(z) \ = \ \sum_{n=0}^\infty a_nz^n.
\end{align}
\end{definition}

\begin{definition}
For a doubly-indexed sequence $\{a_{n,m}\}_{n,m\geq 0}$ of real numbers, the \textbf{exponential bivariate generating function} $A(z,w)$ of $\{a_{n,m}\}_{n,m\geq 0}$ is defined as
\begin{align}
A(z,w) \ = \ \sum_{n,m\geq 0}a_{n,m}\frac{z^{n}}{n!}w^m.
\end{align}
\end{definition}

\begin{definition}
The \textbf{convolution} or the \textbf{Cauchy product} of two ordinary generating functions $A(z)=\sum_{n=0}^\infty a_nz^n$ and $B(z)=\sum_{n=0}^\infty b_nz^n$ is defined as $C(z)=A(z)B(z)$. The coefficient of $z^n$ of $C(z)$, denoted as $[z^n]C(z)$, is given by
\begin{align}
[z^n]C(z) \ = \ \sum_{k=0}^na_kb_{n-k}.
\end{align}
\end{definition}

\begin{theorem}
The limiting expected $2m$\textup{\textsuperscript{th}} moment $M_{2m}$ of $\{\textup{GOE, GOE}\}$ is given by the explicit formula
\begin{align}
M_{2m} \ = \ \frac{1}{m}\sum_{k=1}^m2^k\binom{2m}{k-1}\binom{m}{k}.
\end{align}
\end{theorem}

\begin{proof}
By Theorem \ref{GOE-GOE moment recurrence}, $M_{2m}=2f(m)$, where $g(0)=f(0)=1$ and for $m\geq 2$,
\begin{align}
f(m) &\ = \  g(m-1)+2\sum_{j=1}^{m-1}g(j-1)f(m-j), \label{f_recurrence_GOE_GOE}\\
g(m-1) &\ = \ 2f(m-1) + \sum_{\substack{0\leq x_1,x_2\leq m-2\\ x_1+x_2\leq m-2}}(1+\mathbbm{1}_{x_1>0})(1+\mathbbm{1}_{x_2>0})f(x_1)f(x_2)g(m-2-x_1-x_2).\label{g_recurrence_GOE_GOE}
\end{align}
Note that we have shifted $g$ relative to Theorem \ref{GOE-GOE moment recurrence} to make $g$ zero-indexed. Let $F(z),G(z),$ and $\tilde{F}(z)$ be the ordinary generating functions of $\{f(m)\}_{m=0}^\infty,\{g(m)\}_{m=0}^\infty,$ and $\{f(m)(1+\mathbbm{1}_{m>0})\}_{m=0}^\infty$ respectively. Note that here, $\tilde{F}(0)$ is set to be 1, and for $m\geq 1$, $M_{2m}$ is the $m$\textup{\textsuperscript{th}} coefficient of $\tilde{F}$. From \eqref{f_recurrence_GOE_GOE}, we have
\begin{align}
f(m+1) \ = \ -g(m)+2\sum_{j=0}^{m}g(j)f(m-j).
\end{align}
Multiplying the above expression by $z^m$ on both sides and summing over all $m\geq 0$ gives us the relation
\begin{align}
\frac{F(z)-1}{z} \ = \ -G(z)+2G(z)F(z).
\end{align}
Solving $G(z)$ in terms of $F(z)$, we have
\begin{align}
G(z) \ = \ \frac{F(z)-1}{z(2F(z)-1)}.\label{LHS_F_functional_equation}
\end{align}
On the other hand, from \eqref{g_recurrence_GOE_GOE}, we have
\begin{align}
g(m+1) &\ = \ 2f(m+1)+\sum_{\substack{0\leq x_1,x_2\leq m\\ x_1+x_2\leq m}}(1+\mathbbm{1}_{x_1>0})(1+\mathbbm{1}_{x_2>0})f(x_1)f(x_2)g(m-x_1-x_2)\nonumber\\
&\ = \ 2f(m+1)+\sum_{x_1=0}^{m}(1+\mathbbm{1}_{x_1>0})f(x_1)\sum_{x_2=0}^{m-x_1}f(x_2)(1+\mathbbm{1}_{x_2>0})g(m-x_1-x_2).\label{shifted_g_recurrence_GOE_GOE}
\end{align}
Multiplying the above expression by $z^m$ on both sides and summing over $m\geq 0$, we recognize that the double sum turns into two convolutions that yields $(2F(z)-1)^2G(z)$. Hence, \eqref{shifted_g_recurrence_GOE_GOE} gives the relation
\begin{align}
\frac{G(z)-1}{z} \ = \ 2\left(\frac{F(z)-1}{z}\right)+(2F(z)-1)^2G(z).
\end{align}
Solving $G(z)$ in terms of $F(z)$, we have
\begin{align}
G(z) \ = \ \frac{2F(z)-1}{1-z(2F(z)-1)^2}.\label{RHS_F_functional_equation}
\end{align}
So \eqref{LHS_F_functional_equation} and \eqref{RHS_F_functional_equation} together give the functional equation for $F(z)$
\begin{align}
\frac{F(z)-1}{z(2F(z)-1)} \ = \ \frac{2F(z)-1}{1-z(2F(z)-1)^2}.
\end{align}
Using the relation $\tilde{F}(z)=2F(z)-1$, we obtain
\begin{align}
\tilde{F}(z) \ = \ 1+2z\left(\frac{\tilde{F}(z)^2}{1-z\tilde{F}(z)^2}\right).\label{final_functional_equation_GOE_GOE}
\end{align}
Multiplying \eqref{final_functional_equation_GOE_GOE} by $1-z\tilde{F}(z)^2$, we see that $\tilde{F}$ also satisfies the functional equation
\begin{align}
\tilde{F}(z) \ = \ 1+z\left(\tilde{F}(z)^2+\tilde{F}(z)^3\right).
\end{align}
The recurrence relation induced by this functional equation is uniquely determined by the choice of the first term \cite{BS}. Since $\Tilde{F}(0)=1$, uniqueness together with \cite{BS} gives that the sequence of coefficients of $\tilde{F}$ is OEIS sequence A027307, the number of walks from $(0,0)$ to $(3m,0)$ on or above the $x$-axis with steps $(2,1)$, $(1,2)$ and $(1,-1)$. By \cite{YJ}, these are the $3$-Schr\"oder numbers $r_m^{(3)}$, which admit the explicit formula
\begin{align}
r_m^{(3)} \ = \ \frac{1}{m}\sum_{k=1}^m2^k\binom{2m}{k-1}\binom{m}{k}\label{3_schroder_number_formula}
\end{align}
via Theorem 2.4 of \cite{YJ}. Thus, $M_{2m}=[z^m]\tilde{F}(z)=r_m^{(3)}$, so \eqref{3_schroder_number_formula} gives the explicit formula for $M_{2m}$ as desired.
\end{proof}

\begin{cor}
The limiting spectral density of $\{\textup{GOE, GOE}\}$ is
\begin{align}
\mu(x) \ = \ -\frac{\sqrt{3}}{2\pi|x|}\left(\frac{3x^2+1}{9h(x)}-h(x)\right),\quad |x| \ \leq \ \sqrt{\frac{11+5\sqrt{5}}{2}},
\end{align}
where
\begin{align}
h(x) \ = \ \sqrt[3]{\frac{18x^2+1}{27}+\sqrt{\frac{x^2(1+11x^2-x^4)}{27}}}.
\end{align}
\end{cor}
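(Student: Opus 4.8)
The plan is to pass from the moment sequence to the limiting spectral density via the Stieltjes transform and the Stieltjes--Perron inversion formula. Since the even moments are $M_{2m}=r_m^{(3)}$ and the odd moments vanish, the limiting distribution $\mu$ is symmetric, and its Stieltjes transform
\begin{align}
G(s) \ := \ \int_{-\infty}^\infty \frac{d\mu(x)}{s-x} \ = \ \sum_{m\geq 0}\frac{M_{2m}}{s^{2m+1}} \ = \ \frac{1}{s}\,\tilde{F}\!\left(\frac{1}{s^2}\right)
\end{align}
is expressed through the generating function $\tilde{F}$ just computed (recall $\tilde{F}(0)=M_0=1$). First I would check that $\mu$ is compactly supported and that its moment problem is determinate, so that $G$ genuinely encodes $\mu$; this follows because $r_m^{(3)}$ grows only geometrically, whence $\limsup_m M_{2m}^{1/2m}<\infty$ and the defining series converges for all large $|s|$.

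Next I would substitute $w=\tilde{F}(1/s^2)=sG$ into the functional equation $\tilde{F}(z)=1+z\big(\tilde{F}(z)^2+\tilde{F}(z)^3\big)$ established in the proof of the preceding theorem. With $z=1/s^2$ this collapses to the cubic algebraic equation
\begin{align}
s\,G^3+G^2-s\,G+1 \ = \ 0.
\end{align}
I would then solve this cubic for $G=G(s)$ by Cardano's formula, selecting the branch with $G(s)\sim 1/s$ as $s\to\infty$ (the unique branch compatible with $G$ being the Stieltjes transform of a probability measure) and with $\operatorname{Im}G(x+i0^+)\leq 0$. The quantity $h(x)$ in the statement is precisely the real cube-root radical appearing in the Cardano solution of the depressed cubic, so this is the step that produces the stated closed form.

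The density is then recovered by Stieltjes--Perron inversion, $\mu(x)=-\tfrac{1}{\pi}\lim_{\varepsilon\to 0^+}\operatorname{Im}G(x+i\varepsilon)$: in the regime where the cubic has a complex-conjugate pair of roots, the imaginary part of the selected root, divided by $-\pi$, yields the expression $-\tfrac{\sqrt{3}}{2\pi|x|}\big(\tfrac{3x^2+1}{9h(x)}-h(x)\big)$, the $1/|x|$ factor arising from the prefactor $1/s$ in $G=\tfrac1s\tilde{F}(1/s^2)$ together with the evenness of $\mu$. The support endpoints are pinned down by the discriminant of the cubic: a direct computation gives $\Delta=4(s^4-11s^2-1)$, which is negative---forcing a conjugate pair of roots, hence a nonzero density---exactly when $s^2<\tfrac{11+5\sqrt{5}}{2}$, giving the stated interval $|x|\leq\sqrt{(11+5\sqrt{5})/2}$.

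The main obstacle I anticipate is twofold: correctly identifying the physical branch of the cubic uniformly across the support (so that the overall sign, positivity of $\mu$, and the $1/|x|$ factor come out right under inversion), and carrying out the Cardano reduction cleanly enough to match the imaginary part with the compact radical $h(x)$. The branch-selection and discriminant analysis, rather than the algebra of the radicals itself, is the conceptual heart of the argument.
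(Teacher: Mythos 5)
Your proposal is correct in strategy and its two concrete computations check out: substituting $\tilde{F}(1/s^2)=sG$ into $\tilde{F}(z)=1+z(\tilde{F}(z)^2+\tilde{F}(z)^3)$ does give $sG^3+G^2-sG+1=0$, and the discriminant of this cubic is indeed $4(s^4-11s^2-1)$, which is negative precisely for $s^2<\tfrac{11+5\sqrt{5}}{2}$, matching the stated support. However, your route is genuinely different from the paper's. The paper does not solve the cubic or perform Stieltjes--Perron inversion at all: it invokes Nica--Speicher \cite{commutator}, where the moment series of the anticommutator of free semicircular elements is given as a compositional inverse $M_0(z)=\bigl(\tfrac{y}{(2+y)(1+y)^2}\bigr)^{-1}(z^2)$, shows that $M(z):=\tilde{F}(z^2)-1$ satisfies the same cubic functional equation $z^2M^3+4z^2M^2+(5z^2-1)M+2z^2=0$ as $M_0$, concludes $M=M_0$ by uniqueness of the power-series solution with null constant term, and then simply cites the closed-form density from \cite{commutator}. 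What the paper's argument buys is brevity and the avoidance of Cardano algebra and branch bookkeeping, at the cost of depending on an external result; what your argument buys is a self-contained derivation that also explains \emph{why} the support endpoint is $\sqrt{(11+5\sqrt{5})/2}$ (the discriminant vanishing) rather than importing it. The caveats you flag are the right ones: to make your version complete you must justify continuing the moment-series identity for $G$ from a neighborhood of $s=\infty$ into the upper half-plane (both sides are analytic and agree near infinity, and determinacy follows from the geometric growth of the $3$-Schr\"oder numbers, as you note), select uniformly across the support the root with $G(s)\sim 1/s$ and $\operatorname{Im}G(x+i0^+)\leq 0$, and actually carry out the Cardano reduction to match the radical $h(x)$ --- this last step is left as an announced obstacle rather than executed, so as written your proposal is a correct and well-grounded skeleton rather than a finished proof.
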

\begin{proof}
By \cite{commutator}, the ordinary generating function of the limiting expected moments of $\{\textup{GOE, GOE}\}$ is given by
\begin{align}
M_0(z) \ = \ \left(\frac{y}{\left(2+y\right)(1+y)^2}\right)^{-1}\left(z^2\right),
\end{align}
where $(\cdot)^{-1}$ denotes inverse under composition of a formal power series. Here, we follow the convention used in \cite{commutator} that the constant term of an ordinary generating function is set to be $0$. This gives us
\begin{align*}
\frac{M_0(z)}{\left(2+M_0(z)\right)(1+M_0(z))^2} \ = \ z^2.
\end{align*}
Solving for $M_0(z)$, we find that $M_0(z)$ satisfies the functional equation
\begin{align*}
z^2M_0(z)^3+4z^2M_0(z)^2+(5z^2-1)M_0(z)+2z^2 \ = \ 0.
\end{align*}
On the other hand, we know that $\tilde{F}(z)=1+z(\tilde{F}(z)^2+\tilde{F}(z)^3)$ and the functional equation admits an unique solution. Letting $M(z):=\tilde{F}(z^2)-1$, we find that $M(z)$ satisfies the functional equation
\begin{align}\label{MGF functional eq}
M(z)+1 \ = \ 1+z^2\left((M(z)+1)^2+(M(z)+1)^3\right),
\end{align}
which also admits an unique solution. After simplifying \eqref{MGF functional eq}, we arrive the same functional equation as \eqref{MGF functional eq}
\begin{align}
z^2M(z)^3+4z^2M(z)^2+(5z^2-1)M(z)+2z^2 \ = \ 0.
\end{align}
Since $M_0,M$ satisfy the same functional equation and have null constant terms, it follows that $M(z)=M_0(z)$. By \cite{commutator}, $M(z)$ corresponds to the density function
\begin{align}
\mu(x)dx \ = \ -\frac{\sqrt{3}}{2\pi|x|}\left(\frac{3x^2+1}{9h(x)}-h(x)\right)dx,\quad |x| \ \leq \ \sqrt{\frac{11+5\sqrt{5}}{2}},
\end{align}
where
\begin{align}
h(x) \ = \ \sqrt[3]{\frac{18x^2+1}{27}+\sqrt{\frac{x^2(1+11x^2-x^4)}{27}}}.
\end{align}
\end{proof}

\subsection{Limiting Expected Moments of the Anticommutator of PTE and PTE}\label{momentsPTPT} The palindromic Toeplitz ensemble (PTE) was introduced by Massey-Miller-Sinsheimer in \cite{palindromicToeplitz} to remove the Diophantine obstruction encountered in the moment calculation of Toeplitz ensemble in \cite{Toeplitz}, where the limiting spectral distribution is almost an Gaussian. Essentially, the additional symmetry in the structure of PTE allows almost all pairings to have consistent choices of indexing and contribute in the limit (the pairings that don't have consistent choice of indexing are negligible in the limit). Through this fact, they showed that the limiting expected $2m$\textsuperscript{th} moment of the PTE is $(2m-1)!!$, which is exactly the $2m$\textsuperscript{th} moment of standard Gaussian, and hence the spectral distribution of PTE converges almost surely to Gaussian. The limiting expected moment calculation of PTE can be naturally extended to that of $\{\textup{PTE, PTE}\}$. By standard argument, the expected odd moments of $\{\textup{PTE, PTE}\}$ vanish in the limit. For expected even moments, we can view each cyclic product in the expected $2m$\textsuperscript{th} moment of $\{\textup{PTE, PTE}\}$ as a cyclic product in the expected $4m$\textsuperscript{th} moment of PTE. Thus, the matching in each cyclic product is again free, that is, almost all pairings have consistent choices of indexing and those pairings that don't have consistent choices of indexing are negligible in the limit. Thus, this gives us the genus expansion formula:
\begin{align}\label{genusPalindromicPalindromic}
M_m \ = \ \lim_{N\rightarrow \infty}M_m(N) \ = \ \sum_{C\in\mathcal{C}_{2,2m}}\sum_{\pi_C\in P_{2,C}(2m)}1.
\end{align}
Intuitively, this is the statement that $M_m$ is equal to the number of pairings with respect to configuration $C$ summed over all configurations in $\mathcal{C}_{2,2m}$.

\begin{theorem}\label{PTEPTEmomenT}
The limiting expected $2m$\textup{\textsuperscript{th}} moment $M_{2m}$ of $\{\textup{PTE, PTE}\}$ is $2^{2m}((2m-1)!!)^2$.
\end{theorem}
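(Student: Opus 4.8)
The plan is to evaluate the genus expansion formula \eqref{genusPalindromicPalindromic} directly. Applied to the $2m$\textsuperscript{th} moment, where each cyclic product has length $4m$, the formula reads
\begin{align}
M_{2m} \ = \ \sum_{C\in\mathcal{C}_{2,4m}}\sum_{\pi_C\in P_{2,C}(4m)}1.
\end{align}
Because this identity already encodes the free matching property of the PTE — namely that every pairing respecting a configuration $C$ contributes exactly $1$ in the limit, with even the crossing pairings surviving (in contrast to the $\{\textup{GOE, GOE}\}$ case, where Proposition \ref{non-crossing} killed all crossing pairings) — the remaining task is a purely combinatorial count: how many configurations are there, and how many compatible pairings does each one admit.

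First I would analyze the configurations. Every $C=c_1c_2\cdots c_{4m}\in\mathcal{C}_{2,4m}$ decomposes into the $2m$ consecutive pairs $(c_{2s-1},c_{2s})$, each of which is independently either $(a_{2s-1},b_{2s})$ or $(b_{2s-1},a_{2s})$. Two consequences follow at once: each configuration contains exactly $2m$ entries of type $a$ and $2m$ of type $b$, and the total number of configurations is $|\mathcal{C}_{2,4m}|=2^{2m}$.

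Next I would count the pairings compatible with a fixed $C$. Condition (4) in the definition of a partition with respect to $C$ forces any pairing in $P_{2,C}(4m)$ to match $a$'s only with $a$'s and $b$'s only with $b$'s, so such a pairing is exactly a choice of a perfect matching on the $2m$ positions of type $a$ together with an independent perfect matching on the $2m$ positions of type $b$. Since the number of perfect matchings of $2m$ labeled points is $(2m-1)!!$, the inner sum equals $(2m-1)!!\cdot(2m-1)!!=\bigl((2m-1)!!\bigr)^2$ for every $C$.

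Multiplying the two counts gives $M_{2m}=2^{2m}\bigl((2m-1)!!\bigr)^2$, as claimed (note $2^{2m}=4^m$, matching the statement in the introduction). The combinatorics here is essentially immediate, so the real content sits entirely in the input I am granting: the justification, summarized above \eqref{genusPalindromicPalindromic}, that the palindromic symmetry of the PTE gives every $a$/$a$ and $b$/$b$ pairing a consistent indexing and hence a unit contribution in the limit, with inconsistent choices being $O(N^{-1})$ and negligible. That is the only step I would flag as delicate; once it is in hand, the computation above completes the proof.
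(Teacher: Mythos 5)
Your proposal is correct and follows essentially the same route as the paper: it evaluates the genus expansion formula \eqref{genusPalindromicPalindromic} for cyclic products of length $4m$, counts the $2^{2m}$ configurations, and multiplies by the $(2m-1)!!$ matchings of the $a$'s and $(2m-1)!!$ matchings of the $b$'s permitted by the free matching property. You are also right to flag the free matching justification as the substantive input; the paper likewise delegates that step to the discussion preceding the theorem, which extends the indexing-consistency argument of \cite{palindromicToeplitz} to the anticommutator.
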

\begin{proof}
Recall that a valid configuration $C=c_{1}c_2\cdots c_{4m}$ satisfy $(c_{2s-1},c_{2s})\in \{(a_{2s-1},b_{2s}),(b_{2s-1},a_{2s})\}$, then the number of valid configurations is $2^{2m}$. Moreover, each configuration has $(2m-1)!!$ ways of matching up the $a$'s and $(2m-1)!!$ ways of matching up the $b$'s, then by \eqref{genusPalindromicPalindromic} we have
\begin{align}\label{PTEPTEmomentmoment}
M_{2m} \ = \ 2^{2m}((2m-1)!!)^2.
\end{align}
\end{proof}
In what follows, we provide two proofs of Corollary \ref{convolutiondensity}. The first one is based on the relationship between chi-square distribution and normal distribution, and the second one makes use of moment generating functions and novel integration techniques. Note that from now on, we write $X\sim \mu$ if random variable $X$ follows probability distribution $\mu$. Alternatively, if $X\sim \mu$ and $Y\sim \mu$, then we also write $X\sim Y$. We start by defining $\chi_k^2$ distribution and stating some of its properties.
\begin{definition}
\textbf{Chi-squared distribution with $k$ degree of freedom}, denoted as $\chi_k^2$, is characterized by the probability density function
\begin{align}
f(x) \ = \ \frac{1}{2^{k/2}\Gamma(k/2)}x^{k/2-1}e^{-x/2},
\end{align}
where $\Gamma(x)$ is the Gamma function.
\end{definition}
There is an interesting link between $\chi_1^2$ and $N(0,1)$: suppose that $X\sim N(0,1)$, $Y\sim \chi_1^2$, and $S$ is a random sign. Then $X\sim S\sqrt{Y}$. This is crucial to the first proof of Corollary \ref{convolutiondensity}.
\begin{cor}\label{convolutiondensity}
The limiting spectral density of $\{\textup{PTE, PTE}\}$ is given by the convolution
\begin{align}
\mu(x) \ = \ f(x) * g(x),
\end{align}
where $f(x)$ is the probability density function of $\chi_1^2$ and $g(x)$ is the probability density function of $-\chi_1^2$, the negative of a chi-squared random variable with 1 degree of freedom.
\end{cor}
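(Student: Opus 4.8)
The plan is to recognize the target density $f*g$ as the law of an explicit random variable, compute its moments in closed form, match them against Theorem~\ref{PTEPTEmomenT}, and then promote this moment agreement to an equality of distributions via a determinacy argument. Since $f$ is the density of $\chi_1^2$ and $g$ is the density of $-\chi_1^2$, the convolution $\mu=f*g$ is exactly the distribution of $Y_1-Y_2$, where $Y_1,Y_2$ are independent $\chi_1^2$ variables.

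First I would exploit the relationship $Y\eqd Z^2$ for $Z\sim N(0,1)$ recorded just before the statement to write $Y_1-Y_2\eqd Z_1^2-Z_2^2=(Z_1+Z_2)(Z_1-Z_2)$ with $Z_1,Z_2$ i.i.d.\ $N(0,1)$. Setting $U=Z_1+Z_2$ and $V=Z_1-Z_2$, the computation $\mathbb{E}[UV]=\mathbb{E}[Z_1^2-Z_2^2]=0$ together with joint normality shows that $U$ and $V$ are independent, each distributed as $N(0,2)$. Thus the variable whose law is $f*g$ factors as a product $UV$ of two independent centered Gaussians of variance $2$.

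The moment computation is then immediate. By the symmetry $Y_1-Y_2\eqd-(Y_1-Y_2)$ all odd moments vanish, consistent with the vanishing odd moments of $\{\textup{PTE},\textup{PTE}\}$. For the even moments, independence gives $\mathbb{E}[(UV)^{2m}]=\mathbb{E}[U^{2m}]\,\mathbb{E}[V^{2m}]$, and since the $2m$th moment of an $N(0,2)$ variable equals $2^m(2m-1)!!$, the product is $2^{2m}((2m-1)!!)^2$, matching $M_{2m}$ from Theorem~\ref{PTEPTEmomenT} exactly.

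It remains to conclude that the limiting spectral distribution, which carries these same moments, coincides with $f*g$; this is the step I expect to be the main obstacle. Here the moments grow like $(Cm)^{2m}$, faster than those of a compactly supported law, so one cannot argue from bounded support and must instead verify Carleman's condition. Using $(2m-1)!!=(2m)!/(2^m m!)$ and Stirling's formula one finds $M_{2m}^{1/(2m)}=\Theta(m)$, whence $\sum_m M_{2m}^{-1/(2m)}$ diverges, the moment problem is determinate, and the two distributions sharing all moments must agree. As an alternative route that sidesteps Carleman, I would instead compute the moment generating function of $f*g$ directly: the product of the $\chi_1^2$ and $-\chi_1^2$ transforms is $(1-2t)^{-1/2}(1+2t)^{-1/2}=(1-4t^2)^{-1/2}$, whose power series coefficients yield $M_{2m}/(2m)!=\binom{2m}{m}$, again agreeing with Theorem~\ref{PTEPTEmomenT}; since this transform is finite in a neighborhood of the origin, determinacy is then automatic.
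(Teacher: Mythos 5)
Your proof is correct, and it effectively fuses the paper's two proofs of this corollary while tightening one of them. The paper's first proof runs your factorization in the opposite direction: it starts from $Y_1,Y_2\sim\chi_1^2$ and manufactures Gaussians via random signs, writing $Y_1-Y_2=(S_1\sqrt{Y_1}+S_2\sqrt{Y_2})(S_1\sqrt{Y_1}-S_2\sqrt{Y_2})$, whereas you start from $Z_1,Z_2\sim N(0,1)$ and use $Z_1^2-Z_2^2=(Z_1+Z_2)(Z_1-Z_2)$ together with independence of the sum and difference from joint normality; these are the same algebraic identity and equally valid. The genuine differences are twofold. First, the paper's first proof leans on Theorem \ref{section2Bulk} plus an implicit appeal to the method of moments without ever checking that the limiting law, which has unbounded support, is moment-determinate; your explicit Carleman verification $M_{2m}^{1/(2m)}=\Theta(m)$ fills exactly this gap, which in the paper is only repaired by the second, MGF-based proof. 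Second, your alternative transform route reaches $(1-4t^2)^{-1/2}$ in one line by multiplying the $\chi_1^2$ and $-\chi_1^2$ moment generating functions and invoking the uniqueness statement (Proposition \ref{uniqueness of moment generating function}), whereas the paper arrives at the same function the hard way, summing the moment series $1+\sum_{m\geq1} 4^m((2m-1)!!)^2 z^{2m}/(2m)!$ through a Wallis-type trigonometric integral (Lemma \ref{MGFcomp}); your coefficient check is right, since $4^m((2m-1)!!)^2=(2m)!\binom{2m}{m}$ makes the series the generalized binomial expansion of $(1-4z^2)^{-1/2}$. In short, each of your two closing arguments matches one of the paper's proofs, but your version makes the determinacy hypothesis explicit where the paper's first proof leaves it tacit, and your transform computation is more economical than the paper's integral evaluation.
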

\begin{proof}[Proof 1 of Corollary \ref{convolutiondensity}]
We know that odd moments of normal $N(0,2)$ distribution are 0 and the $2m$\textsuperscript{th} moment of normal $N(0,2)$ distribution is $2^m(2m-1)!!$. Suppose that $X,Y$ are i.i.d. random variables $\sim N(0,2)$. Then we have
\begin{align}
\mathbb{E}[XY] \ = \ \mathbb{E}[X]\mathbb{E}[Y] \ = \ (2^m(2m-1)!!)^2 \ = \ 2^{2m}((2m-1)!!)^2. 
\end{align}
These match with the limiting expected moments of $\{\textup{PTE,PTE}\}$. We will establish in Theorem \ref{section2Bulk} that the empirical spectral distribution of $\{\textup{PTE, PTE}\}$ converges almost surely to that of the product of two i.i.d. random variables $\sim N(0,2)$. On the other hand, suppose that $Y_1,Y_2$ are i.i.d. random variables $\sim \chi_1^2$ and $S_1,S_2$ are i.i.d. random signs. We know that $S_1\sqrt{Y_1}\sim N(0,1)$, $S_2\sqrt{Y_2}\sim N(0,1)$. Moreover, $S_1\sqrt{Y_1}$ and $S_2\sqrt{Y_2}$ are independent, then $Y_1-Y_2=(S_1\sqrt{Y_1}+S_2\sqrt{Y_2})\cdot (S_1\sqrt{Y_1}-S_2\sqrt{Y_2})$ is the product of two i.i.d. random variables $\sim N(0,2)$. Hence, the limiting spectral distribution of $\{\textup{PTE, PTE}\}$ agrees with that of $Y_1-Y_2$. Furthermore, we know that the probability density function of the sum of two independent random variables is the convolution of the probability density function of the two random variables. This proves the claim.
\end{proof}
On the other hand, proof 2 of Corollary \ref{convolutiondensity} makes use of the fact that random variables whose moment generating functions agree on a small neighborhood of 0 has the same distribution. Then, it suffices to prove that the moment generating function of the limiting spectral distribution of $\{\textup{PTE, PTE}\}$, which is $M(z):=1+\sum_{m=1}^\infty 2^{2m}((2m-1)!!)^2\frac{z^{2m}}{(2m)!}$, agrees with the moment generating function $Y_1-Y_2$, where $Y_1, Y_2$ are i.i.d. $\sim \chi_1^2$, on a small neighborhood of 0.

\begin{lemma}
The moment generating function  $M(z):=1+\sum_{m=1}^\infty 2^{2m}((2m-1)!!)^2\frac{z^{2m}}{(2m)!}$ of the limiting spectral distribution of $\{\textup{PTE, PTE}\}$ converges on the interval $(-\frac{1}{2},\frac{1}{2})$.
\end{lemma}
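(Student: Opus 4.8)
The plan is to make the coefficients completely explicit before saying anything about convergence. Writing $b_m$ for the coefficient of $z^{2m}$ in $M(z)$, we have $b_0 = 1$ and $b_m = 2^{2m}((2m-1)!!)^2/(2m)!$ for $m \geq 1$. First I would rewrite the double factorial using the standard identity
\begin{align}
(2m-1)!! \ = \ \frac{(2m)!}{2^m m!},
\end{align}
so that $((2m-1)!!)^2 = ((2m)!)^2 / (4^m (m!)^2)$. Substituting this into $b_m$ and cancelling one factor of $(2m)!$ against the denominator, the powers of $2$ and $4$ collapse, and I expect to find the clean identity
\begin{align}
b_m \ = \ \frac{(2m)!}{(m!)^2} \ = \ \binom{2m}{m}.
\end{align}
This reduces the whole problem to understanding the growth of the central binomial coefficients, and in fact identifies $M(z)$ as $\sum_{m \geq 0}\binom{2m}{m}z^{2m}$.

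With the coefficients identified, the second step is a routine convergence test. I would apply the ratio test to the series in the variable $z^{2}$: using
\begin{align}
\frac{\binom{2m+2}{m+1}}{\binom{2m}{m}} \ = \ \frac{(2m+2)(2m+1)}{(m+1)^2} \ = \ \frac{2(2m+1)}{m+1} \ \longrightarrow \ 4
\end{align}
as $m \to \infty$, the ratio of consecutive terms $b_{m+1}|z|^{2(m+1)}/(b_m|z|^{2m})$ tends to $4|z|^2$. Hence the series converges absolutely when $4|z|^2 < 1$, that is, for $|z| < \tfrac{1}{2}$, which is exactly the claimed interval $(-\tfrac12,\tfrac12)$. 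As a cross-check, the root-test estimate via Stirling's approximation, $\binom{2m}{m} \sim 4^m/\sqrt{\pi m}$, gives $b_m|z|^{2m} \sim (4z^2)^m/\sqrt{\pi m}$, confirming the radius of convergence is precisely $\tfrac12$.

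Honestly, there is no deep obstacle here: the only nontrivial move is recognizing that the superficially complicated coefficient $2^{2m}((2m-1)!!)^2/(2m)!$ simplifies to $\binom{2m}{m}$, after which the endpoint $\tfrac12$ is forced by the well-known growth rate $\binom{2m}{m}\approx 4^m$. The mild care required is purely bookkeeping in the first display, making sure every factor of $2$ is accounted for so that the cancellation is exact. I would also note in passing that this computation essentially recovers the closed form $M(z) = (1-4z^2)^{-1/2}$, since $\sum_{m\ge 0}\binom{2m}{m}x^m = (1-4x)^{-1/2}$ for $|x| < \tfrac14$; this is not needed for the stated lemma but makes the convergence radius transparent and will be convenient for the subsequent moment-generating-function argument in the second proof of Corollary \ref{convolutiondensity}.
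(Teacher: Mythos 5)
Your proposal is correct and follows essentially the same route as the paper: both arguments come down to the ratio test applied to the series in $z^2$, with the ratio of consecutive terms tending to $4|z|^2$ (the paper runs the ratio test directly on the unsimplified coefficients $2^{2m}((2m-1)!!)^2/(2m)!$ after substituting $x=z^2$, while you first collapse them to $\binom{2m}{m}$, which is a valid and exact simplification). Your identification $M(z)=\sum_{m\geq 0}\binom{2m}{m}z^{2m}$ is a worthwhile bonus beyond the lemma itself, since via $\sum_{m\geq 0}\binom{2m}{m}x^m=(1-4x)^{-1/2}$ it immediately yields the closed form $M(z)=(1-4z^2)^{-1/2}$ that the paper later derives in Lemma \ref{MGFcomp} by a Wallis-integral computation.
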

\begin{proof}
We use the transformation $z^2\mapsto x$. It suffices to prove that $F(\sqrt{x})=1+\sum_{m=1}^\infty 2^{2m}((2m-1)!!)^2\frac{x^{m}}{(2m)!}$ converges on $[0,\frac{1}{4})$. After the transformation, the $m$\textsuperscript{th} term of the series is $a_m:=2^{2m}((2m-1)!!)^2\frac{x^m}{(2m)!}$. Then,
\begin{align}
\lim_{m\rightarrow\infty}\left|\frac{a_{m+1}}{a_m}\right|&\ = \ \lim_{m\rightarrow\infty} \frac{2^{2(m+1)}((2(m+1)-1)!!)^2}{2^{2m}((2m-1)!!)^2} \times \frac{(2m)!}{(2(m+1))!}|x| \nonumber\\
&\ = \ \lim_{m\rightarrow\infty} 4\times \frac{(2(m+1)-1)^2}{2(m+1)(2(m+1)-1)}|x| \nonumber\\
&\ = \ 4|x|.
\end{align}
By the ratio test, when $4|x|<1$, the series converges absolutely. Thus, $F(\sqrt{x})$ converges on $[0,\frac{1}{4})$, as desired.
\end{proof}

To find a closed-form expression of $M(z)$, we employ the following calculus fact.
\begin{prop}\cite{MT-B}\label{trignometric integral}
For even $n> 0$
\begin{align}
\int_{0}^{\frac{\pi}{2}}\sin^n(x)dx \ = \ \frac{\pi}{2}\times \frac{(n-1)!!}{n!!}.
\end{align}
\end{prop}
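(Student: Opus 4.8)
The plan is to prove the reduction formula $I_n = \frac{n-1}{n}\,I_{n-2}$ for the Wallis integral $I_n := \int_0^{\pi/2}\sin^n(x)\,dx$ by integration by parts, and then iterate it down to an elementary base case. First I would write $\sin^n(x) = \sin^{n-1}(x)\sin(x)$ and integrate by parts with $u = \sin^{n-1}(x)$ and $dv = \sin(x)\,dx$, so that $v = -\cos(x)$ and $du = (n-1)\sin^{n-2}(x)\cos(x)\,dx$. This yields
\begin{align}
I_n \ = \ \left[-\sin^{n-1}(x)\cos(x)\right]_0^{\pi/2} + (n-1)\int_0^{\pi/2}\sin^{n-2}(x)\cos^2(x)\,dx.
\end{align}

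Next I would observe that the boundary term vanishes: at $x = \pi/2$ the factor $\cos(x)$ is zero, and at $x = 0$ the factor $\sin^{n-1}(x)$ is zero since $n \geq 2$. Replacing $\cos^2(x)$ by $1 - \sin^2(x)$ then gives $I_n = (n-1)(I_{n-2} - I_n)$, which rearranges to $n\,I_n = (n-1)I_{n-2}$, i.e.
\begin{align}
I_n \ = \ \frac{n-1}{n}\,I_{n-2}.
\end{align}

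Finally, for even $n = 2k$ I would iterate this recurrence down to the base case $I_0 = \int_0^{\pi/2}\,dx = \pi/2$, obtaining the telescoping product
\begin{align}
I_{2k} \ = \ \frac{(2k-1)(2k-3)\cdots 1}{(2k)(2k-2)\cdots 2}\cdot \frac{\pi}{2} \ = \ \frac{(2k-1)!!}{(2k)!!}\cdot\frac{\pi}{2},
\end{align}
which is exactly $\frac{\pi}{2}\cdot\frac{(n-1)!!}{n!!}$. There is no serious obstacle here; the only points requiring care are the vanishing of the boundary term (valid precisely because $n > 0$ is even, hence $n \geq 2$) and the bookkeeping that identifies the product of descending odd integers with $(n-1)!!$ and the product of descending even integers with $n!!$. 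An alternative route would express $I_n$ through the Beta function as $\tfrac{1}{2}B\!\left(\tfrac{n+1}{2},\tfrac{1}{2}\right)$ and simplify the resulting half-integer Gamma values, but the reduction-formula argument is more elementary and entirely self-contained.
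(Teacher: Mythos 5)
Your proof is correct: the integration-by-parts reduction $I_n = \frac{n-1}{n}I_{n-2}$, the vanishing of the boundary term for $n \geq 2$, and the telescoping down to $I_0 = \pi/2$ are all sound, and this is the standard argument for the Wallis integrals. The paper itself supplies no proof of this proposition --- it is simply cited from \cite{MT-B} as a known calculus fact --- and your argument is precisely the textbook derivation found in such references, so there is nothing further to compare.
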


\begin{lemma}\label{MGFcomp}
For $z\in (-\frac{1}{2},\frac{1}{2})$,
\begin{align}
M(z) \ = \ \frac{1}{\sqrt{1-4z^2}} \ = \ \frac{1}{\sqrt{1+2z}}\cdot\frac{1}{\sqrt{1-2z}}.
\end{align}
\end{lemma}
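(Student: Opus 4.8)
The plan is to express each coefficient of the power series $M(z)$ as a Wallis-type trigonometric integral using Proposition \ref{trignometric integral}, interchange summation and integration to collapse the whole series into a single elementary integral, and then evaluate that integral in closed form. Write $c_m$ for the coefficient of $z^{2m}$, so that $c_m = 2^{2m}((2m-1)!!)^2/(2m)!$ and $M(z)=\sum_{m\geq 0}c_m z^{2m}$ with $c_0=1$. The first step is purely algebraic: using $(2m)! = (2m)!!\,(2m-1)!! = 2^m m!\,(2m-1)!!$, one simplifies $c_m$ to $2^m(2m-1)!!/m!$. Then Proposition \ref{trignometric integral} with $n=2m$ gives $(2m-1)!! = \tfrac{2^{m+1}m!}{\pi}\int_0^{\pi/2}\sin^{2m}x\,dx$, and substituting this yields the clean representation
\begin{align}
c_m \ = \ \frac{2}{\pi}\int_0^{\pi/2}(2\sin x)^{2m}\,dx.
\end{align}

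Next I would sum the series term by term. Since $|2z\sin x|\leq 2|z|<1$ throughout $[0,\pi/2]$ for $z\in(-\tfrac12,\tfrac12)$, the geometric series $\sum_{m\geq 0}(2z\sin x)^{2m}$ converges uniformly in $x$, which justifies swapping the sum and the integral. This gives
\begin{align}
M(z) \ = \ \frac{2}{\pi}\int_0^{\pi/2}\frac{dx}{1-4z^2\sin^2 x}.
\end{align}
The remaining task is to evaluate this integral. I would use the Weierstrass substitution $t=\tan x$, so that $dx = dt/(1+t^2)$ and $\sin^2 x = t^2/(1+t^2)$; the integrand becomes $dt/\bigl(1+(1-4z^2)t^2\bigr)$ over $t\in(0,\infty)$, an arctangent integral equal to $\tfrac{\pi}{2}(1-4z^2)^{-1/2}$. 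Multiplying by $2/\pi$ gives $M(z) = (1-4z^2)^{-1/2}$, and factoring $1-4z^2 = (1-2z)(1+2z)$ produces the product form claimed in the statement.

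The main obstacle is not any single hard computation but keeping the bookkeeping honest at two points: first, the double-factorial manipulation that turns $c_m$ into the single integral $\tfrac{2}{\pi}\int_0^{\pi/2}(2\sin x)^{2m}dx$ must be checked against the base case $c_0=1$ (which it satisfies, since $\tfrac{2}{\pi}\int_0^{\pi/2}dx=1$); and second, the interchange of summation and integration must be genuinely justified rather than assumed, which is why the restriction $z\in(-\tfrac12,\tfrac12)$ — guaranteeing $4z^2\sin^2 x < 1$ uniformly — is exactly the hypothesis we need. (As a sanity check, one may instead verify the identity directly by recognizing $c_m = \binom{2m}{m}$ after the same double-factorial simplification and comparing with the binomial expansion of $(1-4z^2)^{-1/2}=\sum_{m\geq 0}\binom{2m}{m}z^{2m}$; the integral route is the one I would write up since Proposition \ref{trignometric integral} has been set up for precisely this purpose.)
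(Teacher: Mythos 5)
Your proposal is correct and takes essentially the same route as the paper's proof: rewrite the coefficient of $z^{2m}$ as $\frac{(2m-1)!!}{(2m)!!}(2z)^{2m}$, represent it through the Wallis integral of Proposition \ref{trignometric integral}, interchange sum and integral via uniform convergence of the geometric series for $|z|<\tfrac12$, and evaluate the resulting integral with the substitution $u=\tan x$ to obtain $(1-4z^2)^{-1/2}$. One cosmetic remark: $t=\tan x$ is the ordinary tangent substitution, not the Weierstrass substitution $t=\tan(x/2)$, though this does not affect the computation.
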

\begin{proof}
First, we observe that
\begin{align}
M(z) \ = \ 1+\sum_{m=1}^\infty 2^{2m}((2m-1)!!)^2\frac{z^{2m}}{(2m)!} \ = \ 1+\sum_{m=1}^\infty \frac{(2m-1)!!}{(2m)!!}(2z)^{2m}.
\end{align}
By Proposition \ref{trignometric integral},
\begin{align}
M(z) \ = \ 1+\frac{2}{\pi}\sum_{m=1}^\infty \int_{0}^{\frac{\pi}{2}}(2z\sin(x))^{2m}dx \ = \ \frac{2}{\pi}\sum_{m=0}^\infty \int_{0}^{\frac{\pi}{2}}(2z\sin(x))^{2m}dx.
\end{align}
Note that since $z\in (-\frac{1}{2},\frac{1}{2})$, $|2z\sin(x)|<1$, then $\sum_{m=0}^\infty (2z\sin(x))^{2m}$ as a geometric series converges uniformly to $\frac{1}{1-4z^2\sin^2(x)}$. Thus we can exchange the order of the limit and the integration sign, and we have
\begin{align}
M(z) &\ = \ \frac{2}{\pi}\int_0^{\frac{\pi}{2}}\frac{1}{1-4z^2\sin^2(x)}dx \nonumber\\
&\ = \ \frac{2}{\pi}\int_{0}^{\frac{\pi}{2}}\frac{\sec^2(x)}{\sec^2(x)-4z^2\tan^2(x)}dx.
\end{align}
Using $\sec^2(x)-\tan^2(x)=1$, we have
\begin{align}
M(z) &\ = \ \frac{2}{\pi}\int_{0}^{\frac{\pi}{2}}\frac{\sec^2(x)}{1+(1-4z^2)\tan^2(x)}dx.
\end{align}
Let $u=\tan(x)$. Then $du=\sec^2(x)dx$. Thus,
\begin{align}
M(z) &\ = \ \frac{2}{\pi}\int_{0}^\infty \frac{du}{1+(1-4z^2)u^2} \nonumber \\
&\ = \ \frac{2}{\pi}\frac{1}{\sqrt{1-4z^2}}\arctan((\sqrt{1-4z^2})u)\Bigg|_{0}^{\infty} \nonumber \\
&\ = \ \frac{1}{\sqrt{1-4z^2}} \ = \ \frac{1}{\sqrt{1+2z}}\cdot\frac{1}{\sqrt{1-2z}}.
\end{align}
\end{proof}
\begin{proposition}
The moment generating function $F(z)$ of $\chi_1^2$ is $\frac{1}{\sqrt{1-2z}}$.
\end{proposition}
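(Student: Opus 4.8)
The plan is to compute the moment generating function of a $\chi_1^2$ random variable directly from its probability density function, namely
\begin{align*}
F(z) \ = \ \mathbb{E}[e^{zX}] \ = \ \int_0^\infty e^{zx}\cdot\frac{1}{2^{1/2}\Gamma(1/2)}x^{-1/2}e^{-x/2}\,dx,
\end{align*}
using the density given in the definition of $\chi_k^2$ with $k=1$ and the fact that $\Gamma(1/2)=\sqrt{\pi}$. First I would combine the two exponential factors into $e^{-(1/2-z)x}$, which requires $z<1/2$ for convergence, matching the neighborhood of $0$ relevant to the surrounding argument. Then I would substitute $u=(1/2-z)x$ to reduce the integral to a standard Gamma integral $\int_0^\infty u^{-1/2}e^{-u}\,du=\Gamma(1/2)=\sqrt{\pi}$, with the change of variables producing a factor of $(1/2-z)^{-1/2}$.

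Carrying this out, the constant $\frac{1}{\sqrt{2}\sqrt{\pi}}$ multiplied by $\sqrt{\pi}\,(1/2-z)^{-1/2}$ gives $\frac{1}{\sqrt{2}}(1/2-z)^{-1/2}=\frac{1}{\sqrt{2}}\cdot\frac{1}{\sqrt{1/2-z}}=\frac{1}{\sqrt{1-2z}}$, as claimed. An alternative route, which I would mention as a cross-check, is to read off the moment generating function from the series: the $m$\textsuperscript{th} moment of $\chi_1^2$ is $(2m-1)!!$, so
\begin{align*}
F(z) \ = \ \sum_{m=0}^\infty (2m-1)!!\,\frac{z^m}{m!} \ = \ \sum_{m=0}^\infty \frac{(2m)!}{2^m m!}\cdot\frac{z^m}{m!},
\end{align*}
which one recognizes via the generalized binomial series as $(1-2z)^{-1/2}$.

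There is essentially no obstacle here; the result is a classical computation and the only points requiring care are the convergence condition $z<1/2$ (so the combined exponential is integrable) and the correct bookkeeping of the constants $2^{1/2}$ and $\Gamma(1/2)=\sqrt{\pi}$ so that they cancel cleanly. I would present the direct integral computation as the main proof since it is self-contained and transparent, and note that this factorization of $M(z)$ in Lemma \ref{MGFcomp} as $\frac{1}{\sqrt{1+2z}}\cdot\frac{1}{\sqrt{1-2z}}$ is exactly the product of the moment generating function of $\chi_1^2$ evaluated at $z$ and at $-z$, which is what is needed to identify the limiting spectral distribution of $\{\textup{PTE, PTE}\}$ with the distribution of $Y_1-Y_2$ for independent $Y_1,Y_2\sim\chi_1^2$ and thereby complete the second proof of Corollary \ref{convolutiondensity}.
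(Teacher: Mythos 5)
Your proof is correct and takes essentially the same approach as the paper: the paper's proof simply states that the result follows from straightforward computation of $F(z)=\mathbb{E}\left[e^{zY}\right]$ for $Y\sim\chi_1^2$, and your direct evaluation of the density integral via the substitution $u=(1/2-z)x$ and the Gamma integral is exactly that computation, carried out in full detail (with the correct convergence restriction $z<1/2$ and constant bookkeeping).
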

\begin{proof}
This follows from straightforward computation of $F(z)=\mathbb{E}\left[e^{zY}\right]$, where $Y\sim \chi_1^2$.
\end{proof}
Suppose that $Y_1, Y_2$ are i.i.d. $\sim \chi_1^2$. Then the moment generating function of $Y_1-Y_2$ is given by
\begin{align}
\mathbb{E}\left[e^{z(Y_1-Y_2)}\right] \ = \ \mathbb{E}\left[e^{(-z)Y_2}\right]\mathbb{E}\left[e^{zY_1}\right] \ = \ \frac{1}{\sqrt{1+2z}}\cdot \frac{1}{\sqrt{1-2z}}.
\end{align}
This agrees with the moment generating function of the limiting spectral distribution of $\{\textup{PTE, PTE}\}$ by Lemma \ref{MGFcomp}. We know that the probability density function of $Y_1-Y_2$ is given by $\mu(x)dx=f(x)*g(x)dx$ as stated in Corollary \ref{convolutiondensity}. Then proof 2 of Corollary \ref{convolutiondensity} is completed by the following fact about moment generating functions.
\begin{proposition}

[\cite{Bil}]\label{uniqueness of moment generating function}
Let $Y_1$ and $Y_2$ be random variables with the same moment generating function $M(z)$ in some neighborhood of $0$, then $Y_1$ and $Y_2$ have the same distribution.
\end{proposition}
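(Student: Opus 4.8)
The plan is to reduce this statement about moment generating functions to the classical uniqueness theorem for characteristic functions, using analytic continuation to bridge the two. Write $M_j(z) = \E{e^{zY_j}}$ for $j=1,2$, and suppose $M_1(z) = M_2(z)$ for all $z$ in a real interval $(-\delta,\delta)$ with $\delta > 0$. The characteristic function of $Y_j$ is $\varphi_j(t) = \E{e^{itY_j}}$, and the uniqueness theorem for characteristic functions (Fourier inversion) guarantees that $\varphi_1 \equiv \varphi_2$ forces $Y_1$ and $Y_2$ to share the same distribution. Thus it suffices to show that equality of the moment generating functions on a real neighborhood of $0$ propagates to equality of the characteristic functions on all of $\R$.

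First I would establish that finiteness of $M_j$ on $(-\delta,\delta)$ lets $z \mapsto \E{e^{zY_j}}$ be extended to a function holomorphic on the vertical strip $S = \{z \in \C : |\mathrm{Re}(z)| < \delta\}$. For $\mathrm{Re}(z) = \sigma$ with $|\sigma| \leq \sigma' < \delta$ we have $|e^{zY_j}| = e^{\sigma Y_j} \leq e^{\sigma'|Y_j|}$, and
\begin{align}
\E{e^{\sigma'|Y_j|}} \ \leq \ \E{e^{\sigma'Y_j}} + \E{e^{-\sigma'Y_j}} \ = \ M_j(\sigma') + M_j(-\sigma') \ < \ \infty.
\end{align}
This integrable dominating bound legitimizes differentiating under the expectation by dominated convergence and checking the Cauchy--Riemann equations, so $M_j$ is holomorphic on $S$; equivalently, one may invoke Morera's theorem after verifying continuity and the vanishing of contour integrals via Fubini. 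In particular all moments of $Y_j$ are finite and $M_j$ is real-analytic on $(-\delta,\delta)$.

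With both $M_1$ and $M_2$ holomorphic on the connected open set $S$ and agreeing on the real segment $(-\delta,\delta) \subset S$, the identity theorem for holomorphic functions forces $M_1 \equiv M_2$ throughout $S$. Restricting to the imaginary axis $z = it$, which lies in $S$, yields $\varphi_1(t) = M_1(it) = M_2(it) = \varphi_2(t)$ for every $t \in \R$. Applying the uniqueness theorem for characteristic functions then gives that $Y_1$ and $Y_2$ have the same distribution, completing the argument. (An alternative route bypasses characteristic functions entirely: finiteness of the moment generating function near $0$ forces the moment sequence to satisfy Carleman's condition, so the distribution is determined by its moments, which are common to $Y_1$ and $Y_2$.)

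The main obstacle is the holomorphic-extension step: one must carefully justify that the exponential-moment bound is uniform on compact substrips of $S$ and that it legitimizes both the interchange of differentiation and expectation and the subsequent appeal to the identity theorem. The remainder is essentially bookkeeping, since the uniqueness theorem for characteristic functions is standard and the identity theorem is immediate once holomorphy on $S$ is in hand.
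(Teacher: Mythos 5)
The paper offers no proof of this proposition at all --- it is stated with a bare citation to Billingsley --- so there is no in-paper argument for yours to diverge from, and your proof is correct and complete. The route you take (extend $M_j$ holomorphically to the strip $\{z : |\mathrm{Re}(z)| < \delta\}$ via the dominating bound $\E{e^{\sigma'|Y_j|}} \leq M_j(\sigma') + M_j(-\sigma')$, propagate equality from the real segment by the identity theorem, restrict to the imaginary axis to equate characteristic functions, and invoke their uniqueness theorem) is precisely the standard argument behind the cited reference, and your alternative via determinacy of the moment problem is likewise valid.
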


\subsection{Limiting Expected Moments of the Anticommutator of GOE and PTE}\label{subsectionGOEpalindromicToeplitz}
So far, we've only been looking at \textbf{homogeneous} anticommutator ensembles $\{A_N, B_N\}$, i.e., $A_N$ and $B_N$ are matrices from the same ensemble. Genus expansions of $\{\textup{GOE, GOE}\}$ and $\{\textup{PTE, PTE}\}$ suggest that in general, genus expansion of a homogeneous anticommutator ensemble $\{A_N, B_N\}$ is a straightforward generalization of the genus expansion of $A_N$ (or $B_N$). A natural question to ask is: what does genus expansion of an \textbf{inhomogeneous} anticommutator ensembles $\{A_N, B_N\}$ (i.e., $A_N$ and $B_N$ are matrices from different ensembles) look like? In this section, we turn our attention to an inhomogeneous anticommutator ensemble, namely $\{\textup{GOE, PTE}\}$. Interestingly, we see that the matching properties of GOE and PTE are well preserved under the anticommutator operator, that is, the contributions to the expected moments of $\{\textup{GOE, PTE}\}$ in the limit come solely from non-crossing matchings of the GOE terms and free matchings of the PTE terms that don't cross the matchings of the GOE terms.

Let $A_N$ be a matrix sampled from GOE and $B_N$ be a matrix sampled from PTE. Similar to the previous examples, we have that the $m$\textsuperscript{th} moment of $\{A_N, B_N\}$ is given by
\begin{align}\label{expansionGOEPT}
M_m(N) \ = \ \frac{1}{N^{m+1}}\sum_{C\in\mathcal{C}_{2, 2m}}\sum_{1\leq i_1,\dots, i_{2m}\leq N}\mathbb{E}[c_{i_1i_2}c_{i_2i_3}\cdots c_{i_{2m}i_1}].
\end{align}
Moreover, the limiting expected odd moments of $\{\textup{GOE, PTE}\}$ are also equal to 0.

\begin{definition}
For positive integers $n$ and $m$, let $C=c_1c_2\cdots c_{2m}$ be a $(2, m)$-configuration, where $c_i\in\{a_i, b_i\}$ for all $1\leq i\leq 2m$ under the restriction that $(c_{2s-1},c_{2s})\in \{(a_{2s-1}, b_{2s}), (b_{2s-1}, a_{2s})\}$ and $S\subseteq [2n]$ be the set of all the indices of the $a$'s. Let $\pi_S$ be a partition of $S$. Then a \textbf{layer of $[2n]$ with respect to $S$} is a maximal subset $B^{(i)}_S\subseteq [2n]\setminus S$ such that for any $j, k\in B^{(i)}_S$, there doesn't exist $(p, q)\in \pi_S$ such that $j<p<k<q$ or $p<j<q<k$. It's clear from definition that distinct layers must be disjoint. Then we denote the union of all the layers with respect to $S$ by $B_S:=\cup_{i=1}^t B^{(i)}_S$, where $t$ is the total number of layers.
\end{definition}

\begin{lemma}\label{PT-GOE lemma for bs}
Consider a cyclic product in \eqref{expansionGOEPT}. Let $S$ be the set of all the indices of the $a$'s, $\pi_S$ be a matching of the $a$'s and $\pi_{[2n]\setminus S}$ be a matching of the $b$'s. If $\pi_S$ is non-crossing, then the matching $\pi_S\circ\pi_{[2n]\setminus S}$ contribute to \eqref{expansionGOEPT} in the limit if and only if there exists $i$ such that $j, k\in B^{(i)}_S$ for each $(j, k)\in \pi_{[2n]\setminus S}$, i.e., every layer is matched within itself. For the $2m$\textup{\textsuperscript{th}} moment, the number of ways to assign indices for all the $t$ layers is $N^{m+t}+O(N^{m+t-1})$.
\end{lemma}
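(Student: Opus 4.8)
My plan is to convert the statement into a degree-of-freedom count for the index tuple $(i_1,\dots,i_{2n})$ and to show that this count is maximized exactly when the $b$-matching respects the layers of $\pi_S$. First I would use the independence of $A_N$ and $B_N$ to factor the expectation of each cyclic product as $\mathbb{E}[\,\prod a\,]\cdot\mathbb{E}[\,\prod b\,]$, so that the constraints coming from the two matrices may be analyzed separately and then superimposed. The GOE factor is governed by $\pi_S$ exactly as in Section \ref{momentGOEGOE}: each pair $(r,s)\in\pi_S$ imposes the vertex identifications $i_r=i_{s+1}$ and $i_{r+1}=i_s$ (equivalently, it glues the two corresponding $a$-edges), and it is precisely the non-crossing hypothesis, via Proposition \ref{non-crossing}, that lets these identifications be realized without forcing further collisions. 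The PTE factor is governed by the free-matching property recalled around \eqref{genusPalindromicPalindromic}: each $b$-pair $(p,q)$ contributes a single difference constraint $i_{p+1}-i_p=\pm(i_{q+1}-i_q)$ with a freely choosable sign, and off a negligible set of index choices these are the only surviving relations in the limit.

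Next I would carry out the count for a layer-respecting $b$-matching. Working in the difference variables $d_k:=i_{k+1}-i_k$ (indices read mod $2n$, subject to the cyclic relation $\sum_k d_k=0$) is convenient, since both the GOE gluings ($d_r=-d_s$ together with one positional identification per $a$-arc) and the PTE constraints ($d_p=\pm d_q$) are linear in the $d_k$. The structural point is that a non-crossing $\pi_S$ closes up the $b$-edges of each individual layer into a single cyclic sub-configuration once the bounding $a$-arcs are glued; restricted to one layer the PTE analysis of Section \ref{momentsPTPT} applies verbatim, so any pairing of that layer within itself is consistent and realizes the maximal number of free indices for that layer. Because distinct layers interact only through skeleton indices already fixed by $\pi_S$, the free indices of different layers can be chosen independently and the counts multiply; assembling the skeleton count with the within-layer counts produces the leading power $N^{m+t}$, while the correction $O(N^{m+t-1})$ absorbs the lower-order coincidences (unmatched indices, triple or higher collisions, and non-free sign choices).

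The dichotomy in the lemma then follows. The \emph{if} direction is exactly the previous step: when every $(j,k)\in\pi_{[2n]\setminus S}$ lies in one layer, the within-layer count delivers the full power $N^{m+t}$ and the matching contributes in the limit. For the \emph{only if} direction I would argue contrapositively. If some $b$-pair $(p,q)$ has $p\in B^{(i)}_S$ and $q\in B^{(j)}_S$ with $i\neq j$, then by the definition of a layer there is an $a$-arc $(r,s)\in\pi_S$ separating $p$ and $q$. The difference constraint $d_p=\pm d_q$ then couples indices lying on opposite sides of this separating $a$-arc, whereas the non-crossing GOE gluing has already rendered the two sides independent in the degree-of-freedom count; superimposing the extra coupling strictly lowers the dimension of the admissible index set, so the contribution drops to $O(N^{m+t-1})$ and is negligible in the limit.

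The step I expect to be the main obstacle is making this \emph{only if} direction fully rigorous, i.e., proving that a separating $b$-pair genuinely costs a factor of $N$ rather than being absorbed by the relations already present. Concretely I would assemble all of the constraints (the GOE identifications, the cyclic closure, the within-layer difference relations, and the single cross-layer difference relation) into one linear system in $i_1,\dots,i_{2n}$ over $\mathbb{Q}$, and show that the cross-layer relation raises its rank by one. The two delicate points are (i) the sign ambiguity $d_p=\pm d_q$, where one must check that neither sign lets the cross-layer relation become dependent on the others except on an index set of size $O(N^{m+t-1})$, and (ii) ruling out accidental coincidences among indices that could restore a lost degree of freedom; both are controlled exactly as the negligible inconsistent and higher-multiplicity matchings are controlled in the GOE and PTE arguments above. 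Once the rank increase is established, the factor-of-$N$ loss, and hence the stated characterization and error term, follow.
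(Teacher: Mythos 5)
Your route coincides with the paper's: glue the bounding $a$-arcs of a non-crossing $\pi_S$ so that each layer closes up into its own cyclic PTE product, quote the palindromic Toeplitz count $N^{\#(B^{(i)}_S)/2+1}+O(N^{\#(B^{(i)}_S)/2})$ from \cite{palindromicToeplitz} for each layer, and multiply across the $t$ layers to get $N^{m+t}+O(N^{m+t-1})$; then argue that any cross-layer $b$-pair costs one factor of $N$. The first half of this is exactly the paper's argument, including your observation that a layer forces the inner adjacent $a$'s between consecutive $b$'s to be paired, which is what produces the index identifications turning the layer into a cyclic product.

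The gap is precisely the step you flagged: you assert that the cross-layer difference relation raises the rank of the combined linear system by one, but you supply no mechanism for proving it, and that is where the paper's proof does its real work. The paper writes the $b$-constraints as a system of equations of the form $i_{j+1}-i_j+i_{k+1}-i_k=C_j$ with $C_j\in\{0,\pm(N-1)\}$, in which every index occurs exactly twice, and runs a sequential elimination: choose any equation as the last one, pick an index that has so far occurred only once, declare it dependent, pass to the equation in which that index first appeared, and iterate. The decisive combinatorial point is that this process can terminate prematurely (which is the only way a degree of freedom could be restored) only if every index among the remaining equations already occurs twice, and that condition forces each layer to be matched within itself --- contradicting the presence of a cross-layer pair. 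So the rank increase you hope to establish is not proved by generic position or by inspecting a single separating $a$-arc; it follows from this termination argument, which simultaneously handles your delicate points (i) and (ii): the orientation ambiguity is disposed of because same-orientation matchings contribute $O(1/N)$ of the total as in \cite{palindromicToeplitz}, and the constants $C_j$ are uniquely determined by the free indices up to a lower-order set of choices, so the union over constants cannot restore the lost degree of freedom. Without something playing the role of this elimination scheme, your contrapositive remains a plausible claim rather than a proof. (A minor omission relative to the paper: its proof also disposes of crossing $a$-matchings, which merge several layers into a single cyclic product and lose a further factor of $N$; this lies outside the lemma's non-crossing hypothesis but is used in the surrounding argument.)
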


\begin{proof}
First, observe that each layer $B^{(i)}_{S}$ can be thought of as a cyclic product. For example, consider the following layer $B^{(i)}_S$ consisting of $2\ell$ $b$'s. For clarity, we include some of the $a$'s to highlight how the matching the $a$'s give rise to the cyclic product:
\begin{align}
& a_{i_{j_1-1}i_{j_1}}b_{i_{j_1}i_{j_1+1}}a_{i_{j_1+1}i_{j_1+2}}\cdots a_{i_{j_2-1}i_{j_2}}b_{i_{j_2}i_{j_2+1}}a_{i_{j_2+1}i_{j_2+2}}\cdots \nonumber \\&a_{i_{j_3-1}i_{j_3}}b_{i_{j_3}i_{j_3+1}}a_{i_{j_3+1}i_{j_3+2}}\cdots b_{i_{j_{2\ell}}i_{j_{2\ell}+1}}a_{i_{j_{2\ell}+1}i_{j_{2\ell}+2}}\cdots.
\end{align}

Since the $b$'s form a layer, then for every neighboring two $b$'s, the inner adjacent two $a$'s must be paired together. For example, $a_{i_{j_1+1}i_{j_1+2}}$ and $a_{i_{j_2-1}i_{j_2}}$, which are adjacent $b_{i_{j_1}i_{j_1+1}}$ and $b_{i_{j_2}i_{j_2+1}}$, must be paired together to ensure that all the $b$'s form a layer. Hence, the indices must satisfy the relations $i_{j_1}=i_{j_{2\ell+1}}$, $i_{j_1+1}=i_{j_2}$, $i_{j_2+1}=i_{j_3}$, $\dots$, $i_{j_{2\ell-1}+1}=i_{j_{2\ell}}$, which allows us to think of $B^{(i)}_S$ as $b_{i_1i_2}b_{i_2i_3}\cdots b_{i_{2\ell}i_1}$. Let $\#(B^{(i)}_S)$ be the number of $b$'s in the layer $B^{(i)}$. For each cyclic product, if the matching of all the $b$'s is within each layer, then the number of ways to choose indices for all the $b$'s is $\prod_{i=1}^t (N^{\#(B^{(i)}_{S})/2+1}+O(N^{\#(B^{(i)}_S)/2}))=N^{m+t}+O(N^{m+t-1})$ by \cite{palindromicToeplitz}.

We move on to the case where the matchings of the $b$'s are across different layers. Now, for two arbitrary layers $B^{(i_1)}_S$ and $B^{(i_2)}_S$, suppose that all the $b$'s are paired within these two layers except for at least two $b$'s that are paired across these two layers. Due to the special structure of palindromic Toeplitz, if $b_{i_ji_{j+1}}$ and $b_{i_ki_{k+1}}$ are paired together, then the indices must satisfy the equation $i_{j+1}-i_j+i_{k+1}-i_k=C_j$ for some $C_j\in\{0, \pm (N-1)\}$. Hence, similar to \cite{palindromicToeplitz}, we can think of the matching of all the indices as a system of $M:=(m(B^{(i_1)}_S)+m(B^{(i_2)}_S)/2$ equations, where each index appears exactly twice. After labeling these equations, we pick any equation as $\textup{eq}(M)$, and choose an index that has occured only once. Then, we select the equation in which this index first appeared and label this equation as $\textup{eq}(M-1)$. This index is one of our dependent indices and guarantees consistency choice of indices for the other indices in $\textup{eq}(M-1)$. We can continue this process, and at stage $s$, if at least one index of $\textup{eq}(M-s)$ has occured only once among $\textup{eq}(M-s), \textup{eq}(M-s+1), \dots, \textup{eq}(M)$, then we can choose such an index as one of our dependent indices and continue this process. The only way to terminate this process at stage $s<M-1$ is for all the indices among $\textup{eq}(M-s), \textup{eq}(M-s+1), \dots, \textup{eq}(M)$ to occur twice, which implies that each layer is paired within itself, a contradiction. Hence, if at least two $b$'s are paired across these two layers, then the number of dependent indices is $M/2-1$ and the number of ways to choose indices for all the $b$'s is $N^{M/2+1}$. This is a lower order term compared to the case where each layer is paired within itself, which gives that the number of ways to choose indices for all the $b$'s is $N^{M/2+2}$.

Finally, we consider the case where the matchings of the $a$'s cross each other. If a matching of two $a$'s cross another matching of two $a$'s, then we automatically have three layers $B^{(i_1)}_S, B^{(i_2)}_S$, and $B^{(i_3)}_S$. Due to the mismatch, we can no longer view different layers as independent cyclic products, but all three layers as a single cyclic product. The total number of ways to assign the indices for the three layers is $N^{(m(B^{(i_1)}_S)+m(B^{(i_2)}_S+m(B^{(i_3)}))/2)+1}$, which is a lower order term compared to the case where each of the three layers is matched within itself. Thus, the number of ways to assign indices for all the layers is $O(N^{m+t-1})$, which is again a lower order term. 
\end{proof}

\begin{lemma}\label{PT-GOE lemma for as}
With the same notation as in Lemma \ref{PT-GOE lemma for bs}, regardless of whether $\pi_S$ is non-crossing or not, the number of ways to assign the remaining indices for the $2m$\textup{\textsuperscript{th}} moment is $N^{m+1-t}+O(N^{m-t})$.
\end{lemma}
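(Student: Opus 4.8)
The plan is to identify the \emph{remaining indices} with the index classes carried by the GOE skeleton and to count them by reducing the mixed cyclic product to a pure GOE cyclic product. Since Lemma \ref{PT-GOE lemma for bs} has already assigned every index occurring in a $b$-entry (both the strictly interior and the boundary indices of the $t$ layers), the remaining indices are exactly those variables $i_j$ that appear only in $a$-entries, i.e. the $i_j$ flanked on both sides by $a$'s. Counting the free index classes among these is a GOE problem, so the tool will be Proposition \ref{non-crossing}.

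First I would contract each layer $B^{(i)}_S$. As established in the proof of Lemma \ref{PT-GOE lemma for bs}, once the forced pairings of the $a$'s adjacent to a layer's $b$'s are imposed, the layer closes into a PTE cyclic product with its two boundary indices identified. Contracting that identification merges the two $a$-entries flanking the layer, and performing this for all $t$ layers leaves a cyclic product built from exactly the $2m$ original $a$-entries, on which $\pi_S$ descends to a pairing of $[2m]$ that is non-crossing precisely when $\pi_S$ is. Applying Proposition \ref{non-crossing} to this contracted pairing gives $\#(\gamma_{2m}\pi_S)=m+1$ index classes for the skeleton, hence $N^{m+1}+O(N^{m})$ consistent assignments at leading order.

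Next I would account for the overlap with Lemma \ref{PT-GOE lemma for bs}. For each layer, exactly one skeleton class—the wrap-around boundary index produced when the layer was closed into a cyclic product—is the ``$+1$'' global freedom already counted among the $N^{\#(B^{(i)}_S)/2+1}$ choices for that layer. Because the $t$ layers are separated by the non-crossing arcs of $\pi_S$, these $t$ boundary classes are distinct, so removing them leaves $(m+1)-t$ classes that belong solely to the $a$-skeleton; this yields $N^{m+1-t}$ ways to assign the remaining indices, the $O(N^{m-t})$ error arising from degenerate coincidences (e.g. a skeleton index colliding with a layer index), each of which costs a power of $N$. As a consistency check, $(m+t)+(m+1-t)=2m+1$ recovers the full count $N^{2m+1}$ needed for the $2m$\textsuperscript{th} moment to converge.

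The hard part will be the uniform treatment of the crossing case and the boundary-class bookkeeping. When $\pi_S$ is crossing the contraction no longer closes each layer into a clean cyclic product and the descended pairing is itself crossing, so Proposition \ref{non-crossing} only gives $\#(\gamma_{2m}\pi_S)\le m-1$; one must then show that the $a$-contribution is no larger than $N^{m+1-t}$ and that, jointly with the strictly smaller $b$-part from Lemma \ref{PT-GOE lemma for bs}, every crossing $\pi_S$ contributes at lower order and is absorbed into the error term. Establishing rigorously that the $t$ wrap indices are genuinely distinct and are exactly the classes shared between the two lemmas, while controlling the diagonal coincidences, is the crux of the argument.
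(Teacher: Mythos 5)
Your route is genuinely different from the paper's: the paper proves this lemma by induction on the number of layers $t$ (base case $t=1$ forces the configuration $abba\cdots abba$ or $baab\cdots baab$ with the $a$'s matched in adjacent pairs, giving $N^{m}=N^{(m+1)-1}$ choices, and each insertion of a further layer breaks exactly one adjacency and costs exactly one degree of freedom), with no genus expansion at all. As written, though, your contraction step has a genuine gap. A layer need not occupy a single gap between consecutive $a$'s: in $a_{i_1i_2}b_{i_2i_3}a_{i_3i_4}b_{i_4i_5}b_{i_5i_6}a_{i_6i_7}b_{i_7i_8}a_{i_8i_1}$ with $\pi_S=\{(1,8),(3,6)\}$ the layers are $\{4,5\}$ and $\{2,7\}$, and after deleting the $b$'s the four $a$'s do \emph{not} form a consistent cyclic product: the identifications actually in force are $i_2=i_8$, $i_3=i_7$, $i_4=i_6$, and in particular $i_2\neq i_3$, so the ``descended pairing of $[2m]$'' to which you apply Proposition \ref{non-crossing} conflates the two distinct index classes $\{i_2,i_8\}$ and $\{i_3,i_7\}$ into a single skeleton orbit. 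Your final count survives only because every class so conflated belongs to a layer and is charged to Lemma \ref{PT-GOE lemma for bs}; making this precise requires showing that the orbits of $\gamma_{2m}\pi_S$ meeting $b$'s are in bijection with the layers (all gaps occupied by one layer lie in a single orbit, and distinct layers lie in distinct orbits, by maximality), which is exactly the ``boundary classes are distinct'' assertion you state but do not prove. For non-crossing $\pi_S$ your face-counting argument can be repaired along these lines, and it is arguably more conceptual than the paper's insertion induction, but the repair is the actual mathematical content.

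The second gap is one you flag yourself, and it is real: the lemma is asserted ``regardless of whether $\pi_S$ is non-crossing or not,'' and your plan has no mechanism for the crossing case. There Proposition \ref{non-crossing} gives $\#(\gamma_{2m}\pi_S)\leq m-1$, while $t$ is no longer tied to the orbit count: for $abababab$ with $\pi_S=\{(1,5),(3,7)\}$ one has $t=4$ singleton layers, every index variable meets a $b$, and the number of ways to assign remaining indices is $N^{0}$, not $N^{m+1-t}=N^{-1}$, so ``subtract $t$ boundary orbits from the orbit count'' cannot produce the claimed formula in that regime. What is actually needed downstream (and what the paper uses after the two lemmas) is only that for crossing $\pi_S$ the joint $a$- and $b$-count is $O(N^{2m})$, which the paper extracts from the $O(N^{m+t-1})$ loss on the $b$-side established inside the proof of Lemma \ref{PT-GOE lemma for bs}; your proposal defers precisely this to ``the hard part'' without an argument. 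As it stands, then, you establish the non-crossing leading term modulo the unproved face--layer correspondence, and the crossing half of the statement is left open.
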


\begin{proof}
For a fixed $m$, when a cyclic product has only one layer, the only possible configurations for the cyclic product are $abba\cdots abba$ or $baab\cdots baab$; moreover, all the $a$'s must be matched in adjacent pairs. Since there is one free index for each adjacent pair of $a$, then the number of ways to assign the remaining indices is $N^{m}=N^{(m+1)-1}$. This proves the base case.

When a cyclic product $C$ has two layers $B^{(1)}_S$ and $B^{(2)}_S$, suppose that layer $B_S^{(1)}$ is contained in the cyclic product $C_1$ with $2k_1$ total $b$'s and layer $B_S^{(2)}$ is contained in the cyclic product $C_2$ with $2k_2$ total $b$'s, where $C_1\cap C_2=\emptyset$. We can think of $C$ as inserting $C_2$ into $C_1$. From the base case, $C_1$ and $C_2$ are either $abba\cdots abba$ or $baab\cdots baab$. Then, without loss of generalilty, suppose that $C_1$ has the configuration $abba\cdots abba$. If $C_2$ is inserted between two $a$'s in $C_1$, then it must have the configuration $abba\cdots abba$, otherwise $C$ has only one layer instead of two layers. Let $C_2$ be $a_{i'_{1}i'_{2}}b_{i'_{2}i'_{3}}\cdots b_{i'_{4k-1}i'_{4k}}a_{i'_{4k}i'_{1}}$ and surrounded by $a_{i_\ell i_{\ell+1}}$ and $a_{i_{\ell+1}i_{\ell+2}}$ in $C_1$. Since $a_{i'_{1}i'_{2}}$ and $a_{i'_{4k}i'_{1}}$ as well as $a_{i_\ell i_{\ell+1}}$ and $a_{i_{\ell+1}i_{\ell+2}}$ are no longer adjacent, then we lose one additional degrees of freedom and the number of ways to assign the remaining indices is $k_1+k_2-1$. If $C_2$ is inserted between two $b$'s in $C_1$, then it can either be $abba\cdots abba$ or $baab\cdots baab$. Similarly, we can see that the number of ways to assign the remaining indices is $k_1+k_2-1$. Similar constructions follow when we have an arbitrary number of layers in the cyclic product, and whenever we get another layer we lose one extra degree of freedom, giving us $N^{(m+1)-t}+O(N^{m-t})$ ways of assigning the remaining indices for $t$ layers.
\end{proof}

By \eqref{PT-GOE lemma for bs} and \eqref{PT-GOE lemma for as}, for the $2m$\textsuperscript{th} moment, the number of ways to assign all the indices is $N^{2m+1}+O(N^{2m})$ when $\pi_S$ is non-crossing and every layer is matched within itself, and $O(N^{2m})$ otherwise. In other words, in the limit, the only contributions come from non-crossing matchings of the $a$'s and matchings of the $b$'s within the same layer. This leads us to Theorem \ref{sigmarecurrence}, as follows.

\begin{theorem}\label{sigmarecurrence}
The limiting expected $2m$\textup{\textsuperscript{th}} moment $M_{2m}$ of $\{\textup{GOE, PTE}\}$ is given by $\sigma_{m,0}$, where $\sigma_{n,s}$ satisfies the initial conditions $\sigma_{0,s}=(2s-1)!!$ for $s\geq 1$ and $\sigma_{0,0}=1$, and the recurrence relation
\begin{align}
\sigma_{n,s} \ = \ \sum_{k=1}^n\sigma_{k-1,1}\cdot \sigma_{n-k,s}+\sigma_{k-1,0}\cdot \sigma_{n-k,s+1}.
\end{align}
\end{theorem}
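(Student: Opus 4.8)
The plan is to promote the genus-expansion count established by Lemmas \ref{PT-GOE lemma for bs} and \ref{PT-GOE lemma for as} into a two-parameter recursion. Those lemmas tell us that, in the limit, $M_{2m}$ equals the number of triples (a configuration $C\in\mathcal{C}_{2,4m}$, a non-crossing matching $\pi_S$ of the $a$'s, and a matching of the $b$'s with every layer matched within itself). I would introduce a refined quantity $\sigma_{n,s}$ counting exactly such valid structures on a segment of the cyclic product that carries $n$ matched pairs of $a$'s together with $2s$ additional ``free'' $b$'s lying in the single boundary (outermost) layer, where these $2s$ boundary $b$'s are to be matched freely among themselves and with any other $b$'s later deposited into that layer. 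The target is then the top-level case $M_{2m}=\sigma_{m,0}$, since the full cyclic product has $m$ arcs of $a$'s ($2m$ $a$'s in all) and no prescribed boundary $b$'s.

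For the base case $n=0$ there are no $a$-arcs, so the entire segment is a single layer containing $2s$ free $b$'s; by the free-matching property of PTE every one of the $(2s-1)!!$ pairings contributes, giving $\sigma_{0,s}=(2s-1)!!$ for $s\ge 1$ and $\sigma_{0,0}=1$. For the recursion I would peel off the arc emanating from the first $a$ in the segment. If this $a$ is matched with the $a$ enclosing exactly $k-1$ further $a$-arcs, the structure splits into an inside block (the $k-1$ enclosed arcs) and an outside remainder (the other $n-k$ arcs); the non-crossing property of the $a$-matchings makes this inside/outside split independent, and the within-layer constraint on the $b$'s means the two pieces interact only through the partner $b$'s adjacent to the two arc endpoints.

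The crux is the bookkeeping of those two endpoint $b$'s, and a parity observation forces exactly the two terms. Because every configuration pair is $ab$ or $ba$, the region strictly enclosed by the arc has even length (a prerequisite for the enclosed $b$'s to admit a within-layer matching) precisely when the two arc endpoints carry opposite pair-types; consequently the partner $b$ of the first $a$ and the partner $b$ of its matched $a$ always lie on the same side of the arc. If both lie inside, they form one extra free $b$-pair deposited into the inner layer, contributing $\sigma_{k-1,1}$ for the inside and leaving the outside count at $\sigma_{n-k,s}$; if both lie outside, they join the boundary layer of the remainder, contributing $\sigma_{k-1,0}$ inside and raising the outside parameter to $\sigma_{n-k,s+1}$. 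Summing over the block size $k$ from $1$ to $n$ yields
\begin{align}
\sigma_{n,s} \ = \ \sum_{k=1}^n \sigma_{k-1,1}\,\sigma_{n-k,s} + \sigma_{k-1,0}\,\sigma_{n-k,s+1},
\end{align}
and specializing to $s=0$, $n=m$ gives $M_{2m}=\sigma_{m,0}$.

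I expect the main obstacle to be making this $b$-bookkeeping fully rigorous. One must define $\sigma_{n,s}$ on the correct class of (possibly wrap-around) sub-segments so that the outside remainder is again an instance of the same count, verify that the parity argument genuinely excludes every ``split'' case in which the two endpoint $b$'s straddle the arc, and confirm that each deposited pair is counted once and only once as the free-$b$ index increments. The two supporting facts this rests on — that non-crossing $a$-matchings let inside and outside be treated independently, and that PTE's free matching makes every within-layer $b$-pairing contribute equally in the limit — are exactly what Lemmas \ref{PT-GOE lemma for bs} and \ref{PT-GOE lemma for as} supply.
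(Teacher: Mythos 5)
Your proposal is correct and follows essentially the same route as the paper's proof: you define $\sigma_{n,s}$ with the same combinatorial meaning ($2n$ adjacent $ab$/$ba$ pairs plus $2s$ free boundary $b$'s matched freely), use the same parity argument to force the first $a$'s partner into an even-indexed pair of the opposite type (which is exactly why the two endpoint $b$'s land on the same side of the arc), and split into the same two cases --- both $b$'s inside giving $\sigma_{k-1,1}\cdot\sigma_{n-k,s}$, both outside giving $\sigma_{k-1,0}\cdot\sigma_{n-k,s+1}$ --- resting on Lemmas \ref{PT-GOE lemma for bs} and \ref{PT-GOE lemma for as} just as the paper does. The bookkeeping concerns you flag are handled in the paper by the same inner/outer cyclic-product decomposition you describe, so no genuine gap remains.
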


\begin{proof}
Let $\sigma_{n,s}$ be the total number of matchings of any configurations of $a$'s and $b$'s that start with $s$ adjacent pairs of $bb$, followed by a valid configuration with a total of $2n$ adjacent pairs of $ab$ or $ba$, where the $a$'s are matched without crossing each other and the $b$'s are matched freely within the same layer. It is clear that the $2m$\textsuperscript{th} moment of $\{\textup{GOE, PTE}\}$ is given by $\sigma_{m, 0}$, and that $\sigma_{0,s}=(2s-1)!!$ for $s\geq 1$, since the number of matchings of $s$ adjacent pairs of $bb$ where the matchings of the $b$'s are free is equal to $(2s-1)!!$. We also set $\sigma_{0,0}=1$ to keep the right count of the total number of matchings.

Now, we move on to prove the recurrence relation for $\sigma_{n, s}$. Suppose that the $a$ in the first adjacent pair of $ab$ or $ba$ is paired with another $a$ in the $t$\textsuperscript{th} adjacent pair of $ab$ or $ba$. Note that $t$ must be even, since otherwise there is only an odd number of $a$'s and $b$'s within the layer created by the matching of the two $a$'s, in which case the contribution is a lower order term by Lemma \ref{PT-GOE lemma for bs} and \ref{PT-GOE lemma for as}. Suppose that $t=2k$ for some $1\leq k\leq n$. If the first adjacent pair of $ab$ or $ba$ is an $ab$, then the $2k$\textsuperscript{th} adjacent pair of $ab$ or $ba$ must be a $ba$, and vice versa. In both cases, the matching of the two $a$'s split the cyclic product into two smaller cyclic product, as illustrated in the following example.

\begin{example}
If $(n, s)=(2, 2)$, then an example of a cyclic product with $k=2$ is 
\begin{align}
b_{i_1i_2}b_{i_2i_3}b_{i_3i_4}b_{i_4i_5}b_{i_5i_6}a_{i_6i_7}b_{i_7i_8}a_{i_8i_9}a_{i_9i_{10}}b_{i_{10}i_{11}}a_{i_{11}i_{12}}b_{i_{12}i_{1}}.
\end{align}
The matching of the $a$'s partitions the cyclic product into two smaller cyclic products $b_{i_7i_8}a_{i_8i_9}a_{i_9i_{10}}b_{i_{10}i_{11}}$ (inner cyclic product) and $b_{i_{12}i_1}b_{i_1i_2}b_{i_2i_3}b_{i_3i_4}b_{i_4i_5}b_{i_5i_6}$ (outer cyclic product), where a term from either smaller cyclic product is paired with another term in the same smaller cyclic product.
\end{example} 

If the first adjacent pair of $ab$ or $ba$ is a $ba$, then the matching of the $a$'s partitions the cyclic product into two smaller cyclic products $C'_{1}$ (inner cyclic product)  and $C'_{2}$ (outer cyclic product), where $C'_{1}$ doesn't start with any adjacent pairs of $bb$ and has a valid configuration with a total of $2(k-1)$ adjacent pairs of $ab$ or $ba$, and $C'_2$ starts with $s+1$ adjacent pairs of $bb$ and has a valid configuration with a total of $2(n-k)$ adjacent pairs of $ab$ or $ba$. Then the total number of matchings is $\sigma_{k-1,0}$ for $C'_{1}$ and $\sigma_{n-k,s+1}$ for $C'_{2}$. Similarly, if the first adjacent pair of $ab$ or $ba$ is a $ab$, then the total number of matchings is $\sigma_{k-1, 1}$ for $C'_{1}$ and $\sigma_{n-k,s}$ for $C'_{2}$.

Summing over all possible $k$'s, we have 
\begin{align}
\sigma_{n,s} \ = \ \sum_{k=1}^n\sigma_{k-1,1}\cdot \sigma_{n-k,s}+\sigma_{k-1,0}\cdot \sigma_{n-k,s+1},
\end{align}
as desired.
\end{proof}

\begin{cor}\label{horriblePDE}
The generating function $F(z,w):=\sum_{n,s\geq0}\sigma_{n,s}z^n w^s/s!$ satisfies the partial differential equation
\begin{align}
F(z,w) \ = \ (1-2w)^{-1/2}+z\left(\frac{\partial F(z,0)}{\partial w}F(z,w)+F(z,0)\frac{\partial F(z,w)}{\partial w}\right).
\end{align}
\end{cor}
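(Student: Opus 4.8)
The plan is to translate the recurrence from Theorem \ref{sigmarecurrence} directly into an identity between formal power series, by multiplying through by $z^n w^s/s!$ and summing over all valid $n,s$. First I would isolate the $n=0$ contribution to $F(z,w)$: since $\sigma_{0,0}=1$ and $\sigma_{0,s}=(2s-1)!!$ for $s\geq 1$ (with the convention $(-1)!!=1$, consistent with $\sigma_{0,0}$), this slice equals $\sum_{s\geq 0}(2s-1)!!\,w^s/s!$. Using the binomial identity $\binom{-1/2}{s}(-2)^s=(2s-1)!!/s!$, I would recognize this sum as exactly $(1-2w)^{-1/2}$, which accounts for the first term on the right-hand side of the claimed PDE.

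For the remaining part I would sum the recurrence over $n\geq 1$ and $s\geq 0$ against $z^n w^s/s!$, splitting into the two pieces coming from the summands $\sigma_{k-1,1}\sigma_{n-k,s}$ and $\sigma_{k-1,0}\sigma_{n-k,s+1}$. In each piece, reindexing via $j=k-1$ and $l=n-k$ (so that $n=j+l+1$) turns the inner sum over $k$ into an ordinary Cauchy product in $z$, with a single factor of $z$ pulled out of $z^{j+l+1}$. The crucial observation is that in each summand the second index $s$ appears in exactly one factor, while the other factor has its second index pinned (to $1$ or to $0$); hence the exponential weight $w^s/s!$ attaches cleanly to that one factor and the series factor without any binomial convolution in the $w$ variable.

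The heart of the computation is matching the resulting factors to derivatives of $F$, using three bookkeeping identities read off from the definition of $F$: namely $F(z,0)=\sum_{n\geq 0}\sigma_{n,0}z^n$; the termwise derivative $\partial_w F(z,w)=\sum_{n,s\geq 0}\sigma_{n,s+1}z^n w^s/s!$; and its evaluation $\partial_w F(z,0)=\sum_{n\geq 0}\sigma_{n,1}z^n$. With these, the first piece becomes $z\,\partial_w F(z,0)\,F(z,w)$ and the second becomes $z\,F(z,0)\,\partial_w F(z,w)$, which combine to give precisely the stated equation.

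The step most prone to error, and the one I would be most careful with, is the mixed nature of $F$: ordinary in $z$ but exponential in $w$. I would double-check that $\partial_w$ is exactly the operator realizing the shift $s\mapsto s+1$ under the $w^s/s!$ weighting, and that the factor with fixed second index genuinely contributes no $w$-dependence (it sits at the $w^0$ level), so that the two convolutions separate as an honest ordinary product in $z$ times an exponential generating series in $w$. Once this bookkeeping is confirmed, assembling the pieces is routine.
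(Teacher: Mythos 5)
Your proposal is correct and follows essentially the same route as the paper: isolate the $n=0$ slice as $\sum_{s\geq 0}\sigma_{0,s}w^s/s!=(1-2w)^{-1/2}$ via the generalized binomial theorem, then sum the recurrence over $n\geq 1$, $s\geq 0$ against $z^nw^s/s!$ and identify the two Cauchy products in $z$ with $z\,\partial_w F(z,0)\,F(z,w)$ and $z\,F(z,0)\,\partial_w F(z,w)$, using that $\partial_w$ realizes the shift $s\mapsto s+1$ under the exponential weighting. Your extra care about the mixed ordinary/exponential structure matches the paper's implicit bookkeeping, so no gap remains.
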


\begin{proof}
    This follows from the recurrence relation given in Theorem \ref{sigmarecurrence}. Let $F(z,w):=\sum_{n,s\geq0}\sigma_{n,s}z^n w^s/s!$. Then
    \begin{align}
        \sum_{\substack{n\geq 1\\ s\geq0}}\sigma_{n,s}z^n\frac{w^s}{s!} \ = \ \sum_{\substack{n\geq 1\\ s\geq0}}\left(\sum_{k=1}^n\sigma_{k-1,1} \sigma_{n-k,s}\right)z^n\frac{w^s}{s!}+\sum_{\substack{n\geq 1\\ s\geq0}}\left(\sum_{k=1}^n \sigma_{k-1,0} \sigma_{n-k,s+1}\right)z^n\frac{w^s}{s!}.
    \end{align}
    We have
    \begin{align}
        \sum_{\substack{n\geq 1\\ s\geq0}}\sigma_{n,s}z^n\frac{w^s}{s!} \ = \ F(z,w)-\sum_{s\geq0}\sigma_{0,s}\frac{w^s}{s!} \ = \ F(z,w)-(1-2w)^{-1/2}
    \end{align}
    by noting the identity $\sum_{s\geq0}\sigma_{0,s}w^s/s!=(1-2w)^{-1/2}$, which follows from the generalized binomial theorem.
    
    The convolution of $\sigma_{k-1,1}$ and $\sigma_{k,s}$ corresponds to the product of $z\frac{\partial F(z,0)}{\partial w}$ and $F(z,w)$ respectively. Further, the convolution of $\sigma_{k-1,0}$ and $\sigma_{k,s+1}$ corresponds to the product of $zF(z,0)$ and $\frac{\partial F(z,w)}{\partial w}$ respectively. Thus,
    \begin{align}
        F(z,w)-(1-2w)^{-1/2} \ = \ z\frac{\partial F(z,0)}{\partial w}F(z,w)+zF(z,0)\frac{\partial F(z,w)}{\partial w},
    \end{align}
    giving the equation as desired.
\end{proof}

From Corollary \ref{horriblePDE}, we ask the following open question:

\begin{que}
    Is there a closed-form generating function solution of the form $G(z,w)=\sum_{n,s\geq0}\sigma_{n,s}z^n \frac{w^s}{s!}$ to the partial differential equation
    \begin{align}
        G(z,w) \ = \ (1-2w)^{-1/2}+z\left(\frac{\partial G(z,0)}{\partial w}G(z,w)+\frac{\partial G(z,w)}{\partial w}G(z,0)\right)
    \end{align}
    with initial conditions $\sigma_{0,0}=1$ and $\sigma_{0,s}=(2s-1)!!$ for $s\geq1$? An answer in the affirmative would give an explicit formula for $M_{2m}$.
\end{que}

Due to the complexity of the recurrence in Theorem \ref{sigmarecurrence} and partial differential equation in Proposition \ref{horriblePDE}, it is difficult to obtain an explicit formula for $\sigma_{n,0}$ and thus $M_{2m}$. Nevertheless, we provide some crude upper and lower bounds on $M_{2m}$ in the following corollary.
\begin{cor} 
We have the bounds
\begin{align}
\sum_{i=0}^m\binom{m}{i}(2i-1)!! \ \leq \ 
M_{2 m} \ \leq \ 4^m((2m-1)!!) C_{m},
\end{align}
where $C_m$ is the $m$\textup{\textsuperscript{th}} Catalan number. 
\end{cor}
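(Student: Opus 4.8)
The plan is to establish the two inequalities by entirely different means: the lower bound from the recurrence of Theorem~\ref{sigmarecurrence}, and the upper bound from the combinatorial interpretation of $M_{2m}$ as a weighted count of matchings.

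For the lower bound I would begin from $M_{2m}=\sigma_{m,0}$ and retain only the $k=1$ summand of the recurrence. Because each $\sigma_{n,s}$ is a nonnegative integer (it counts matchings) and $\sigma_{0,0}=\sigma_{0,1}=1$, discarding all $k\ge 2$ terms yields
\begin{align}
\sigma_{n,s} \ \geq \ \sigma_{n-1,s}+\sigma_{n-1,s+1}
\end{align}
for all $n\geq 1$ and $s\geq 0$. I would then introduce the auxiliary array $T_{n,s}$ defined by equality in this relation, $T_{n,s}=T_{n-1,s}+T_{n-1,s+1}$, with the same initial data $T_{0,s}=\sigma_{0,s}=(2s-1)!!$ (interpreting $(-1)!!=1$ so that $T_{0,0}=1$). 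A single induction on $n$, run simultaneously over all $s$, gives $\sigma_{n,s}\geq T_{n,s}$: the inductive step is immediate from the displayed inequality together with the hypotheses $\sigma_{n-1,s}\geq T_{n-1,s}$ and $\sigma_{n-1,s+1}\geq T_{n-1,s+1}$. Solving the Pascal-type recurrence, and verifying the answer via $\binom{n}{j}=\binom{n-1}{j}+\binom{n-1}{j-1}$, produces the closed form $T_{n,s}=\sum_{j=0}^n\binom{n}{j}(2(s+j)-1)!!$. Specializing to $n=m$ and $s=0$ gives $M_{2m}=\sigma_{m,0}\geq T_{m,0}=\sum_{i=0}^m\binom{m}{i}(2i-1)!!$, as required.

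For the upper bound I would instead use the interpretation established just before Theorem~\ref{sigmarecurrence}: in the limit $M_{2m}$ equals the number of triples consisting of a configuration $C\in\mathcal{C}_{2,4m}$, a non-crossing matching of the $2m$ $a$-terms of $C$, and a matching of the $2m$ $b$-terms of $C$ in which every layer is matched within itself. I would bound the three factors separately and crudely: there are exactly $2^{2m}=4^m$ configurations in $\mathcal{C}_{2,4m}$; for any fixed configuration the $2m$ $a$-terms admit at most $C_m$ non-crossing matchings (the number of non-crossing pairings of $2m$ points in cyclic order); and, dropping the within-layer restriction entirely, the $2m$ $b$-terms admit at most $(2m-1)!!$ matchings. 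The product of these three bounds is $4^m(2m-1)!!\,C_m$, giving $M_{2m}\leq 4^m(2m-1)!!\,C_m$.

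The only genuinely delicate point is the lower bound. The full recurrence couples $\sigma_{n-k,s}$ with $\sigma_{n-k,s+1}$ across all $k$ and two distinct second indices, so a naive single-index induction on $\sigma_{m,0}$ alone does not close; the essential idea is that throwing away every term except $k=1$ collapses the recursion to a clean two-index Pascal recurrence whose exact solution is the target sum, and that the induction must be carried out over all values of $s$ at once so that the shift $s\mapsto s+1$ is always available. The upper bound, by contrast, is deliberately loose—both the Catalan bound on the $a$-matchings and the double-factorial bound on the $b$-matchings ignore the constraints coupling the two matchings through the layer structure—so no difficulty arises there beyond invoking the combinatorial interpretation of $M_{2m}$.
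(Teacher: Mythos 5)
Your proposal is correct and follows essentially the same route as the paper: the upper bound is exactly the paper's overcount $4^m\cdot C_m\cdot(2m-1)!!$ (configurations, non-crossing $a$-pairings, free $b$-pairings, ignoring their interaction), and your array $T_{n,s}$ with $T_{n,s}=T_{n-1,s}+T_{n-1,s+1}$ and $T_{0,s}=(2s-1)!!$ is, up to relabeling, the paper's sequence $\nu_{4m,2k}$ with recurrence $\nu_{4m,2k}=\nu_{4(m-1),2k}+\nu_{4(m-1),2k+2}$ and the same closed form $\sum_{i=0}^m\binom{m}{i}(2i-1)!!$. The only difference is cosmetic: the paper obtains this recurrence combinatorially by restricting to matchings whose last $abba$ or $baab$ block has its two $a$'s paired together, while you reach the identical inequality algebraically by truncating the recurrence of Theorem \ref{sigmarecurrence} at $k=1$ (using $\sigma_{0,0}=\sigma_{0,1}=1$ and non-negativity), which is precisely the same adjacent-pairing case.
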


\begin{proof}
We know that the limiting expected $2m$\textsuperscript{th} moment of $\{\textup{GOE, PTE}\}$ is the total number of pairings of all cyclic products of length $4m$, where the matching rules are specified by Lemma \ref{PT-GOE lemma for bs} and \ref{PT-GOE lemma for as}. The total number of configurations is $4^m$, the total number of ways to pair the $a$'s is $C_m$, and the total number of ways to pair b's is $(2m - 1)!!$. Ignoring the potential crossings between the pairings, we can overcount by multiplying the three qualities, which establishes the upper bound
\begin{align}
M_{2 m} \ \leq \ 4^m((2m-1)!!) C_{m}. 
\end{align}
For the lower bound, we define a sequence $\nu_{4m, 2k}$ that provides a lower bound for the total number pairings of all cyclic products of length $4m+2k$ where the first 
$4m$ terms is a valid configuration and the last $2k$ terms are $k$ blocks of $bb$. The base case is given by $\nu_{0, 2k} \ = \ (2k - 1)!!$ due to the free matching property of the $b$'s. Then, we assume the last two blocks are either $abba$ or $baab$ and the $a$'s are matched together. This gives us the recurrence relation
\begin{align}
\nu_{4m, 2k} \ = \ \nu_{4(m-1), 2k} + \nu_{4(m - 1), 2k + 2}.
\end{align}
By induction, we find that 
\begin{align}
\nu_{4m, 0} \ = \ \sum_{i = 0}^m \binom{m}{i} (2i - 1)!!.
\end{align}
\end{proof}

\subsection{Limiting Expected Moments of the Anticommutator of GOE and BCE and of the Anticommutator of BCE and BCE} The real symmetric $k$-block circulant ensemble is introduced by Kolo$\breve{{\rm g}}$lu-Kopp-Miller in \cite{Block Circulant} and combines the structure of palindromic Toeplitz matrices and block circulant matrices: not only do entries on different diagonals satisfy relations analogous to the palindromic Toeplitz ensemble, but the entries on the same diagonal also appear periodically due to the $k$-block structure. Because of its complicated structure, the $2m\textsuperscript{th}$ moments of the spectral distribution are not given explicitly, but in terms of the number of pairings of the edges of a $2m$-gon which give rise to a genus $g$ surface. Using generating function techniques and contour integral, a closed-form expression for the limiting spectral density is also obtained as the product of a Gaussian and some polynomial of degree $2k-2$. Similar to the palindromic Toeplitz case, we can extend the moment calculation of $k$-block circulant ensemble to that of $\{k\textup{-BCE}, k\textup{-BCE}\}$ and $\{\textup{GOE}, k\textup{-BCE}\}$.

Suppose that $b_{i_si_{s+1}}$ and $b_{i_ti_{t+1}}$ are entries from an $N\times N$ real symmetric $k$-block circulant matrix, then $b_{i_si_{s+1}}$ and $b_{i_ti_{t+1}}$ are matched iff either of the following relations hold:
\begin{enumerate}
\item $i_{s+1}-i_s=i_{t+1}-i_t+C_s$ and $i_s\equiv i_{t}\textup{ (mod $k$)}$, or
\item $i_{s+1}-i_s=-(i_{t+1}-i_t)+C_s$ and $i_s\equiv i_{t+1}\textup{ (mod $k$)}$,
\end{enumerate}
where $C_s\in \{0, \pm N\}$. The difference in sign in the two relations above allows us to think of the matching of $(s,s+1)$ and $(t,t+1)$ as having the same or different orientations. For both $\{k\textup{-BCE}, k\textup{-BCE}\}$ and $\{\textup{GOE, $k$\textup{-BCE}}\}$, we can apply the same argument from \cite{Toeplitz}, \cite{palindromicToeplitz}, and \cite{Block Circulant} to show that the total contribution of all the pairings with at least one matching of the same orientation is $O(1/N)$. Hence, it suffices to consider those pairings with matchings of the same orientation, i.e., $i_{s+1}-i_s=-(i_{t+1}-i_t)+C_s$ and $i_s\equiv i_t\textup{ (mod $k$)}$. By assumption $k=o(N)$, then the modular restrictions do not reduce the total degrees of freedom. Hence, analogous to the palindromic Toeplitz case, we can think of the pairing of terms of in the $2m\textsuperscript{th}$ moment of an $N\times N$ real symmetric $k$-block circulant matrix as a system of $m$ linear equations each of the form $i_{s+1}-i_s=-(i_{t+1}-i_t)+C_s$. This gives us $m+1$ free indices with $m-1$ dependent indices and constants $C_{s}\in \{0,\pm N\}$ uniquely determined, except for a lower order term of choices of free indices.

Using the idea of layers developed in subsection \ref{subsectionGOEpalindromicToeplitz}, we see that if $\pi$ is a pairing of $\{\textup{GOE}, k\textup{-BCE}\}$, then $\pi$ contributes to the moment of $\{\textup{GOE}, k\textup{-BCE}\}$ iff the GOE terms are matched non-crossing and the real symmetric $k$-block circulant terms are matched within each layer. Now, consider a pairing $\pi$ in the $2m\textsuperscript{th}$ moment of $\{\textup{GOE}, k\textup{-BCE}\}$. We identify the matched indices in the same congruence class modulo $k$ by the equivalence relation $\sim$. For example, if $a_{i_si_{s+1}}$ and $a_{i_ti_{t+1}}$ are matched, i.e., $i_s=i_{t+1}$ and $i_{s+1}=i_t$, then $i_s\sim i_{t+1}$ and $i_{s+1}\sim i_t$. If $b_{i_si_{s+1}}$ and $b_{i_ti_{t+1}}$ are matched, i.e., $i_{s+1}-i_s=-(i_{t+1}-i_t)+C_s$ and $i_s\equiv i_{t+1}\textup{ (mod $k$)}$, then we also have $i_{s+1}\equiv i_t\textup{ (mod $k$)}$. Hence, we still have $i_s\sim i_{t+1}$ and $i_{s+1}\sim i_t$. We see that the number of equivalence classes of indices of the pairing $\pi$ is $\#(\gamma_{4m}\pi)$. For each equivalence class, there are $k$ ways to choose congruence classes. So the number of ways to choose congruence class for all the indices is $k^{\#(\gamma_{4m}\pi)}$. Since there are $N/k$ choices for indices for each congruence class, then $2m\textsuperscript{th}$ moment of $\{\textup{GOE}, k\textup{-BCE}\}$ is given by
\begin{align}
M_{2m} \ = \ \sum_{C\in\mathcal{C}_{2,4m}}\sum_{\pi_C\in NCF_{2,C}(4m)} k^{\#(\gamma_{4m}\pi)-(2m+1)},
\end{align}
where $NCF_{2,C}(4m)$ denotes the set of all the pairings with respect to $C$ of $[4m]$ where the GOE terms are matched non-crossingly and the block circulant terms are matched freely without crossing the matchings of the GOE terms. Similarly, the $2m\textsuperscript{th}$ moment of $\{k\textup{-BCE}, k\textup{-BCE}\}$ is given by
\begin{align}
M'_{2m} \ = \ \sum_{C\in \mathcal{C}_{2,4m}}\sum_{\pi_C\in \mathcal{P}_{2,C}(4m)}k^{\#(\gamma_{4m}\pi)-(2m+1)}.
\end{align}
Summing up, we have the following theorem.
\begin{theorem}
The limiting expected $2m$\textup{\textsuperscript{th}} moment $M_{2m}$ of $\{\textup{GOE, $k$-BCE}\}$ is given by the genus expansion formula
\begin{align}
M_{2m} \ = \ \sum_{C\in\mathcal{C}_{2,4m}}\sum_{\pi_C\in NCF_{2,C}(4m)}k^{\#(\gamma_{4m}\pi)-(2m+1)},
\end{align}
and the limiting expected $2m$\textup{\textsuperscript{th}} moment $M'_{2m}$ of $\{\textup{$k$-BCE, $k$-BCE}\}$ is given by the genus expansion formula
\begin{align}
M'_{2m} \ = \ \sum_{C\in\mathcal{C}_{2,4m}}\sum_{\pi_C\in \mathcal{P}_{2,C}(4m)}k^{\#(\gamma_{4m}\pi)-(2m+1)}.
\end{align}
\end{theorem}

\section{Limiting Expected Moments of Blips of Checkerboard Ensemble}
In Section \ref{sec: anticommutator Combinatorics}, we considered random matrix ensembles whose eigenvalues are $\Theta(N)$. In this section, we introduce new random matrix ensembles, $\{\textup{GOE, $k$-checkerboard}\}$ and $\{\textup{$k$-checkerboard, $j$-checkerboard}\}$, where this condition no longer holds. As we shall see, this new ensemble exhibits entirely different behaviors from these previously considered ensembles.  Throughout our discussion of $\{\textup{GOE, $k$-checkerboard}\}$, when the dimension of the ensemble is $N$, we always assume that $k>1$ and $k\mid N$; similarly, for $\{\textup{$k$-checkerboard, $j$-checkerboard}\}$, when the dimension of the ensemble is $N$, we assume that $k,j>1$, $\textup{gcd}(k,j)=1$, and $k,j\mid N$. These are simplications crucial to our calculation later on. According to numerical simulation, for large $N$ the spectrum of $\{\textup{GOE, $k$-checkerboard}\}$ consists of 2 regimes in total: $1$ bulk regime of size $\Theta(N)$ and 1 blip regime that contains $2k$ eigenvalues and are positioned symmetrically around 0 at $\pm N^{3/2}/k$. On the other hand, for large $N$ the spectrum of $\{\textup{$k$-checkerboard, $j$-checkerboard}\}$ consists of 4 regimes in total: $1$ bulk regime of size $\Theta(N)$, one intermediary blip regime that contains $2k-2$ eigenvalues and is positioned symmetrically  around 0 at $\pm \frac{N^{3/2}}{k}\sqrt{1-\frac{1}{j}}+\Theta(N)$, one intermediary blip regime that contains $2j-2$ eigenvalues and is positioned symmetrically around 0 at $\pm \frac{N^{3/2}}{j}\sqrt{1-\frac{1}{k}}+\Theta(N)$, and one largest blip regime at $2N^2/kj+\Theta(N)$ containing of 1 eigenvalue. This is different from \cite{split} and \cite{CLMY}, where only blip regimes of the same order are observed.

In what follows, we first define the empirical blip spectral measure for each blip regime in terms of the appropriate weight function. Then, we adopt the language developed in \cite{split} to identify the types of cyclic products that contribute to the limiting expected $m$\textsuperscript{th} moments of the empirical blip spectral measure. Finally, we use combinatorics and cancellation techniques to obtain the limiting expected $m$\textsuperscript{th} moments of the empirical blip spectral measure for $\{\textup{GOE, $k$-checkerboard}\}$ and for the largest blip regime of $\{\textup{$k$-checkerboard, $j$-checkerboard}\}$. We end this section by highlighting the challenges with finding the moments of the intermediary blip regimes of $\{\textup{$k$-checkerboard, $j$-checkerboard}\}$.

\subsection{Preliminaries}
In the following definitions of the empirical spectral measure for each blip regime, we will use a weight function that puts the weight 1 at the location of the blip regime, decays very rapidly to 0 in the neighborhood of that regime, and puts the weight 0 at all other regimes (if any). As we are only focusing on the largest blip regime in this section, after some normalization, we can use $f^{(2n)}=x^{2n}(2-x)^{2n}$ as our weight function, where $n=\log\log(N)$. Note that we choose $n=\log\log(N)$ is key to ensuring the decay condition and assigning the right amount of weight to each blip regime.

\begin{definition}\label{Blip Spectral Measures}
Let $n=\log\log(N)$ and choose the weight function to be $f^{(2n)}=x^{2n}(2-x)^{2n}$. Suppose that $A_N$ is an $N\times N$ matrix sampled from GOE and $B_N$ is an $N\times N$ matrix sampled from the $k$-checkerboard ensemble, where the samplings occur independently. Then the $f^{(2n)}$-\textbf{weighted empirical blip spectral measure} associated to $\{A_N,B_N\}$ around $\pm N^{3/2}/k$ is
\begin{align}\label{blip regime GOE Checkerboard}
\mu_{\{A_N,B_N\},f^{(2n)}}(x)dx \ = \ \frac{1}{2k}\sum_{\lambda\textup{ eigenvalues}}f^{(2n)}\left(\frac{k^2\lambda^2}{N^{3}}\right)\delta\left(x-\frac{\lambda^2-N^{3}/k^2}{N^{5/2}}\right)dx.
\end{align}
\end{definition}
\begin{definition}
Let $n=\log\log(N)$ and choose the weight function to be $f^{(2n)}=x^{2n}(2-x)^{2n}$. Suppose that $A_N$ is an $N\times N$ matrix sampled from the $k$-checkerboard ensemble and $B_N$ is an $N\times N$ matrix sampled from the $j$-checkerboard ensemble, where the samplings occur independently. Then the $f^{(2n)}$-\textbf{weighted empirical blip spectral measure} associated to $\{A_N, B_N\}$ around $2N^2/kj$ is
\begin{align}
\mu_{\{A_N,B_N\},f^{(2n)}}(x)dx=\sum_{\lambda\textup{ eigenvalues}}f^{(2n)}\left(\frac{jk\lambda}{2N^2}\right)\delta\left(x-\left(\frac{\lambda-\frac{2N^2}{jk}}{N}\right)\right)dx.
\end{align}
\end{definition}

Recall that the blip regime of $\{\textup{GOE, $k$-checkerboard}\}$ contains both positive and negative eigenvalues, and numerical simulation suggests that they are positioned almost symmetrically around 0, so we need to account for this in our definition. This is why we have the $k^2\lambda^2/N^3$ and $(\lambda^2-N^3/k^2)/N^{5/2}$ factors in \eqref{blip regime GOE Checkerboard}-- whenever an eigenvalue is around $\pm N^{3/2}/k+\Theta(N)$, it will be accounted for in the expected moments of this intermediary blip regime. Note that here the normalization inside the delta functional is $N^{5/2}$ instead of $N$ because the error term of $\lambda^2$ is $\Theta(N^{5/2})$ for $\lambda=\pm N^{3/2}/k+\Theta(N)$.

Since the weight function $f^{(2n)}(x)=x^{2n}(2-x)^{2n}$ is a polynomial, then we can instead write $f^{(2n)}(x)=\sum_{\alpha=2n}^{4n}c_\alpha x^{\alpha}$, where $c_\alpha$ is the coefficient of the $x^\alpha$ term of $f^{(2n)}(x)$. We choose $f^{(2n)}:=x^{2n}(2-x)^{2n}$ to be the weight function for the empirical blip spectral measure of the anticommutator of an $N\times N$ GOE and an $N\times N$ $k$-checkerboard because $f^{(2n)}\left(\frac{k^2\lambda^2}{N^3}\right)\approx 0$ for any bulk eigenvalue $\lambda$, and $f^{(2n)}\left(\frac{k^2\lambda^2}{N^3}\right)\approx 1$ for any blip eigenvalue $\lambda$. Similar reasoning applies to choosing $f^{(2n)}$ to be the weight function for the empirical largest blip spectral measure of the anticommutator of an $N\times N$ $k$-checkerboard and an $N\times N$ $j$-checkerboard. By eigenvalue trace lemma, the expected $m$\textsuperscript{th} moment of the $f^{(2n)}$-weighted empirical blip spectral measure of the anticommutator of an $N\times N$ GOE and an $N\times N$ $k$-checkerboard $\{A_N, B_N\}$ around $\pm N^{3/2}/k$ is

\begin{align}
\mathbb{E}\left[\mu_{{\{A_N,B_N\},f^{(2n)}}}^{(m)}\right] &\ = \ \mathbb{E}\left[\frac{1}{2k}\sum_{\lambda\textup{ eigenvalues}}\sum_{\alpha=2n}^{4n}c_\alpha\left(\frac{k^2\lambda^2}{N^{3}}\right)^\alpha\left(\frac{\lambda^2-N^{3}/k^2}{N^{5/2}}\right)^{m}\right] \nonumber \\
&\ = \ \mathbb{E}\left[\frac{1}{2k}\sum_{\alpha=2n}^{4n}c_\alpha\left(\frac{k^2}{N^{3}}\right)^\alpha\frac{1}{N^{5m/2}}\sum_{i=0}^m \binom{m}{i}\left(-\frac{N^{3}}{k^2}\right)^{m-i}\textup{Tr}\left(\{A_N,B_N\}^{2(\alpha+i)}\right)\right] \nonumber \\
&\ = \ \frac{1}{2k}\sum_{\alpha=2n}^{4n}c_\alpha\left(\frac{k^2}{N^{3}}\right)^\alpha\frac{1}{N^{5m/2}}\sum_{i=0}^m \binom{m}{i}\left(-\frac{N^{3}}{k^2}\right)^{m-i}\mathbb{E}\left[\textup{Tr}\left(\{A_N,B_N\}^{2(\alpha+i)}\right)\right]\label{MomentCalc}.
\end{align}

Then by eigenvalue trace lemma, the expected $m$\textsuperscript{th} moment of the $f^{(2n)}$-weighted empirical largest blip spectral measure of the anticommutator of an $N\times N$ $k$-checkerboard and an $N\times N$ $j$-checkerboard $\{A_N, B_N\}$ around $2N^2/kj$ is
\begin{align}\label{momentOfCNDN}
\mathbb{E}\left[\mu^{(m)}_{\{A_N, B_N\},f^{(2n)}}\right] &\ = \ \mathbb{E}\left[\sum_{\lambda\textup{ eigenvalues}} \sum_{\alpha=2n}^{4n}c_\alpha\left(\frac{jk\lambda}{2N^2}\right)^\alpha \left(\frac{\lambda-\frac{2N^2}{jk}}{N}\right)^m\right] \nonumber \\
&\ = \ \mathbb{E}\left[\sum_{\alpha=2n}^{4n}c_\alpha\left(\frac{jk}{2N^2}\right)^\alpha\frac{1}{N^m} \sum_{i=0}^m\binom{m}{i}\left(-\frac{2N^2}{jk}\right)^{m-i}\textup{Tr}\left(\{A_N, B_N\}^{\alpha+i}\right)\right] \nonumber \\
&\ = \ \sum_{\alpha=2n}^{4n}c_\alpha \left(\frac{jk}{2N^2}\right)^\alpha \frac{1}{N^m}\sum_{i=0}^m\binom{m}{i}\left(-\frac{2N^2}{jk}\right)^{m-i}\mathbb{E}\left[\text{Tr}\left(\{A_N,B_N\}^{\alpha+i}\right)\right].
\end{align}

We know that for an $N\times N$ anticommutator ensemble $\{X_N, Y_N\}$, the $(\alpha+i)$\textsuperscript{th} expected moment is
\begin{align}
\mathbb{E}[\textup{Tr}(\{X_N, Y_N\}^{\alpha+i})] \ = \ \sum_{1\leq i_1, \cdots, i_{2(\alpha+2i)}\leq N}\mathbb{E}[c_{i_1i_2}\cdots c_{i_{2(\alpha+2i)}i_1}],
\end{align}
where each $c_{i_t}$ is a matrix entry from either $X_N$ or $Y_N$. So now, the blip moment calculation has become a combinatorial problem of counting and leveraging the expected value of different configurations of cyclic products. To facilitate our discussion, we adopt the language introduced in \cite{split} to describe this combinatorial problem. From now on in this section, we refer to each matrix entry as a \textbf{term}. When $A_N$ is sampled from GOE, there is no weight term, and we use $a$ to denote a term of $A_N$; when $A_N$ is sampled from the $k$-checkerboard ensemble, we use $a$ to denote a non-weight term of $A_N$ and $w$ to denote a weight term of $A_N$. Regardless of whether $B_N$ is sampled from the $k$-checkerboard or the $j$-checkerboard ensemble, we use $b$ to denote a non-weight term of $B_N$ and $v$ to denote a weight term of $B_N$. Finally, we use $c$ to denote any entry term of $A_N$ or $B_N$. 

\begin{defi}
A \textbf{block} is a set of adjacent $a$'s and $b$'s surrounded by $w$'s (and $v$'s, for the anticommutator of $k$-checkerboard and $j$-checkerboard) in a cyclic product, where the last term of a cyclic product is considered to be adjacent to the first. We refer to a block of length $\ell$ as an $\ell$ block or sometimes a block of size $\ell$.
\end{defi}

\begin{defi}
A \textbf{weight block} is a set of adjacent $w$'s (and $v$'s for the anticommutator of $k$-checkerboard and $j$-checkerboard) surrounded by $a$'s and $b$'s in a cyclic product. We similarly refer to a weight block of length $\ell$ as an $\ell$ weight block or sometimes a weight block of size $\ell$.
\end{defi}

\begin{defi}
An \textbf{adjacent pair} is a pair of adjacent entries of the form $c_{i_{2\ell-1}i_{2\ell}}$, where the first term starts with an odd index.
\end{defi}

\begin{defi}
A \textbf{weight pair} is a pair of adjacent weight terms $c_{i_{2\ell -1}i_{2\ell}}c_{i_{2\ell}i_{2\ell+1}}$. The anticommutator of GOE and the $k$-checkerboard ensemble doesn't have a weight pair in its expected moment calculation. For the anticommutator of the $k$-checkerboard ensemble and the $j$-checkerboard ensemble, a weight pair can be 
\begin{align}
\{c_{i_{2\ell -1}i_{2\ell}}, c_{i_{2\ell}i_{2\ell+1}}\}\in \{\{w_{i_{2\ell -1}i_{2\ell}}, v_{i_{2\ell}i_{2\ell+1}}\}, \{v_{i_{2\ell -1}i_{2\ell}},w_{i_{2\ell}i_{2\ell+1}}\}\} 
\end{align}
\end{defi}

\begin{defi}
An \textbf{mixed pair} is a pair of adjacent weight and non-weight terms $c_{i_{2\ell-1}i_{2\ell}}c_{i_{2\ell}i_{2\ell+1}}$. Due to the structure of anticommutator, a mixed pair for the anticommutator of GOE and the $k$-checkerboard ensemble can be
\begin{align}
\{c_{i_{2\ell-1}i_{2\ell}}, c_{i_{2\ell}i_{2\ell+1}}\}\in \{\{a_{i_{2\ell-1}i_{2\ell}}, v_{i_{2\ell}i_{2\ell+1}}\}, \{v_{i_{2\ell-1}i_{2\ell}}, a_{i_{2\ell}i_{2\ell+1}}\}\}.
\end{align}
A mixed pair for the anticommutator of the $k$-checkerboard ensemble and the $j$-checkerboard ensemble can be
\begin{align}
\{c_{i_{2\ell-1}i_{2\ell}}, c_{i_{2\ell}i_{2\ell+1}}\}\in \{\{a_{i_{2\ell-1}i_{2\ell}}, v_{i_{2\ell}i_{2\ell+1}}\}, \{v_{i_{2\ell-1}i_{2\ell}}, a_{i_{2\ell}i_{2\ell+1}}\}, \{b_{i_{2\ell-1}i_{2\ell}}, w_{i_{2\ell}i_{2\ell+1}}\}, \{w_{i_{2\ell-1}i_{2\ell}}, b_{i_{2\ell}i_{2\ell+1}}\}\}.
\end{align}
\end{defi}

\begin{defi}
A \textbf{configuration} is a set of all cyclic products for which it is specified (a) how many blocks there are and what each of them compose of (e.g., a block of $abba$); and (b) in what order these blocks appear (up to cyclic permutation); However, it is not specified how many $w$'s (and $v$'s for the anticommutator of the $k$-checkerboard ensemble and the $j$-checkerboard ensemble) there are between each block.
\end{defi}

\begin{defi}
A \textbf{congruence configuration} is a configuration together with a choice of the congruence class modulo $k$ every index.
\end{defi}

\begin{defi}
Given a configuration, a \textbf{matching} is an equivalence relation $\sim$ on the $a$'s and $b$'s in the cyclic product which constrains the way of indexing: for any $c_{i_\ell i_{\ell+1}}$ and $c_{i_ti_{t+1}}$, if $\{c_{i_\ell i_{\ell+1}}, c_{i_ti_{t+1}}\}\in \{\{a_{i_\ell i_{\ell+1}},a_{i_ti_{t+1}}\}, \{b_{i_\ell i_{\ell+1}}, b_{i_t i_{t+1}}\}\}$, then $\{i_\ell, i_{\ell+1}\}=\{i_t, i_{t+1}\}$ if and only if $c_{i_\ell i_{\ell+1}}\sim c_{i_ti_{t+1}}$.
\end{defi}

\begin{defi}
Given a configuration, matching, and length of the cyclic product, then an \textbf{indexing} is a choice of
\begin{enumerate}
\item the (positive) number of $w$'s and $v$'s between each pair of adjacent blocks (in the cyclic sense), and
\item the integer indices of each $a$, $b$, $w$, $v$ in the cyclic product.
\end{enumerate}
\end{defi}

\begin{defi}
A \textbf{configuration equivalence} $\sim_C$ is an equivalence relation on the set of all configurations such that for any configurations $C_1, C_2$, $C_1\sim_C C_2$ if and only if they have the same blocks (but they may have different number and arrangement of $w$'s and $v$'s between their blocks). Every equivalence class under $\sim_C$ is called an \textbf{$S$-class}, specified by the blocks in all the configurations in the equivalence class. 
\end{defi}

Lemma \ref{blockslemma} below characterizes the $S$-class with the highest degree of freedom and boils down the blip moment calculation to consideration of some nice configurations. Its assertions hold true for both $\{\textup{GOE, $k$-checkerboard}\}$ and $\{\textup{$k$-checkerboard, $j$-checkerboard}\}$.

\begin{lemma}\label{blockslemma}
Fix $m\geq 1$, consider all the $S$-classes with $|S|=m$. Then a $S$-class with a matching $\sim$ yields the highest degrees of freedom iff it satisfies the following conditions.
\begin{enumerate}
\item It consists only of the following blocks: (i) 1-block of $a$ (i.e., $av$ or $va$); (ii) 1-block of $b$ (i.e., $bw$ or $wb$, but only for \{\textup{$k$-checkerboard and $j$-checkerboard}\}); (iii) 2-block of $aa$ (i.e., $vaav$), (iv) 2-block of $bb$ (i.e., $wbbw$, but only for \{\textup{GOE and $k$-checkerboard}\}).
\item Each 1-block is paired up to another 1-block and the two terms in each 2-block are paired up with each other.
\end{enumerate}
\end{lemma}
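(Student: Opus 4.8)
The plan is to attach to every $S$-class, together with a choice of matching, the exponent of $N$ governing the number of consistent indexings --- its \emph{degrees of freedom} --- and then to show that this exponent is maximized exactly when conditions (1) and (2) hold. I would run the argument uniformly for $\{\textup{GOE, $k$-checkerboard}\}$ and $\{\textup{$k$-checkerboard, $j$-checkerboard}\}$, working in the language of blocks, weight blocks, matchings, and indexings set up above and following the method of \cite{split}, the one genuinely new ingredient being the alternating adjacent-pair constraint $(c_{2s-1},c_{2s})\in\{(a_{2s-1},b_{2s}),(b_{2s-1},a_{2s})\}$ forced by the anticommutator, which restricts where non-weight and weight entries may sit.

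First I would recast a single indexing as a two-level count. Writing each index as $i_\ell=kQ_\ell+R_\ell$ with $R_\ell\in\{0,\dots,k-1\}$ (and mod $j$ in the second ensemble), I note that a weight term imposes only the congruence $R_\ell\equiv R_{\ell+1}$, a matched pair of non-weight terms in the dominant opposite-orientation case imposes the full identifications $i_\ell=i_{t+1}$ and $i_{\ell+1}=i_t$ (constraining both the $Q$'s and the $R$'s), and an unmatched non-weight term makes the whole expectation vanish. Hence the number of indexings compatible with a fixed configuration and matching is $\asymp N^{\,c_Q}\,k^{\,c_R-c_Q}$, where $c_Q$ counts the components of the index graph whose edges are the matching identifications alone, and $c_R\ge c_Q$ counts the components once the weight-induced congruence edges are added. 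For fixed $k$ and $N\to\infty$ the degrees of freedom is the power $c_Q$, so the task reduces to maximizing $c_Q$ over all $S$-classes with $|S|=m$, while tracking $c_R-c_Q$ since it fixes the $k$-power and hence the blip location.

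Next I would reduce to non-crossing matchings and then optimize block by block. Viewing the cyclic product as a polygon whose matched non-weight edges are glued in pairs and whose weight edges are left free, Euler's formula together with the non-crossing bound of Proposition \ref{non-crossing} shows that $c_Q$ drops strictly whenever two matched pairs cross, so only non-crossing matchings can be maximal. Granting this, I would prove the local estimates: (a) a $1$-block, whose single entry must be matched, achieves the maximal per-block contribution precisely when paired to another $1$-block; (b) a $2$-block achieves the same maximal per-block contribution precisely when its two entries are matched to each other, so that one of the two index identifications becomes automatic and the block stays a single component; and (c) a block of size $\ge 3$, or any $2$-block matched across distinct blocks, is strictly suboptimal, because regrouping the same entries into $1$- and $2$-blocks separated by additional weight terms frees index components without forcing new identifications. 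Thus $1$- and $2$-blocks tie at the maximal rate --- matching the $m$ factors of the $k\times k$ trace that governs the blip --- while all larger blocks lose. Reading off the adjacent-pair structure together with the placement of the weight entries, which arise only from the checkerboard matrix (or matrices), then pins the surviving types down to the four listed and produces the ensemble-specific restrictions, since a $bb$ $2$-block and a $b$ $1$-block each require weights on flanks available only in the respective ensemble.

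The main obstacle is estimate (c): the strict loss of degrees of freedom for blocks of size $\ge 3$ and for cross-matched $2$-blocks. This requires simultaneous bookkeeping of the two-level count --- how subdividing a block changes both the number of free congruence classes (the $k$-power $c_R-c_Q$) and the number of free index components (the $N$-power $c_Q$) --- and a verification that the congruence constraints introduced by the new weight separators never force $c_Q$ to decrease. Because of the modular structure of the checkerboard this is more delicate than the pure genus expansion of \cite{MS}; once the non-crossing bound, the local optimality in (a)--(b), and the strict monotonicity in (c) are in place, the four block types of (1) and the pairing rule (2) emerge as the unique maximizers, yielding the ``if and only if.''
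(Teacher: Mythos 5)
Your proposal contains a genuine error at the reduction step: the claim that ``$c_Q$ drops strictly whenever two matched pairs cross, so only non-crossing matchings can be maximal'' is false in this weighted setting, and Proposition \ref{non-crossing} does not apply. That proposition concerns a contiguous cyclic product in which every edge is a matched entry; here the $1$-blocks are separated by weight blocks whose indices are constrained only modulo $k$ (resp.\ modulo $j$), so the index chain is decoupled at the level of actual index values. Concretely, take four $1$-blocks $a_{i_1i_2},a_{i_3i_4},a_{i_5i_6},a_{i_7i_8}$ separated by weight stretches: the crossing matching $(a_{i_1i_2}\sim a_{i_5i_6},\,a_{i_3i_4}\sim a_{i_7i_8})$ and the non-crossing one $(a_{i_1i_2}\sim a_{i_3i_4},\,a_{i_5i_6}\sim a_{i_7i_8})$ both leave exactly four free index components, because the weight edges impose only congruence conditions (your $c_R$), not identifications (your $c_Q$). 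Crossing changes the power of $k$, not the power of $N$. Indeed, if your reduction were correct it would contradict the lemma's own ``iff'' (condition (2) requires only that $1$-blocks be paired, with no planarity restriction) and, downstream, Theorem \ref{GOE-checkerboard Moments}: the limiting blip moments equal $\frac{1}{k}(2/k^2)^m\,\mathbb{E}_k[\textup{Tr }C_k^m]$, the trace moments of a $k\times k$ hollow GOE, which by Wick's formula sum over \emph{all} pairings of the $1$-blocks, crossing ones included; discarding crossings would replace this by a Catalan-type count and give the wrong moments. Your local steps (a) and (b) are consistent with the truth, but since step (c) and the global maximality argument are routed through the false non-crossing claim, the ``only if'' direction of your proof excludes matchings that do achieve the maximum, and the argument as structured does not go through.

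A secondary gap: you treat the matching throughout as a pairing (``matched pair,'' ``opposite-orientation case''), but checkerboard entries are only assumed to have mean zero, variance one, and finite higher moments, so equivalence classes of size at least three have nonvanishing expectation and must be ruled out by counting, not by Gaussianity; your proposal never addresses them. The paper's actual proof handles both issues at once with a short global count that involves no topology: for a configuration with $\alpha$ $a$'s, $\beta$ $b$'s, matching classes $s_a,s_b$, and cross-over loss $\gamma$, the loss of degrees of freedom per block is $\overline{\mathcal{L}}_{\mathcal{C}}=1+(\alpha+\beta+\gamma-2s_a-2s_b)/m$ as in \eqref{degreeoffreedom}, which is at least $1$ since $s_a\le\alpha/2$, $s_b\le\beta/2$, $\gamma\ge0$; equality forces every equivalence class to have size exactly two and forbids cross-overs, which immediately yields conditions (1) and (2). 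If you want to salvage your two-level $N^{c_Q}k^{c_R-c_Q}$ bookkeeping (which is a sound formalization and meshes well with Lemma \ref{Sclasscontribution}), drop the non-crossing reduction entirely and instead bound $c_Q$ by the number of matching equivalence classes, with strict loss exactly when a class has size $\ge 3$ or when adjacent non-weight terms lie in different classes.
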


\begin{proof}
Similar to Lemma 3.14 of \cite{split}, we see that when a 1-block of $a$ (and $b$ for the anticommutator of $k$-checkerboard and $j$-checkerboard) is paired up with another 1-block of $a$ (and $b$ for the anticommutator of $k$-checkerboard and $j$-checkerboard) or when the terms in a 2-block of $a$'s (and $b$'s for the anticommutator of $k$-checkerboard and $j$-checkerboard) are paired up with each other, there is one degree of freedom lost per block. Now, fix a configuration $\mathcal{C}$ with $\alpha$ from the $a$'s and $\beta$ from the $b$'s and a matching $\sim$. Suppose that $\sim$ partitions all the $a$'s into equivalence classes $\mathcal{E}_1, \cdots, \mathcal{E}_{s_a}$ and the $b$'s into $\mathcal{E}'_1, \cdots, \mathcal{E}'_{s_b}$. Then, without any matching restrictions, the degrees of freedom of $\mathcal{C}$ is
\begin{align}
\mathcal{\Tilde{F}}_{\mathcal{C}} \ = \ \sum_{\textup{blocks } \mathcal{B}}(\textup{len}(\mathcal{B})+1) \ = \ \alpha+\beta+m.
\end{align}
To find the actual degree of freedom $\mathcal{F}_\mathcal{C}$ of $\mathcal{C}$, we can choose two indices from each equivalence class. However, adjacent $a$'s and $b$'s from different equivalence classes (which we call cross-overs) place restrictions on the indices and cause additional loss of degrees of freedom. Let the loss of degrees of freedom due to cross-overs be $\gamma$, then $\mathcal{F}_\mathcal{C}=2s_a+2s_b-\gamma$. Thus, the degree of freedom lost per block is
\begin{equation}\label{degreeoffreedom}
\mathcal{\overline{L}}_\mathcal{C} \ = \ \frac{\mathcal{\Tilde{F}_C}-\mathcal{F}_C}{m} \ = \ 1+\frac{\alpha+\beta+\gamma-2s_a-2s_b}{m}.
\end{equation}
Since $|\mathcal{E}_{i}|,|\mathcal{E}_{j}'|\geq 2$ for $1\leq i\leq s_a$ and $1\leq j\leq s_b$, then $s_a\leq \frac{\alpha}{2}$ and $s_b\leq \frac{\beta}{2}$, and so $\mathcal{\overline{L}}_\mathcal{C}\geq 1$. We've shown that if $\mathcal{C}$ satisfies the conditions (1) and (2), then $\mathcal{\overline{L}}_\mathcal{C}=1$. Hence, it suffices to show that if $\mathcal{C}$ with a matching $\sim$ loses one degree of freedom per block (or equivalently, satisfies $\frac{\alpha+\beta+\gamma}{s_a+s_b}=2$), then it must satisfy the conditions (1) and (2). Since $|\mathcal{E}_{i}|,|\mathcal{E}'_{j}|\geq 2$, we get $\alpha\geq 2s_a$ and $\beta\geq 2a_b$. Hence, if some $|\mathcal{E}_{i}|>2$ or $|\mathcal{E}'_{j}|> 2$, then $\frac{\alpha+\beta+\gamma}{s_a+s_b}>2$. Moreover, if $\gamma>0$, then $\frac{\alpha+\beta+\gamma}{s_a+s_b}>2$. Therefore, if $\mathcal{C}$ with a matching $\sim$ loses one degree of freedom per block, then all the blocks are paired up and there can be no cross-overs from different equivalence classes. Thus, the only possible $S$-classes and matchings are those satisfying conditions (1) and (2).
\end{proof}

\begin{lemma}\label{Sclasscontribution} Suppose that $A_N$ is an $N\times N$ matrix sampled from GOE and $B_N$ is an $N\times N$ matrix sampled from the $k$-checkerboard ensemble. For $m_1, m_2\in \Z_{\geq 0}$, the total contribution to $\mathbb{E}[\textup{Tr}\{A_N, B_N\}^\eta]$ of an $S$-class with $m_1$ 1-blocks of $a$ and $m_2$ 2-blocks of $a$ is
\begin{align}
\frac{p(\eta)N^{\frac{3\eta}{2} -\frac{m_1}{2}}}{k^{\eta}}\mathbb{E}_k[\textup{Tr }C^{m_1}]+O\left(N^{\frac{3\eta}{2} -\frac{m_1}{2}-1}\right),
\end{align}
where $p(\eta)=\frac{2\eta^{m_1}}{m_1!}+O(\eta^{m_1-1})$ and $C$ is a $k\times k$ hollow GOE matrix.
\end{lemma}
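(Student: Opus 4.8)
The plan is to compute the contribution of the $S$-class straight from the eigenvalue trace expansion of $\mathbb{E}[\textup{Tr}\{A_N,B_N\}^\eta]$, splitting the index count into (i) a choice of congruence classes modulo $k$, which will reproduce $\mathbb{E}_k[\textup{Tr}\,C^{m_1}]$, and (ii) a choice of actual index values in $[N]$, which will produce the power of $N$. First I would record the combinatorial skeleton forced by Lemma \ref{blockslemma}: in an $S$-class of this type every $B$-entry is a weight $v$, so each adjacent pair is $av$ or $va$, whence $m_1+2m_2=\eta$; moreover the $a$-blocks and the weight blocks all have size $1$ or $2$, with exactly $m_1$ weight blocks of size $1$ and $m_2$ of size $2$. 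Each matched pair of $1$-blocks imposes two index identifications, and each $2$-block $a_{i_pi_{p+1}}a_{i_{p+1}i_{p+2}}$ imposes $i_p=i_{p+2}$. I then sum over all configurations (cyclic pair-patterns) in the $S$-class and over all matchings.

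\textbf{Degrees of freedom.} Writing $V$ for the number of index-equivalence classes after these identifications, the $m_2$ two-blocks remove $m_2$ indices and the $m_1/2$ matched pairs of one-blocks remove $m_1$ indices, so $V=2\eta-m_2-m_1=3\eta/2-m_1/2$ using $m_1+2m_2=\eta$; this is exactly the claimed exponent of $N$. To locate the powers of $k$ I would use that each weight entry forces its two indices to be congruent, so the residue class is constant along every weight block and across every $2$-block (whose two ends coincide), while the free middle of each $2$-block is an unconstrained index of $[N]$. Assigning a value of $[N]$ to one representative of each congruence component and a residue-constrained value ($N/k$ choices) to the rest produces $N^{3\eta/2-m_1/2}$ together with the net factor $k^{-\eta}$.

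\textbf{The residue walk and the hollow trace.} The heart of the argument is that the residue class changes \emph{only} across a $1$-block of $a$, so reading residues around the cyclic product gives a closed walk of length $m_1$ on $\{0,\dots,k-1\}$ whose steps are the $1$-blocks. Since the matched $1$-blocks are paired GOE entries with $\mathbb{E}[C_{rs}C_{sr}]=1$, summing over residue assignments compatible with the pairing is precisely the genus expansion $\sum_\sigma k^{\#(\gamma_{m_1}\sigma)}$ of $\mathbb{E}_k[\textup{Tr}\,C^{m_1}]$ for a $k\times k$ hollow GOE. I expect the hardest point to be justifying the \emph{hollow} (zero-diagonal) structure: a matched pair of $1$-blocks that keeps the residue constant is a self-loop of the walk, and these class-preserving contributions sit at the same power of $N$ while being suppressed by a factor $1/k$. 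I would need to show that they do not enter the hollow genus expansion at the stated order, separating them from the genuine class-changing steps and absorbing them into the $O(N^{3\eta/2-m_1/2-1})$ error or into the $\Theta(N)$ shift of the blip centre. This cancellation/degeneracy analysis is the delicate estimate of the whole lemma.

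\textbf{Counting configurations and error control.} Finally I would count configurations: encoding each adjacent pair as $0=(a,v)$ or $1=(v,a)$, the number of $1$-blocks equals the number of equal cyclic adjacencies ($00$ or $11$), so choosing which $m_1$ of the $\eta$ adjacencies are equal and then a global bit (the parity constraint $\eta-m_1=2m_2$ being automatically even) gives $p(\eta)=2\binom{\eta}{m_1}$, with the trace starting-point multiplicity already built in. Expanding the binomial yields $p(\eta)=\frac{2\eta^{m_1}}{m_1!}+O(\eta^{m_1-1})$, as claimed. The error term $O(N^{3\eta/2-m_1/2-1})$ then collects the suppressed phenomena — crossing (higher-genus) pairings, index coincidences that merge equivalence classes, and blocks of size $\geq 3$ — each costing at least one further power of $N$. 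Assembling the three factors $p(\eta)$, $N^{3\eta/2-m_1/2}k^{-\eta}$, and $\mathbb{E}_k[\textup{Tr}\,C^{m_1}]$ gives the stated contribution.
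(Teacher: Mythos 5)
Your overall skeleton coincides with the paper's proof: both reduce via Lemma \ref{blockslemma} to configurations with no $b$'s (so $m_2=(\eta-m_1)/2$ and every adjacent pair is $av$ or $va$), both split the count into an arrangement factor $p(\eta)$, an index-assignment factor $N^{3\eta/2-m_1/2}k^{-\eta}$, and a residue-walk factor identified with the $m_1$\textsuperscript{th} hollow-GOE moment, and your degrees-of-freedom bookkeeping ($2\eta$ indices minus $m_2$ identifications from $2$-blocks minus $m_1$ from matched $1$-block pairs) reproduces the paper's exponent exactly. Your configuration count is in fact a small improvement: encoding pairs as bits and counting cyclic sign changes gives the \emph{exact} value $p(\eta)=2\binom{\eta}{m_1}$, whereas the paper places $1$-blocks between $2$-blocks and separately argues that degenerate placements are $O(\eta^{m_1-1})$; both yield $\frac{2\eta^{m_1}}{m_1!}+O(\eta^{m_1-1})$.

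There is, however, a genuine gap at exactly the point you flag: the hollow structure. You propose to show that class-preserving steps (a matched pair of $1$-blocks whose four endpoint indices lie in a single residue class) can be ``absorbed into the $O(N^{3\eta/2-m_1/2-1})$ error or into the $\Theta(N)$ shift of the blip centre,'' but neither mechanism can work as stated. Such a pair admits \emph{two} orientations of the index identification ($i_t=i_\ell,\ i_{t+1}=i_{\ell+1}$ or $i_t=i_{\ell+1},\ i_{t+1}=i_\ell$), hence contributes roughly $2(N/k)^2$ index choices --- the \emph{same} power of $N$ as a class-changing pair, differing only in its $k$-weight, so it is not $O(N^{3\eta/2-m_1/2-1})$; and the lemma is a raw identity for $\mathbb{E}[\textup{Tr}\{A_N,B_N\}^\eta]$ with no weight function in sight, so there is no ``blip-centre shift'' available at this level. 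At the stated order these self-loop terms must either be counted (which would produce a residue sum matching a full GOE trace with diagonal variance $2$ rather than a hollow one, since the GOE entries of $A_N$ carry no congruence restriction, unlike the checkerboard entries in \cite{split} where hollowness is automatic) or be excluded by an argument you do not supply. To be fair, the paper's own proof simply asserts that the residue sum equals the hollow-GOE moment without discussing self-loop steps, so your instinct that this is the delicate estimate is sound --- but your proposed resolution fails on the very degree-of-freedom count you performed. A minor related slip: crossing pairings of the $1$-blocks are not error terms; they occur at full order in $N$ and are precisely the lower-power-of-$k$ terms in the expansion $\sum_\sigma k^{\#(\gamma_{m_1}\sigma)}$ of $\mathbb{E}_k[\textup{Tr}\,C^{m_1}]$, consistent with your own residue-walk paragraph --- only matchings violating the block structure of Lemma \ref{blockslemma} (e.g., a $1$-block matched into a $2$-block) lose powers of $N$.
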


\begin{proof}
We begin by noting that by Lemma \ref{blockslemma}, an $S$-class with $m_1$ $1$-blocks of $a$ and $m_2$ $2$-blocks of $a$ loses at least $(\eta+m_1)/2$ degrees of freedom, and if it contains any $b$ would have fewer degrees of freedom and hence would contribute at most $O(N^{3\eta/2-m_1/2-1})$. Thus, it suffices to consider the case when $m_2=(\eta-m_1)/2$ and there are no $b$'s. The rest of the proof is divided into two parts: we first count the number of ways to arrange a prescribed number of blocks into a cyclic product of length $2\eta$; we then count the number of ways to pair together $1$-blocks and assign indices that are consistent throughout the cyclic product.

Given $m_1=o(\eta)$, we claim that the number of ways $p(\eta)$ of arranging $m_1$ $1$-blocks and $(\eta-m_1)/2$ $2$-blocks of $a$'s into a cyclic product of length $2\eta$ is $\frac{2\eta^{m_1}}{m_1!}+O(\eta^{m_1-1})$. Indeed, there are two ways to choose the $(\eta-m_1)/2$ $2$-blocks since we can either start with $aw$ or $wa$, and there are $2^{m_1}\binom{\frac{\eta-m_1}{2}}{m_1}=\frac{\eta^{m_1}}{m_1!} + O(\eta^{m_1-1})$ ways to choose the $m_1$ 1-blocks between adjacent $2$-blocks assuming that $m_1=o(\eta)$. Moreover, as we shall see later in the proof, we require that mixed pairs containing the $1$-blocks are not placed adjacent to each other, which is possible since the number of ways of having at least one $2$-block formed from adjacent mixed pairs is $2\left(\frac{\eta-m_1}{2}\right)\binom{\frac{\eta-m_1}{2}-2}{m_1-2}=O(\eta^{m_1-1})$ and thus a lower order term. Hence, overall we get $\frac{2\eta^{m_1}}{m_1!}+O(\eta^{m_1-1})$ ways to arrange the prescribed blocks.

Now, we observe that the second and the first index respectively of two adjacent $1$-blocks are congruent mod $k$, as illustrated in the example below.

\begin{example}
Consider the configuration
\begin{align}
\cdots v_{i_1i_2}a_{i_2i_3}v_{i_3i_4}v_{i_4i_5}a_{i_5i_6}a_{i_6i_7}v_{i_7i_8}a_{i_8i_9}\cdots.
\end{align}
Since $a$'s within a $2$-block are matched together, we have $i_5=i_7$ with $i_6$ being free, and that 
\begin{align}
i_3 \equiv i_4 \equiv i_5 \equiv i_7 \equiv i_8 \textup{ (mod $k$)}.
\end{align}
\end{example}

Thus, all the indices of terms between a pair of $1$-blocks, except for those within $2$-blocks, share a congruence class.  The number of ways to specify the congruence classes of $1$-blocks and to pair the $1$-blocks up is 
\begin{align}
\sum_{1\leq i_1,i_2,...,i_{m_1}\leq k}\mathbb{E}[a_{i_1i_2}a_{i_2i_3}...a_{i_{m_1}i_1}],
\end{align}
where each $a_{ij}$ are i.i.d. $\sim \mathcal{N}(0,1)$ under the restriction that $a_{ij}=a_{ji}$ and $a_{ii}=0$ for all $i,j$. The above expression is simply the $m_1$\textsuperscript{th} expected moment of $k\times k$ hollow GOE.

Since the $m_1$ 1-blocks are paired together, then there are only $m_1$ free indices. As the congruence class of these indices are fixed, the number of choices of these indices is $(N/k)^{m_1}$. Similarly, the number of choices of indices for all the 2-blocks is $(N^2/k)^{\frac{\eta-m_1}{2}}$, since the indices of each 2-block $a_{i_\ell i_{\ell+1}}a_{i_{\ell+1}i_{\ell+2}}$ must satisfy $i_\ell=i_{\ell+2}$, and there are $N/k$ choices for $i_\ell=i_{\ell+2}$ whose congruence class is fixed and $N$ choices for $i_2$ that is free. The remaining indices are those of the weight blocks, which must satisfy congruence mod $k$ and hence are each restricted to $N/k$ choices. By the structure imposed by the anticommutator, the total number of indices of all weight blocks is $\eta-\left(\frac{\eta-m_1}{2}+m_1\right)=\frac{\eta-m_1}{2}$. Thus, the total number of ways to assign indices is
$\left(N/k\right)^{m_1} \left(N^2/k\right)^{\frac{\eta-m_1}{2}}\left(N/k\right)^{\frac{\eta-m_1}{2}}=\frac{N^{3\eta/2-m_1/2}}{k^\eta}$. After combining all these pieces, we arrive at the desired result for the contribution of a fixed $S$-class.
\end{proof}
The following two identities from \cite{split} will be useful for us in the expected blip moment calculation.
\begin{lemma}\label{inequalities}
For any $0\leq p<m$,
\begin{align}
\sum_{i=0}^m(-1)^i\binom{m}{i}i^p &\ = \ 0, \\
\sum_{i=0}^m(-1)^{m-i}\binom{m}{i}i^m &\ = \ m!.
\end{align}
\end{lemma}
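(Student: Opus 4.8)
The plan is to recognize both identities as instances of the action of the $m$-th finite difference operator on polynomials. First I would introduce the forward difference operator $\Delta$, defined on functions of an integer variable by $(\Delta f)(x) = f(x+1) - f(x)$, and record the standard expansion of its $m$-th iterate,
\begin{align}
(\Delta^m f)(0) \ = \ \sum_{i=0}^m (-1)^{m-i}\binom{m}{i} f(i).
\end{align}
Both sums in the lemma are then immediately of this shape, up to an overall sign: the second identity is exactly $(\Delta^m f)(0)$ for $f(x)=x^m$, while the first, after factoring out $(-1)^m$, is $(-1)^m (\Delta^m f)(0)$ for $f(x)=x^p$.

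The two facts I would need about $\Delta$ are: (a) if $f$ is a polynomial of degree $d\geq 1$ with leading coefficient $c$, then $\Delta f$ is a polynomial of degree $d-1$ with leading coefficient $cd$; and (b) $\Delta$ sends a nonzero constant to $0$. Fact (a) follows from the binomial expansion of $(x+1)^d - x^d$, whose top term $x^d$ cancels and whose next term is $dx^{d-1}$. Iterating (a), starting from the monic polynomial $x^m$ of degree $m$, each application of $\Delta$ lowers the degree by one and multiplies the leading coefficient by the current degree, so after $m$ steps $\Delta^m x^m$ is the constant $m(m-1)\cdots 1 = m!$; this gives the second identity. For the first identity, applying $\Delta$ to $x^p$ a total of $m$ times with $p<m$ drives the degree to zero after $p$ steps and then, by (b), to the zero polynomial after one further step, so $(\Delta^m f)(0)=0$ and hence the alternating sum vanishes.

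There is essentially no hard step here; the only thing to be careful about is the bookkeeping of signs, since the lemma states the first sum with weight $(-1)^i$ but the natural difference-operator expansion carries weight $(-1)^{m-i}$. This is harmless because the first sum equals zero, so the global factor $(-1)^m$ is immaterial, but I would state this explicitly to avoid confusion. As an alternative, entirely combinatorial route, one may observe that $\sum_{i=0}^m (-1)^{m-i}\binom{m}{i} i^p$ counts, by inclusion-exclusion, the surjections from a $p$-element set onto an $m$-element set: this count is $0$ when $p<m$ (no surjection onto a larger set exists) and $m!$ when $p=m$ (the surjections are exactly the bijections), which recovers both identities at once.
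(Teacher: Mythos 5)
Your proof is correct. Note, however, that the paper does not actually prove this lemma: it is quoted verbatim from \cite{split} and used as a black box, so there is no internal argument to compare against. Your finite-difference derivation is a complete, self-contained replacement for that citation, and it is essentially the standard proof: writing both sums as $(\Delta^m f)(0)$ with $f(x)=x^p$ or $f(x)=x^m$, using that $\Delta$ drops the degree by one and multiplies the leading coefficient by the degree, immediately gives $0$ for $p<m$ and $m!$ for $p=m$. Your handling of the sign discrepancy is also right: the lemma's first sum carries $(-1)^i$ rather than $(-1)^{m-i}$, but since the value is $0$ the global factor $(-1)^m$ is immaterial, and it is good that you flagged this explicitly. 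One small point worth making precise if this were to be inserted into the text: for $p=0$ the summand at $i=0$ requires the convention $0^0=1$, which your polynomial formulation ($f(x)=1$, so $f(0)=1$) handles automatically. The alternative surjection-counting argument you sketch is equally valid and arguably more memorable — $\sum_{i=0}^m(-1)^{m-i}\binom{m}{i}i^p$ counts surjections from a $p$-set onto an $m$-set, which is $0$ for $p<m$ and $m!$ for $p=m$ — and it yields both identities in one stroke; either route would serve the paper's purpose, where the lemma drives the cancellation of all $S$-classes with $m_1\neq m$ in the blip moment computations.
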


Observe that if $m_1>m$, then the contribution of an $S$ class with $m_1$ 1-blocks of $a$ is
\begin{align}
&\frac{1}{2k}\sum_{\alpha=2n}^{4n}c_\alpha\left(\frac{k^2}{N^{3}}\right)^\alpha\frac{1}{N^{5m/2}}\sum_{i=0}^m\binom{m}{i}\left(-\frac{N^{3}}{k^2}\right)^{m-i}p(2(\alpha+i))\left(\frac{N^{\frac{3}{2}\cdot 2(\alpha+i)-\frac{1}{2}m_1}}{k^{2(\alpha+i)}}\right)\mathbb{E}_k\left[\textup{Tr }C^{m_1}\right] \nonumber \\
&\ = \ \frac{C_{k,m,m_1}}{N^{\frac{1}{2}(m_1-m)}}\sum_{\alpha=2n}^{4n}c_\alpha \sum_{i=0}^m\binom{m}{i}(-1)^{m-i}p(2(\alpha+i)) \nonumber \\
&\ = \ \frac{C_{k,m,m_1}}{N^{\frac{1}{2}(m_1-m)}}\sum_{\alpha=2n}^{4n}c_\alpha \sum_{i=0}^m\binom{m}{i}(-1)^{m-i}\left(\frac{2(2(\alpha+i))^{m_1}}{m_1}+\left((\alpha+i)^{m_1-1}\right)\right) \nonumber \\
&\ \ll \ \frac{C_{k,m,m_1}}{N^{\frac{1}{2}(m_1-m)}}\sum_{\alpha=2n}^{4n}c_\alpha \alpha^{m_1}.
\end{align}
Since $f^{(2n)}(x)=x^{2n}(2-x)^{2n }$, then $|c_\alpha|\ll C_0^{2n}$ for some $C_0>0$. Moreover, $\alpha=O(\log\log(N))$, then for some $\epsilon>0$ 
\begin{align}
\sum_{\alpha=2n}^{4n}c_\alpha \alpha^{m_1} \ \ll \ n^{m_1+2}C_0^{2n} \ \ll \ (\log\log(N))^{m_1+2}\log(N) \ \ll \ N^{(m_1-m)/2-\epsilon}
\end{align}

Hence, as $N\rightarrow 0$, the contribution of an $S$-class with $m_1>m$ total 1-blocks of $a$ and $m_2$ total 2-blocks of $a$ is negligible. Moreover, if $m_1<m$, then the contribution of an $S$-class with $m_1$ total 1-blocks of $a$  is
\begin{align}
&\frac{1}{2k}\sum_{\alpha=2n}^{4n}c_\alpha\left(\frac{k^2}{N^{3}}\right)^\alpha\frac{1}{N^{5m/2}}\sum_{i=0}^m\binom{m}{i}\left(-\frac{N^{3}}{k^2}\right)^{m-i}p(2(\alpha+i))\left(\frac{N^{\frac{3}{2}\cdot 2(\alpha+i)-\frac{m_1}{2}}}{k^{2(\alpha+i)}}\right)\mathbb{E}_k\left[\textup{Tr }C^{m_1}\right] \nonumber \\
&\ = \ \frac{C_{k,m,m_1}}{N^{\frac{1}{2}(m_1-m)}}\sum_{\alpha=2n}^{4n}c_\alpha\sum_{i=0}^m\binom{m}{i}(-1)^ip(2(\alpha+i)) \nonumber \\
&\ = \ \frac{C_{k,m,m_1}}{N^{\frac{1}{2}(m_1-m)}}\sum_{\alpha=2n}^{4n}c_\alpha \sum_{q=0}^{m_1}c_q \alpha^{m_1-q} \sum_{i=0}^m(-1)^i\binom{m}{i} i^q \ = \ 0,
\end{align}
where the last step follows from Lemma \ref{inequalities}. Thus, any contributing $S$-class must satisfy $m_1=m$.

\begin{theorem}\label{GOE-checkerboard Moments} Let $A_N$ be an $N\times N$ matrix sampled from GOE and $B_N$ be an $N\times N$ matrix sampled from the $k$-checkerboard ensemble, where the samplings occur independently. Then the limiting expected $m$\textup{\textsuperscript{th}} moment associated to the empirical blip spectral measure of $\{A_N, B_N\}$ around $\pm N^{3/2}/k$ is
\begin{align}
\lim_{N\rightarrow\infty}\mathbb{E}\left[\mu_{\{A_N,B_N\}}^{(m)}\right] \ = \ \frac{1}{k}\left(\frac{2}{k^2}\right)^{m}\mathbb{E}_k[\textup{Tr }C_k^m],
\end{align}
where $C_k$ is a $k\times k$ GOE and $\mathbb{E}_k[\textup{Tr }C_k^m]$ is taken over all $k\times k$ hollow GOE.
\end{theorem}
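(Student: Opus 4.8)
The plan is to start from the exact identity \eqref{MomentCalc} for $\mathbb{E}[\mu_{\{A_N,B_N\},f^{(2n)}}^{(m)}]$ and evaluate its limit by decomposing each trace moment $\mathbb{E}[\textup{Tr}(\{A_N,B_N\}^{2(\alpha+i)})]$ according to the $S$-classes that contribute. By Lemma \ref{blockslemma} the only $S$-classes with the top degrees of freedom consist of $1$-blocks and $2$-blocks, and by Lemma \ref{Sclasscontribution} the total contribution of those with exactly $m_1$ $1$-blocks of $a$ (and no $b$'s) to $\mathbb{E}[\textup{Tr}(\{A_N,B_N\}^\eta)]$ is $p(\eta)N^{3\eta/2-m_1/2}k^{-\eta}\mathbb{E}_k[\textup{Tr }C^{m_1}]$ up to lower order. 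The computations already carried out before the theorem show that, after the alternating weighting $\binom{m}{i}(-1)^{m-i}$ in \eqref{MomentCalc}, the classes with $m_1>m$ are individually suppressed by a factor $N^{-(m_1-m)/2}$ and those with $m_1<m$ cancel identically via the first identity of Lemma \ref{inequalities}. Hence only $m_1=m$ survives in the limit, and the proof reduces to pinning down this single term.

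For the $m_1=m$ term I would substitute the contribution from Lemma \ref{Sclasscontribution} with $\eta=2(\alpha+i)$ into \eqref{MomentCalc} and collect powers of $N$ and $k$. Bookkeeping the exponent of $N$ yields $-3\alpha-\tfrac{5m}{2}+3(m-i)+3(\alpha+i)-\tfrac{m}{2}=0$, so the term is genuinely $\Theta(1)$; collecting the exponent of $k$ yields $-1+2\alpha-2(m-i)-2(\alpha+i)=-(2m+1)$. What remains is the scalar factor $\tfrac12 k^{-(2m+1)}\mathbb{E}_k[\textup{Tr }C^m]\sum_{\alpha=2n}^{4n}c_\alpha\sum_{i=0}^m\binom{m}{i}(-1)^{m-i}p(2(\alpha+i))$.

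The next step is to evaluate the inner alternating sum. Writing $p(2(\alpha+i))=\tfrac{2^{m+1}}{m!}(\alpha+i)^m+(\text{polynomial in }(\alpha+i)\text{ of degree}<m)$ and expanding $(\alpha+i)^m=\sum_{q=0}^m\binom{m}{q}\alpha^{m-q}i^q$, the first identity of Lemma \ref{inequalities} annihilates every power $i^q$ with $q<m$ (hence the lower-order part of $p$ and all cross terms carrying $\alpha$), while the second identity contributes $m!$ from the $i^m$ term; thus $\sum_{i=0}^m\binom{m}{i}(-1)^{m-i}p(2(\alpha+i))=2^{m+1}$, independent of $\alpha$. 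Then $\sum_{\alpha=2n}^{4n}c_\alpha=f^{(2n)}(1)=1^{2n}(2-1)^{2n}=1$, and assembling gives $\tfrac12 k^{-(2m+1)}\cdot 2^{m+1}\cdot\mathbb{E}_k[\textup{Tr }C^m]=2^m k^{-(2m+1)}\mathbb{E}_k[\textup{Tr }C^m]=\tfrac1k\left(\tfrac{2}{k^2}\right)^m\mathbb{E}_k[\textup{Tr }C_k^m]$, as claimed.

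The main obstacle is not the leading-order algebra but the error control, since the surviving term is only $\Theta(1)$ while the individual trace moments are as large as $N^{3\eta/2}$. I would need to confirm that the $O(N^{3\eta/2-m_1/2-1})$ remainders in Lemma \ref{Sclasscontribution}, summed against the coefficients $c_\alpha$ over $2n\le\alpha\le 4n$ and over $0\le i\le m$, are genuinely $o(1)$. This is precisely where the choice $n=\log\log N$ is essential: using $|c_\alpha|\ll C_0^{2n}$ and $\alpha=O(\log\log N)$ one checks that $\sum_\alpha|c_\alpha|\,\alpha^{m}$ grows only polylogarithmically in $N$, which is overwhelmed by the extra $N^{-1}$ from the remainder. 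I would also verify that treating $p$ as an honest polynomial in its argument (so that the alternating sum extracts exactly its leading coefficient) is justified by the explicit block-counting established in the proof of Lemma \ref{Sclasscontribution}.
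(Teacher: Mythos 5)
Your proposal is correct and takes essentially the same route as the paper's proof: you isolate the $m_1=m$ $S$-class via Lemmas \ref{blockslemma} and \ref{Sclasscontribution}, kill $m_1\neq m$ with the alternating-sum identities of Lemma \ref{inequalities} together with the $N^{-(m_1-m)/2}$ suppression, and evaluate the surviving term using $f^{(2n)}(1)=1$, with your $N$- and $k$-exponent bookkeeping and the final constant $2^{m}k^{-(2m+1)}$ matching the paper exactly. The error-control points you flag (the role of $n=\log\log N$ and the polynomiality of $p(\eta)$ from the explicit block count) are precisely how the paper disposes of the remainders in the discussion immediately preceding the theorem.
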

Note that we drop the dependency of the empirical blip spectral measure on $f^{(2n)}$ since the limiting expected moment is independent of the choice of the weight function.
\begin{proof}
By the discussion above, we know that $m_1=m$. Then for large $N$, the error term is negligible, and the main term gives
\begin{align}
&\lim_{N\rightarrow\infty}\mathbb{E}\left[\mu_{\{A_N,B_N\},f^{(2n)}}^{(m)}\right] \ = \ \lim_{N\rightarrow\infty}\frac{1}{2k}\sum_{\alpha=2n}^{4n}c_\alpha\left(\frac{k^2}{N^{3}}\right)^\alpha\frac{1}{N^{5m/2}}\sum_{i=0}^m \binom{m}{i}(-1)^{m-i}\left(\frac{N^3}{k^2}\right)^{m-i} \nonumber\\
&\times\frac{2(2(\alpha+i))^mN^{3/2\cdot 2(\alpha+i)-1/2m}}{m!\cdot k^{2(\alpha+i)}}\mathbb{E}_k[\textup{Tr }C^{m}] \nonumber \\
&\ = \ \frac{1}{m!\cdot k}\left(\frac{2}{k^2}\right)^{m}\mathbb{E}_k[\textup{Tr }C^m]\sum_{\alpha=2n}^{4n}c_\alpha\sum_{i=0}^m \binom{m}{i}(-1)^{m-i}(\alpha+i)^m \nonumber \\
&\ = \ \frac{1}{m!\cdot k}\left(\frac{2}{k^2}\right)^{m}\mathbb{E}_k[\textup{Tr }C^m] \sum_{\alpha=2n}^{4n}c_\alpha\sum_{i=0}^m \binom{m}{i}(-1)^{m-i}\sum_{p=0}^m \binom{m}{p}\alpha^{p}i^{m-p} \nonumber \\
&\ = \ \frac{1}{m!\cdot k}\left(\frac{2}{k^2}\right)^{m}\mathbb{E}_k[\textup{Tr }C^m]\sum_{\alpha=2n}^{4n}\sum_{p=0}^m\binom{m}{p}c_\alpha\alpha^p\sum_{i=0}^m\binom{m}{i}(-1)^{m-i}i^{m-p}.\label{MomentCalc2}
\end{align}
Since the inner sum is 0 if $p>0$ and $m!$ if $p=0$ by Lemma \ref{inequalities} and $f^{(2n)}(1)=\sum_{\alpha=2n}^{4n}c_\alpha=1$, then
\begin{align}
\lim_{N\rightarrow\infty}\mathbb{E}\left[\mu_{\{A_N,B_N\}}^{(m)}\right] &\ = \ \frac{1}{m!\cdot k}\left(\frac{2}{k^2}\right)^{m}\mathbb{E}_k[\textup{Tr }C^m]\sum_{\alpha=2n}^{4n}c_\alpha m! \nonumber \\
&\ = \ \frac{1}{k}\left(\frac{2}{k^2}\right)^{m}\mathbb{E}_k[\textup{Tr }C^m].
\end{align}
\end{proof}

%\textcolor{red}{To do: need to add the error term too}
\subsection{Limiting Expected Moments of Blips of the Anticommutator of Checkerboard and Checkerboard} By Lemma \ref{blockslemma}, for the expected moments of a blip regime, it suffices to only consider $S$-classes with $1$-blocks or $2$-blocks of $a$'s or $b$'s. In the following proposition, we determine the contributions of these $S$-classes:

\begin{prop}\label{j,k-checkerboard trace}
For $m_{1a}, m_{2a}, m_{1b}, m_{2b}\in \mathbb{Z}_{\geq 0}$, define $m_1:=m_{1a}+m_{1b}$ and $m_2:=m_{2a}+m_{2b}$. If $m_1+m_2=o(\eta)$, then the total contribution to $\E{\textup{Tr}\{A_N, B_N\}^\eta}$ of an $S$-class with $m_{1a}$ 1-blocks of $a$, $m_{1b}$ 1-blocks of $b$, $m_{2a}$ 2-blocks of $a$, and $m_{2b}$ 2-blocks of $b$ is 
\begin{equation*}
\resizebox{1.0\hsize}{!}{$\frac{2^{\eta-2m_2}\eta^{m_1+m_2}}{m_{1a}!m_{1b}!m_{2a}!m_{2b}!}2^{\frac{m_1}{2}}(m_{1a})!!(m_{1b})!!\left(\frac{1}{k}\right)^{\eta-m_{1a}-2m_{2a}}\left(\frac{1}{j}\right)^{\eta-m_{1b}-2m_{2b}}\left(1-\frac{1}{k}\right)^{\frac{m_{1a}}{2}+m_{2a}}\left(1-\frac{1}{j}\right)^{\frac{m_{1b}}{2}+m_{2b}}N^{2\eta-(m_1+m_2)}$}.
\end{equation*} 
\end{prop}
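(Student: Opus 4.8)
The plan is to adapt the proof of Lemma \ref{Sclasscontribution} to the setting where \emph{both} matrices carry a checkerboard weight structure, now tracking congruences modulo $k$ (for the $a$/$w$ terms coming from the $k$-checkerboard) and modulo $j$ (for the $b$/$v$ terms coming from the $j$-checkerboard) simultaneously. By Lemma \ref{blockslemma}, it suffices to treat an $S$-class built only from $1$-blocks and $2$-blocks of $a$'s and $b$'s, with every $1$-block matched to another $1$-block of the same type and every $2$-block self-matched; the hypothesis $m_1+m_2=o(\eta)$ guarantees that all competing $S$-classes are of lower order. I would factor the total contribution into three independent counts: (i) the number of ways to arrange the prescribed blocks and the interleaving weight terms into a cyclic product of length $2\eta$, together with the anticommutator orientation ($AB$ versus $BA$) of each of the $\eta$ factors; (ii) the number of ways to match the $1$-blocks and to orient each matched pair and each self-matched $2$-block; and (iii) the number of admissible index assignments, meaning the power of $N$ together with the congruence-class weights.

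For step (i), exactly as in Lemma \ref{Sclasscontribution}, the count of placements of $m_{1a}+m_{1b}$ one-blocks and $m_{2a}+m_{2b}$ two-blocks among the $\sim\eta$ available gaps is the multinomial $\eta^{m_1+m_2}/(m_{1a}!\,m_{1b}!\,m_{2a}!\,m_{2b}!)$ to leading order, the subleading corrections being suppressed both by the $o(\eta)$ hypothesis and by the requirement (again as in Lemma \ref{Sclasscontribution}) that the mixed pairs bounding distinct blocks not be placed adjacently. The factor $2^{\eta-2m_2}$ records the two orientations of each anticommutator factor, reduced by the fact that each $2$-block forces two consecutive same-type slots and thereby freezes the orientation of its two neighboring factors. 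The matchings (which are free, as for the PTE, and force $m_{1a},m_{1b}$ even for a nonzero count) together with the two endpoint orientations of each matched unit constitute step (ii), and I would show these combine with the multinomial normalization to produce the stated factor $2^{m_1/2}(m_{1a})!!\,(m_{1b})!!$.

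The heart of the argument is step (iii), the index count, where the two moduli interact. Because $\gcd(k,j)=1$, the Chinese Remainder Theorem lets me assign to each index an independent residue modulo $k$ and modulo $j$, and then count the $N/(kj)$ choices within each joint residue class. A $w$-term forces its two endpoints to agree modulo $k$ while an $a$-term forces them to disagree (the ``hollow'' constraint), with the analogous statements modulo $j$ for $v$- and $b$-terms; I would verify that the $\equiv$ constraints merge indices into common residue classes at no cost, whereas each independent $\not\equiv$ constraint retains only the fraction $1-1/k$ (respectively $1-1/j$) of residue pairs. Balancing the surviving residue assignments against the within-class freedom then yields the power $N^{2\eta-(m_1+m_2)}$ (one degree of freedom lost per block, consistent with Lemma \ref{blockslemma}), the weight $(1/k)^{\eta-m_{1a}-2m_{2a}}(1/j)^{\eta-m_{1b}-2m_{2b}}$ contributed by the $w$- and $v$-terms, and the hollow factor $(1-1/k)^{m_{1a}/2+m_{2a}}(1-1/j)^{m_{1b}/2+m_{2b}}$ from the $m_{1a}/2+m_{2a}$ matched $a$-units and the $m_{1b}/2+m_{2b}$ matched $b$-units.

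I expect the main obstacle to be precisely this residue bookkeeping for weight \emph{blocks} that mix $w$'s and $v$'s: a single run of consecutive weights can chain together $\equiv$ constraints modulo both $k$ and $j$, and one must confirm that, for coprime $k$ and $j$, such a run still contributes the clean product of one factor $1/k$ per $w$ and one factor $1/j$ per $v$, creating no unexpected dependencies that would lower the power of $N$ or distort the constants. A secondary technical point is checking that configurations with crossings among the matched blocks, with $1$-blocks matched across distinct weight blocks, or with blocks of length exceeding $2$, are all $O(N^{2\eta-(m_1+m_2)-1})$ and hence negligible; this should follow from the degree-of-freedom accounting of Lemma \ref{blockslemma} together with the $o(\eta)$ hypothesis.
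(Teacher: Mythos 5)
Your proposal is correct and follows essentially the same route as the paper's proof: the same factorization into (a) an arrangement count giving $\frac{2^{\eta-2m_2}\eta^{m_1+m_2}}{m_{1a}!m_{1b}!m_{2a}!m_{2b}!}$ up to $O(2^{\eta}\eta^{m_1+m_2-1})$ corrections from adjacency degeneracies, (b) a free matching count $2^{m_1/2}(m_{1a})!!(m_{1b})!!$, and (c) a residue count via the Chinese Remainder Theorem yielding the $k$-, $j$-, and $N$-factors; indeed, the mixed $w$/$v$ weight-run issue you single out as the main obstacle is precisely what the paper resolves in its worked example, where a $wv$ junction index admits $N/kj$ values since $\gcd(k,j)=1$ and $kj\mid N$, so a weight block contributes one clean factor of $1/k$ per $w$ and $1/j$ per $v$ regardless of the indices assigned at its endpoints.

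One caveat: your closing ``secondary technical point'' is misstated and contradicts your own step (ii). Crossing matchings of the $1$-blocks are \emph{not} negligible in this ensemble --- any pairing of $1$-blocks loses exactly one degree of freedom per block, crossing or not, and all such pairings are already included in the free factor $(m_{1a})!!\,(m_{1b})!!$; this free matching is exactly what distinguishes the checkerboard blip counts from GOE-type non-crossing (Catalan) counts, and matched $1$-blocks are automatically separated by weight terms, so neither of those checks is needed or true. The only configurations that must be discarded are $S$-classes with blocks of length greater than $2$, unpaired $1$-blocks, or equivalence classes of size greater than $2$, and that is precisely what Lemma \ref{blockslemma} already provides.
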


\begin{proof}
The proof is divided into two parts. First, we count the number of ways to arrange the prescribed blocks into a cyclic product of length $2\eta$ and assign the weight pairs; second, we count the number of ways to pair up all the 1-blocks and assign indices that ensures consistent indexing throughout the cyclic product.

Given $m_1+m_2=o(\eta)$, we claim that the number of ways $q(\eta)$ of arranging the prescribed blocks into a cyclic product of length $2\eta$ and assigning the adjacent weight pairs is $\frac{2^{\eta-2m_2}\eta^{m_1+m_2}}{m_{1a}!m_{1b}!m_{2a}!m_{2b}!}+O(2^\eta\eta^{m_1+m_2-1})$. 
Naively, $q(\eta)$ is simply
\begin{align}
\binom{\eta-m_2}{m_1+m_2}\binom{m_1+m_2}{m_2}2^{\eta-2m_2}\binom{m_1}{m_{1a}}\binom{m_2}{m_{2a}} \ = \ \frac{2^{\eta-2m_2}\eta^{m_1+m_2}}{m_{1a}!m_{1b}!m_{2a}!m_{2b}!}+O(2^\eta\eta^{m_1+m_2-1}).
\end{align}

That is, we first choose all the $m_1+m_2$ blocks, viewing each of the $m_2$ 2-block as a $1$-block (where $\eta-m_2$ comes from), which can be done in $\binom{\eta-m_2}{m_1+m_2}$ ways. Then, we choose the $m_2$ 2-blocks from all the $m_1+m_2$ blocks, $m_{1a}$ 1-blocks of $a$ from all the $m_1$ 1-blocks, $m_{2a}$ 2-blocks of $a$ from all the $m_2$ 2-blocks, and finally specifying the mixed pairs the 1-blocks belong to and the weight pairs, all of which can be done in $\binom{m_1+m_2}{m_2}2^{\eta-2m_2}\binom{m_1}{m_{1a}}\binom{m_2}{m_{2a}}$ ways.

However, this naive counting method fails to account for the restriction that different mixed pairs of $a$ and $v$ and of $b$ and $w$ cannot be placed adjacent to each other to form a 2-block (e.g. if two mixed pairs $va$ and $av$ are adjacent to each other, then we have a 2-block of $a$). The number of ways of having at least one 2-block formed from different mixed pairs of $a$ and $v$ and of $b$ and $w$ is $2^{\eta-2m_2-1}(\eta-m_2)\binom{\eta-m_2-2}{m_1+m_2-2}\binom{m_1+m_2}{m_2}\binom{m_1}{m_{1a}}\binom{m_2}{m_{2a}}=O(2^\eta \eta^{m_1+m_2-1})$ if $m_1+m_2=o(\eta)$. Thus,
\begin{align}
q(\eta) \ = \ \frac{2^{\eta-2m_2}\eta^{m_1+m_2}}{m_{1a}!m_{1b}!m_{2a}!m_{2b}!}+O(2^\eta\eta^{m_1+m_2-1}).
\end{align}

Similarly, we also need to guarantee that none of the 1-blocks are adjacent to each other. The number of ways of having at least two adjacent 1-blocks is $2^{\eta-2m_2}(\eta-m_2)\binom{\eta-m_2-2}{m_1+m_2-2}\binom{m_1+m_2}{m_2}\binom{m_1}{m_{1a}}\binom{m_2}{m_{2a}}=O(2^\eta \eta^{m_1+m_2-1})$ if $m_1+m_2=o(\eta)$.

Now, we count the number of ways to assign the indices for the $S$-class. In contrast to \cite{split} where there are restrictions on the indices of the 1-blocks, we demonstrate in the example below that we can remove such restrictions.

\begin{example}
Consider a weight block surrounded by two non-weight terms $c_{i_{\ell-1} i_{\ell}} c_{i_{\ell}i_{\ell+1}}\cdots c_{i_{t}i_{t+1}} c_{i_{t+1} i_{t+2}}$, where $t-\ell$ is sufficiently large. We assume without loss of generality that $c_{i_{\ell-1} i_{\ell}}=a_{i_{\ell-1} i_{\ell}}$ and $c_{i_{t+1} i_{t+2}}=a_{i_{t+1} i_{t+2}}$. After specifying $i_{\ell}$, if $c_{i_{\ell}i_{\ell+1}}=w_{i_{\ell}i_{\ell+1}}$, then $i_{\ell}\equiv i_{\ell+1}\textup{ (mod $k$)}$ and there are $N/k$ choices of indices for $i_{\ell+1}$; if $c_{i_{\ell}i_{\ell+1}}=v_{i_{\ell}i_{\ell+1}}$, then $i_{\ell}\equiv i_{\ell+1}\textup{ (mod $j$)}$ and there are $N/j$ choices of indices for $i_{\ell+1}$. After specifying $i_{t+1}$, there are similar number of choice of indices for $i_{t}$. Since $t-\ell$ is sufficiently large, then with high probability there exists $\ell+1\leq s\leq t-2$ such that $\{c_{i_s i_{s+1}}, c_{i_{s+1}i_{s+2}}\}=\{w_{i_s i_{s+1}}, v_{i_{s+1}i_{s+2}}\}$. We can specify the indices $i_{\ell+2}, \dots, i_s$ and $i_{s+2}, \dots, i_{t-1}$ the same way as before. Then we have $i_{s+1}\equiv i_s\textup{ (mod $k$)}$ and $i_{s+1}\equiv i_{s+2}\textup{ (mod $j$)}$. Since $\textup{gcd}(k, j)=1$ and $jk\mid N$, then by Chinese remainder theorem, there are $N/kj$ choices of indices $i_{s+1}$. If the number of $w$'s and $v$'s in the weight block is $r$ and $t-\ell+1-r$, respectively, then there are $\left(1/k\right)^{r}\left(1/j\right)^{t-\ell+1-r}N^{t-\ell}$ ways of specifying the $i_{\ell+1}, \dots, i_{t}$. Thus, regardless of the indices we specify for the two non-weight terms surrounding a weight block, we can guarantee with high probability consistency of indexing throughout the weight block.
\end{example}

We know that the total number of $w$'s and $v$'s in a cyclic product of length $2\eta$ is $\eta-m_{1a}-2m_{2a}$ and $\eta-m_{1b}-2m_{2b}$, respectively. Then the number of choices of congruence classes of indices for all the $w$'s and $v$'s has the corresponding factors $\left(1/k\right)^{\eta-m_{1a}-2m_{2a}}$ and $\left(1/j\right)^{\eta-m_{1b}-2m_{2b}}$. Now, by Lemma \ref{blockslemma}, each 1-block is paired up with another 1-block and the two terms of each 2-block are paired up with each other. Moreover, the indices of any non-weight terms $a_{i_\ell i_{\ell+1}}$ and $b_{i_t i_{t+1}}$ must satisfy the modular restrictions $i_\ell\not\equiv i_{\ell+1}\textup{ (mod $k$)}$ and $i_t\not\equiv i_{t+1}\textup{ (mod $j$)}$. Then similarly, the number of choices of congruence classes of indices for all the $a$'s and $b$'s has the corresponding factors $\left(1-1/k\right)^{\frac{m_{1a}}{2}+m_{2a}}$ and $\left(1-1/j\right)^{\frac{m_{1b}}{2}+m_{2b}}$. Since the loss of degrees of freedom per block in a contributing configuration is 1, then the contribution of actually specifying all the indices is $N^{2\eta-(m_1+m_2)}$. Thus, the number of ways of assigning the indices that guarantees consistent indexing is
\begin{align}
\left(\frac{1}{k}\right)^{\eta-m_{1a}-2m_{2a}}\left(\frac{1}{j}\right)^{\eta-m_{1b}-2m_{2b}}\left(1-\frac{1}{k}\right)^{\frac{m_{1a}}{2}+m_{2a}}\left(1-\frac{1}{j}\right)^{\frac{m_{1b}}{2}+m_{2b}}N^{2\eta-(m_1+m_2)}.
\end{align}

Finally, since there are $m_{1a}$ 1-block of $a$ and $m_{1b}$ 1-block of $b$, and there are no restrictions on their indices, then the number of ways of matching up all the 1-blocks is $2^{\frac{m_1}{2}}(m_{1a})!!(m_{1b})!!$. Note that the $2^{\frac{m_1}{2}}$ factor is due to the fact that for any two paired 1-blocks $c_{i_\ell i_{\ell+1}}, c_{i_t, i_{t+1}}$, we either have $i_\ell=i_{t+1}$ and $i_{\ell+1}=i_t$ or $i_{\ell}=i_t$ and $i_{\ell+1}=i_{t+1}$. This completes the proof.
\end{proof}

We are now ready to calculate the limiting expected moments for the largest blip.

\begin{theorem}
Let $A_N$ be an $N\times N$ matrix sampled from the $k$-checkerboard ensemble and $B_N$ be an $N\times N$ matrix sampled from the $j$-checkerboard ensemble, where the samplings occur independently. Then the limiting expected $m$\textup{\textsuperscript{th}} moment of the largest blip spectral measure of $\{A_N, B_N\}$ around $2N^2/kj$ is
\begin{align}
\lim_{N\rightarrow\infty}\mathbb{E}\left[\mu_{\{A_N,B_N\}}^{(m)}\right] \ = \ \sum_{\substack{m_{1a}+m_{1b}+m_{2a}+m_{2b}=m; \\ m_{1a},m_{1b}\textup{ even}}}&C(m, m_{1a}, m_{2a}, m_{1b}, m_{2b}),\end{align}
where 
\begin{align}
&C(m, m_{1a}, m_{2a}, m_{1b}, m_{2b}) \ = \ \nonumber\\
&m!\left(\frac{2}{jk}\right)^m\frac{2^{\frac{m_{1a}+m_{1b}}{2}-2(m_{2a}+m_{2b})}m_{1a}!!m_{1b}!!}{m_{1a}!m_{1b}!m_{2a}!m_{2b}!}\left(k\sqrt{1-\frac{1}{k}}\right)^{m_{1a}+2m_{2a}}\left(j\sqrt{1-\frac{1}{j}}\right)^{m_{1b}+2m_{2b}}.  
\end{align}

\end{theorem}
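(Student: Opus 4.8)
The plan is to mirror the proof of Theorem~\ref{GOE-checkerboard Moments}: substitute the checkerboard–checkerboard $S$-class contribution of Proposition~\ref{j,k-checkerboard trace} into the moment expansion \eqref{momentOfCNDN}, and then isolate the surviving terms using the cancellation identities of Lemma~\ref{inequalities}. First I would start from \eqref{momentOfCNDN}, which writes $\mathbb{E}[\mu^{(m)}_{\{A_N,B_N\},f^{(2n)}}]$ as a double sum over $\alpha$ and $i$ with weight $c_\alpha(jk/2N^2)^\alpha N^{-m}\binom{m}{i}(-2N^2/jk)^{m-i}\,\mathbb{E}[\textup{Tr}\{A_N,B_N\}^{\alpha+i}]$. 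By Lemma~\ref{blockslemma}, the leading contribution to each trace comes only from $S$-classes built of $1$-blocks and $2$-blocks of $a$'s and $b$'s, so I replace the trace by the sum over block counts $(m_{1a},m_{1b},m_{2a},m_{2b})$ of the contribution of Proposition~\ref{j,k-checkerboard trace} with $\eta=\alpha+i$.

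Writing $m_1=m_{1a}+m_{1b}$ and $m_2=m_{2a}+m_{2b}$, I would next collect all $\eta$-dependent exponential factors. The pieces $(jk/2)^{\eta-m}$ (from the prefactors of \eqref{momentOfCNDN}), $2^{\eta-2m_2}$, $k^{-(\eta-m_{1a}-2m_{2a})}$, and $j^{-(\eta-m_{1b}-2m_{2b})}$ (from Proposition~\ref{j,k-checkerboard trace}) combine into the $\eta$-independent constant $2^{m-2m_2}k^{m_{1a}+2m_{2a}-m}j^{m_{1b}+2m_{2b}-m}$, the powers of $N$ combine to $N^{m-(m_1+m_2)}$, and the only remaining $\eta$-dependence is the polynomial factor $\eta^{m_1+m_2}=(\alpha+i)^{m_1+m_2}$. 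I would then split into three cases by the sign of $m-(m_1+m_2)$, exactly as in the paragraph preceding Theorem~\ref{GOE-checkerboard Moments}. If $m_1+m_2<m$, the polynomial in $i$ has degree strictly below $m$, so the alternating sum $\sum_{i}(-1)^{m-i}\binom{m}{i}i^p$ vanishes by Lemma~\ref{inequalities} and the $S$-class drops out. If $m_1+m_2>m$, the factor $N^{m-(m_1+m_2)}$ decays, and the residual sum $\sum_\alpha c_\alpha(\text{polynomial in }\alpha)$ is controlled using $|c_\alpha|\ll C_0^{2n}$ with $\alpha=O(\log\log N)$ and $n=\log\log N$, so the contribution is $o(1)$. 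Hence only $m_1+m_2=m$ survives; there $N^{m-(m_1+m_2)}=1$ and $\sum_{i=0}^m(-1)^{m-i}\binom{m}{i}(\alpha+i)^m=m!$ by Lemma~\ref{inequalities}, after which $\sum_{\alpha=2n}^{4n}c_\alpha=f^{(2n)}(1)=1$ removes the remaining $\alpha$-sum.

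Assembling the surviving pieces, the contribution of a fixed $S$-class with $m_1+m_2=m$ becomes $m!$ times the constant $2^{m-2m_2}k^{m_{1a}+2m_{2a}-m}j^{m_{1b}+2m_{2b}-m}$ times the combinatorial prefactor $\frac{2^{m_1/2}(m_{1a})!!(m_{1b})!!}{m_{1a}!m_{1b}!m_{2a}!m_{2b}!}(1-1/k)^{m_{1a}/2+m_{2a}}(1-1/j)^{m_{1b}/2+m_{2b}}$ of Proposition~\ref{j,k-checkerboard trace}. A direct rearrangement of the powers of $2$, $j$, $k$ together with the square-root factors shows this product equals $C(m,m_{1a},m_{2a},m_{1b},m_{2b})$. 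Finally, since every $1$-block is paired with another $1$-block of the same type, the counts $m_{1a}$ and $m_{1b}$ must both be even for a nonvanishing matching, which reproduces precisely the index constraints in the stated sum.

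The main obstacle I expect is the case $m_1+m_2>m$: there the decaying power $N^{m-(m_1+m_2)}$ must dominate the growth coming both from the weight coefficients $c_\alpha$ and from the degree-$(m_1+m_2-m)$ polynomial in $\alpha$ left behind by the $m$-th finite difference. Verifying this requires the same size estimate as in the GOE–checkerboard argument, where the choice $n=\log\log N$ keeps $C_0^{2n}$ only polylogarithmic in $N$; establishing that this bound wins cleanly is the one genuinely technical point, while the exponent bookkeeping that turns the surviving factors into $C(m,m_{1a},m_{2a},m_{1b},m_{2b})$ is routine.
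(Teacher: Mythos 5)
Your proposal is correct and follows essentially the same route as the paper: substituting the $S$-class contribution of Proposition \ref{j,k-checkerboard trace} into \eqref{momentOfCNDN} with $\eta=\alpha+i$, using Lemma \ref{inequalities} to kill $m_1+m_2<m$, discarding $m_1+m_2>m$ via the decaying power of $N$, and collapsing the $\alpha$-sum with $f^{(2n)}(1)=1$; your exponent bookkeeping (the constant $2^{m-2m_2}k^{m_{1a}+2m_{2a}-m}j^{m_{1b}+2m_{2b}-m}$ recombining into $C(m,m_{1a},m_{2a},m_{1b},m_{2b})$) checks out. If anything, you are slightly more careful than the paper on the $m_1+m_2>m$ case, where you import the $|c_\alpha|\ll C_0^{2n}$ estimate explicitly rather than asserting the contribution vanishes.
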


\begin{proof}

By Lemma \ref{blockslemma}, it suffices to consider the contributions from $S$-classes with 1-blocks and 2-blocks. Applying Proposition \ref{j,k-checkerboard trace} to \eqref{momentOfCNDN} by letting $\eta=\alpha+i$, we have
\begin{align}\label{largestblipmoment}
&\lim_{N\rightarrow\infty}\mathbb{E}\left[\mu^{(m)}_{\{A_N,B_N\},f^{(2n)}}\right] \ = \ \lim_{N\rightarrow\infty}\sum_{\alpha=2n}^{4n}c_\alpha \left(\frac{jk}{2N^2}\right)^\alpha \frac{N^{-(m_1+m_2)}}{N^m}\sum_{i=0}^m\binom{m}{i}\left(-\frac{2N^2}{jk}\right)^{m-i}\left(\frac{2N^2}{kj}\right)^{\alpha+i}
\nonumber \\
&\sum_{\substack{m_{1a}, m_{1b}, m_{2a}, m_{2b} \\ m_{1a}, m_{1b}\textup{ even}}} \frac{2^{-2m_2+\frac{m_1}{2}}(m_{1a})!!(m_{1b})!!}{m_{1a}!m_{1b}!m_{2a}!m_{2b}!}\left(k\sqrt{1-\frac{1}{k}}\right)^{m_{1a}+2m_{2a}}\left(j\sqrt{1-\frac{1}{j}}\right)^{m_{1b}+2m_{2b}}(\alpha+i)^{m_1+m_2}.
\end{align}
In \eqref{largestblipmoment}, the power of $N$ in each term of the sum is $N^{m_1+m_2-m}$, which is independent of $\alpha$. We require $m_1+m_2\leq m$, since otherwise the contribution from $m_1+m_2>m$ vanishes in the limit. Moreover, by Lemma \ref{inequalities}, the sum $\sum_{i=0}^m\binom{m}{i}(-1)^{m-i}(\alpha+i)^{m_1+m_2}$ vanishes except for the $m^{th}$ power of $i$. Hence, the contribution to the moment in the limit only comes from $m_1+m_2=m$. After combining terms and canceling out the dependency on $i$, and noting that the weight function $g(x)$ satisfies $g(1)=\sum_{\alpha=2n}^{4n}c_{\alpha} x^\alpha$, we have
\begin{align}
&\mathbb{E}\left[\mu^{(m)}_{\{A_N,B_N\}}\right] \ = \ \sum_{\substack{m_{1a}+m_{1b}+m_{2a}+m_{2b}=m; \\ m_{1a},m_{1b}\textup{ even}}}C(m, m_{1a}, m_{2a}, m_{1b}, m_{2b}).
\end{align}
\end{proof}

Even though the combinatorics and cancellation techniques have successfully yielded the limiting expected moments of the empirical blip spectral measures for the blip regime of $\{\textup{GOE, $k$-checkerboard}\}$ and the largest blip regime of $\{\textup{$k$-checkerboard, $j$-checkerboard}\}$, they fail to provide us with the limiting expected moments of the intermediary blip regimes of $\{\textup{$k$-checkerboard, $j$-checkerboard}\}$. To see this, we need to first define the empirical spectral measure for each intermediary blip regime.

\begin{definition}
Let $n=\log\log(N)$ and consider the weight function $g_s^{(2n)}$ as defined in Appendix \ref{weight polynomial}. Let $h_s=k-1$ if $s=1$ and $h_s=j-1$ if $s=2$. Suppose that $A_N$ is an $N\times N$ matrix sampled from the $k$-checkerboard ensemble and $B_N$ is an $N\times N$ matrix sampled from the $j$-checkerboard ensemble, where the samplings occur independently. Then the $g_s^{(2n)}$-\textbf{weighted intermediary blip spectral measure} associated to $\{A_N, B_N\}$ around $\pm w_sN^{3/2}$ is
\begin{align}
\mu_{\{A_N, B_N\},g_s^{(2n)}}(x)dx \ = \ \frac{1}{2h_s}\sum_{\lambda\textup{ eigenvalue}}g^{(2n)}_s\left(\frac{\lambda^2}{w_s^2N^3}\right)\delta\left(x-\frac{\lambda^2-w_s^2N^3}{N^{5/2}}\right)dx.
\end{align}
\end{definition}
Then, by eigenvalue trace lemma,
\begin{align}
\mathbb{E}\left[\mu^{(m)}_{\{A_N, B_N\},g_s^{(2n)}}\right] &\ = \ \mathbb{E}\left[\frac{1}{2h_s}\sum_{\lambda\textup{ eigenvalue}}\sum_{\alpha=2n}^{14n}c_{\alpha}\left(\frac{\lambda^2}{w_s^2N^3}\right)^\alpha\left(\frac{\lambda^2-w_s^2N^3}{N^{5/2}}\right)^m\right] \\
&\ = \ \frac{1}{2h_s}\sum_{\alpha=2n}^{14n}c_\alpha\left(\frac{1}{w_s^2N^3}\right)^\alpha\frac{1}{N^{5m/2}}\sum_{i=0}^m (-w_s^2N^3)^{m-i}\mathbb{E}\left[\textup{Tr}\left(\{A_N, B_N\}^{2(\alpha+i)}\right)\right].
\end{align}
Hence, the intermediary blip moment calculation has again become a combinatorial problem of counting and leveraging the expected value of different configurations of cyclic products. Since a blip eigenvalue in either intermediary blip regimes is of size $\Theta(N^{3/2})$, then the contribution of such eigenvalue to $\mathbb{E}\left[\textup{Tr}\left(\{A_N, B_N\}^{2(\alpha+i)}\right)\right]$ is of size $\Theta(N^{3(\alpha+i)})$, thus a contributing $S$-class for either intermediary blip regime should be $\Theta(N^{3(\alpha+i)-m})$ for some nonnegative integer $m=o(\alpha)$. One may be tempted to apply the same combinatorial and cancellation techniques to the intermediary blip moment calculation. However, there are two problems with this approach.

\begin{enumerate}
\item Among all the $S$-classes in $\mathbb{E}\left[\textup{Tr }\left(\{A_N, B_N\}\right)^{2(\alpha+i)}\right]$, there are $\Theta(\alpha)$ $S$-classes whose contribution is $\omega(N^{3/2(\alpha+i)-m})$ for each nonnegative $m=o(\alpha)$. These $S$-classes, which are typically a mixture of $wv$, $vw$ pairs, $2$-blocks of $a$ and $b$, and $1$-blocks of $a$ and $b$, are particularly difficult to cancel out using current cancellation techniques.
\item Since a blip eigenvalue in either intermediary blip regimes is of size $\Theta(N^{3/2})$, then there are $\Theta(\alpha)$ $S$-classes in $\mathbb{E}\left[\textup{Tr}\left(\{A_N, B_N\}^{2(\alpha+i)}\right)\right]$ whose contributions are all $\Theta(N^{3/2(\alpha+i)-m})$ for each nonnegative $m=o(\alpha)$. These $S$-classes typically consists of a mixture of $2$-blocks of $a$ and $b$ and $1$-blocks of $a$ and $b$. It remains difficult to differentiate which $S$-classes among these are contributing to $\mathbb{E}\left[\mu^{(m)}_{\{A_N,B_N\},g_s^{(2n)}}\right]$ and which are not.
\end{enumerate}
Note that one would encounter the same problems even if they take $k=O(N^\epsilon)$ or $j=O(N^\epsilon)$ for $0< \epsilon<1/2$, despite creating a difference in the order of $N$ of the aforementioned $S$-classes. Nevertheless, if we take $k=\Theta(\sqrt{N})$ or $j=\Theta(\sqrt{N})$, then either all the eigenvalues are in the bulk, or there are $\Theta(\sqrt{N})$ blip eigenvalues of size $\Theta(N)$ outside the bulk. In either case, standard techniques (without weight function) will allow us to find the limiting expected moments as all the eigenvalues are of the same size $\Theta(N)$.
\appendix
\section{Moments of anticommutators}\label{AppendixMomentsAnti}
\subsection{Moments of Anticommutators}
\begin{prop}
For sequences $f_0,f_1,\dots,f_\ell$ defined such that $f_0(0)=1$, and $f_0(1)=f_1(1)=f_2(1)=\dots=f_\ell(1)=1$, and with recurrence relations for $m>1$ given by
\begin{align}
&f_0(m) \ = \ f_1(m)+ \ell! \sum_{j=1}^{m-1}f_1(j)f_0(m-j),\nonumber\\
&f_k(m) \ = \ f_{k+1}(m)+\sum_{\substack{1\leq x_1,x_2\leq m\\ x_1+x_2\leq m}}(\ell-k)!(k-1)!f_{k+1}(x_1)f_{k+1}(x_2)f_{\ell-k-1}(m-x_1-x_2+1)
\end{align}
for any $0<k<\ell-1$, and by
\begin{align}
&f_{\ell-1}(m) \ = \ f_\ell(m)+\nonumber\\&\sum_{\substack{0\leq x_1,x_2<m-1\\ x_1+x_2<m-1}} (\ell-1)!(1+(\ell!-1)\cdot \mathbbm{1}_{x_1>0})(1+(\ell!-1)\cdot \mathbbm{1}_{x_2>0})f_0(x_1)f_0(x_2)f_1(m-x_1-x_2-1),\nonumber
\\
&f_\ell(m) \ = \ \ell! \cdot f_0(m-1),
\end{align}
the $2m$\textup{\textsuperscript{th}} moment of the $\ell$-anticommutator is 
\begin{align}
M_{2m} \ = \ \ell! \cdot f_0(m)
\end{align}
\end{prop}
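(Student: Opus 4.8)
The plan is to generalize the genus-expansion argument behind Theorem \ref{GOE-GOE moment recurrence}. First I would expand $\mathbb{E}[\textup{Tr}(\{A^{(1)}_N,\dots,A^{(\ell)}_N\}^{2m})]$ via the eigenvalue trace formula: each of the $2m$ anticommutator factors contributes $\ell$ matrix entries in one of $\ell!$ orders, so a cyclic product is indexed by a configuration of $2m$ blocks of length $\ell$, each block carrying a permutation $\sigma\in S_\ell$ recording which of the $\ell$ i.i.d. GOE matrices supplies each entry. Since all entries are Gaussian, Wick's formula and Proposition \ref{non-crossing} apply verbatim once we require that a pairing may only match entries from the same matrix; the odd moments vanish by the usual parity and order count, and in the limit only label-respecting non-crossing pairings survive. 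This yields the genus expansion $M_{2m}=\sum_{C}\#\{\text{non-crossing label-respecting pairings of }C\}$, the exact analogue of \eqref{genusGOEGOE}.

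Next I would exploit the fact that the $\ell$ matrices are exchangeable: relabeling them simultaneously permutes the block orderings without changing the count of non-crossing pairings. Fixing the ordering of the first block and summing over all orderings of the remaining $2m-1$ blocks therefore defines a single quantity $f_0(m)$, and $M_{2m}=\ell!\,f_0(m)$, the factor $\ell!$ accounting for the first block's ordering (this recovers the factor $2$ when $\ell=2$). The heart of the argument is the introduction of the auxiliary counts $f_1,\dots,f_\ell$. I would define $f_k(m)$ to be the number of non-crossing label-respecting pairings of a partial configuration in which $k$ of the matrix types appearing in the leading block have already been opened and must be closed by later matchings, exactly as $g=f_1$ records the single open $a$ in the proof of Theorem \ref{GOE-GOE moment recurrence}.

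Conditioning on the position with which the next open entry is matched splits the cyclic product, by non-crossing, into an inner and an outer part whose pairings are counted independently; this produces the convolution sums in the recurrences. The combinatorial prefactors $(\ell-k)!\,(k-1)!$ count the admissible orderings of the still-unmatched matrix types inside the block straddling the split, the leading $\ell!$ in the $f_0$ recurrence is the free ordering of the first block of the outer part, and the factors $(1+(\ell!-1)\mathbbm{1}_{x_i>0})$ in the $f_{\ell-1}$ recurrence encode whether a sub-part is empty (no ordering choice) or nonempty ($\ell!$ orderings), generalizing the indicators $(1+\mathbbm{1}_{x_i>0})$ of the $\ell=2$ case. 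Matching the final open type of a block against its partner closes that nested structure and returns the count to a fresh configuration of size $m-1$, giving $f_\ell(m)=\ell!\,f_0(m-1)$; the base cases $f_0(0)=1$ and $f_k(1)=1$ are immediate.

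I expect the main obstacle to be the precise bookkeeping of the block orderings under the non-crossing split. Unlike the $\ell=2$ case, where each block is just $ab$ or $ba$, here a block of $\ell$ entries can be cut by a matching into two fragments that must be interleaved consistently with the permutation it carries, and one must verify that the number of consistent orderings is exactly $(\ell-k)!\,(k-1)!$ and that no non-crossing configuration is double-counted or omitted across the $\ell+1$ interdependent sequences. Establishing this counting identity and confirming the empty-fragment boundary cases — the source of the indicator terms — is where the real work lies; once it is in place, summing over the location of the first matching yields the stated recurrences and hence $M_{2m}=\ell!\,f_0(m)$.
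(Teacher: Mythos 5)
Your proposal matches the paper's own proof in all essentials: the same genus-expansion reduction to non-crossing, label-respecting pairings of $(\ell,2\ell m)$-configurations, the same normalization $M_{2m}=\ell!\,f_0(m)$ obtained by fixing the ordering of the leading block, and the same nested auxiliary counts $f_k$ (the paper defines $f_k(m)$ as the number of such matchings in which the first $k$ terms are paired with the last $k$ terms in nested fashion, which is exactly your ``$k$ opened types'') with recurrences derived by conditioning on where the first unmatched entry closes. The bookkeeping you flag as the remaining work is resolved in the paper precisely as you predict: $(\ell-k)!$ for the orderings of the unconstrained portion of the final block, $(k-1)!$ for the leading block of the middle segment viewed as a fresh $f_{\ell-k-1}$-configuration, and the factors $1+(\ell!-1)\mathbbm{1}_{x_i>0}$ distinguishing empty from nonempty outer fragments in the $f_{\ell-1}$ recurrence.
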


\begin{proof}
The proof follows similarly as with the $2$-anticommutator. Let $f_0(m)$ be the number of non-crossing matchings with respect to all $(\ell, 2\ell m)$-configurations starting with $a^{(1)}_{i_1i_2}a^{(2)}_{i_2i_3}\cdots a^{(\ell)}_{i_\ell i_{\ell+1}}$ and let $f_k(m)$ be the number of such matchings where the first $k$ terms are paired with the last $k$ terms in a nested fashion (i.e., for $k=3$, we would have configurations of the form $a^{(1)}_{i_1i_2}a^{(2)}_{i_2i_3}a^{(3)}_{i_3i_4}\cdots a^{(3)}_{i_{2\ell m-2}i_{2\ell m-1}}a^{(2)}_{i_{2\ell m-1}i_{2\ell m}}a^{(1)}_{i_{2\ell m}i_{1}}$ and matchings such that $i_2=i_{2\ell m}$, $i_3=i_{2\ell m-1}$, and $i_4=i_{2\ell m-2}$).

We first find the recurrence relation for $f_0(m)$. To ensure non-crossing matchings, $a^{(1)}_{i_1 i_2}$ must be paired with some $a^{(1)}_{i_{2\ell j} i_{2\ell j+1}}$ with $j\leq m$ (in the case when $j=m$, we identify $2\ell m+1$ as $1$). When $j=m$, the number of non-crossing matchings is simply $f_1(m)$ by definition. When $j<m$, the number of non-crossing matchings within $a^{(1)}_{i_1 i_2}\cdots a^{(1)}_{i_{2\ell j} i_{2\ell j+1}}$ is $f_1(j)$, while the number of non-crossing matchings within the rest of the cyclic product for which we have no restrictions is $\ell!f_0(m-j)$, with the $\ell!$ accounting for different possible arrangements of the first $\ell$ terms. Multiplying these together and summing over all possible $j$'s gives
\begin{align}
f_0(m) \ = \ f_1(m)+\ell! \sum_{j=1}^{m-1}f_1(j)f_0(m-j).
\end{align}

Now turning to $f_k(m)$, we look separately at when $0<k<\ell-1$, when $k=\ell-1$, and when $k=\ell$.

When $0<k<\ell-1$, we have either that $a^{(k+1)}_{i_{k+1}i_{k+2}}$ is paired with $a^{(k+1)}_{i_{2\ell m-k}i_{2\ell m-k+1}}$, or that $a^{(k+1)}_{i_{k+1}i_{k+2}}$ is paired with $a^{(k+1)}_{i_{2\ell x_1 -k}i_{2\ell x_1-k+1}}$ and $a^{(n)}_{i_{2\ell m-k}i_{2\ell m-k+1}}$ is paired with $a^{(n)}_{i_{2\ell (m-x_2) + k+1}i_{2\ell(m- x_2) + k+2}}$, where $n\in \{k+1, \dotsc, \ell\}$, with both $x_1, x_2 \geq 1$ and $2\ell x_1-k+1 < 2\ell (m-x_2) + k+1$, or $x_1+x_2 \leq m$. The first case is precisely the definition of $f_{k+1}(m)$. In the second case, the number of non-crossing matchings of terms between $a^{(k+1)}_{i_{k+1}i_{k+2}}$ and $a^{(k+1)}_{i_{2\ell x_1 -k}i_{2\ell x_1-k+1}}$ is $f_{k+1}(x_1)$, the number of non-crossing matchings of terms between $a^{(n)}_{i_{2\ell (m-x_2) + k+1}i_{2\ell(m- x_2) + k+2}}$ and $a^{(n)}_{i_{2\ell m-k}i_{2\ell m-k+1}}$ is $(\ell-k)!f_{k+1}(x_2)$, with the $(\ell-k)!$ accounting for different possible arrangements of the last $\ell$ terms, and the number of non-crossing matchings of terms between $a^{(k+1)}_{i_{2\ell x_1 -k}i_{2\ell x_1-k+1}}$ and $a^{(n)}_{i_{2\ell (m-x_2) + k+1}i_{2\ell(m- x_2) + k+2}}$ is $(k-1)!f_{\ell-k+1}(m-x_1-x_2+1)$. The last statement follows from viewing the $2\ell(m-x_1-x_2)+2k+2$ terms between $a^{(k+1)}_{i_{2\ell x_1 -k}i_{2\ell x_1-k+1}}$ and $a^{(n)}_{i_{2\ell (m-x_2) + k+1}i_{2\ell(m- x_2) + k+2}}$ as $2\ell (m-x_1-x_2+1)$ terms where the $(l-k-1)$ terms on both end are matched to each other and hence fixed, with the $(k-1)!$ accounting for different permutations of the remaining $k+1$ of the first  $\ell$ terms. We sum over all possible $x_1$ and $x_2$'s to get the desired result:
\begin{align}
f_k(m) \ = \ f_{k+1}(m)+
 \sum_{\substack{1\leq x_1,x_2\leq m\\ x_1+x_2\leq m}}(\ell-k)!(k-1)!f_{k+1}(x_1)f_{k+1}(x_2)f_{\ell-k-1}(m-x_1-x_2+1).
\end{align}

When $k=\ell-1$, we either have that $a^{(\ell)}_{i_{\ell}i_{\ell+1}}$ is paired with $a^{(\ell)}_{i_{2\ell m - \ell+1}i_{2\ell m - \ell+2}}$, which is simply $f_\ell(m)$ by definition, or that $a^{(\ell)}_{i_{\ell}i_{\ell+1}}$ is paired with $a^{\ell}_{i_{2\ell x_1 +\ell+1}i_{2\ell x_1+\ell+2}}$ and $a^{(n)}_{i_{2\ell m-\ell+1}i_{2\ell m-\ell+2}}$ is paired with $\\a^{(n)}_{i_{2\ell (m-x_2) - \ell}i_{2\ell(m- x_2) - \ell+1}}$, with both $x_1, x_2 \geq 0$. The number of non-crossing matchings of terms between $a^{(\ell)}_{i_{\ell}i_{\ell+1}}$ and $a^{(\ell)}_{i_{2\ell x_1 +\ell+1}i_{2\ell x_1+\ell+2}}$ is $(1+(\ell!-1)\mathbbm{1}_{x_1>0})f_0(x_1)$, the number of non-crossing matchings of terms between $a^{(n)}_{i_{2\ell (m-x_2) - \ell}i_{2\ell(m- x_2) - \ell+1}}$ and $a^{(n)}_{i_{2\ell m-\ell+1}i_{2\ell m-\ell+2}}$ is  $(1+(\ell!-1)\mathbbm{1}_{x_2>0})f_0(x_2)$, with a factor of $\ell!$ when either $x_1,x_2 > 0$ due to different possible arrangements of the first $\ell$ terms starting at $a^{(n)}_{i_{\ell+1}i_{\ell+2}}$, where $n \in \{1, \dotsc, \ell-1\}$, and the number of non-crossing matchings of terms between $a^{(\ell)}_{i_{2\ell x_1 +\ell+1}i_{2\ell x_1+\ell+2}}$ and $a^{(n)}_{i_{2\ell (m-x_2) - \ell}i_{2\ell(m- x_2) - \ell+1}}$ is $(\ell-1)!f_1(m-x_1-x_2-1)$. For the last statement, we view the $2\ell(m-x_1-x_2)-2\ell-2$ terms between $a^{(\ell)}_{i_{2\ell x_1 +\ell+1}i_{2\ell x_1+\ell+2}}$ and $a^{(n)}_{i_{2\ell (m-x_2) - \ell}i_{2\ell(m- x_2) - \ell+1}}$ as $2\ell(m-x_1-x_2-1)$ terms with the first and last term matched with each other, with $(l-1)!$ accounting for different arrangements of the remaining $l-1$ terms. We once again sum over all possible $x_1$ and $x_2$'s to reach the desired result:
\begin{align}
&f_{\ell-1}(m) \ = \ f_\ell(m)+\nonumber
\\
&\sum_{\substack{0\leq x_1,x_2<m-1\\ x_1+x_2<m-1}} (\ell-1)!(1+(\ell!-1)\cdot \mathbbm{1}_{x_1>0})(1+(\ell!-1)\cdot \mathbbm{1}_{x_2>0})f_0(x_1)f_0(x_2)f_1(m-x_1-x_2-1).
\end{align}

Lastly, when $k=\ell$, with no matching conditions on the terms between the first and last $\ell$ terms, for each possible permutation of the next $\ell$ terms, we have $f_0(m-1)$ non-crossing matchings, amounting to $\ell!\cdot f_0(m-1)$ total non-crossing matchings.

We have now fully defined our recurrences for $f_0(k)$, which represents the number of non-crossing matchings with respect to $(\ell, 2\ell m)$-configurations where the first $\ell$ terms are fixed to be $a^{(1)}_{i_1i_2}a^{(2)}_{i_2i_3}\cdots a^{(\ell)}_{i_\ell i_{\ell+1}}$. Applying any permutation to these $\ell$ terms preserves the non-crossing property of these matchings. Hence, we multiply $f_0(m)$ by $\ell!$ to obtain all possible non-crossing partitions with respect to $(\ell, 2\ell m)$-configurations, and we arrive at the even moments being $M_{2k}=\ell! \cdot f_0(k)$.
\end{proof}

\subsection{Bulk Moments of the Anticommutator of GOE and Checkerboard}

\begin{proposition}\label{bulkGOEcheckerboard} The $2m$\textup{\textsuperscript{th}} bulk moment of $\{\textup{GOE, }k\textup{-checkerboard}\}$ is $M_{2m}=2(1-\frac{1}{k})^mf(m)$, where $f(0)=f(1)=1$, $g(1)=1$, and
\begin{align}
f(m) \ = \ g(m)+2\sum_{j=1}^{m-1}g(j)f(m-j)
\end{align}
and 
\begin{align}
g(m) \ = \ 2f(m-1) + \sum_{\substack{0\leq x_1,x_2\leq m-2\\ x_1+x_2\leq m-2}}(1+\mathbbm{1}_{x_1>0})(1+\mathbbm{1}_{x_2>0})f(x_1)f(x_2)g(m-1-x_1-x_2).
\end{align}
\end{proposition}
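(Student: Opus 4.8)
The plan is to reduce the bulk computation to the non-crossing pairing count already established for $\{\textup{GOE, GOE}\}$ in Theorem \ref{GOE-GOE moment recurrence}, and then to absorb the one genuinely new feature of the checkerboard ensemble — the modular restriction on its non-weight entries — into a single multiplicative constant. First I would split $B_N = \overline{B}_N + \tilde{B}_N$, where $\overline{B}_N$ is the rank-$k$ mean matrix and $\tilde{B}_N$ is the hollow fluctuation matrix whose entries vanish on the positions with $i\equiv j \pmod k$ and are i.i.d.\ mean-$0$, variance-$1$ off them. Since $\overline{B}_N$ has rank $k$, the matrix $\{A_N,\overline{B}_N\}=A_N\overline{B}_N+\overline{B}_N A_N$ has rank at most $2k$, so by Weyl's inequality (as used in Appendix \ref{multipleregimes} and \cite{split}) it displaces only $O(k)$ eigenvalues, which form the blip; the bulk spectral distribution of $\{A_N,B_N\}$ therefore coincides with that of $\{A_N,\tilde{B}_N\}$. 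As $\tilde{B}_N$ carries no mean component, $\{A_N,\tilde{B}_N\}$ has all eigenvalues of order $N$, so its bulk moment is simply $\lim_{N\to\infty}\mathbb{E}[\textup{Tr}(\{A_N,\tilde{B}_N\}^{2m})]/N^{2m+1}$, a clean trace computation.

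Next I would run the genus expansion of subsection \ref{momentGOEGOE} verbatim on $\{A_N,\tilde{B}_N\}$. Writing $A_N$-entries as $a$ and $\tilde{B}_N$-entries as $b$, the $2m$\textsuperscript{th} moment expands into cyclic products of length $4m$ over configurations $C\in\mathcal{C}_{2,4m}$, and exactly as in \eqref{genusGOEGOE} only non-crossing pairings matching $a$'s with $a$'s and $b$'s with $b$'s survive in the limit. Because treating $a$ and $b$ terms identically reproduces the combinatorics of Theorem \ref{GOE-GOE moment recurrence}, the bare number of contributing pairings is again governed by the interdependent recurrences for $f(m)$ and $g(m)$ and equals $2f(m)$. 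The sole difference from $\{\textup{GOE, GOE}\}$ is that each $b$-term $b_{i_s i_{s+1}}$ now carries the constraint $i_s\not\equiv i_{s+1}\pmod k$, which reweights each pairing rather than changing which pairings contribute.

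I would then evaluate this weight. Fix a contributing non-crossing pairing $\pi$ of a configuration $C$; by Proposition \ref{non-crossing} the permutation $\gamma_{4m}\pi$ has $2m+1$ cycles, which I identify with the $2m+1$ index classes. For a genus-$0$ pairing the graph whose vertices are these cycles and whose edges are the $2m$ pairs is a tree (it is connected and satisfies $V-E=1$), as in \cite{MS}. Exactly $m$ of these edges come from $b$-pairs, and a subgraph of a tree is a forest; hence the $b$-pairs impose $m$ inequality constraints whose constraint graph is a forest on $2m+1$ vertices with $m$ edges and $m+1$ components. Assigning a congruence class modulo $k$ to each index class and then a value in $[N]$ within that class (each class offering $N/k+O(1)$ choices since $k\mid N$ is fixed), the number of valid congruence colorings of this forest is $k^{m+1}(k-1)^m$, so the fraction of all $k^{2m+1}$ colorings meeting the constraints is $(1-\tfrac{1}{k})^m$. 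As $k$ is fixed this does not lower the leading power of $N$, and the surviving count of index assignments is $(1-\tfrac{1}{k})^m N^{2m+1}+O(N^{2m})$.

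Since the factor $(1-\tfrac{1}{k})^m$ depends only on $m$ — there are always precisely $m$ $b$-pairs in the $2m$\textsuperscript{th} moment — and not on the particular pairing, it factors out of the sum over configurations and non-crossing pairings, yielding $M_{2m}=(1-\tfrac{1}{k})^m\cdot 2f(m)$ with the same $f,g$ recurrences as Theorem \ref{GOE-GOE moment recurrence}. The main obstacle I anticipate is the rigorous justification that the $b$-edges form a forest and hence contribute the clean product $(1-\tfrac{1}{k})^m$ \emph{uniformly} across pairings; this rests on the tree structure of genus-$0$ pairings and on checking that the modular restrictions never merge two distinct index classes or create a cycle among the $b$-edges. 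A secondary technical point is confirming that the low-rank piece $\{A_N,\overline{B}_N\}$ pushes its eigenvalues entirely into the blip without perturbing the bulk, which I would control with the Weyl interlacing estimates of Appendix \ref{multipleregimes}.
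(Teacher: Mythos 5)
Your proposal is correct and follows essentially the same route as the paper: reduce the bulk to the hollow $(k,0)$-checkerboard (the paper cites Tao's low-rank perturbation result where you use rank bounds plus Weyl interlacing --- the same substance), then reuse the $\{\textup{GOE, GOE}\}$ non-crossing count $2f(m)$ and multiply by the modular-restriction factor $\left(1-\frac{1}{k}\right)^m$. Your forest-coloring computation of that factor --- using that a genus-zero pairing's quotient graph is a tree on the $2m+1$ index classes, so the $m$ $b$-edges form a forest admitting exactly $k^{m+1}(k-1)^m$ residue assignments out of $k^{2m+1}$ --- is in fact a uniform-in-$\pi$ justification of a step the paper's proof dispatches with a one-line probability assertion, so if anything your write-up is more rigorous on that point.
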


\begin{proof}
By a result in \cite{Tao1}, the limiting distribution of the bulk of $\{\textup{GOE, }(k,1)\textup{-checkerboard}\}$ is given by the limiting distribution of $\{\textup{GOE, }(k,0)\textup{-checkerboard}\}$. Because in a contributing cyclic product, every term $c_{i_\ell i_{\ell+1}}$ from the $(k, 0)$-checkerboard must be non-weight with the modular restriction $i_\ell\not\equiv i_{\ell+1}\textup{ (mod $k$)}$, then the $2m$\textsuperscript{th} bulk moment of $\{\textup{GOE, }k\textup{-checkerboard}\}$ is essentially the $2m$\textsuperscript{th} moment $\{\textup{GOE,}\\ \textup{GOE}\}$, except that we have to account for all the modular restrictions. Since the $2m$ non-weight terms are paired together, the probability that all the terms from the $(k,0)$-checkerboard are non-weights is $\left(1-\frac{1}{k}\right)^m$. This completes the proof.
\end{proof}

\subsection{Bulk Moments of the Anticommutator of Checkerboard and Checkerboard}
\begin{corollary} The $2m$\textup{\textsuperscript{th}} bulk moment of $\{k\textup{-checkerboard, }j\textup{-checkerboard}\}$ is $M_{2m}=2\left(1-\frac{1}{k}\right)^m\\ \left(1-\frac{1}{j}\right)^m f(m)$, where
\begin{align}
f(m) \ = \ g(m)+2\sum_{j=1}^{m-1}g(j)f(m-j)
\end{align}
and 
\begin{align}
g(m) \ = \ 2f(m-1) + \sum_{\substack{0\leq x_1,x_2\leq m-2\\ x_1+x_2\leq m-2}}(1+\mathbbm{1}_{x_1>0})(1+\mathbbm{1}_{x_2>0})f(x_1)f(x_2)g(m-1-x_1-x_2).
\end{align}
\end{corollary}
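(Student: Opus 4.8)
The plan is to follow the proof of Proposition \ref{bulkGOEcheckerboard} almost verbatim, now applying the mean-zero reduction once to \emph{each} checkerboard factor. The guiding observation is that the only structural differences between $\{k\text{-checkerboard}, j\text{-checkerboard}\}$ and $\{\textup{GOE}, \textup{GOE}\}$ in the bulk are (i) the deterministic weight entries sitting on the residue diagonals, and (ii) the modular restrictions forcing the non-weight entries off those diagonals. Difference (i) is eliminated by the Tao--Vu finite-rank perturbation result, and difference (ii) contributes only a multiplicative density factor, exactly as in the GOE--checkerboard case.

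First I would invoke \cite{Tao1} to replace the $(k,1)$-checkerboard by the $(k,0)$-checkerboard and the $(j,1)$-checkerboard by the $(j,0)$-checkerboard. Since the corresponding mean matrices have finite ranks $k$ and $j$, subtracting them is a bounded-rank perturbation that leaves the limiting bulk spectral distribution unchanged, so the bulk moments of $\{k\text{-checkerboard}, j\text{-checkerboard}\}$ agree with those of $\{(k,0)\text{-checkerboard}, (j,0)\text{-checkerboard}\}$. After this reduction $A_N$ and $B_N$ are hollow checkerboard matrices whose non-weight entries are i.i.d.\ with mean $0$ and variance $1$, so the genus-expansion and non-crossing analysis of Subsection \ref{momentGOEGOE} applies unchanged: the pairings that contribute in the limit are precisely the non-crossing pairings counted by $\{\textup{GOE}, \textup{GOE}\}$, giving the combinatorial count $2f(m)$ with $f$ and $g$ as in Theorem \ref{GOE-GOE moment recurrence}.

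Next I would reinstate the modular restrictions. In a contributing length-$4m$ cyclic product there are $m$ matched pairs of $a$-entries (from the $k$-checkerboard) and $m$ matched pairs of $b$-entries (from the $j$-checkerboard); each matched $a$-pair forces $i\not\equiv j \pmod{k}$ and each matched $b$-pair forces $i\not\equiv j \pmod{j}$ on its common index pair. Assigning the free indices uniformly over residues, each $a$-pair survives with probability $1-1/k$ and each $b$-pair with probability $1-1/j$, so the two families of restrictions multiply the leading-order count by $(1-1/k)^m(1-1/j)^m$. Combining this with the combinatorial count $2f(m)$ yields $M_{2m}=2(1-1/k)^m(1-1/j)^m f(m)$, as claimed.

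The hard part will be justifying that the mod-$k$ restrictions on the $a$-pairs and the mod-$j$ restrictions on the $b$-pairs factor independently, since consecutive entries in the cyclic product share an index and the constraints are therefore coupled through the cycle. Here I would exploit the standing hypotheses $\gcd(k,j)=1$ and $k,j\mid N$: by the Chinese Remainder Theorem the residue of a uniformly chosen index modulo $k$ and modulo $j$ are independent and each equidistributed, so the $m$ mod-$k$ conditions and the $m$ mod-$j$ conditions impose asymptotically independent density constraints, and neither family reduces the number of free index classes below the value $2m+1$ responsible for the leading $N^{2m+1}$ scaling. Verifying that the shared-index coupling affects only lower-order terms is the sole step requiring genuine care; everything else is a direct transcription of the GOE--checkerboard argument in Proposition \ref{bulkGOEcheckerboard}.
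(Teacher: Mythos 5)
Your proposal is correct and takes essentially the same approach as the paper, which proves this corollary by direct appeal to the argument of Proposition \ref{bulkGOEcheckerboard}: reduce via the finite-rank perturbation result of \cite{Tao1} to the hollow $(k,0)$- and $(j,0)$-checkerboards, reuse the $\{\textup{GOE, GOE}\}$ non-crossing count $2f(m)$, and multiply by the modular-restriction probabilities $\left(1-\frac{1}{k}\right)^m\left(1-\frac{1}{j}\right)^m$ for the $m$ matched $a$-pairs and $m$ matched $b$-pairs. Your additional Chinese Remainder Theorem justification (using $\gcd(k,j)=1$ and $k,j\mid N$) of why the mod-$k$ and mod-$j$ constraints factor independently is a sound fleshing-out of a detail the paper leaves implicit.
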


\begin{proof}
The proof is essentially the same as the proof of Proposition \ref{bulkGOEcheckerboard}.
\end{proof}

\section{Proof of Multiple Regimes}\label{multipleregimes} 

In this section, we prove the existence of multiple regimes of eigenvalues for $\{\textup{GOE, }k\textup{-checkerboard}\}$ and $\{k\textup{-checkerbord, }j\textup{-checkerboard}\}$. Our method involves decomposing each checkerboard matrix into the sum of its mean matrix and perturbation matrix and applying Weyl's inequality to bound the eigenvalue of the matrix ensemble in terms of the eigenvalue of its components. For the sake of simplicity, throughout this section we assume that the weight $w=1$ and that $k|N$ for $\{\textup{GOE, }k\textup{-checkerboard}\}$ and $jk|N$ for $k\{\textup{-checkerboard, }j\textup{-checkerboard}\}$.

\begin{definition}[Mean Matrix]
The mean matrix $\overline{A}_N$ of the $k$-checkerboard matrix $A_N=(a_{ij})$ is given by
\begin{align}
\overline{a}_{ij} \ = \ 
\begin{cases}
0,& \text{if }i\not\equiv j\mod{k}\\
1,& \text{if }i\equiv j\mod{k}.
\end{cases}
\end{align}
We note that the rank of $\overline{A}_N$ is $k$.
\end{definition}

\begin{definition}[Perturbation Matrix]
The perturbation matrix $\Tilde{A}_N$ of the k-checkerboard matrix $A_N=\Tilde{A}_N=(\Tilde{a}_{ij})$ is given by
\begin{align}
\Tilde{a}_{ij} \ = \
\begin{cases}
a_{ij},& \text{if }i\not\equiv j\mod{k}\\
0,& \text{if }i\equiv j\mod{k}.
\end{cases}
\end{align}
\end{definition}

Thus, we can write the $k$-checkerboard matrix simply as $A_N=\overline{A}_N+\Tilde{A}_N$. As shown in \cite{split},  all the eigenvalues of $\Tilde{A}_N$ are $O(N^{1/2})$. Moreover, $\overline{A}_N$ has $k$ eigenvalues at $\frac{N}{k}$ and $N-k$ eigenvalues at $0$.

\begin{lemma} 
Let $\Tilde{A}_N$ be the perturbation matrix as defined above and $B_N$ an $N\times N$ GOE matrix, then with probability $1-o(1)$, all the eigenvalues of $\{\Tilde{A}_N, B_N\}$ are $O(N)$.
\end{lemma}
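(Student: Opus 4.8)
The plan is to control the operator (spectral) norm of $\{\Tilde{A}_N, B_N\}$ and then use that for a real symmetric matrix the largest eigenvalue in absolute value equals its operator norm. First I would observe that since both $\Tilde{A}_N$ and $B_N$ are real symmetric, their anticommutator $\{\Tilde{A}_N, B_N\} = \Tilde{A}_N B_N + B_N \Tilde{A}_N$ is again real symmetric: indeed $(\Tilde{A}_N B_N + B_N \Tilde{A}_N)^T = B_N^T \Tilde{A}_N^T + \Tilde{A}_N^T B_N^T = B_N \Tilde{A}_N + \Tilde{A}_N B_N$. Hence all its eigenvalues $\lambda_i$ are real and satisfy $\max_i |\lambda_i| = \|\{\Tilde{A}_N, B_N\}\|$, where $\|\cdot\|$ denotes the operator norm. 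So it suffices to show $\|\{\Tilde{A}_N, B_N\}\| = O(N)$ with probability $1-o(1)$.

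Next I would bound the norm of the anticommutator by the norms of its factors. By the triangle inequality and submultiplicativity of the operator norm,
\begin{align*}
\|\{\Tilde{A}_N, B_N\}\| \ \leq \ \|\Tilde{A}_N B_N\| + \|B_N \Tilde{A}_N\| \ \leq \ 2\,\|\Tilde{A}_N\|\,\|B_N\|.
\end{align*}
Now I would invoke the two needed inputs. The first is already recorded in this section: as shown in \cite{split}, with probability $1-o(1)$ all eigenvalues of the perturbation matrix $\Tilde{A}_N$ are $O(N^{1/2})$, and since $\Tilde{A}_N$ is symmetric this gives $\|\Tilde{A}_N\| = O(N^{1/2})$ on that event. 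The second is the standard spectral norm bound for GOE: with probability $1-o(1)$ the extreme eigenvalues of $B_N$ lie near the semicircle edge at $\pm 2\sqrt{N}$, so $\|B_N\| = O(N^{1/2})$. Intersecting the two high-probability events (whose complement still has probability $o(1)$) and combining with the displayed inequality yields
\begin{align*}
\|\{\Tilde{A}_N, B_N\}\| \ \leq \ 2\,O(N^{1/2})\,O(N^{1/2}) \ = \ O(N),
\end{align*}
whence every eigenvalue of $\{\Tilde{A}_N, B_N\}$ is $O(N)$ with probability $1-o(1)$.

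The only genuinely probabilistic step, and hence the main point requiring justification, is the bound $\|B_N\| = O(\sqrt{N})$ for the GOE matrix; everything else (symmetry of the anticommutator, submultiplicativity, and the quoted bound on $\|\Tilde{A}_N\|$) is routine. This GOE estimate is classical and can be supplied in several ways: either from the almost-sure convergence of the largest eigenvalue of the GOE to the soft edge at $2\sqrt{N}$, or, quantitatively, from an $\epsilon$-net argument over the unit sphere combined with Gaussian concentration, which gives $\PP{\|B_N\| > C\sqrt{N}} = o(1)$ for a suitable absolute constant $C$. Either route closes the argument.
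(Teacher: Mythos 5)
Your proof is correct and follows essentially the same route as the paper: both arguments reduce the claim to the operator-norm bound $\|\Tilde{A}_N\|\,\|B_N\| = O(N^{1/2})\cdot O(N^{1/2}) = O(N)$ via submultiplicativity, quoting the bound on $\|\Tilde{A}_N\|$ from the checkerboard literature and the classical GOE edge bound for $\|B_N\|$. The only difference is cosmetic \emph{and} in your favor: the paper invokes Weyl's inequality applied to $\lambda_N(\Tilde{A}_N B_N)$ and $\lambda_N(B_N \Tilde{A}_N)$ (products which are not symmetric, so this step is stated loosely there), whereas your triangle inequality on the operator norm of the symmetric anticommutator sidesteps that issue, and you are also more careful in tracking the high-probability events.
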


\begin{proof}

We know that the maximum eigenvalue of a matrix is equal to the operator norm of the matrix. Since $||\Tilde{A}_N||=O(N^{1/2})$ and $||B_N||=O(N^{1/2})$, then by submultiplicativity of the matrix norm, $||\Tilde{A}_NB_N||\leq ||\Tilde{A}_N||||B_N||=O(N)$. Similarly, $||B_N\Tilde{A_N}||=O(N)$. By Weyl's inequality, $\lambda_N(\{\Tilde{A}_N, B_N\})\leq \lambda_N(\Tilde{A}_NB_N)\\+\lambda_N(B_N\Tilde{A}_N)=O(N)$. The argument for the smallest negative eigenvalues follows from considering $-\Tilde{A}_N$ and $-B_N$.
\end{proof}

\begin{lemma}
Let $\overline{A}_N$ be the mean matrix as defined above and $B_N$ the $N\times N$ GOE matrix, then the largest eigenvalue of $\{\overline{A}_N, B_N\}$ is bounded above by $\frac{4N^{3/2}}{k}$, the smallest eigenvalue is bounded below by $-\frac{4N^{3/2}}{k}$, and there are at least $N-2k$ eigenvalues at 0.
\end{lemma}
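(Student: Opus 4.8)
The plan is to handle the two assertions by entirely separate arguments: the eigenvalue magnitude bounds will come from submultiplicativity of the operator norm combined with the edge behavior of the GOE, while the count of zero eigenvalues will come from a rank bound exploiting that $\overline{A}_N$ has rank $k$. First I would record the structural facts already available. The matrix $\overline{A}_N$ is symmetric and positive semidefinite with $k$ eigenvalues equal to $N/k$ and the remaining $N-k$ equal to $0$, so $\|\overline{A}_N\|=N/k$ and $\operatorname{rank}(\overline{A}_N)=k$. For the GOE matrix $B_N$, the largest eigenvalue concentrates at the spectral edge, so with probability $1-o(1)$ we have $\|B_N\|\le 2\sqrt{N}$. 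Finally, since $\overline{A}_N$ and $B_N$ are both symmetric, the anticommutator $C:=\{\overline{A}_N,B_N\}=\overline{A}_NB_N+B_N\overline{A}_N$ is symmetric, so that $\max_i|\lambda_i(C)|=\|C\|$.

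For the norm bound I would apply the triangle inequality and submultiplicativity of the operator norm:
\[
\|C\| \ \le \ \|\overline{A}_NB_N\|+\|B_N\overline{A}_N\| \ \le \ 2\|\overline{A}_N\|\,\|B_N\| \ \le \ 2\cdot\frac{N}{k}\cdot 2\sqrt{N} \ = \ \frac{4N^{3/2}}{k}.
\]
Because $C$ is symmetric, this immediately yields $\lambda_{\max}(C)\le\|C\|\le 4N^{3/2}/k$ and $\lambda_{\min}(C)\ge-\|C\|\ge-4N^{3/2}/k$, delivering both the upper and lower bounds at once and avoiding the need to interpret eigenvalues of the non-symmetric products $\overline{A}_NB_N$ and $B_N\overline{A}_N$ individually (in contrast to the Weyl-inequality phrasing used in the preceding lemma).

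For the zero eigenvalues I would use the rank inequalities $\operatorname{rank}(XY)\le\min(\operatorname{rank}X,\operatorname{rank}Y)$ and subadditivity of rank under addition. Since $\operatorname{rank}(\overline{A}_N)=k$, both $\overline{A}_NB_N$ and $B_N\overline{A}_N$ have rank at most $k$, whence $\operatorname{rank}(C)\le\operatorname{rank}(\overline{A}_NB_N)+\operatorname{rank}(B_N\overline{A}_N)\le 2k$. A symmetric $N\times N$ matrix of rank at most $2k$ has at least $N-2k$ zero eigenvalues, which is exactly the third assertion; note this part is deterministic and uses no randomness of $B_N$.

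The only step requiring genuine care is the edge estimate $\|B_N\|\le 2\sqrt{N}$, which is the statement that the operator norm of an $N\times N$ GOE matrix sits at the spectral edge $2\sqrt{N}$; this is the sole probabilistic ingredient and must be quoted with the correct constant to produce the factor $4$, holding with probability $1-o(1)$. Everything else is routine linear algebra, so I expect this edge bound to be the main (and essentially only) obstacle, and a minor one given that it is a standard concentration result for the GOE.
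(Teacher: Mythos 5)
Your proposal is correct and follows essentially the same argument as the paper: the rank inequalities $\operatorname{rank}(\overline{A}_NB_N),\operatorname{rank}(B_N\overline{A}_N)\leq k$ plus subadditivity give at least $N-2k$ zero eigenvalues, and submultiplicativity of the operator norm gives $\|\{\overline{A}_N,B_N\}\|\leq 2\cdot\frac{N}{k}\cdot 2N^{1/2}=\frac{4N^{3/2}}{k}$, bounding the extreme eigenvalues. Your explicit remark that the edge bound $\|B_N\|\leq 2\sqrt{N}$ is a probabilistic statement holding with probability $1-o(1)$ is in fact slightly more careful than the paper, which uses it implicitly.
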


\begin{proof}
First, observe that $\textup{rank}(\overline{A}_N B_N)\leq \textup{min}(\textup{rank}(\overline{A}_N), \textup{rank}(B_N))=k$. Similarly, $\textup{rank}(B_N\overline{A}_N)\leq k$. By the subadditivity of rank, $\textup{rank}(\{\overline{A}_N, B_N\})\leq 2k$. Thus, at least $N-2k$ eigenvalues are $0$. For the highest eigenvalues, we see that $||\{\overline{A}_N, B_N\}||\leq 2||\overline{A}_N||||B_N||=2\cdot \frac{N}{k}\cdot 2N^{1/2}=\frac{4N^{3/2}}{k}$. Similarly, the smallest eigenvalue is bounded below by $-\frac{4N^{3/2}}{k}$.
\end{proof}

Empirically, we observe that $\overline{A}_N$ has $k$ blip eigenvalues at $\frac{N^{3/2}}{k}+O(N)$ and $k$ blip eigenvalues at $-\frac{N^{3/2}}{k}+O(N)$. By assuming this, we are able to prove the existence of multiple regimes for $\{A_N, B_N\}$, as follows:

\begin{lemma}
Let $A_N$ be a $k$-checkerboard matrix and $B_N$ an $N\times N$ GOE matrix, then $\{A_N,B_N\}$ has a blip containing $k$ eigenvalues at $\frac{N^{3/2}}{k} + O(N)$, a blip containing $k$ eigenvalues at $-\frac{N^{3/2}}{k}+ O(N)$, and $N-2k$ eigenvalues of $O(N)$.
\end{lemma}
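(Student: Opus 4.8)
The plan is to combine the bilinearity of the anticommutator with the decomposition $A_N = \overline{A}_N + \tilde{A}_N$ of the $k$-checkerboard matrix into its mean and perturbation parts, and then transfer the known spectral structure of $\{\overline{A}_N, B_N\}$ to $\{A_N, B_N\}$ via Weyl's inequality.

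First I would record the additive identity
\[
\{A_N, B_N\} \ = \ \{\overline{A}_N + \tilde{A}_N, B_N\} \ = \ \{\overline{A}_N, B_N\} + \{\tilde{A}_N, B_N\},
\]
which is immediate from expanding the anticommutator and using its bilinearity in the first slot. This writes $\{A_N, B_N\}$ as a sum of two real symmetric matrices whose spectra are already understood: by the empirical assumption preceding the lemma, $\{\overline{A}_N, B_N\}$ has $k$ eigenvalues at $N^{3/2}/k + O(N)$, $k$ eigenvalues at $-N^{3/2}/k + O(N)$, and (by the rank bound in the earlier lemma) $N-2k$ eigenvalues at $0$; while the earlier lemma on $\{\tilde{A}_N, B_N\}$ gives $\|\{\tilde{A}_N, B_N\}\| = O(N)$ on an event of probability $1 - o(1)$.

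Next I would apply Weyl's perturbation inequality in the form
\[
|\lambda_i(\{A_N, B_N\}) - \lambda_i(\{\overline{A}_N, B_N\})| \ \leq \ \|\{\tilde{A}_N, B_N\}\|,
\]
valid for every index $i$ with eigenvalues listed in decreasing order. On the high-probability event the right-hand side is $O(N)$, so each eigenvalue of $\{A_N, B_N\}$ lies within $O(N)$ of the corresponding eigenvalue of $\{\overline{A}_N, B_N\}$. Matching indices then gives: the $k$ largest eigenvalues equal $N^{3/2}/k + O(N)$, the $k$ smallest equal $-N^{3/2}/k + O(N)$, and the remaining $N - 2k$ eigenvalues are $O(N)$, which is exactly the asserted three-regime structure.

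The step requiring the most care — though it is more bookkeeping than genuine obstacle — is verifying that the order-matching in Weyl's inequality is legitimate: one must check that the $O(N)$ perturbation is of strictly lower order than the blip separation $N^{3/2}/k$, so that the positive blips stay above, and the negative blips below, the $O(N)$ bulk, and the induced ordering of the eigenvalues of $\{A_N, B_N\}$ respects that of $\{\overline{A}_N, B_N\}$. Since $N = o(N^{3/2})$ this holds automatically for large $N$, and all conclusions hold on the same probability-$(1-o(1))$ event on which $\|\{\tilde{A}_N, B_N\}\| = O(N)$. I would finally remark that the only genuinely external input is the empirically assumed blip location of $\{\overline{A}_N, B_N\}$; everything else follows directly from the norm bound and Weyl's inequality.
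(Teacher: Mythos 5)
Your proposal is correct and is essentially the paper's own argument: the paper uses the identical decomposition $\{A_N,B_N\}=\{\overline{A}_N,B_N\}+\{\tilde{A}_N,B_N\}$, the same $O(N)$ norm bound on $\{\tilde{A}_N,B_N\}$ and the same empirically assumed blip structure of $\{\overline{A}_N,B_N\}$, and then applies Weyl's inequality index by index at the boundary indices $\lambda_N$, $\lambda_{N-k+1}$, $\lambda_{N-k}$, $\lambda_{k+1}$ (together with the symmetric negative counterparts). Your uniform perturbation bound $|\lambda_i(\{A_N,B_N\})-\lambda_i(\{\overline{A}_N,B_N\})|\leq\|\{\tilde{A}_N,B_N\}\|$ is just those same two-sided Weyl estimates packaged for all $i$ at once, and your observation that $N=o(N^{3/2})$ keeps the regimes separated matches the paper's implicit use of the same fact.
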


\begin{proof}
First note that we can write $\{A_N,B_N\}=\{\overline{A}_N,B_N\} + \{\Tilde{A}_N,B_N\}$. By Weyl's inequality, we see that 
\begin{align}
\lambda_{N-k+1}(\{A_N,B_N\})\ \geq \ \lambda_{N-k+1}(\{\overline{A}_N,B_N\}) + \lambda_1(\{\Tilde{A}_N,B_N\})\ = \ \frac{1}{k}N^{3/2}+O(N)
\end{align}
and 
\begin{align}
\lambda_N(\{A_N,B_N\})\ \leq \ \lambda_N(\{\overline{A}_N,B_N\}) + \lambda_N(\{\Tilde{A}_N,B_N\})\ = \ \frac{1}{k}N^{3/2}+O(N).
\end{align}

So this proves the existence of $k$ blip eigenvalues at $\frac{N^{3/2}}{k}$. Similarly, we can use Weyl's inequality to show the existence of blip eigenvalues at $-\frac{N^{3/2}}{k}$. For the bulk, we see that 
\begin{align}
\lambda_{N-k}(\{A_N,B_N\})\ \leq \ \lambda_{N-k}(\{\overline{A}_N,B_N\}) + \lambda_N(\{\Tilde{A}_N,B_N\})\ = \ O(N),
\end{align}
and 
\begin{align}
\lambda_{k+1}(\{A_N,B_N\})\ \geq \ \lambda_{k+1}(\{\overline{A}_N,B_N\}) + \lambda_1(\{\Tilde{A}_N,B_N\})\ = \ O(N).
\end{align}
This completes the proof for the existence of three different regimes. 
\end{proof}

Now we consider $\{A_N, B_N\}$, where $A_N$ is a $k$-checkerboard matrix and $B_N$ is a $j$-checkerboard matrix. We assume $\textup{gcd}(k,j)=1$, $N\mid kj$. Then we can write $\{A_N,B_N\}=\{\Tilde{A}_N,\Tilde{B}_N\}+\{\overline{A}_N,\Tilde{B}_N\}+\{\Tilde{A}_N,\overline{B}_N\}+\{\overline{A}_N, \overline{B}_N\}$. Similarly, we see that all eigenvalues of $\{\Tilde{A}_N,\Tilde{B}_N\}$ are of $O(N)$. Empirically, $\{\overline{A}_N,\Tilde{B}_N\}$ has $k$ eigenvalues at $\frac{1}{k}\sqrt{1-\frac{1}{j}}N^{3/2}$, $k$ eigenvalues at $-\frac{1}{k}\sqrt{1-\frac{1}{j}}N^{3/2}$, and the remaining $N-2k$ eigenvalues at $0$. Heuristically, this can be seen from the fact that $\overline{A}_N$ has $k$ eigenvalues at $\frac{1}{k}N$ and the eigenvalues of $\Tilde{B}_N$ are bounded above and below by $\pm 2\sqrt{1-\frac{1}{j}}N^{1/2}$. Similarly, empirically $\{\Tilde{A}_N, \overline{B}_N\}$ has $j$ eigenvalues at $\frac{1}{j}\sqrt{1-\frac{1}{k}}N^{3/2}$, $j$ eigenvalues at $-\frac{1}{j}\sqrt{1-\frac{1}{k}}N^{3/2}$, and the remaining $N-2j$ eigenvalues are at 0. For $\{\overline{A}_N, \Tilde{B}_N\}+\{\Tilde{A}_N, \overline{B}_N\}$, we observe that the largest eigenvalue is of $O(N^{3/2})$ but larger than $\frac{1}{k}\sqrt{1-\frac{1}{j}}N^{3/2}$ and $\frac{1}{j}\sqrt{1-\frac{1}{j}}N^{3/2}$, the smallest eigenvalue is of $O(N^{3/2})$ but smaller than $-\frac{1}{k}\sqrt{1-\frac{1}{j}}N^{3/2}$ and $-\frac{1}{j}\sqrt{1-\frac{1}{k}}N^{3/2}$. Furthermore, There are $k-1$ eigenvalues at each of $\pm \frac{1}{k}\sqrt{1-\frac{1}{j}}N^{3/2}$, and $j-1$ eigenvalues at each of $\pm \frac{1}{j}\sqrt{1-\frac{1}{k}}N^{3/2}$, and the remaining $N-2k-2j+3$ eigenvalues of $O(N)$.

\begin{lemma}
Let $\overline{A}_N$ and $\overline{B}_N$ be average matrices as defined above, then $\{\overline{A}_N, \overline{B}_N\}$ has 1 eigenvalue exactly at $\frac{2N^2}{jk}$ and $N-1$ eigenvalues at 0.
\end{lemma}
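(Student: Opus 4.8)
The plan is to compute the two products $\overline{A}_N\overline{B}_N$ and $\overline{B}_N\overline{A}_N$ explicitly and to observe that each is a constant multiple of the all-ones matrix, after which the conclusion is immediate from the spectrum of a rank-one matrix. First I would write out the $(i,\ell)$ entry of the product using the definitions of the two mean matrices:
\[
(\overline{A}_N\overline{B}_N)_{i\ell} \ = \ \sum_{m=1}^N \overline{a}_{im}\,\overline{b}_{m\ell} \ = \ \#\{1\le m\le N : m\equiv i \ (\mathrm{mod}\ k),\ m\equiv \ell \ (\mathrm{mod}\ j)\},
\]
since $\overline{a}_{im}=1$ exactly when $m\equiv i \pmod{k}$ and $\overline{b}_{m\ell}=1$ exactly when $m\equiv \ell \pmod{j}$.

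The key step is to evaluate this count. Because $\gcd(k,j)=1$, the Chinese Remainder Theorem says the simultaneous congruences $m\equiv i \pmod{k}$ and $m\equiv \ell \pmod{j}$ are equivalent to a single congruence $m\equiv c \pmod{kj}$ for a residue $c$ depending on $i$ and $\ell$. Since $kj\mid N$, exactly $N/(kj)$ integers in $\{1,\dots,N\}$ satisfy this, and this count is independent of $i$ and $\ell$. Hence every entry of $\overline{A}_N\overline{B}_N$ equals $N/(kj)$, that is, $\overline{A}_N\overline{B}_N = \tfrac{N}{kj}\,\mathbf{1}\mathbf{1}^{T}$, where $\mathbf{1}$ is the all-ones column vector. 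The identical computation (or simply transposing, as both mean matrices are symmetric) gives $\overline{B}_N\overline{A}_N = \tfrac{N}{kj}\,\mathbf{1}\mathbf{1}^{T}$, so that
\[
\{\overline{A}_N,\overline{B}_N\} \ = \ \frac{2N}{kj}\,\mathbf{1}\mathbf{1}^{T}.
\]

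Finally I would invoke the elementary spectral decomposition of the rank-one matrix $\mathbf{1}\mathbf{1}^{T}$: it has the single nonzero eigenvalue $\mathbf{1}^{T}\mathbf{1}=N$ with eigenvector $\mathbf{1}$, and eigenvalue $0$ on the $(N-1)$-dimensional orthogonal complement $\mathbf{1}^{\perp}$. Scaling by $2N/(kj)$ produces exactly one nonzero eigenvalue $\tfrac{2N}{kj}\cdot N = \tfrac{2N^2}{kj}$ together with $N-1$ zero eigenvalues, which is precisely the claim.

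I do not expect a genuine obstacle: the only substantive observation is that the CRT count is independent of $i$ and $\ell$, which collapses each product to a multiple of $\mathbf{1}\mathbf{1}^{T}$; everything else is the standard spectrum of a rank-one matrix. As a consistency check one can recover the eigenvalue directly from $\overline{A}_N\mathbf{1}=\tfrac{N}{k}\mathbf{1}$ and $\overline{B}_N\mathbf{1}=\tfrac{N}{j}\mathbf{1}$, which give $\{\overline{A}_N,\overline{B}_N\}\mathbf{1} = 2\cdot\tfrac{N}{k}\cdot\tfrac{N}{j}\,\mathbf{1} = \tfrac{2N^2}{kj}\,\mathbf{1}$.
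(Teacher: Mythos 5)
Your proposal is correct and follows essentially the same route as the paper: the paper likewise observes that, since $\gcd(k,j)=1$ and $kj\mid N$, both $\overline{A}_N\overline{B}_N$ and $\overline{B}_N\overline{A}_N$ are the constant matrix with every entry $\frac{N}{kj}$, so the anticommutator is the constant matrix with entries $\frac{2N}{kj}$, which has one eigenvalue $\frac{2N^2}{kj}$ and $N-1$ eigenvalues at $0$. Your write-up merely makes explicit the Chinese Remainder Theorem count and the rank-one spectral decomposition that the paper leaves implicit.
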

\begin{proof}
Since $j$ and $k$ are relatively prime, then from matrix multiplication, we see that $\overline{A}_N\overline{B}_N$ and $\overline{B}_N\overline{A}_N$ are both the constant matrix where every entry is $\frac{N}{kj}$. Hence, $\{\overline{A}_N, \overline{B}_N\}$ is the constant matrix where every entry is $\frac{2N}{kj}$. Such matrix has 1 eigenvalue exactly at $\frac{2N^2}{kj}$ and $N-1$ eigenvalues at $0$.  
\end{proof}

\begin{lemma}\label{multipleregimeskjcheckerboard}
Let $A_N$ be an $N\times N$ $k$-checkerboard matrix, and $B_N$ an $N\times N$ $j$-checkerboard matrix such that $\gcd(j,k)=1$ and $jk|N$. Assume without loss of generality that $2\leq k<j$. Then the eigenvalues of $\{A_N,B_N\}$ are distributed as follows: 
\begin{enumerate}
\item $1$ eigenvalue at $\frac{2}{jk}N^2+O(N^{3/2})$,
\item $k-1$ eigenvalues at each of $\pm \frac{1}{k}\sqrt{1-\frac{1}{j}}N^{3/2}+O(N)$,
\item $1$ eigenvalue between $\frac{1}{j}\sqrt{1-\frac{1}{k}}N^{3/2}+O(N)$ and $\frac{1}{k}\sqrt{1-\frac{1}{j}}N^{3/2}+O(N)$ and $1$ eigenvalue between $-\frac{1}{k}\sqrt{1-\frac{1}{j}}N^{3/2}+O(N)$ and $-\frac{1}{j}\sqrt{1-\frac{1}{k}}N^{3/2}+O(N)$,
\item $j-2$ eigenvalues at $\pm \frac{1}{j}\sqrt{1-\frac{1}{k}}N^{3/2}+O(N)$,
\item 1 eigenvalue between $O(N)$ (positive) and $\frac{1}{j}\sqrt{1-\frac{1}{k}}N^{3/2}+O(N)$ and 1 eigenvalue between $O(N)$ (negative) and $-\frac{1}{j}\sqrt{1-\frac{1}{k}}N^{3/2}+O(N)$,
\item The remaining $N-2k-2j+1$ eigenvalues of $O(N)$.
\end{enumerate}
\end{lemma}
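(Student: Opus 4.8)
The plan is to exploit the decomposition
$\{A_N, B_N\} = \{\overline{A}_N, \overline{B}_N\} + \left(\{\overline{A}_N, \tilde{B}_N\} + \{\tilde{A}_N, \overline{B}_N\}\right) + \{\tilde{A}_N, \tilde{B}_N\}$,
which I abbreviate as $M_1 + M_2 + M_3$, and to use that the three pieces live at separated scales: $M_1$ is rank one with single eigenvalue $2N^2/jk$ by the preceding lemma, so $\|M_1\| = \Theta(N^2)$; the sum $M_2$ has norm $O(N^{3/2})$; and $M_3$ has all eigenvalues $O(N)$ as established above. The engine of the proof is Weyl's inequality $|\lambda_i(X+Y) - \lambda_i(X)| \le \|Y\|$, applied across each of the two gaps between these scales, supplemented by Cauchy interlacing for the rank-one piece $M_1$.

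First I would isolate the largest blip (item (1)). Since $M_1$ is positive semidefinite of rank one, interlacing for rank-one updates gives, in descending order, $\lambda_i(M_2 + M_3) \le \lambda_i(\{A_N,B_N\}) \le \lambda_{i-1}(M_2+M_3)$. The top eigenvalue of $M_2 + M_3$ is only $O(N^{3/2})$, so the bump produces exactly one eigenvalue of size $\Theta(N^2)$; combining interlacing with the two-sided Weyl bound $\lambda_1(\{A_N,B_N\}) = 2N^2/jk + O(N^{3/2})$ pins it, and confines all remaining $N-1$ eigenvalues to $O(N^{3/2})$.

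Next I would localize the intermediary blips (items (2)--(5)) by analyzing $M_2 + M_3$; since $M_3$ enters only as an $O(N)$ Weyl perturbation, it suffices to understand $M_2 = M_2' + M_2''$ up to $O(N)$, where $M_2' = \{\overline{A}_N, \tilde{B}_N\}$ and $M_2'' = \{\tilde{A}_N, \overline{B}_N\}$. Here I would feed in the (empirically observed) spectra of the summands: $M_2'$ has rank at most $2k$ with $\pm$-symmetric clusters near $\pm a$, $a := \tfrac{1}{k}\sqrt{1-1/j}\,N^{3/2}$, while $M_2''$ has rank at most $2j$ with clusters near $\pm b$, $b := \tfrac{1}{j}\sqrt{1-1/k}\,N^{3/2}$, and $a > b$ since $k < j$. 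Applying the two-sided Weyl inequalities $\lambda_{i+s-1}^\downarrow(M_2' + M_2'') \le \lambda_i^\downarrow(M_2') + \lambda_s^\downarrow(M_2'')$ together with the matching lower bound, a cluster of multiplicity in one summand survives the sum as a cluster with multiplicity one less (explaining the counts $k-1$ at $\pm a$ and $j-2$ at $\pm b$), whereas the leftover eigenvalue at each scale can only be trapped between consecutive cluster locations rather than pinned to a point; these are precisely the straggler eigenvalues of items (3) and (5). Rank additivity $\textup{rank}(M_2) \le 2k + 2j$ together with the $\Theta(N)$ bound on $M_3$ then forces the remaining $N - 2k - 2j + 1$ eigenvalues into the bulk (item (6)).

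The main obstacle is the interaction inside $M_2 = M_2' + M_2''$: because $a$ and $b$ genuinely differ (and can differ by more than a factor of two when $j \gg k$), the crude sandwich $a - b \le \lambda_1^\downarrow(M_2) \le a + b$ is too weak to pin the leftover eigenvalues, so the argument can only place them in the stated intervals, and the exact counts $k-1$ and $j-2$ hinge on the fine multiplicity structure of the component spectra, which is taken as empirical input. Making those component spectra rigorous (rather than empirical) and sharpening the straggler localization beyond an interval are the genuinely hard points; the rest is interlacing and rank bookkeeping.
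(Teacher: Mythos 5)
Your skeleton is the same as the paper's: the paper also writes $\{A_N,B_N\}=\{\overline{A}_N,\overline{B}_N\}+\{\overline{A}_N,\tilde{B}_N\}+\{\tilde{A}_N,\overline{B}_N\}+\{\tilde{A}_N,\tilde{B}_N\}$ and runs Weyl's inequality across the scale gaps; your interlacing treatment of the rank-one piece $M_1=\{\overline{A}_N,\overline{B}_N\}$ is just Weyl with $\lambda_{N-1}(M_1)=0$, which is exactly what the paper does, and your items (1) and (6) are fine. The genuine gap is in items (2)--(5), and it is larger than the caveat you attached. You claim that two-sided Weyl inequalities applied to the component spectra of $M_2'=\{\overline{A}_N,\tilde{B}_N\}$ (clusters of size $k$ at $\pm a$) and $M_2''=\{\tilde{A}_N,\overline{B}_N\}$ (clusters of size $j$ at $\pm b$) make each cluster ``survive the sum with multiplicity one less.'' They cannot. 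To pin an eigenvalue of $M_2'+M_2''$ at $a+O(N)$ you need a matching lower bound, and in descending order the only available one is $\lambda_{i+s-N}(M_2'+M_2'')\ \geq\ \lambda_i(M_2')+\lambda_s(M_2'')$: taking $\lambda_i(M_2')\approx a$ forces $i\leq k$, hence $s\geq N-k+t>N-j$, where $\lambda_s(M_2'')\approx -b$. So the best lower bound at any top index is $a-b$, which is $\Theta(N^{3/2})$ below $a$, not $a-O(N)$; upper bounds of the form $a+O(N)$ likewise only become available at index $j+2$, far past the $k$-cluster. Your ``sandwich'' remark about the top eigenvalue is thus not a defect confined to the stragglers --- from the component spectra alone, Weyl localizes \emph{no} eigenvalue of $M_2'+M_2''$ to within $O(N)$ of $\pm a$ or $\pm b$, so the counts in items (2) and (4) do not follow (note also they are not both ``one less'': $j-2$ is two below the component multiplicity $j$).

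The paper sidesteps exactly this obstruction by placing its empirical input one level up from yours: it observes numerically the ordered spectrum of the \emph{combined} matrix $\{\overline{A}_N,\tilde{B}_N\}+\{\tilde{A}_N,\overline{B}_N\}$ --- $k-1$ eigenvalues at $\pm a$, $j-1$ at $\pm b$, one straggler above and below --- and then only uses Weyl to add $\{\overline{A}_N,\overline{B}_N\}+\{\tilde{A}_N,\tilde{B}_N\}$, whose cost is $O(N)$ at every index but one (since $\lambda_{N-1}(\{\overline{A}_N,\overline{B}_N\})=0$ and $\|\{\tilde{A}_N,\tilde{B}_N\}\|=O(N)$). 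The single index swallowed by the rank-one piece is what turns $j-1$ into $j-2$ and produces the interval-trapped eigenvalues of items (3) and (5) (the paper itself notes Weyl cannot pin those four). So to repair your argument you must either promote your empirical hypothesis from the summands to the sum $M_2$, or supply a genuinely stronger tool than Weyl (e.g., control of the eigenvector/commutation structure between the mean and perturbation pieces) --- the hard point you flagged is real, but it sits at the $M_2'+M_2''$ stage, not merely in rigorizing the component spectra or sharpening the stragglers.
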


\begin{proof}
By assumption, we have $2\leq k<j$, so $\frac{1}{k}\sqrt{1-\frac{1}{j}}> \frac{1}{j}\sqrt{1-\frac{1}{k}}$. Since $\{A_N,B_N\}=\{\Tilde{A}_N,\Tilde{B}_N\}+\{\overline{A}_N,\Tilde{B}_N\}+\{\Tilde{A}_N,\overline{B}_N\}+\{\overline{A}_N,\overline{B}_N\}$, then
\begin{align}
\lambda_N(\{A_N,B_N\}) &\ \geq \ \lambda_N(\{\overline{A},\overline{B}\}) + \lambda_1(\{\overline{A},\Tilde{B}\} + \{\Tilde{A},\overline{B}\} + \{\Tilde{A},\Tilde{B}\}) \nonumber \\
&\ \geq \ \lambda_N(\{\overline{A},\overline{B}\}) + \lambda_1(\{\overline{A},\Tilde{B}\}) + \lambda_1(\{\Tilde{A},\overline{B}\}) + \lambda_1(\{\Tilde{A},\Tilde{B}\}) \ = \ \frac{2N^2}{jk} + O(N^{3/2}).
\end{align}
This establishes the existence of the largest blip. Then, we establish the existence of the intermediary blip containing $k-1$ eigenvalues at $\frac{1}{k}\sqrt{1-\frac{1}{j}}N^{3/2}+O(N)$:

\begin{alignat}{2}
&\lambda_{N-1}(\{A_N,B_N\}) &&\ \leq \ \lambda_{N-1}(\{\overline{A}_N,\overline{B}_N\})+\lambda_{N}(\{\overline{A},\Tilde{B}\} + \{\Tilde{A},\overline{B}\} + \{\Tilde{A},\Tilde{B}\}) \nonumber \\
 & &&  \ \leq \ \lambda_N(\{\overline{A}_N,\Tilde{B}_N\}+\{\Tilde{A}_N,\overline{B}_N\}) + \lambda_N(\{\Tilde{A}_N,\Tilde{B}_N\}) \ = \ \frac{1}{k}\sqrt{1-\frac{1}{j}}N^{3/2} + O(N),\nonumber
\\
&\lambda_{N-k+1}(\{A_N,B_N\}) && \ \geq \ \lambda_{1}(\{\overline{A}_N,\overline{B}_N)+\lambda_{N-k+1}(\{\overline{A},\Tilde{B}\} + \{\Tilde{A},\overline{B}\} + \{\Tilde{A},\Tilde{B}\}) \nonumber \\
 & &&\ \geq \ 
\lambda_{N-k+1}(\{\overline{A}_N,\Tilde{B}_N\}+\{\Tilde{A}_N,\overline{B}_N\})+\lambda_1(\{\Tilde{A}_N,\Tilde{B}_N)=\frac{1}{k}\sqrt{1-\frac{1}{j}}N^{3/2} + O(N).
\end{alignat}
Next, we show that there is one eigenvalue between $\frac{1}{j}\sqrt{1-\frac{1}{k}}N^{3/2}+O(N)$ and $\frac{1}{k}\sqrt{1-\frac{1}{j}}N^{3/2}+O(N)$ as well as $j-2$ eigenvalues at $\frac{1}{j}\sqrt{1-\frac{1}{k}}N^{3/2}+O(N)$:

\begin{alignat}{2}
&\lambda_{N-k}(\{A_N,B_N\}) &&\ \leq \ \lambda_{N-k+1}(\{\overline{A}_N,\Tilde{B}_N\} + \{\Tilde{A}_N,\overline{B}_N\}) + \lambda_{N-1}(\{\overline{A}_N,\overline{B}_N\}+\{\Tilde{A}_N, \Tilde{B}_N\}) \nonumber \\
& &&\ = \ \frac{1}{k}\sqrt{1-\frac{1}{j}}N^{3/2}+O(N),\nonumber \\
&\lambda_{N-k-1}(\{A_N,B_N\}) &&\ \leq \ \lambda_{N-k}(\{\overline{A}_N,\Tilde{B}_N\} + \{\Tilde{A}_N,\overline{B}_N\}) + \lambda_{N-1}(\{\overline{A}_N,\overline{B}_N\}+\{\Tilde{A}_N, \Tilde{B}_N\}) \nonumber \\
& &&\ = \ \frac{1}{j}\sqrt{1-\frac{1}{k}}N^{3/2}+O(N),\nonumber \\
&\lambda_{N-k-j+2}(\{A_N,B_N\}) &&\ \geq \ \lambda_1(\{\overline{A}_N,\overline{B}_N\}+\{\Tilde{A}_N,\Tilde{B}_N\}) + \lambda_{N-j-k+2}(\{\overline{A}_N,\Tilde{B}_N\} + \{\Tilde{A}_N,\overline{B}_N\}) \nonumber \\
& &&\ = \ \frac{1}{j}\sqrt{1-\frac{1}{k}}N^{3/2} + O(N).
\end{alignat}
By symmetry argument, we can use Weyl's inequality to establish the existence of their negative counterpart. Finally, for the bulk, we have 
\begin{alignat}{2}
&\lambda_{N-j-k+1}(\{A_N,B_N\}) &&\ \leq \ \lambda_{N-j-k+2}(\{\overline{A}_N,\Tilde{B}_N\} + \{\Tilde{A}_N,\overline{B}_N\}) + \lambda_{N-1}(\{\overline{A}_N,\overline{B}_N\}+\{\Tilde{A}_N,\Tilde{B}_N\}) \nonumber \\
& &&\ \leq \ \frac{1}{j}\sqrt{1-\frac{1}{k}}N^{3/2} + O(N),\nonumber \\
&\lambda_{N-j-k}(\{A_N, B_N\}) &&\ \leq \ \lambda_{N-j-k+1}(\{\overline{A}_N, \Tilde{B}_N\}+\{\Tilde{A}_N, \overline{B}_N\})+\lambda_{N-1}(\{\overline{A}_N, \overline{B}_N\}+\{\Tilde{A}_N, \Tilde{B}_N\})\nonumber  \\
& &&\ = \ O(N).
\end{alignat}
By symmetry argument, we can use Weyl's inequality to bound the bulk from the below. This completes the proof.
\end{proof}

Note that in the proof of Lemma \ref{multipleregimeskjcheckerboard}, Weyl's inequality fails to provide an accurate bound on the four eigenvalues between different regimes. Empirically, we observe that among those eigenvalues, the positive ones belong to the regimes corresponding to their lower bound, and the negative ones belong to the regimes corresponding to their upper bound.

\section{Convergence of Blip and Bulk Regimes}\label{sec:convergence}
In this section, we establish the weak convergence almost surely of the empirical blip spectral measure of $\{\textup{GOE, $k$-checkerboard}\}$ and the weak convergence almost surely of the bulk of all the anticommutator ensembles previously considered, following \cite{split}. Convergence results for the empirical largest blip spectral measure of $\{\textup{$k$-checkerboard, $j$-checkerboard}\}$ can be established analogously, so here we will only focus on the case of $\{\textup{GOE, $k$-checkerboard}\}$. By \cite{split}, we can treat the $m$\textsuperscript{th} moment of the empirical blip spectral measure of $\{\textup{GOE, $k$-checkerboard}\}$, $\mu^{(m)}_{\{A_N, B_N\},N}$, as a random variable on $\Omega$. Here, we define $\Omega:=\prod_{N \in \N} \Omega_N$. where $\Omega_N$ is the probability space of $N\times N$ $\{\textup{GOE, $k$-checkerboard}\}$ with the natural probability measure.
\begin{definition}
We define the random variable $X_{m,N}$ on $\Omega$
\begin{equation}\label{eq_xmn}
X_{m,N}(\{A_N, B_N\})\ :=\ \mu_{\{A_N, B_N\},N}^{(m)},
\end{equation}
which have centered $r$\textup{\textsuperscript{th}} moment
\begin{equation}
X_{m,N}^{(r)}\ :=\ \E{(X_{m,N}-\E{X_{m,N}})^r}.
\end{equation}
\end{definition}

\begin{defi}\label{def_average_blip_measure}
Fix a function $g: \N \rightarrow \N$. The \textbf{averaged empirical blip spectral measure} associated to $\overline{\{A, B\}} \in \Omega^\N$ is
\begin{equation}
\mu_{N,g,\overline{\{A, B\}}}\ :=\ \frac{1}{g(N)}\sum_{i=1}^{g(N)} \mu_{\{A_N,B_N\}^{(i)},N}.
\end{equation}
\end{defi}

In other words, we project onto the $N$\textsuperscript{th} coordinate in each copy of $\Omega$ and then average over the first $g(N)$ of these $N \times N$ matrices.
\begin{definition}
We denote by $Y_{m,N,g}$ the random variable on $\Omega^\N$ defined by the moments of the averaged empirical blip spectral measure
\begin{equation}
Y_{m,N,g}(\overline{\{A, B\}})\ :=\ \mu_{N,g,\overline{\{A, B\}}}^{(m)}.
\end{equation}
The centered $r$\textsuperscript{th} moment (over $\Omega^\N$) of this random variable is denoted by $Y_{m,N,g}^{(r)}$.    
\end{definition}

\begin{definition}
A sequence of measures $\{\mu_N\}_{N \in \N}$ converges \textbf{weakly} to a measure $\mu$ if
\begin{equation}
\lim_{N \rightarrow \infty} \int f d\mu_N = \int f d\mu
\end{equation}
for all $f \in \mathcal{C}_b(\R)$ (continuous and bounded functions).
\end{definition}

\begin{thm}\label{thm_as_convergence}
Let $g: \mathbb{N} \rightarrow \mathbb{N}$ be such that there exists an $\delta>0$ for which $g(N) = \omega(N^\delta)$. Then, as $N\to\infty$, the averaged blip empirical spectral measures $\mu_{N,g,\overline{\{A, B\}}}$ of $N\times N$ $\{\textup{GOE, $k$-checkerboard}\}$ converge weakly almost-surely to the measure with moments $M_{m}=\frac{1}{k}\left(\frac{2}{k^2}\right)^m \mathbb{E}\left[\textup{Tr }C^m\right]$, the limiting expected moments computed in Theorem \ref{GOE-checkerboard Moments}.
\end{thm}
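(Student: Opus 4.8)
The plan is to prove almost-sure weak convergence via the standard two-step moment method: first show the expected moments converge to the claimed limit $M_m = \frac{1}{k}\left(\frac{2}{k^2}\right)^m \mathbb{E}_k[\textup{Tr }C^m]$, and then control the fluctuations by bounding higher centered moments and invoking Borel–Cantelli. The convergence of the expected $m$\textsuperscript{th} moment $\mathbb{E}\left[\mu_{\{A_N,B_N\}}^{(m)}\right] \to M_m$ is already established in Theorem \ref{GOE-checkerboard Moments}, so the entire content of this theorem is the upgrade from convergence in expectation to almost-sure convergence. I would structure the argument around the random variables $X_{m,N}$ and $Y_{m,N,g}$ introduced above.

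First I would argue that it suffices to prove convergence of moments, since the limiting measure is determined by its moments (the moments $M_m$ grow slowly enough — they are controlled by moments of a fixed $k\times k$ hollow GOE scaled by $(2/k^2)^m$ — that Carleman's condition holds, so the moment problem is determinate), and convergence of all moments to those of a determinate measure implies weak convergence. Then the core estimate is a bound on the centered moments $Y_{m,N,g}^{(r)}$. The key step is to show that $X_{m,N}^{(2)} = \V{X_{m,N}} = o(1)$, and more precisely to obtain a decay rate in $N$. I would expand $X_{m,N}^{(2)} = \mathbb{E}[X_{m,N}^2] - (\mathbb{E}[X_{m,N}])^2$ using the eigenvalue trace formula twice: $X_{m,N}^2$ involves a product of two traces, which expands into a double sum over two cyclic products (two ``necklaces'' of terms). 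The diagonal/independent contributions where the two cyclic products share no matched entries reproduce $(\mathbb{E}[X_{m,N}])^2$ and cancel, so the variance is governed by the cross terms where at least one entry of the first product is matched to an entry of the second. A degrees-of-freedom count of the type carried out in Lemma \ref{blockslemma} and Lemma \ref{Sclasscontribution} shows that forcing such a cross-matching costs at least one power of $N$ relative to the leading order, giving $X_{m,N}^{(2)} = O(1/N)$, and analogously $X_{m,N}^{(r)} = O(N^{-\lceil r/2\rceil})$ for the centered moments.

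Next I would pass from $X_{m,N}$ to the averaged quantity $Y_{m,N,g}$. Because $Y_{m,N,g}$ is an average of $g(N)$ independent copies of $X_{m,N}$ (the $N$\textsuperscript{th} coordinates in independent copies of $\Omega$), independence gives $Y_{m,N,g}^{(2)} = X_{m,N}^{(2)}/g(N)$, and more generally the centered even moments satisfy $Y_{m,N,g}^{(2r)} = O\!\left(g(N)^{-r}\right)$ by the usual expansion of $(\sum_i (X^{(i)} - \mathbb{E}X))^{2r}$ in which only terms pairing the independent summands survive, combined with the single-matrix bound $X_{m,N}^{(r)} = O(N^{-\lceil r/2 \rceil})$. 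I would then apply Chebyshev's inequality: for any $\varepsilon>0$,
\begin{align}
\mathbb{P}\left(\left|Y_{m,N,g} - \mathbb{E}[X_{m,N}]\right| > \varepsilon\right) \ \leq \ \frac{Y_{m,N,g}^{(2r)}}{\varepsilon^{2r}} \ = \ O\!\left(\frac{1}{\varepsilon^{2r} g(N)^{r}}\right).
\end{align}
Using the hypothesis $g(N) = \omega(N^\delta)$, we may choose $r$ large enough (depending on $\delta$) that $g(N)^{-r}$ is summable in $N$; then $\sum_N \mathbb{P}\left(|Y_{m,N,g} - \mathbb{E}[X_{m,N}]| > \varepsilon\right) < \infty$, so Borel–Cantelli yields $Y_{m,N,g} \to M_m$ almost surely for each fixed $m$. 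Taking a countable intersection over all $m \in \mathbb{N}$ (a countable intersection of probability-one events has probability one) gives almost-sure convergence of every moment simultaneously, and combined with the determinacy of the moment problem this gives almost-sure weak convergence of $\mu_{N,g,\overline{\{A,B\}}}$ to the measure with moments $M_m$, as claimed.

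The main obstacle will be the variance/centered-moment estimate $X_{m,N}^{(r)} = O(N^{-\lceil r/2 \rceil})$. Unlike the expectation computation, where the weight function $f^{(2n)}$ with $n = \log\log(N)$ and the cancellation identities of Lemma \ref{inequalities} conspire to isolate exactly the $m_1 = m$ contribution, here one must carry the same weight-function bookkeeping through a product of two (or $r$) cyclic products and verify that cross-matchings between distinct cyclic products are genuinely lower order \emph{even after} the $\alpha$-summation and the $\binom{m}{i}(-1)^{m-i}$ alternating cancellation. The delicate point is that the weight function inflates the effective length of each cyclic product to $\Theta(\log\log N)$, so one must confirm that the polynomial-in-$\alpha$ overcounting factors (as in the $p(\eta)$ estimate of Lemma \ref{Sclasscontribution}) do not overwhelm the $1/N$ gain from the forced cross-matching; this is exactly the kind of estimate where $n = \log\log(N)$ is chosen precisely so that $(\log\log N)^{O(1)} \cdot C_0^{2n} \ll N^{1/2}$, and I would lean on that same calibration to push the cross-terms below the main term.
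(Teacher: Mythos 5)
Your overall scaffolding — moment method, averaging over the $g(N)$ independent coordinates, Chebyshev with a high centered moment, Borel--Cantelli, countable intersection over $m$, and moment determinacy via Carleman — is exactly the route the paper takes (the paper proves one lemma and defers the rest to the proof of Theorem 5.5 of \cite{split}). However, your key single-matrix estimate is wrong, and it is the one quantitative claim your write-up actually asserts: you claim $X_{m,N}^{(2)} = O(1/N)$ and more generally $X_{m,N}^{(r)} = O(N^{-\lceil r/2\rceil})$. This cannot hold. The blip contains only $2k$ eigenvalues — a number fixed independent of $N$ — so the weighted empirical blip measure never self-averages: $X_{m,N}$ converges in distribution to the nondegenerate random variable $\frac{1}{k}\left(\frac{2}{k^2}\right)^m \textup{Tr}\,C_k^m$ (with $C_k$ a $k\times k$ hollow GOE), and in particular $\textup{Var}[X_{m,N}] = \Theta(1)$, not $o(1)$. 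That is precisely why the theorem is stated for the \emph{averaged} measures with $g(N)=\omega(N^\delta)$ rather than for a single matrix; with your decay bound, single-matrix almost-sure convergence would essentially follow and the averaging would be superfluous. Your heuristic that a cross-matching between the two cyclic products in $\mathbb{E}[X_{m,N}^2]$ ``costs at least one power of $N$'' is the correct intuition for the \emph{bulk} (compare Theorem \ref{checkerboard convergence}, where the variance is $O(1/N^2)$), but it fails in the blip regime: in the contributing $S$-classes, a $1$-block in the first trace may pair with a $1$-block in the second trace at no loss in the power of $N$ — the degrees-of-freedom count of Lemma \ref{Sclasscontribution} is insensitive to whether paired $1$-blocks lie in the same or different traces — which is exactly how the limit $\mathbb{E}_k[(\textup{Tr}\,C_k^m)^2] \neq (\mathbb{E}_k[\textup{Tr}\,C_k^m])^2$ survives.

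The good news is that the error is repairable without changing your architecture, and the repaired argument is the paper's: the paper's Lemma \ref{lem_moments_of_moments} proves only $X_{m,N}^{(r)} = O_{m,r}(1)$, carrying the weight-function bookkeeping and the alternating-sum cancellation of Lemma \ref{inequalities} through products of $\ell$ traces to show that configurations with some trace carrying fewer than $m$ $1$-blocks cancel exactly, while the rest contribute boundedly. Mere boundedness suffices downstream: in the expansion of $\left(\sum_{i=1}^{g(N)}(X^{(i)}-\mathbb{E}X)\right)^{2r}$, only index tuples in which every coordinate appears at least twice survive (by independence and centering), and there are $O(g(N)^r)$ of them, each $O_{m,r}(1)$, giving $Y_{m,N,g}^{(2r)} = O(g(N)^{-r})$ with no help needed from any $N$-decay of $X_{m,N}^{(r)}$. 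Choosing $r > 1/\delta$ makes $\sum_N g(N)^{-r} \ll \sum_N N^{-\delta r} < \infty$, and your Chebyshev/Borel--Cantelli/determinacy endgame then goes through verbatim. So: replace the false decay claim with the $O_{m,r}(1)$ bound and its genuinely delicate proof (the cancellation across products of traces), and your proposal becomes the paper's proof.
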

To prove this theorem, it suffices to prove the following lemma, and the rest follows from the proof of Theorem 5.5 in \cite{split}.

\begin{lem}\label{lem_moments_of_moments}
Let $X_{m,N}$ be as defined in \eqref{eq_xmn}. Then the $r$\textup{\textsuperscript{th}} centered moment of $X_{m,N}$ satisfies
\begin{equation}
X_{m,N}^{(r)} \ =\ \E{\left(X_{m,N}-\E{X_{m,N}}\right)^r}\ =\ O_{m,r}(1)
\end{equation}
as $N$ goes to infinity.
%exists.
\end{lem}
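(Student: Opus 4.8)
The plan is to expand the centered moment $X_{m,N}^{(r)}=\E{(X_{m,N}-\E{X_{m,N}})^r}$ combinatorially and to show that all genuinely $N$-dependent growth cancels, leaving a quantity bounded uniformly in $N$. First I would rewrite $X_{m,N}$ through the eigenvalue trace formula exactly as in \eqref{MomentCalc}: since $f^{(2n)}(x)=\sum_{\alpha=2n}^{4n}c_\alpha x^\alpha$ is a polynomial, $X_{m,N}$ is a finite linear combination of normalized traces $\textup{Tr}(\{A_N,B_N\}^{2(\alpha+i)})$, each of which expands as a sum over cyclic products (closed index walks). Raising this to the $r$th power produces a sum over $r$-tuples of cyclic products $(W_1,\dots,W_r)$, and the joint expectation factors over the connected components of the combined matching on the $a$'s and $b$'s of all $r$ walks.

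The key structural observation is that subtracting $\E{X_{m,N}}$ removes precisely the fully factored contributions. Grouping the $r$-tuples by their joint matching partition, any configuration in which the $r$ walks split into independent sub-collections (each matched only within itself) contributes a product of single-copy expectations, and these are exactly cancelled by the centering. Hence only \emph{connected} configurations survive, namely those whose matching graph on $\{W_1,\dots,W_r\}$ has no isolated copy, so that every copy is linked by a shared matched entry to at least one other copy. This is the same cancellation mechanism employed in \cite{split}.

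For the power counting I would reuse the degree-of-freedom analysis of Lemma \ref{Sclasscontribution} and the block classification of Lemma \ref{blockslemma}. In a single walk the number of free index choices is exactly balanced against the blip normalization so that each contributing $S$-class yields $\Theta(1)$; this is the content of the first-moment computation in Theorem \ref{GOE-checkerboard Moments}. Each identification forced by a cross-copy matching merges two previously disjoint index pools and therefore strictly lowers the total free-index count while leaving the product of per-copy normalizations unchanged, so every connected configuration is $O(1)$ (indeed a negative power of $N$ once at least two copies are linked). Since the number of admissible configuration types depends only on $m$ and $r$, summing over them gives $X_{m,N}^{(r)}=O_{m,r}(1)$. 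Throughout, the cancellation identities of Lemma \ref{inequalities} force the surviving $S$-classes in each copy to have $m_1=m$ one-blocks and guarantee that the $\alpha,i$ sums, which run over only $O(\log\log N)$ values with coefficients $|c_\alpha|\ll C_0^{2n}$, are dominated by the polynomial savings in $N$, exactly as in \eqref{MomentCalc2}.

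I expect the main obstacle to be the bookkeeping for linked configurations: one must check that the single-copy cancellation which forces $m_1=m$ and kills over-counted or bulk $S$-classes is not destroyed when a matching bridges two copies, since such a bridge can in principle create hybrid blocks lying outside the classification of Lemma \ref{blockslemma}. The resolution is to verify that each cross-link costs at least one net degree of freedom uniformly over the $\log\log N$ range of $\alpha$, so that no hybrid class can exceed the $O(1)$ bound. Making this uniform degree-of-freedom loss precise, rather than merely the boundedness of an individual configuration, is the technical heart of the argument; once it is in place, the remainder of Theorem \ref{thm_as_convergence} follows verbatim from the proof of Theorem 5.5 in \cite{split}.
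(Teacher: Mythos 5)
Your strategy is sound but takes a genuinely different, and substantially heavier, route than the paper. The paper never invokes the connected-configuration cancellation on which you build: since the lemma asserts only $X_{m,N}^{(r)}=O_{m,r}(1)$ and not decay in $N$, it simply expands the centered moment by the binomial theorem,
\begin{equation*}
\E{\left(X_{m,N}-\E{X_{m,N}}\right)^r}\ =\ \sum_{\ell=0}^r \binom{r}{\ell}\,\E{(X_{m,N})^\ell}\left(\E{X_{m,N}}\right)^{r-\ell}
\end{equation*}
(up to signs), notes that $\E{X_{m,N}}=O_m(1)$ by Theorem \ref{GOE-checkerboard Moments}, and thereby reduces everything to showing $\E{(X_{m,N})^\ell}=O_{m,\ell}(1)$. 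That raw moment is then bounded by exactly the per-copy machinery you cite: by Lemmas \ref{blockslemma} and \ref{Sclasscontribution} the main term of the $v$\textsuperscript{th} trace with $B_v$ one-blocks is $(2(\alpha_v+i_v))^{B_v}/B_v!$, the joint index count over all $\ell$ cyclic products is $N^{3\sum_v(\alpha_v+i_v)-\frac{1}{2}\sum_v B_v}k^{-2\sum_v(\alpha_v+i_v)}$, and after normalization the tuples with all $B_v\geq m$ contribute $O(1)$, while any tuple with some $B_v<m$ vanishes identically because the alternating sum over $i_v$ factors coordinatewise and Lemma \ref{inequalities} kills the factor of degree $B_v<m$. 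Your inclusion--exclusion observation (centering annihilates every joint matching with an isolated copy, since entries in distinct traces factor under expectation) is correct and would prove strictly more, namely genuine decay for linked configurations; but it is not needed for the stated bound, and the advantage of the paper's route is precisely that it bypasses the step you yourself flag as the technical heart of yours.

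On that flagged step: the gap is real as your proposal stands. You assert that each cross-copy matching strictly lowers the total free-index count, but you do not verify that the per-copy cancellations (which force $m_1=m$ and annihilate all $S$-classes with $B_v<m$) survive bridging: once a matching couples two copies, the configuration count no longer factors as a product of functions of the individual $\alpha_v+i_v$, so the identity $\sum_{i}(-1)^{i}\binom{m}{i}i^{p}=0$ for $p<m$ cannot be applied coordinatewise, and one must instead show that the half-power savings in $N$ from each bridge dominates the growth $|c_\alpha|\ll C_0^{2n}$ with $\alpha=O(\log\log N)$ uniformly over the hybrid classes --- an estimate you state but do not carry out. Since the lemma requires only boundedness, the cleanest repair is to drop the connectedness reduction altogether and run the paper's raw-moment argument, where the cancellation is used only in the factored form in which it is already proved; your connected-configuration analysis would then be the natural tool if one later wanted quantitative decay of $X_{m,N}^{(r)}$ rather than $O_{m,r}(1)$.
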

\begin{proof}
Note that
\begin{align}
 \E{\left(X_{m,N}-\E{X_{m,N}}\right)^r}\ &\ =\  \mathbb{E}\left[\sum_{\ell=0}^r \binom{r}{\ell}(X_{m,N})^\ell \left(\E{X_{m,N}}\right)^{r-\ell}\right]\  \nonumber \\
 &\ =\ \ \sum_{\ell=0}^r \binom{r}{\ell}\mathbb{E}\left[(X_{m,N})^\ell\right] \left(\E{X_{m,N}}\right)^{r-\ell}.
\end{align}
By Theorem \ref{GOE-checkerboard Moments}, we have $\E{X_{m,N}}=O_m(1)$, so $\left(\E{X_{m,N}}\right)^{r-\ell}=O_{m,r,\ell}(1)$ for all $\ell$. As such, it suffices to show that $\E{(X_{m,N})^\ell}=O_{m,\ell}(1)$. We know that
\begin{align}
&\mathbb{E}\left[(X_{m,N})^\ell\right] \nonumber \\ 
&\ = \ \mathbb{E}\left[\left(\frac{1}{2k}\sum_{\alpha = 2n} ^{4n}c_{\alpha} \left(\frac{k^2}{N^3}\right)^{\alpha}\frac{1}{N^{5m/2}}\sum_{i = 0}^{m}\binom{m}{i} \left(-\frac{N^{3}}{k^2}\right)^{m-i}\text{Tr}\left(\{A_N,B_N\}^{2(\alpha+i)}\right)\right)^l\right]  \nonumber \\
&\ = \  \left(\frac{1}{2kN^{5m/2}}\right)^\ell\sum_{\substack{2n \leq \alpha_1 \leq 4n\\ 0 \leq i_1 \leq m}} \dots\sum_{\substack{2n \leq \alpha_l \leq 4n\\ 0 \leq i_l \leq m}}\prod_{v=1}^lc_{\alpha_v}\left(\frac{k^2}{N^3}\right)^{\alpha_v} \binom{m}{i_v} (-1)^{m-i_v}\left(\frac{N^{3}}{k^2}\right)^{m-i_v} \nonumber\\
&\quad\quad\mathbb{E} \left[\prod_{v=1}^l \textup{Tr}\left(\{A_N,B_N\}^{2(\alpha_v+i_v)}\right)\right]. \label{eq_finite_moments}
\end{align}
By the eigenvalue trace lemma,
\begin{align}
\mathbb{E}\left[\prod_{v=1}^\ell\textup{Tr} \left(\{A_N, B_N\}^{2(\alpha_v+i_v)} \right)\right) \ = \  \sum_{t^1_1,\dots,t^1_{2(\alpha_1+i_1)} \leq N} \dots \sum_{t^\ell_1,\dots,t^\ell_{2(\alpha_\ell+i_\ell)}\leq N} \mathbb{E}\left [ \prod_{v=1}^\ell c_{t^v_1,t^v_2}\dots c_{t^v_{2(\alpha_v+i_v)},t^v_1} \right].
\end{align}
Thus, we have again reduced the moment problem into a combinatorial one. By Lemma \ref{blockslemma} and \ref{Sclasscontribution}, the main contribution from configurations of length $2(\alpha_v+i_v)$ with $B_v$ $1$-blocks of $a$ is $\frac{(2(\alpha_v+i_v))^{B_v}}{B_v!}$. The number of ways to choose the number of blocks having 1-blocks of $a$ and 2-blocks of $a$ as well as the number of ways to choose matchings across the $\ell$ cyclic products are independent of $N$, $v$'s and $t_j^v$'s. Furthermore, the contribution from choosing the indices of all the blocks and $w$'s is $\frac{N^{3\sum_{v=1}^l(\alpha_v+i_v)-1/2(B_1+\cdots+B_l)}}{k^{2\sum_{v=1}^l(\alpha_v+i_v)}}$. As such, if $B_1,\dots,B_\ell \geq m$, the total contribution is $O(1)$.
If there exists $B_{v}<m$, then the overall contribution is
\begin{equation}
CN^{1/2\ell m-1/2(B_1+\dots+B_\ell)}\sum_{\substack{2n\leq \alpha_1\leq 4n \\ 0\leq i_1\leq m}}\cdots \sum_{\substack{2n\leq \alpha_l\leq 4n \\ 0\leq i_l\leq m}}\prod_{v=1}^l c_{\alpha_v}\binom{m}{i_v}(-1)^{m-i_v}\frac{(2(\alpha_v+i_v))^{B_v}}{B_v!} \ = \ 0
\end{equation}
by Lemma \ref{inequalities}. Thus, the total contribution of $\E{\left(X_{m,N}\right)^\ell}$ is simply $O(1)$.
\end{proof}
By \cite{split}, to establish weak convergence almost surely of the bulk of $\{\textup{GOE, $k$-checkerboard}\}$ (resp. $\{\textup{$k$-checkerboard, $j$-checkerboard}\}$), it suffices to prove the limiting variance of the $r$\textsuperscript{th} moment of empirical spectral measures of $\{\textup{GOE, $(k,0)$-checkerboard}\}$ (resp. $\{\textup{$(k,0)$-checkerboard, $(j,0)$-checkerboard}\}$) is $O\left(\frac{1}{N^2}\right)$. This can be established using similar techniques in Lemma A.2. of \cite{split}. 
\begin{theorem}\label{checkerboard convergence} Let $\{A_N, B_N\}$ be an $N\times N$ $\{\textup{GOE, $(k,0)$-checkerboard}\}$. Then for any fixed $r$,
\begin{align}
\lim_{N\to\infty}\textup{Var}\left[\nu_{\{A_N,B_N\}}^{(r)}\right]\ = \ O\left(\frac{1}{N^2}\right).
\end{align}
\end{theorem}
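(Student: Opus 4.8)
The plan is to expand the variance directly through the eigenvalue trace lemma and isolate the surviving ``cross'' contributions. Since the bulk eigenvalues of $\{\textup{GOE, }(k,0)\textup{-checkerboard}\}$ are $\Theta(N)$, the $r$\textsuperscript{th} moment of the bulk empirical spectral measure is $\nu_{\{A_N,B_N\}}^{(r)} = \frac{1}{N^{r+1}}\textup{Tr}(\{A_N,B_N\}^r)$, so that
\begin{align}
\textup{Var}\left[\nu_{\{A_N,B_N\}}^{(r)}\right] \ = \ \frac{1}{N^{2r+2}}\left(\mathbb{E}\left[\textup{Tr}(\{A_N,B_N\}^r)^2\right] - \mathbb{E}\left[\textup{Tr}(\{A_N,B_N\}^r)\right]^2\right).
\end{align}
First I would expand each trace into a sum of cyclic products, so that $\mathbb{E}[\textup{Tr}^2]$ becomes a double sum over a pair of cyclic products (two ``necklaces'') of length $2r$, and then apply the matching rules for GOE and $(k,0)$-checkerboard developed in Section \ref{sec: anticommutator Combinatorics} to each. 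Because the $(k,0)$-checkerboard has zeros on the congruent-mod-$k$ entries and otherwise i.i.d.\ mean-$0$ variance-$1$ entries, the surviving matchings pair the GOE terms non-crossingly and the checkerboard terms within layers, subject to the modular restriction $i_\ell\not\equiv i_{\ell+1}$.

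The key observation is that the contributing matchings split into those that pair each necklace within itself (the \emph{disconnected} matchings) and those containing at least one matched pair joining the two necklaces (the \emph{connected} matchings). The disconnected matchings factor as a product of two independent single-necklace expectations, so they reproduce exactly $\mathbb{E}[\textup{Tr}]^2$ and cancel in the variance. Hence only connected matchings survive, and it suffices to bound their total contribution by $O(N^{2r})$.

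I would then carry out the degree-of-freedom count for the connected matchings, adapting the cross-over analysis of Lemma \ref{blockslemma} and the technique of Lemma A.2 of \cite{split}. A single necklace of length $2r$ admits $N^{r+1}$ consistent index choices at leading order; gluing two necklaces by a matched pair forces shared indices across them, and since a pairing must link the two necklaces an even number of times (a single crossing edge cannot close up consistently around the cyclic products), a connected matching loses at least two degrees of freedom relative to the disconnected leading term. This gives a connected contribution of at most $O(N^{2r})$, whence $\textup{Var}[\nu^{(r)}] = O(1/N^2)$. The modular restrictions merely rescale the counts by fixed powers of $k$, and since $k=o(N)$ they do not change the order in $N$.

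The hard part will be making the ``at least two shared indices'' loss rigorous in the presence of the mixed matching rules: one must verify that no connected cross-matching — combining GOE non-crossing pairings, free checkerboard pairings within layers, and the layer decomposition of Lemma \ref{PT-GOE lemma for bs} — can match the full degree of freedom $N^{2r+2}$ enjoyed by the disconnected configurations. This requires a careful case analysis of how a cross-edge interacts with the layer structure and the cyclic constraints, together with the cancellation identities of Lemma \ref{inequalities} to dispose of the lower-order boundary terms, exactly as in the bulk variance argument of \cite{split}.
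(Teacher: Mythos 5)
Your overall skeleton---expand the variance via the trace lemma, observe that matchings pairing each necklace within itself factor and cancel against $\mathbb{E}[\textup{Tr}(\{A_N,B_N\}^r)]^2$, use the parity argument to force an even number (hence at least two) of cross edges in any surviving connected pairing, and conclude a loss of at least two degrees of freedom---is exactly the technique the paper has in mind: it gives no written proof of this theorem at all, deferring to ``similar techniques in Lemma A.2 of \cite{split}'', which is precisely this connected-versus-disconnected covariance expansion.

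However, there is a concrete error in how you instantiate the matching rules. The $(k,0)$-checkerboard is a Wigner-type matrix: its entries are i.i.d.\ mean-$0$, variance-$1$ off the mod-$k$ diagonal and $0$ on it, so two entries $b_{i_si_{s+1}}$ and $b_{i_ti_{t+1}}$ are matched if and only if $\{i_s,i_{s+1}\}=\{i_t,i_{t+1}\}$. There are no layers and no free matchings here: the layer decomposition of Lemma \ref{PT-GOE lemma for bs} is specific to the GOE--PTE anticommutator, where the palindromic Toeplitz structure turns matchings into linear difference relations $i_{s+1}-i_s=-(i_{t+1}-i_t)+C_s$ rather than equalities of index sets. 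Indeed, the paper's own Proposition \ref{bulkGOEcheckerboard} shows the bulk moments of $\{\textup{GOE, }k\textup{-checkerboard}\}$ are the $\{\textup{GOE, GOE}\}$ moments times $\left(1-\frac{1}{k}\right)^m$: at leading order both the $a$'s and the $b$'s pair non-crossingly. Consequently the ``hard part'' you flag---a cross edge interacting with a layer structure---never arises; with the correct Wigner rule the connected bound is the standard two-necklace count (each cross edge identifies indices across the necklaces, and connectedness reduces the number of free indices from $2r+2$ to at most $2r$, exactly as in the genus expansion of Section \ref{momentGOEGOE}), while the modular restrictions only multiply counts by fixed powers of $1/k$ and $1-1/k$. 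Similarly, Lemma \ref{inequalities} plays no role here: those binomial cancellation identities are needed for the weighted blip moments, not the bulk variance. With these corrections your plan is sound and coincides with the argument the paper intends.
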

By \cite{palindromicToeplitz}, to establish weak convergence almost surely of the bulk of $\{\textup{$k$-checkerboard, $j$-checkerboard}\}$, we only need to prove the fourth moment bound. Applying the same proof method, we arrive at the following theorem which establishes the weak convergence almost surely of all the anticommutator ensembles we considered in Section \ref{sec: anticommutator Combinatorics}.
\begin{theorem}\label{section2Bulk}Let $\{A_N, B_N\}$ be an $N\times N$ anticommutator matrix from Section \ref{sec: anticommutator Combinatorics}. Then we have the fourth moment bound
\begin{align}
\lim_{N\to\infty}\mathbb{E}\left[|M_m(\{A_N,B_N\})-\mathbb{E}\left[M_m(\{A_N,B_N\})\right]|^4\right] \ = \ O_m\left(\frac{1}{N^2}\right).
\end{align}
\end{theorem}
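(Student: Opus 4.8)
The plan is to reduce the fourth central moment to a combinatorial count via the eigenvalue trace formula, and then to use the centering together with a degrees-of-freedom analysis to force the surviving terms to lower order. Writing $X_N := \textup{Tr}(\{A_N,B_N\}^m) = \sum_{C\in\mathcal{C}_{2,2m}}\sum_{1\le i_1,\dots,i_{2m}\le N} c_{i_1 i_2}\cdots c_{i_{2m}i_1}$, so that $M_m(\{A_N,B_N\}) = X_N/N^{m+1}$, I would expand
\begin{align}
\mathbb{E}\left[\left|M_m-\mathbb{E}[M_m]\right|^4\right] \ = \ \frac{1}{N^{4(m+1)}}\sum_{C^{(1)},\dots,C^{(4)}}\sum_{\mathbf{i}^{(1)},\dots,\mathbf{i}^{(4)}}\mathbb{E}\left[\prod_{t=1}^4\left(Y_t-\mathbb{E}[Y_t]\right)\right],
\end{align}
where $Y_t=c_{i^{(t)}_1 i^{(t)}_2}\cdots c_{i^{(t)}_{2m}i^{(t)}_1}$ is the cyclic product attached to the $t$-th copy. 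Each copy carries $2m$ edges, and the outer sum ranges over the finitely many (depending only on $m$) configurations in $\mathcal{C}_{2,2m}$ together with all index tuples $\mathbf{i}^{(t)}=(i^{(t)}_1,\dots,i^{(t)}_{2m})$.

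Next I would analyze which terms survive the centering. To a choice of index tuples associate the graph $G$ on the vertex set $\{1,2,3,4\}$ of copies, joining two copies whenever they share a common matrix entry (the same unordered index pair drawn from the same parent matrix). By the standard inclusion–exclusion argument behind centering (as in \cite{palindromicToeplitz,split}), $\mathbb{E}\left[\prod_t(Y_t-\mathbb{E}[Y_t])\right]=0$ whenever some connected component of $G$ is a single vertex, since that copy factors out and its centered expectation vanishes. Hence only index tuples whose linking graph has all components of size at least two contribute; with four copies this leaves either one cluster of four or two clusters of two, and in every surviving term there are at least two cross-links lying in distinct clusters.

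The heart of the argument is then a degrees-of-freedom count bounding the number of surviving index tuples, carried out ensemble by ensemble using the matching rules of Section \ref{sec: anticommutator Combinatorics}. Four independent copies admit $N^{4(m+1)}$ tuples at leading order; the key lemma to establish is that fusing two copies into one cluster costs at least one degree of freedom. For the GOE factors this is the Euler-characteristic bound $\#(\gamma\pi)\le 4m+2c-4$, with $c$ the number of components (Proposition \ref{non-crossing}), which in fact costs two; for the palindromic Toeplitz and block-circulant factors it follows from the rank analysis of the associated system of index equations, where linking two cyclic products removes one of the telescoping redundancies; and for the mixed GOE–PTE and GOE–BCE ensembles it follows from the layer bookkeeping of Lemmas \ref{PT-GOE lemma for bs} and \ref{PT-GOE lemma for as}. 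Since every surviving term has at least two cross-links in separate clusters, the losses add, so the surviving tuples number at most $N^{4m+2}$; dividing by $N^{4m+4}$ and summing over the $O_m(1)$ configuration and matching types yields the bound $O_m(1/N^2)$.

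The main obstacle I expect is precisely this degrees-of-freedom bookkeeping for the free-matching and mixed ensembles. For GOE the genus bound is clean and even gives $O(1/N^4)$, but for PTE and especially $k$-BCE one must rule out the lower-order inconsistent-indexing (Diophantine) contributions, verify that the modular and periodic constraints of the block-circulant structure and the same-versus-opposite orientation dichotomy do not restore a lost degree of freedom, and confirm that the combinatorial prefactors (numbers of configurations and of admissible matchings) remain bounded independently of $N$. Controlling these error terms uniformly across all of the Section \ref{sec: anticommutator Combinatorics} ensembles, so that a single $O_m(1/N^2)$ bound applies, is the delicate part; once the per-link loss is secured, the almost sure weak convergence follows from Borel–Cantelli exactly as in \cite{palindromicToeplitz,split}.
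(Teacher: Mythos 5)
Your proposal is correct and takes essentially the same approach as the paper: the paper offers no self-contained proof of Theorem \ref{section2Bulk}, instead invoking the standard fourth-moment method of \cite{palindromicToeplitz} (with the variance bookkeeping of \cite{split}), which is precisely your argument of expanding the centered fourth moment over four cyclic products, discarding tuples whose linking graph has an isolated copy, and counting degrees of freedom so that every fusion of copies costs at least one free index, forcing the surviving count to $N^{4m+2}$ against the normalization $N^{4(m+1)}$. Your treatment of the cluster dichotomy (one component of four versus two components of two) and the per-link loss for the GOE, PTE/BCE, and mixed matching rules fills in exactly the details the paper delegates to those references, so the two arguments coincide.
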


\section{Polynomial Weight Functions for Intermediary Blips}\label{weight polynomial}
The exact choice of the weight function does not affect the values of limiting expected moments of the blip regimes, but only the details of the moment calculation. For the sake of completeness, we include here the expression for the weight functions for the intermediary blips of $\{k\textup{-checkerboard}, j\textup{-checkerboard}\}$. Let $w_1=\frac{1}{k}\sqrt{1-\frac{1}{j}}$, $w_2=\frac{1}{j}\sqrt{1-\frac{1}{k}}$, and $w_3=2/kj$ Then, the weight function for the intermediary blip around $\pm w_sN^{3/2}+\Theta(N)$ is
\begin{align}
g_s^{(2n)}(x) \ = \ \frac{x^{2n}\left(x^2-\frac{w^2_{t_s}}{w^2_s}\right)^{2n}\left(x-\frac{w_3\sqrt{N}}{w_s}\right)^{10n}}{\left(1-\frac{w_{t_s}^2}{w_s^2}\right)^{2n}\left(1-\frac{w_3 \sqrt{N}}{w_s}\right)^{10n}},
\end{align}
where $t_s=2$ if $s=1$ and $t_s=1$ if $s=2$. It's clear that $g_s^{(2n)}$ has zeros of order $2n$ at $0$, $\pm w_{t_s}/w_s$, and zero of order $10n$ at $w_3\sqrt{N}/w_s$. We prove that $g_s^{(2n)}$ vanishes at $O\left(1/\sqrt{N}\right)$, $\pm w_{t_s}/w_s+O\left(1/\sqrt{N}\right)$, and $w_3\sqrt{N}/w_s+O(1/\sqrt{N})$ and is equal to $1$ at $1+O(1/\sqrt{N})$. The key to the proof is the evaluation of the expression $\lim_{N\rightarrow\infty}(1+O(1/\sqrt{N}))^{2n}$. Since $\lim_{y\rightarrow \infty}(1+1/y)^y=e$ and $n= \log\log(N)\ll \sqrt{N}$, then
\begin{align}
1 \ \leq \ \lim_{N\rightarrow\infty}(1+O(1/\sqrt{N}))^{2n} \ = \ \lim_{N\rightarrow\infty}\left((1+O(1/\sqrt{N}))^{O(\sqrt{N})}\right)^{2n/O(\sqrt{N})} \ = \ \lim_{N\rightarrow\infty} e^{2n/O(\sqrt{N})} \ = \ 1.
\end{align}
Hence, $\lim_{N\rightarrow\infty}(1+O(1/\sqrt{N}))^{2n}=1$.

Now, we first evaluate the function at $x=1+O(1/\sqrt{N})$ as $N\rightarrow\infty$,
\begin{align}
&\lim_{N\rightarrow\infty}g_s^{(2n)}\left(1+O\left(\frac{1}{\sqrt{N}}\right)\right) \nonumber \\ 
&\ = \ \lim_{N\rightarrow\infty}\left(1+O\left(\frac{1}{\sqrt{N}}\right)\right)^{2n}\cdot \frac{\left(\left(1+O\left(\frac{1}{\sqrt{N}}\right)\right)^2-\frac{w^2_{t_s}}{w_s^2}\right)^{2n}}{\left(1-\frac{w^2_{t_s}}{w_s^2}\right)^{2n}}\cdot\frac{\left(1+O\left(\frac{1}{\sqrt{N}}\right)-\frac{w_3\sqrt{N}}{w_s}\right)^{10n}}{\left(1-\frac{w_3\sqrt{N}}{w_s}\right)^{10n}}\nonumber\\
&\ = \ 1.
\end{align}
Note that we repeatedly apply the evaluation $\lim_{N\rightarrow\infty}(1+O(1/\sqrt{N}))^{2n}=1$ above to simplify the expression. As an example, $\left(\frac{(1+O(1/\sqrt{N}))^2-A^2}{1-A^2}\right)^{2n}=\left(1+\frac{O(1/\sqrt{N})}{1-A^2}\right)^{2n}=\left(1+O(1/\sqrt{N})\right)^{2n}=1$.

Then, we consider the evaluation of the function at $O(1/\sqrt{N})$, and the evaluation of the function at other vanishing points similarly follows,
\begin{align}
&\lim_{N\rightarrow\infty}g_s^{(2n)}\left(O\left(\frac{1}{\sqrt{N}}\right)\right) \nonumber \\
&\ = \ \lim_{N\rightarrow\infty}\left(O\left(\frac{1}{\sqrt{N}}\right)\right)^{2n}\cdot\frac{\left(O\left(\frac{1}{\sqrt{N}}\right)^2-\frac{w_{t_s}^2}{w_s^2}\right)^{2n}}{\left(1-\frac{w^2_{t_s}}{w^2_s}\right)^{2n}}\cdot \frac{\left(O\left(\frac{1}{\sqrt{N}}\right)-\frac{w_3\sqrt{N}}{w_s}\right)^{10n}}{\left(1-\frac{w_3\sqrt{N}}{w_s}\right)^{10n}}
\nonumber \\
&\ = \ \lim_{N\rightarrow\infty} C_N \cdot \left(O\left(\frac{1}{\sqrt{N}}\right)\right)^{2n} \ = \ 0.
\end{align}

\section{Lower even moments of the Anticommutator of GOE and BCE and of BCE and BCE}\label{lower even moments involving BCE}

We provide here explicit expression of the first few even moments of $\{\textup{GOE}, k\textup{-BCE}\}$ and $\{k\textup{-BCE}, k\textup{-BCE}\}$ based on genus expansion formulae, where distributions are rescaled and normalized to have mean zero and variance one. This means that the second method of both distributions are 1. Theoretically, with enough computing power, we should be able to obtain closed form expressions for any even moments for the two distribution. However, in reality, the computation is extremely complicated and time-consuming. Hence, we only provide the moments of $\{\textup{GOE, }k\textup{-BCE}\}$ up to the 10\textsuperscript{th} moment and $\{k\textup{-BCE}, k\textup{-BCE}\}$ up to the 8\textsuperscript{th} moment.

\begin{table}[ht]
\centering
\caption{First Few Even Moments of $\{\textup{GOE}, k\textup{-BCE}\}$.}
\renewcommand{\arraystretch}{3}
\begin{tabular}{|>{\centering\arraybackslash}m{1.5cm}|>{\centering\arraybackslash}m{4cm}|} 
\hline
Moment & Value \\
\hline
4\textsuperscript{th} & $\dfrac{5}{2}+\dfrac{1}{2k^2}$ \\ 
\hline
6\textsuperscript{th} & $\dfrac{33}{4}+\dfrac{19}{4k^2}$ \\ 
\hline
8\textsuperscript{th} & $\dfrac{249}{8}+\dfrac{34}{k^2}+\dfrac{27}{8k^4}$ \\ 
\hline
10\textsuperscript{th} & $\dfrac{2033}{16}+\dfrac{875}{4k^2}+\dfrac{1043}{16k^4}$ \\
\hline
\end{tabular}
\end{table}

\begin{table}[ht]
\centering
\caption{First Few Even Moments of $\{k\textup{-BCE}, k\textup{-BCE}\}$.}
\renewcommand{\arraystretch}{3}
\begin{tabular}{|>{\centering\arraybackslash}m{1.5cm}|>{\centering\arraybackslash}m{10cm}|} 
\hline
Moment & Value \\
\hline
4\textsuperscript{th} & $\dfrac{10k^4+86k^2+48}{4k^4+8k^2+4}$ \\ 
\hline
6\textsuperscript{th} & $\dfrac{66k^6+1890k^4+9084k^2+3360}{8k^6+24k^4+24k^2+8}$ \\ 
\hline
8\textsuperscript{th} & $\dfrac{498k^8+33236k^6+529634k^4+1759064k^2+499968}{16k^8+64k^6+96k^4+64k^2+16}$ \\
\hline
\end{tabular}
\end{table}
\newpage

\begin{table}[ht]
\centering
\caption{Comparison of Theoretical and Empirical Even Moments of $\{\textup{GOE, $2$-BCE}\}$.}
\renewcommand{\arraystretch}{2}
\begin{tabular}{|>{\centering\arraybackslash}m{1.5cm}|>{\centering\arraybackslash}m{3cm}|>{\centering\arraybackslash}m{3cm}|} 
\hline
Moment & Theoretical Value & Empirical Value\\
\hline
4\textsuperscript{th} & $10.5$ & $10.47$\\ 
\hline
6\textsuperscript{th} & $75.5$ & $79.47$\\ 
\hline
8\textsuperscript{th} & $637.38$ & $668.60$\\
\hline
10\textsuperscript{th} & $5946.38$ & $6657.43$ \\
\hline
\end{tabular}
\end{table}

\begin{table}[ht]
\centering
\caption{Comparison of Theoretical and Empirical Even Moments of $\{\textup{$2$-BCE, $2$-BCE}\}$.}
\renewcommand{\arraystretch}{2}
\begin{tabular}{|>{\centering\arraybackslash}m{1.5cm}|>{\centering\arraybackslash}m{3cm}|>{\centering\arraybackslash}m{3cm}|} 
\hline
Moment & Theoretical Value & Empirical Value\\
\hline
4\textsuperscript{th} & $5.52$ & $10.19$\\ 
\hline
6\textsuperscript{th} & $74.16$ & $70.75$\\ 
\hline
8\textsuperscript{th} & $1826.50$ & $1179.23$\\
\hline
\end{tabular}
\end{table}

%%%%%%%%%%%%%%%%%%%%%%%%%%%%%%%%%%%%%%%%%%%%%%%%%%%%%%%%%%
%%%%%%%%%%%%%%%%%%%%%%%%%%%%%%%%%%%%%%%%%%%%%%%%%%%%%%%%%%%%%%%%%%%%%%%%%%%%%%%%%%%%%%%%%%%%%%%%%%%%%%%%%%%%%%%%%%%%%%%%%%%%%%
%%%%%%%%%%%%%%%%%%%%%%%%%%%%%%%%%%%%%%%%%%%%%%%%%%%%%%%%%%%%%%%%%%%%%%%%%%%%%%%%%%%%%%%%%%%%%%%%%%%%%%%%%%%%%%%%%%%%%%%%%%%%%%
%%%%%%%%%%%%%%%%%%%%%%%%%%%%%%%%%%%%%%%%%%%%%%%%%%%%%%%%%%%%%%%%%%%%%%%%%%%%%%%%%%%%%%%%%%%%%%%%%%%%%%%%%%%%%%%%%%%%%%%%%%%%%%

\ \\
\end{document}